
 \documentclass[11pt]{amsart}
 
 \usepackage{color}

 \usepackage{stix} 
 \usepackage[cal=boondoxo]{mathalfa} 

 \usepackage[all]{xy}
 \usepackage{tikz}
\usetikzlibrary{arrows, calc,  backgrounds, shapes.geometric, positioning, decorations.markings}

\usepackage{amsmath,amsthm,amscd,graphicx}

 \usepackage{hyperref}
\usepackage{enumitem}

%
%
\newtheorem{thm}{Theorem}[subsection]
\newtheorem{thmA}{Theorem}[section]
\newtheorem*{thm*}{Theorem}
\newtheorem{lemma}[thm]{Lemma}
\newtheorem*{lemma*}{Lemma}
\newtheorem{prop}[thm]{Proposition}
\newtheorem{propA}[thmA]{Proposition}
\newtheorem*{prop*}{Proposition}
\newtheorem{corr}[thm]{Corollary}
\newtheorem{corrA}[thmA]{Corollary}
\newtheorem*{corr*}{Corollary}

\theoremstyle{definition}
\newtheorem{dfn}[thm]{Definition}
\newtheorem{dfnA}[thmA]{Definition}
\newtheorem{exmple}[thm]{Example}
\newtheorem{exmples}[thm]{Examples}

\theoremstyle{remark}

\newtheorem*{rmq}{\textit{Remark}}
\newtheorem{rmk}[thm]{\textit{Remark}}

%
%
%
\newcommand\mbold[1]{{\mathbb{#1}}}
\newcommand{\bC}{{\mbold{C}}}
\newcommand{\bQ}{{\mbold{Q}}}
\newcommand{\bR}{{\mbold{R}}}
\newcommand{\bZ}{{\mbold{Z}}}

\newcommand{\bP}{{\mbold{P}}}

%
%
\newcommand\germ[1]{{\mathfrak{#1}}}
\def\gn{\germ n}
\def\gq{\germ q}
\newcommand{\gh}{\germ h}%
\newcommand{\ga}{\germ a}%
\newcommand{\geg}{\germ g}%

%
%
%
%
%
%
%

\newcommand{\cA}{{\mathcal A}}

\newcommand\cC{{\mathcal C}}

\newcommand\cF{{\mathcal  F}}

\newcommand\cH{{\mathcal H}}

\newcommand\cM{{\mathcal M}}
\newcommand\cQ{{\mathcal Q}}
\newcommand\cI{{\mathcal I}}

\newcommand\cE{{\mathcal E}}

\newcommand\cU{{\mathcal U}}
\newcommand\cO{{\mathcal O}}

\newcommand\cY{{\mathcal Y}}
\newcommand\cZ{{\mathcal Z}} 


\def\mapright#1{\mathop{\vbox{\ialign{
                                ##\crcr
    ${\scriptstyle\hfil\;\;#1\;\;\hfil}$\crcr
 \noalign{\kern2pt\nointerlineskip}
    \rightarrowfill\crcr}}\;}}
\def\into{\hookrightarrow}

%
%
 
\renewcommand\setminus{-}

\def\comp{\raise1pt\hbox{{$\scriptscriptstyle\circ$}}}
\def\del{\partial}
\def\e{\mathop{ \rm e}\nolimits}
\def\half{\frac{1}{2}}
\def\higgs{\text{\rm Higgs}}
\def\hor{\text{\rm hor}}
\def\id{\text{\rm id}} 
\def\ii{{\mathbf i}}
\def\im{\text{\rm Im}}
 
\def\mhs{\text{\sf MHS}} 

\def\prim{\text{\rm prim}}
\def\rank{\text{\rm rank}}

\def\tr{\mathop{\rm Tr}\nolimits}

\newcommand\Tr{{}^{\mathsf{T}}\kern-0.9pt} 
\def\var{\text{\rm var}}
\def\vmhs{\text{\sf VMHS}}  
%
%
\def\lset{\{}  
\def\rset{ \}}  

\def\set#1{\lset#1\rset} 
\def\sett#1#2{\lset #1 \mid  #2 \rset}  
%
%

\newcommand\coker{\mathop{\rm Coker}\nolimits}
\renewcommand\ker{\mathop{\rm Ker}\nolimits}
\def\aut#1{\operatorname{Aut}(#1)}

\def\endo{\mathop{\rm End}\nolimits}
\def\qendo#1{\geg(#1)}
\def\ext{\mathop{\rm Ext}\nolimits} 
 
\def\gr{\text{\rm Gr}}
\renewcommand\hom{\mathop{\rm Hom}\nolimits}

\newcommand\re{\operatorname{Re}}

%
%
\newcommand\Ad[1]{\operatorname{Ad}{#1}\,}
\newcommand\ad[1]{\operatorname{ad}{(#1})}

\newcommand\gl[1]{\operatorname{GL}({#1})}
\newcommand\gll[1]{\operatorname{gl}({#1})}
 \newcommand\lie[1]{\operatorname{Lie}(#1)}
 
 \newcommand\slgr[1]{\operatorname{SL}_{#1}}
 \newcommand\sll[1]{\operatorname{sl}_{#1}}
 
 \newcommand\smpl[2]{\operatorname{Sp}_{#2}({#1})}

 \def\warn#1{\textcolor{black}{#1}}

 \begin{document}
\title[Deformations and Rigidity for mixed period maps]{Holomorphic Bisectional Curvature and Applications to Deformations and Rigidity for Variations of Mixed  Hodge Structures 
}

\author{Gregory Pearlstein, \\
Chris  Peters }

\begin{abstract}

In this article, we prove a rigidity criterion for period maps of
admissible variations of graded-polarizable mixed Hodge structure, and
establish rigidity in a number of cases, including families of
quasi-projective curves, projective curves with ordinary double
points, the complement of the canonical curve in families of
Kynev--Todorov surfaces, period maps attached to the fundamental
groups of smooth varieties and normal functions. 
\end{abstract}

\maketitle
  
\section{Introduction}
\label{sec:intro}

\subsection{Historical background}
\label{ssec:history}

The rigidity concept the title refers to,  concerns a Hodge theoretic variant of a rigidity property that S.\ Arakelov \cite{ara} discovered.
He  showed that one cannot deform   families $\set{C_s}_{s\in \bar S}$ of 
  curves of genus $g\ge 2$ parametrized by a  smooth projective curve  $\bar S$ with  varying moduli,  
   keeping $\bar S$ fixed as well as  the set, say  $\Sigma$, over which singular fibers occur. In terms of $\cM_g$, the  moduli space
  of curves of genus $g$, this result states that if the moduli map 
  $\mu: S =\bar S\setminus \Sigma  \to  \cM_g$ is not constant, it is rigid, keeping source and target fixed.\footnote{For an approach using Teichm\"uller theory see \cite{imashi}.} In the remainder of this introduction \textbf{\emph{we shall only consider deformations of maps keeping source and target fixed}}.
  
   The cohomology groups  $H^1(C_s,\bZ)$ admit  a canonical polarizable weight one Hodge structure. These are classified by a period domain,
   in this case the generalized Siegel upper half-space  $\gh_g$. Since the group of  integral automorphisms preserving the polarization is  the 
   symplectic group $ \smpl g \bZ $, the period map in this case is a holomorphic map
    $F: S   \to \cA_g:=\smpl {  g }{\bZ}\backslash \gh_g$ which factors through the   morphism $\cM_g \to \cA_g$. The latter morphism  is an embedding
    (this is Torelli's theorem).    
    
    It might be the
   case that,  although $\mu$ is rigid   keeping $(\bar S,\Sigma)$ fixed, this need not be the case for $F$.  
    Geometrically interpreted, polarized weight one Hodge structures are  polarized
   Abelian varieties and G.\ Faltings, in \cite{arafal} investigated the analog of Arakelov rigidity in this situation.  Let us recall
   his result in Hodge theoretic terms.     The period domain $\gh_g$ classifies (polarized) weight $1$ Hodge structure on a  free $\bZ$-module $H$.
   Such a Hodge  structure induces Hodge structures of weight $0$ on  $\endo (H)$ as well on its  subspace $\endo(H,Q)$ of the $Q$-endo\-morphisms, that is
  those   $u\in \endo H$ for which $Q(ux,y)+Q(x,uy)=0$ for all $x,y\in H$. 
  By means of the  period map $F:S\to \cA_g$,  Hodge structures $F(s)$ of weight  one  
  are put on $H$.  The group $\Gamma$ acts on $H$ as well as $\endo(H,Q)$. In particular, its commutant
  \[
  \endo^\Gamma(H_\bC,Q):= \sett{u\in \endo(H_\bC,Q)}{ \gamma\comp u \comp \gamma^{-1}=u}
  \]
  inherits   natural Hodge structures as well. By W.\ Schmid's result \cite[Corollary 7.23]{schmid},  these Hodge structures are independent of $s$. In technical terms,  
  the period map defines a local system on $S$ carrying  a variation of Hodge structure inducing one on the endomorphism bundles and  
  the Hodge decomposition extends as a 
    flat  decomposition,  hence  is independent of $s$. See Section~\ref{ssec:purepd}.   G.\ Faltings' result is as follows:
   
   \begin{thm*}[\protect{\cite[Theorem 2]{arafal}}] The space of infinitesimal deformations of  a period map  $F: S   \to \cA_g$  over a curve $S$
    can be canonically identified with  the direct summand    of 
    $\endo^\Gamma(H_\bC,Q)$ of  Hodge type $(-1,1)$. Consequently, if $F$ is not constant, $F$ is rigid if and only if $\endo^\Gamma  (H,Q)$
    is pure of type $(0,0)$.
   \end{thm*}
   
   Faltings gave an example with $g=8$  for which $\endo^{\Gamma}(H,Q)^{-1,1}\not=0$ and so this gives 
   a non-rigid (non-isotrivial) family of $8$-dimensional Abelian varieties.  M.-H Saito \cite{Sa} made a systematic study  and classified these in any dimension.
    
    The Hodge-theoretic rigidity question for higher weight and over any quasi-projective smooth base was first consider by the second author in \cite{hodge4}
    and it turns out that G.\ Faltings' result is in essence valid for all weights. There are a couple of differences. 
    Of course,  since $S $  is allowed to be higher-dimensional, 
    one has to impose the condition that the period map is generically an immersion 
    instead of being non-constant.  Secondly, on a more fundamental level, one  should  incorporate 
    ``Griffiths' transversality'' (cf. \cite{periods})
    an infinitesimal  property of  variations of geometric origin which is automatic for weight $1$ but gives a constraint for most
    types of higher weight variations.    
    Geometrically this condition means that tangents to 
      the image of the period map belong to the so-called horizontal tangent bundle. This is encapsulated in the statement that
      period maps are horizontal.
      It is natural to demand that deformations  preserve this property. The result from loc. cit. indeed takes this into account:
      
      \begin{thm*}[\protect{\cite[Theorem 3.4]{hodge4}}] Let $S$ be smooth and quasi-projective and $F:S\to \Gamma \backslash D$ a period map.
 The   space of infinitesimal deformations of $F$  remaining  horizontal  can be canonically identified with
  $\endo^{\Gamma}(H_\bC,Q)^{-1,1}$. 
 \end{thm*}
     
The proof of this result is reviewed in Section~\ref{sec:pure}.
      
   \subsection{Main results on deformations of mixed period maps}  
   \label{ssec:DeformPure}
   
  For the purpose of this introduction,  a free $\bZ$-module $H$  is said to carry a mixed Hodge structure, if  $H_\bQ=H\otimes \bQ$ carries an increasing finite filtration
   $W$, the weight filtration  and $H_\bC=H\otimes \bC$ carries a decreasing filtration $F$, the Hodge filtration which induces
   a pure Hodge structure of weight $k$ on $\gr^W_kH$.  If, moreover, each of those are polarized by $Q_k$, we 
   write $Q$ for the collection of the $Q_k$ and say that $(H,W,F,Q)$ is a graded polarized mixed Hodge structure.
   
   Motivated by geometry, for classifying purposes we  keep the weight filtration and the polarization fixed.
   So on a fixed triple $(H,W,Q)$ we   allow only the Hodge filtration to vary. 
   The associated  period domains and period maps
   have been studied in \cite{usui,higgs,dmj,sl2anddeg}.
   \par
   There are several important differences   with  the pure  situation. First of all, $H_\bC$ does not have a "mixed" Hodge decomposition,
   but instead, a canonical decomposition, introduced by Deligne \cite{tdh}, the Deligne-decomposition $H_\bC= \oplus I^{p,q}$ where
   $I^{p,q}$  has the same dimension as the $(p,q)$-component of the Hodge structure on $\gr^W_{p+q}$, but it is no longer
   the case that $I^{q,p}$ is the complex conjugate of $I^{p,q}$.

   Secondly, although, as  in  the pure case the period domain $D$  is homogeneous under a   Lie group $G$,
  say $D=G/G^F$, the isotropy group $G^F$need not be  compact. Moreover, the group $G$ has in general no real
  structure: it generally strictly contains   $G_\bR$, the automorphism group of $(H_\bR, W,Q)$.
  
  As in the pure case the  polarization induces a Hodge metric on the
  tangent bundle to $D$, which is equivariant with respect to $G_{\bR}$,
  but not the full group $G$. Period maps are holomorphic, there is a notion of Griffiths'  transversality 
   and a concept of horizontal tangent bundle.  Period maps $F$ have tangents in the latter bundle.  
 As before,  through the period map one gets  mixed Hodge structures on $H$  depending on $s$, i.e.,  the holomorphic vector bundle $\cH$ on $S $ with fibers $\simeq  H_\bC$
 receives  a variation of mixed Hodge structure (VMHS). The induced 
 varying mixed Hodge structures on  the Lie algebra 
 \[
 \geg_\bR = \endo^W(H_\bR,Q)
 \]
 of endomorphisms which preserve $W$ and act by infinitesimal isometries
 on $Gr^W$ defines a VMHS on the holomorphic vector bundle  
 \[
 \geg(\cH) =\endo^W(\cH,Q)
 \]
  over $S$ and, again by  \cite[Corollary 7.23]{schmid},     the   Deligne decomposition on the space of global $\Gamma$-equivariant sections of $\geg(\cH)$ is 
  a flat decomposition, that is, ``constant in  $s\in S$''.
The horizontality constraint implies that we restrict  our attention to
\[
 \cU^{-1} \geg (\cH)= \bigoplus_{q\leq 1} \geg^{-1,q} (\cH),
 \]
 the \textbf{\emph{horizontal endomorphism bundle}}.     The main result can now be stated as follows:
  
  \begin{thm*}[=Theorem~\ref{thm:Main}] 
  Let $S$ be quasi-projective and $F:S\to \Gamma\backslash D$ a horizontal holomorphic map to a mixed domain $D$ parametrizing
mixed Hodge structures on    $(H,W,Q) $  such that the corresponding   VMHS  is admissible.
\warn{Suppose that     $v\in \cU^{-1}  \geg (\cH ) $ is  Hodge-harmonic, that
moreover, $v$  is equi\-variant with respect to the monodromy group $\Gamma$
  and that  the Hodge norm $\|v\|$ is bounded near infinity.}
 
Then     infinitesimal deformations of $F$ that stay horizontal correspond one-to-one to 
 $\Gamma$-equivariant horizontal endomorphisms of $ \geg (\cH)  $.
 The space of such deformations is smooth at $F$. 
 \end{thm*}
 
 The statement requires some explanation.   Let $v(s)$ be a section of the bundle \warn{$\cU^{-1} \geg (\cH)$ on $S$ of 
 the horizontal endomorphisms 
of $\geg(\cH)$}.
   In the pure case, as shown in the proof of \cite[Theorem 3.2]{hodge4},  negativity of the bisectional curvature in horizontal directions
implies that its  Hodge norm gives rise to a plurisubharmonic function $s\mapsto \|v(s)\|$. 
One can do with a slightly weaker condition
which is more suitable  in the mixed situation. This weaker condition is the
plurisubharmonicity of an   endomorphism $v$ of \warn{$\geg(\cH)$}  and will be explained  in  Section~\ref{ssec:Curv}. In the pure case
it  indeed implies plurisubharmonicity of  the  Hodge norm $\|v\|$, and we show that this is also true
  for  several   types  of mixed Hodge structures of geometric interest.
 As is well known (see for example  \cite{psh}), bounded  plurisubharmonic functions on a quasi-projective manifold are constant. 
To make use of this, it suffices to show  that  $\|v(s)\|$ is bounded   near infinity whenever  $v$ is preserved by the local monodromy at infinity. 
 This is indeed the case for pure Hodge structures as follows from  W. Schmid's norm estimates in  \cite{schmid}.
  Unfortunately, \warn{as Section~\ref{subsect:higher-normal-functions}   shows,} the desired estimates do not hold for mixed variations in general, not even for admissible variations. 
  However, for  several  cases of geometric  interest,  boundedness still holds as shown in  \warn{the remainder} of Section~\ref{sec:norm-estimates}. 
  
\begin{rmk} Although we only consider period maps to "classical" mixed period domains, the same methods apply to 
variations with extra structure corresponding to period maps to mixed \textbf{\emph{Mumford--Tate domains}}. 
To explain this, first of all, the differential geometric input based on curvature calculations only uses Lie-theoretic calculations
involving  the mixed Hodge metric and the Deligne types and these calculations remain the same. Indeed, a Mumford--Tate domain is
a homogeneous space of the form $M/M^F$ where $M$ is a subgroup of a group $G$ acting transitively on some mixed domain $D$ and $M^F=M\cap G^F$ so that the Hodge metric is the one from $D$ restricted to $M/M^F$,  and the Deligne types are  the same as the
ones for  the mixed Hodge structure of the Lie algebra of $G$.  See also \cite[Remarks 1.1, 2.4]{pdmixed}. 

Secondly, the calculations for
boundedness of the mixed Hodge metric are based on the $\slgr 2$-orbit  theorem. Its proof uses Lie theory within a given group
and one can show that these calculations stay within $M\subset G$.
See \cite[Section 4]{kerr2017polarized} where the pure case is treated. For the mixed situation the arguments are the same. 
 \end{rmk}

\subsection{Boundedness results}
Although for our purposes we only need a one-variable boundedness result, there is one situation where we prove  a multivariable version
which may be of independent interest:
\begin{thm*}[=\ref{hodge-tate-estimate}] A flat section of an admissible Hodge--Tate
variation $\cH$ with unipotent monodromy   has bounded
Hodge norm   with respect to the mixed Hodge metric. Likewise, for  a flat sections of
$\endo \cH $.
\end{thm*}
Furthermore, we prove the following $1$-variable results, similarly of independent interest:
\begin{thm*}[=Theorems~\ref{unpotent-estimate}, \ref{thm:weight-minus-2-case}, Corollaries~\ref{cor:IandII},\ref{corr:height-pairing}]  Let $\cH$ be a $1$-variable  admissible variation with unipotent monodromy  of one of the following types:
  \\
  1. of unipotent type;\\
  2.  of type $(\text{\rm I})$ or $(\text{\rm II})$\footnote{We recall that from~\cite{sl2anddeg} that a variation is type $(\text{\rm I})$ if there exists an integer $k$ such that the Hodge numbers
$h^{p,q}$  are zero unless $p+q=k$, $k-1$  (i.e. $\gr^W$ has
exactly two non-zero weight graded-quotients which are adjacent) and it 
  is type $(\text{\rm II})$ if there is an integer $k$ such that
$h^{p,q}=0$ unless $(p,q)=(k,k)$, $(k-1,k-1)$ or $p+q=2k-1$ and $h^{k,k}$,
$h^{k-1,k-1}$ are non-zero.};\\
  3.  a variation whose  sole weight graded quotients are $\gr^W_0\cong\bR(0)$ and  $\gr^W_{-2}$;\\
  4.  a variation whose  sole  weight graded quotients  are $\gr^W_0\cong\bZ(0)$,
$\gr^W_{-2}$ and $ \gr^W_{-4}\cong\mathbb Z(0)$.  
\medskip

Then a flat section of $\cH$ or of $\geg(\cH)$   has bounded mixed Hodge norm.
\end{thm*}

We also show that  for variations whose  sole  weight graded quotients  are $\gr^W_0=\bZ$ and $\gr^W_{-k}$ for $k>2$, the norm estimates required to
obtain rigidity need not hold. See Lemma~\ref{lem:UnBounded}.

\subsection{Geometric applications}
 The first application concerns     families of  \textbf{\emph{quasi-projective smooth curves}} of genus $g$.  In  Example~\ref{exmpls:OpenRig} 
 we show that if the monodromy acts irreducibly on cohomology, the family 
is rigid in the  $(-1,0)$-directions provided the curves can be completed by adding   $<2g$ points. \warn{The mixed Hodge structures on    projective curves
with $k$ double points are in a certain sense dual to the ones on a quasi-projective curves which can be compactified to a smooth projective curves
upon adding $k$ points. Indeed, there is a dual result}  for families of \textbf{\emph{curves  with   $>2g$ double points}} (see Example~\ref{ex:CurvWithDPS}).
Perhaps worth mentioning here  is the use of the rather recent concept of
pure \textbf{\emph{variations having maximal Higgs field}}, a concept introduced by E. Viehweg and exploited in \cite{VZ}.
For instance in Proposition~\ref{prop:maxHiggsIsRigid} we state and prove that  having a weight one maximal Higgs field implies rigidity.  Hence, for  the preceding examples maximal Higgs leads to period maps rigid in all
horizontal directions.

Next we mention families  of   \textbf{\emph{Kynev and Todorov surfaces}}. V. Kynev \cite{kynev} has given a construction of surfaces of general type
with invariants $h^{1,0}=0$, $h^{2,0}=1$, $K^2=1$ that violate the infinitesimal Torelli theorem. Other counterexamples to  infinitesimal  Torelli were  given
by  A. Todorov \cite{todorov}. His surfaces have the same  invariants $h^{1,0}=0, \, h^{2,0}=1$,  but  $2\le K^2\le 8$.
The period domain of both types of surfaces resemble that of a K3 surface. Like a K3 surface, there is an up to scaling  unique holomorphic $2$-form
but here it  vanishes along the  canonical curve which is smooth for a generic such surface.
Removing this  curve gives an  open surface intrinsically 
 associated to a  Kynev or Todorov surface. 
Its  cohomology then provides an example of  a mixed Hodge structure. The Todorov surfaces with $K^2=2,\dots,8$ generalize  Kynev surfaces that were previously also 
investigated in detail  by F. Catanese \cite{catanese} and A. Todorov~\cite{todorov2} and so we shall call these  \textbf{\emph{CKT-surfaces}}. We show (cf. Proposition~\ref{prop:CKT-surfs}) that  a modular  family of open CKT-surfaces  or of Todorov surfaces
 is rigid,  as is any  sufficiently generic subfamily.

We shall  also consider deformations of certain unipotent variations. Firstly  \textbf{\emph{Hodge--Tate variations }} (Section~\ref{ssec:PSH}, Example (2))   and, secondly,    
\textbf{\emph{variations  associated 
to the fundamental group}} of an algebraic manifold (Section~\ref{ssec:PSH}.  Example (3)).   
For the latter, an explicit rigidity  criterion  is stated later  as  Proposition~\ref{prop:OnMHSonFundGrps}.
 It involves  the geometry  of the exterior algebra of the $1$- and $2$-forms of $S$. 

Deformations of  other types of algebraic families  are investigated  in Example~\ref{exmpl:NonRigid}  and, more elaborately, in Section~\ref{sec:Exampl}. These   include \textbf{\emph{normal functions}}, certain  \textbf{\emph{higher normal functions}} and  \textbf{\emph{biextensions}} coming from higher Chow groups.  

%
%
%

    \subsection{Structure of the paper}
   \label{ssec:Struct}
   
   In Section~\ref{sec:pure} we recall in detail the pure case and the proof of  the main result from \cite{hodge4}.
   The proof   presented  here differs slightly from the one given in  loc. cit.   since we want to highlight where problems arise    for
   the  mixed  case.    
     Further basic developments have been taken place since the publication of \cite{hodge4} which
    we  recall    in Section~\ref{ssec:ExRigPure}.  Several of these newer examples  
    serve as  building blocks  in the   mixed  situation to which we turn in later sections.
     
 In Section~\ref{sec:permaps} we recall some  basic material concerning mixed period maps.

One of the main ingredients in the proof of  our results is  the curvature calculation from \cite{pdmixed}.
We explained in loc. cit. that, unlike in the pure case, the holomorphic sectional curvature is not in general $\le 0$ in
horizontal directions and so this  is a fortiori  true for the holomorphic bisectional curvature. The latter  plays 
a central role in the proof of \cite{hodge4} and our  original strategy was to list classes of types of mixed Hodge structure
for which this is also true. In Section~\ref{sec:DG} we come back to the calculations of \cite{pdmixed} and show that
instead of focusing on bisectional curvature, it is better to  use   a new  property, that of  plurisubharmonicity of certain  global  endomorphisms of the Hodge bundle.
 \par
 The second main ingredient, the norm estimates for the Hodge metric
are given in  Section~\ref{sec:norm-estimates}. The techniques employed in this section are of an entirely different, mainly Lie-theoretic  nature. 
\par
The proper topic of this paper,  deformation theory in the mixed situation,  is treated in Section~\ref{sec:DefoMPS} where 
we prove the main theorem,
Theorem~\ref{thm:Main} and give criteria for rigidity.
The main technical result, Proposition~\ref{prop:harmendo},   leads to the geometric examples which are treated in detail in Section~\ref{sec:Exampl}.

In Appendix~\ref{sec:admis} the notion of admissibility is reviewed,  and  in Appendix~\ref{sec:ProperDisc} we show that, like in the pure case, the monodromy action
on the period domain  coming from a  mixed variation with an integral  structure is properly discontinuous so that
the quotient has the structure of an analytic  space. 

\section{The pure case} \label{sec:pure}

\subsection{Basics on period domains and period maps} \label{ssec:purepd}

Recall that a \textbf{\emph{period domain}}  parametrizes polarized Hodge
structures of weight $k$ on a  finite dimensional real vector space $H_{\bR}$  
with given Hodge numbers $\{h^{p,q}\}$, polarized  by a non-degenerate
bilinear form $Q$ of parity $(-1)^k$.   Such a domain $D$ is homogeneous under the real Lie group $G_{\mathbb R} \subset \gl{H_\bR}$ of automorphisms of
the polarization $Q$. The  isotropy groups $G_{\mathbb R}^F$, $F\in D$ are
compact. The domain $D$ is an open set  in the  compact dual  $\check D$ upon 
which the complexification $G_\bC$ of $G_{\mathbb R}$  acts transitively:
\[
G_{\mathbb R}/G_{\mathbb R}^F= D \subset \check D= G_\bC/G^F_\bC.
\]

The Hodge structure on $H_\bR$ given by $F$ induces a Hodge
structure on the Lie algebra of
$G_{\mathbb R}$ as a sub-Hodge structure of $\endo H_{\bR}$. It has
weight zero with Hodge decomposition
$
        \geg_\bC  =\bigoplus_p \geg^{p,-p}
$
where $\geg^{p,-p}$ consists of those endomorphisms that send $H^{s,t}$ to $H^{s+p,t-p}$.

A point   $F \in \check D$  can be considered as  a   filtration on $H_\bC$. Then
$F^0   \geg_\bC $ is the Lie algebra of the stabilizer of $F$ in $G_\bC$. Hence 
the   tangent space $T_F\check D$ of $\check D$ at $F$  is   isomorphic to  $  \geg_\bC / F^0   \geg_\bC $.  Accordingly, since   $F^p   \geg_\bC  = \bigoplus_{a\geq p}\, \geg^{a,-a} $, it follows that
  \begin{align}
   \label{eqn:TangSpasHS}
   T_FD = \geg_\bC/F^0\geg_\bC \simeq \bigoplus_{a> 0}\, \geg^{- a, a}.
  \end{align} 
  
Every Hodge structure    $F\in D$ defines the \textbf{\emph{Hodge metric}} on $H_\bC$   which is given by
\begin{equation}   \label{eqn:hmetric}
h_F(x,y) := Q(C_Fx,\bar y), x,y\in H_\bC,
\end{equation}
where $C_F|H^{p,q}= \ii^{p-q}$ is the Weil-operator.   The Hodge metric is a hermitian metric relative to which the Hodge decomposition of $H_{\mathbb C}$ is
orthogonal.  The induced metric on $\mathfrak g_{\mathbb C}$ satisfies
$\mathfrak g^{a,-a}\perp\mathfrak g^{b,-b}$ unless $a=b$.  In particular,
via the isomorphism $T_FD\simeq\bigoplus_{a> 0}\, \geg^{- a, a}$, we obtain
a Hodge metric on $T_FD$. Moreover, since
\[
h_{gF}(x,y)= h_F (g^{-1} x,g^{-1} y),\quad g\in G_{\mathbb R}.
\]
it follows that the Hodge metric defines a $G_{\mathbb R}$-invariant metric on
the tangent bundle of $D$.
\par
A (real) \textbf{\emph{variation of polarized Hodge structure}} over a  complex manifold  $S$ consists of 
  a local system $\underline H_\bR$ of finite dimensional real  vector spaces  
 equipped with a weight $k$ Hodge structure polarized by a $(-1)^k$-symmetric form $Q$ such that

\begin{itemize}
 
 \item  the  Hodge filtrations   glue  to   a holomorphic  filtration 
$\cF$ of the holomorphic bundle $ \cH= \underline H_\bR\otimes\cO_S$;

\item  (Griffiths' transversality) the natural flat connection $\nabla$ induces  a vector bundle map $\cF^p\to \cF^{p+1}\otimes \Omega^1_S$.
\end{itemize}
\begin{rmq}
The motivation of this concept is geometric:   if  $f:X\to S$ is   a smooth, proper morphism between
complex algebraic varieties, then, by the work of P. Griffiths \cite{periods},
the associated local system $\underline H_{\bR} = R^k {f^*}\bR_X$
underlies a variation of pure Hodge structure of weight $k$. It comes  equipped with a natural polarization induced by the cup-product and  the Lefschetz decomposition in cohomology. In fact, we may instead consider cohomology with  rational  coefficients and consider polarizations defined by ample
classes. In this way we obtain a rational variation of polarized Hodge structure.  There is even a canonical flat  integral structure
equipped with a polarizing form.\end{rmq}

By its very definition, locally in a simply connected
open  neighborhood $U$ of  $s\in S$, the assignment $s\mapsto \cF_s$ gives a holomorphic period map $  U \to D$.
To make sense of this globally, one needs to incorporate the effect of the fundamental group at $s$: giving a local system $\underline H$ 
is equivalent to giving a representation on $H $, the fiber of $\underline H$  at $s$. 
This representation preserves $Q$ and so the image of
the fundamental group is a  subgroup $\Gamma$ of $G$, the \textbf{\emph{monodromy group}} of the variation. For variations coming from geometry this subgroup belongs to $G_\bZ$, the subgroup preserving the integral structure coming from integral cohomology.
The monodromy group being   closed and 
discrete,  acts   properly discontinuously on $D$.   It follows that  the quotient $\Gamma\backslash D$ is an analytic space.
The   \textbf{\emph{period map}} in its global incarnation is  the holomorphic map
\[
F:S \to \Gamma\backslash D.
\] 
The Griffiths'  transversality property is equivalent to the statement  that the derivative of the tangent map at $s$ lands in  
\[
T^\hor_{F(s)} D = F ^{-1} \geg_\bC  / F ^0   \geg_\bC \simeq \geg^{-1,1},
\]
 the \textbf{\emph{horizontal tangent space}} at $F$. The corresponding vector bundle is the \textbf{\emph{horizontal tangent bundle}}  
 \begin{equation}
 \label{eqn:TangHorPure}
 T^\hor D = \cF^{-1} \qendo{\cH}/\cF^{0} \qendo{\cH} \simeq \qendo {\cH}^{-1,1}  ,
 \end{equation} 
 where the isomorphism is in the category of  $C^\infty$ hermitian vector bundles.
 Conversely, a holomorphic map from a complex manifold to a quotient of a period domain $D$ by 
 a discrete closed  subgroup of $G$ is a period map provided it is
 locally liftable to $D$ as a horizontal holomorphic map.

\subsection{Curvature properties} \label{ssec:CurvPure}

By \cite[Theorem 9.1]{curv} the holomorphic sectional curvature of $D$
along horizontal tangents is negative and bounded away from zero. As shown in \cite{hodge4},   the full   curvature 
tensor along a  $(1,0)$-tangent vector   of $u  \in \geg^{-1,1}$  is given by
$R(u,\bar u) = -\ad {[u, \bar u] }$
so that the bisectional curvature in the $(u,v)$ unit-norm direction becomes
\[
K_F(u,v) =  h_F(R(u,\bar u)v,v )= -  h_F( [ [u,\bar u] v] ,v)  = \|[u,v]\|_F^2 - \|[\bar u, v]\|_F^2.
\]
As recalled below, in geometric situations $u$ and $v$ commute, which implies $K_F(u,v)\le 0$.
We shall outline how this implies that for a global section $\eta$ of   $\qendo{\cH}$ which is of Hodge type $(-1,1)$, 
  the function  $F\mapsto \|\eta(F)\|^2$ is   plurisubharmonic   on $S$.

\par
This phenomenon occurs   more generally for sections $\eta$ of any  holomorphic  vector bundle $\cE$ equipped with a hermitian metric $h$.
Recall that   there is a unique metric $(1,0)$-connection $\mathsf D$, the \textbf{\emph{Chern connection}} for $(\cE,h)$.
The bisectional curvature appears in    a Bochner type formula  \cite[Prop. 11.1.5]{3authorsBIS}, a special case of which reads
\begin{equation}
 \label{eqn: plurisubh}
 \begin{array}{rl}
 \del_{u} \del_{\bar{u} } \| v \|^2  &= \| \mathsf D  _{u} v \|^2   - h  ( R_{\mathsf D} (u,\bar u ) v, v )\\
 & \quad\quad u \in T^{1,0} _s S,\quad v=\eta(s).
\end{array}
\end{equation} 
Recall that a real  $C^2$-function $f$ on an open subset $U$ of $\bC^n$ is plurisubharmonic if $\ii \del\bar \del f$ is a positive definite $(1,1)$-form. This is equivalent to $\del_{u} \del_{\bar{u} } f \ge 0$ for all type $(1,0)$-tangent vectors on $U$.
If $h  ( R_{\mathsf D} (u,\bar u ) v, v )\le 0$,     formula~\eqref{eqn: plurisubh} shows that $s\mapsto \|\eta(s)\|$ is a plurisubharmonic function on $S$.    

We apply this to  our situation with $\cE$ the   bundle   $\qendo{\cH}$ on $S$. A holomorphic section $\eta$ of this bundle is 
  invariant under the global monodromy and so  in particular invariant under
local monodromy at infinity. 
We now invoke:
\begin{prop}[\protect{\cite[Cor. 6.7']{schmid}}] Let there be given a polarized variation over the punctured disk. Then an  invariant holomorphic section
of the Hodge bundle  remains bounded. \label{prop:HNboundPure}
\end{prop}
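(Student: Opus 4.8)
The statement to be proved is Proposition~\ref{prop:HNboundPure}: for a polarized variation of Hodge structure over the punctured disk $\Delta^*$, a monodromy-invariant holomorphic section $\sigma$ of the Hodge bundle $\cH$ has bounded Hodge norm near $0$. The plan is to reduce to Schmid's nilpotent orbit theorem and then read off the norm estimate from the $\slgr2$-orbit theorem. First I would recall the setup: pulling back along the universal cover $\mathfrak{h}\to\Delta^*$, $t=e^{2\pi\ii z}$, we get a period map $\tilde F:\mathfrak{h}\to D$ with $\tilde F(z+1)=T\tilde F(z)$, where $T$ is the monodromy operator. Since the variation is polarized, $T$ is quasi-unipotent, and after a finite base change (which only shrinks the disk and does not affect boundedness) we may assume $T$ is unipotent, so $N=\log T$ is nilpotent. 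A monodromy-invariant section $\sigma$ corresponds to a flat section $s$ of $\underline H$ with $Ts=s$, i.e. $Ns=0$, that moreover lies in the appropriate piece $F^p$ of the Hodge filtration of the bundle $\cH$ (holomorphicity of $\sigma$ as a section of the sub-bundle $\cF^p$).

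\textbf{Key steps.} The main tool is the $\slgr2$-orbit theorem of Schmid \cite{schmid}, which provides the asymptotic behaviour of the Hodge metric. Writing $\tilde F(z) = \exp(zN)\cdot\psi(t)$ with $\psi$ holomorphic across $t=0$ (nilpotent orbit theorem), Schmid's norm estimate \cite[Theorem 6.6]{schmid} says that for a flat section $s$ lying in the monodromy weight filtration piece $W_\ell(N)$ but not $W_{\ell-1}(N)$, the Hodge norm behaves like
\[
\|s(z)\|^2 \sim (\operatorname{Im} z)^{\ell}
\]
up to bounded factors, where the weight filtration $W_\bullet(N)$ is centered at the weight $k$ of the variation. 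Concretely, I would invoke \cite[Cor.\ 6.7']{schmid}: since $\sigma$ is a holomorphic (hence $F^p$-valued for suitable $p$) and flat section with $Ns=0$, the relevant weight $\ell$ for $s$ satisfies $\ell \le 0$ relative to the centering — the point being that a class killed by $N$ and lying in the Hodge filtration cannot have strictly positive associated weight, because $N$ maps $F^p$ to $F^{p-1}$ and interacts with the polarization in a sign-definite way. Hence $\|s(z)\|^2 = O((\operatorname{Im} z)^{\ell})$ with $\ell\le 0$, which is bounded as $\operatorname{Im} z\to\infty$, i.e.\ as $t\to 0$. This is precisely the content of Schmid's Corollary 6.7'.

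\textbf{Main obstacle.} The substantive point — and the one I expect to be the crux — is verifying that an invariant \emph{holomorphic} section (i.e.\ one lying in $\cF^p$ for some $p$, not merely a flat section) necessarily has non-positive associated weight $\ell$ in the monodromy weight filtration $W_\bullet(N)$. This combines two facts: (i) the compatibility $N F^p \subseteq F^{p-1}$ from Griffiths transversality, together with (ii) the relation between $N$, the limiting Hodge filtration $F_\infty$, and $W(N)$ furnished by the $\slgr2$-orbit theorem, which forces the $F_\infty^p$-part of $W_\ell$ to vanish for $\ell$ too large. Equivalently, one uses that $N: \operatorname{gr}^W_{k+j}\to \operatorname{gr}^W_{k-j}$ is an isomorphism (hard Lefschetz for the limiting MHS) and that $s$, being a holomorphic section killed by $N$, must be primitive of non-positive weight. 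Once this numerical input is in place, the norm estimate and hence boundedness is immediate from Schmid. Since the excerpt explicitly cites \cite[Cor.\ 6.7']{schmid}, the intended proof is simply this citation; the expository burden is to recall why the hypotheses of that corollary are met by an invariant holomorphic section.
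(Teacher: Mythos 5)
The paper does not prove this statement: it is quoted from Schmid with the reference \cite[Cor.~6.7']{schmid} and no proof is given, so there is no in-text argument to compare against. Your sketch correctly unpacks what lies behind the citation — reduce to unipotent monodromy, invoke the nilpotent and $\slgr 2$-orbit theorems, and read boundedness off the norm estimates of \cite[Thm.~6.6]{schmid} once the section is known to sit in centered weight $\leq 0$ of the monodromy weight filtration.

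Two calibration points. First, you locate the ``main obstacle'' in the wrong place. For a flat section $s$ with $Ns=0$, the containment $s\in W_k(N)$ (centered weight $\leq 0$) needs neither the Hodge filtration nor the polarization: since $N^{j}\colon \gr^W_{k+j}\to \gr^W_{k-j}$ is an isomorphism, $N$ is injective on $\gr^W_{k+j}$ for every $j\geq 1$, so $Ns=0$ forces the image of $s$ in every positive-weight graded piece to vanish. Combined with the estimate $\|s\|^2\sim (\Im z)^{\ell-k}$ for $s\in W_\ell\setminus W_{\ell-1}$, boundedness is immediate; this is exactly Schmid's Corollary 6.7, and your appeal to ``$N$ maps $F^p$ to $F^{p-1}$ and interacts with the polarization in a sign-definite way'' is a red herring. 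Second, and conversely, your opening identification of an invariant holomorphic section with a flat one is where the real content of the primed corollary hides. In the paper's application the section $\eta$ of $\qendo{\cH}$ is holomorphic and single-valued, but its flatness is the \emph{conclusion} of the ensuing plurisubharmonicity argument, not a hypothesis; and a general invariant holomorphic section of $\cH$ over $\Delta^*$ (e.g.\ $e^{1/t}v$ in a flat frame) is certainly unbounded. The holomorphic version therefore carries an implicit regularity condition at the puncture (membership in the canonical extension / moderate growth), which is what Schmid's 6.7$'$ supplies and which your argument, as written, does not address. Neither point affects the flat case you actually prove, but you should state explicitly which hypotheses on the section you are using.
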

Quasi-projective manifolds do not admit bounded plurisubharmonic functions except constants 
(cf. \cite{psh}).  Consequently,  in the present situation  the Hodge norm $\|\eta\|$  is constant along curves in $S$ and hence on all of $S$.
The bundles on $S$ are pull backs under the period map $F$ of bundles on $D$ and the calculation takes place on   $D$. 
In particular,  tangent vectors from $\xi\in  T_sS$ of type $(1,0)$  are pushed to  $u=F_*\xi \in F_*(T_sS) \subset T^\hor_{F(s)}D=\geg^{-1,1}_{F(s)}$. 
Summarizing the discussion so far we have shown:

\begin{lemma} \label{lem:PluriSubConseq} Let there be given a polarized variation of Hodge structure    $(\cH,Q,\cF)$ 
over a quasi-projective complex manifold $S$. 
Let $\eta$ be  a holomorphic section of   the endomorphism bundle $\qendo{\cH}$ which is of Hodge type $(-1,1)$.

Suppose that for all    $u \in T^\hor _F D$ tangent to the image of the period map at $F=F(s)$, $s\in S$, one has $[u, v]=0$, $v=\eta(s)$. Then $\|\eta\|$ 
is a plurisubharmonic bounded (and hence constant) function, $\mathsf D \eta=0$ and $[\bar u,v]=0$.
\end{lemma}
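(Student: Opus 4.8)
The plan is to verify the two curvature inputs that make the Bochner formula~\eqref{eqn: plurisubh} usable, and then feed it into the known absence of bounded plurisubharmonic functions on quasi-projective manifolds. First I would set $v = \eta(s)$ and let $u = F_*\xi$ for an arbitrary $(1,0)$-tangent vector $\xi \in T^{1,0}_s S$, so that $u \in T^{\hor}_{F(s)}D \simeq \geg^{-1,1}_{F(s)}$. By hypothesis $[u,v] = 0$. Combining this with the curvature formula $R(u,\bar u) = -\ad{[u,\bar u]}$ recalled in Section~\ref{ssec:CurvPure}, the bisectional curvature term becomes
\[
h_F(R_{\mathsf D}(u,\bar u)v, v) = -h_F([[u,\bar u],v], v) = \|[u,v]\|_F^2 - \|[\bar u, v]\|_F^2 = -\|[\bar u, v]\|_F^2 \le 0,
\]
using the Jacobi identity and the fact that $\ad$ acts by infinitesimal isometries of the Hodge metric on $\geg_\bC$. (Strictly one should note that the Chern connection $R_{\mathsf D}$ on $\qendo{\cH}$ restricted to horizontal directions agrees with the curvature operator $R$ on $D$ written in the excerpt; this is exactly the identification~\eqref{eqn:TangHorPure} of $T^\hor D$ with $\qendo{\cH}^{-1,1}$ as hermitian bundles, so the curvature computation on $D$ transports verbatim to $\cE = \qendo{\cH}$.)

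Next I would plug this into~\eqref{eqn: plurisubh}: since $-h_F(R_{\mathsf D}(u,\bar u)v,v) \ge 0$ and $\|\mathsf D_u v\|^2 \ge 0$, we get $\partial_u \partial_{\bar u} \|v\|^2 \ge 0$ for every $(1,0)$-tangent vector, i.e.\ $s \mapsto \|\eta(s)\|^2$ is plurisubharmonic on $S$. For boundedness, I would observe that $\eta$, being a global holomorphic section of $\qendo{\cH}$, is invariant under the full monodromy group $\Gamma$, hence under each local monodromy transformation at a point of the boundary divisor in a smooth projective compactification $\bar S$ of $S$; applying Proposition~\ref{prop:HNboundPure} on each punctured disk transverse to a boundary component (and a standard covering/polydisk argument for normal crossings) shows $\|\eta\|$ stays bounded near infinity, hence is bounded on all of $S$. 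By~\cite{psh} a bounded plurisubharmonic function on the quasi-projective manifold $S$ is constant, so $\|\eta\|^2$ is constant.

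Finally I would extract the two remaining conclusions from the constancy. Constancy of $\|v\|^2$ forces $\partial_u\partial_{\bar u}\|v\|^2 = 0$, so both nonnegative terms in~\eqref{eqn: plurisubh} vanish: $\mathsf D_u v = 0$ for all horizontal $u$ and $\|[\bar u, v]\|_F^2 = 0$, i.e.\ $[\bar u, v] = 0$. To upgrade $\mathsf D_u \eta = 0$ (for $u$ tangent to the image of $F$) to the flatness statement $\mathsf D \eta = 0$ I would use that $\eta$ is holomorphic, so the $(0,1)$-part of its Chern-connection derivative already vanishes, and that the Hodge type $(-1,1)$ together with Griffiths transversality constrains the $(1,0)$-part of $\mathsf D\eta$ to lie in horizontal directions, which are exactly the ones covered by the vanishing $\mathsf D_u\eta=0$; combined with $\|\eta\|$ constant this pins down $\mathsf D\eta = 0$ on all of $S$. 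The main obstacle is the bookkeeping in this last step — making precise that the curvature identity and Bochner formula, stated on $D$, pull back correctly to $S$ and that the horizontal vanishing of $\mathsf D\eta$ plus type considerations genuinely give full flatness — rather than any deep new estimate; the analytic heavy lifting (norm boundedness, no bounded psh functions) is entirely imported from Proposition~\ref{prop:HNboundPure} and~\cite{psh}.
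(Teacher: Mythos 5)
Your proposal reproduces the paper's argument essentially verbatim: the curvature identity $K_F(u,v)=\|[u,v]\|^2-\|[\bar u,v]\|^2$ combined with $[u,v]=0$, the Bochner formula~\eqref{eqn: plurisubh}, boundedness via Proposition~\ref{prop:HNboundPure}, constancy via the absence of nonconstant bounded plurisubharmonic functions on quasi-projective manifolds, and the vanishing of both nonnegative terms. The final "bookkeeping" worry is not an issue, since $\qendo{\cH}$ is pulled back to $S$ so $\mathsf D\eta$ only involves derivatives along $S$, whose $(0,1)$-part vanishes by holomorphy and whose $(1,0)$-part is exactly the $\mathsf D_u\eta$ you have shown to vanish.
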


The next step is to relate the Chern connection and the Gauss--Manin connection $\nabla$ as explained  in \cite[Prop. 13.1.1]{3authorsBIS}.
It uses the \textbf{\emph{Higgs bundle structure}} on the  Hodge bundle  $\cH=\bigoplus_{p+q=k} \cH^{p,q}$.
To explain this, note that the Hodge decomposition is only a $C^\infty$-decomposition. However,  $\cH^{p,q}$ receives a complex structure
through the isomorphism $\cH^{p,q}\simeq \cF^p/\cF^{p+1}$. There is a corresponding operator $\bar\del :\cH \to \cH\otimes \cE^{0,1}_S$ with the property that local sections $v$ of
$\cH^{p,q}$ are holomorphic if and only if $\bar\del v=0$. 
The Gauss--Manin connection $\nabla$     can be decomposed as follows:
\begin{equation}
 \label{eqn:HiggsPure}
 \nabla = \sigma  + \underbrace{\bar\del + \del}_{\mathsf D } + \sigma^*.
\end{equation} 
 Here   $\del: \cH \to \cH \otimes \cE^{1,0}_S$  is a differential operator which preserves Hodge type.   
 The operator 
 \[
  \sigma : \cH \to \cH\otimes \cE^{1,0}_S,
  \]
  an   endomorphism of $\cH$ of Hodge type $(-1,1)$ with values in the $(1,0)$-forms,   is called the \textbf{\emph{Higgs field}}.
 Its adjoint with respect to the Hodge metric is the linear operator $\sigma^* \in \qendo {\cH }^{1,-1}\otimes  \cE^{0,1}_S$.
 
  By functoriality, a similar decomposition holds for  the  bundle  $\qendo{\cH}$. Since the tangent bundle comes from the adjoint representation of $G$ on the endomorphism bundle, it follows from
 \cite[Prop. 11.4.3]{3authorsBIS}  that  
  for any  horizontal tangent vector $u$ of type $(1,0) $  at $F\in D$ we have:
\begin{gather*}
  \nabla_u     =  \del_u      +     \ad {u}  ,  \\
\nabla_{\bar u}     =  \bar\del_u +    \ad { {\bar u}}    .
  \end{gather*}
Assuming, as before, that $\ad{u} v=[u,v]=0$, by Lemma~\ref{lem:PluriSubConseq}, $\del_u v=\bar\del_u v=0$ and $[\bar u,v]=0$.
Invoking Lemma~\ref{lem:PluriSubConseq}, we may summarize the above discussion as follows:

 \begin{prop} \label{prop:WhenFlatPure} Let  a     $\eta$  be a holomorphic section of $\qendo{\cH}$ of type $(-1,1)$.
 At a point $F$ in the image of the period map, set $v=\eta(F)$ and assume that  $[u,v]=0$ for all vectors $u\in T_{F }D$,  tangent   to the period map.
Then $\eta$   is parallel with respect to the Gauss--Manin connection. Moreover, one has $[\bar u,v]=0$.   \end{prop}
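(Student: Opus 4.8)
The plan is to combine the curvature/Bochner input of the previous subsection with the Higgs-field decomposition of $\nabla$. First I would observe that the hypothesis $[u,v]=0$ for all horizontal $(1,0)$-vectors $u$ tangent to the image of the period map is exactly what is needed to feed into Lemma~\ref{lem:PluriSubConseq}: pulling the section $\eta$ back to a local lift of the period map and using the curvature formula $R(u,\bar u)=-\ad{[u,\bar u]}$, the Bochner identity \eqref{eqn: plurisubh} shows $\del_u\del_{\bar u}\|v\|^2=\|\mathsf D_u v\|^2+h([[u,\bar u],v],v)$; the assumption $[u,v]=0$ forces $[[u,\bar u],v]=[u,[\bar u,v]]-[\bar u,[u,v]]=[u,[\bar u,v]]$, but rather than unwind Jacobi here I would just invoke Lemma~\ref{lem:PluriSubConseq} directly, which already packages the conclusion: $\|\eta\|$ is plurisubharmonic, bounded (by Proposition~\ref{prop:HNboundPure}, since a holomorphic section of $\qendo{\cH}$ is monodromy-invariant), hence constant on $S$, and moreover $\mathsf D\eta=0$ and $[\bar u,v]=0$.

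Next I would upgrade ``$\mathsf D\eta=0$ and $[u,v]=[\bar u,v]=0$'' to ``$\nabla\eta=0$'' using the decomposition \eqref{eqn:HiggsPure} transported to $\qendo{\cH}$ via the adjoint representation. The key identities, quoted from \cite[Prop.~11.4.3]{3authorsBIS}, are $\nabla_u=\del_u+\ad u$ and $\nabla_{\bar u}=\bar\del_u+\ad{\bar u}$ for horizontal $(1,0)$-vectors $u$. Since $\mathsf D=\bar\del+\del$ and we already know $\mathsf D\eta=0$, we get $\del_u v=\bar\del_u v=0$; combined with $[u,v]=[\bar u,v]=0$ this gives $\nabla_u\eta=\nabla_{\bar u}\eta=0$ for every horizontal $u$ tangent to the period map. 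Because the Gauss--Manin connection is flat and the period map has image whose tangent directions span (together with their conjugates) enough of the relevant directions — more carefully, because $\eta$ is a global holomorphic, hence $\bar\del$-closed, section and the above shows the $(1,0)$- and $(0,1)$-parts of $\nabla\eta$ vanish along the variation — one concludes $\nabla\eta=0$, i.e. $\eta$ is flat. The statement $[\bar u,v]=0$ is already recorded from the previous step.

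I expect the main obstacle to be purely expository rather than mathematical: making precise the passage ``vanishing of $\nabla_u\eta$ for all $u$ tangent to the period map'' $\Rightarrow$ ``$\nabla\eta=0$ as a section over $S$''. One has to be careful that $\eta$ lives on $S$ (or on $\Gamma\backslash D$) and the calculation is done on $D$ after local lifting, and that the only directions in which one controls $\nabla\eta$ a priori are the image directions of $F_*$; the point is that $\nabla\eta$, being $\nabla(\text{holomorphic section})$, has no $(0,1)$-component beyond what $\ad{\bar u}$ contributes, so checking the $(1,0)$-part along the image suffices to kill it there, and flatness plus the identity principle (the norm being constant, combined with $\mathsf D\eta=0$) propagates this. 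I would phrase the argument so that this is transparent, essentially reproducing the end of the proof of \cite[Theorem~3.4]{hodge4} as reviewed here, and flag that this is precisely the place where the pure-case argument is clean and where the mixed case will later require additional work.
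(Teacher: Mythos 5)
Your argument is correct and follows the paper's own route: invoke Lemma~\ref{lem:PluriSubConseq} to obtain $\mathsf D\eta=0$ and $[\bar u,v]=0$, then use the identities $\nabla_u=\del_u+\ad{u}$ and $\nabla_{\bar u}=\bar\del_u+\ad{\bar u}$ from the Higgs decomposition to conclude $\nabla\eta=0$. The passage you flag as a possible obstacle is handled exactly as you suggest, since $\eta$ lives on $S$ and the only directions in which $\nabla\eta$ must be checked are those in the image of $F_*$.
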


\subsection{Deformations  of period maps}
\label{sec:tang}
 
The kind of deformations we are interested in are deformations of holomorphic maps $\varphi: X\to Y$
between complex  spaces  $X$ and $Y$ that keep $X$ and $Y$ fixed. By definition, these are given by
complex-analytic maps $\Phi: X\times T\to Y\times T$ with $(T,0)$ a germ of an analytic space centered at $0$ such that
\begin{itemize}
\item $\Phi(x,t)= (\varphi_t(x), t)$;
\item $\Phi(x,0)= \varphi(x)$.
\end{itemize}
Such deformations are in one-to-one correspondence to deformations   of the graph of $\varphi$ and as in \cite[\S 3.4.1]{Sern}
the tangent space at $\varphi$ of such deformations is given by the space  $H^0(X, \varphi^*T(Y))$, the space   of infinitesimal deformations of $\varphi$ keeping $X$ and $Y$ fixed.    
Here $T(Y)$ is the tangent sheaf of $Y$, i.e.,  the dual of the cotangent sheaf of $Y$.

We apply this to  period maps  $F:S\to \Gamma\backslash D$.
In geometric situations we are  interested in deformations of   families  of varieties and
the corresponding  deformations $ S\times T \to \Gamma\backslash D$ of period maps $F$ that stay locally liftable and horizontal.   
We pass to the smallest
unramified  cover of  $S$ over which there is no monodromy and lift the period map accordingly, say
to $\tilde F:\tilde S\to D$ and then the  space of infinitesimal deformations in which  we are interested   is
the subspace of  $H^0(\tilde F^* T^\hor(D))$ consisting of sections commuting with the  monodromy action.
 In view of the isomorphism \eqref{eqn:TangHorPure},  such a deformation lifts  to a  holomorphic section of  $\qendo {\cH}$
which at any given point   $F\in D$ in the image of the period map projects to $ \geg^{-1,1}$.  
In this situation we can apply Proposition~\ref{prop:WhenFlatPure} since
the condition $[u,v]=0$ follows from  horizontality
 (see \cite[Prop. 5.5.1]{3authorsBIS}) and we conclude:

\begin{thm} Let $S$ be smooth and quasi-projective and $F:S\to \Gamma \backslash D$ a period map.
 The   space of infinitesimal deformations of $F$  remaining  horizontal is isomorphic to
 \warn{the space of   flat   sections of type $(-1,1)$ of the bundle $\qendo{\cH}$}.  
 Moreover, at a point $F$ in the image of the period map, setting $v=\eta(F)$, one has    $[\bar u,v]=0$, $v=\eta(F)$, for all tangent vectors at $F$ tangent to the period map. 
 \label{thm:MainDefPure}
 \end{thm}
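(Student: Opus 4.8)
The plan is to assemble the proof of Theorem~\ref{thm:MainDefPure} from the pieces that have already been put in place in Sections~\ref{ssec:CurvPure} and~\ref{sec:tang}. First I would recall, following \cite[\S 3.4.1]{Sern}, that infinitesimal deformations of a holomorphic map $F\colon S\to \Gamma\backslash D$ keeping source and target fixed are classified by $H^0(S,F^*T(\Gamma\backslash D))$, and that the constraint of remaining horizontal replaces $T(\Gamma\backslash D)$ by the horizontal subsheaf $T^{\hor}(\Gamma\backslash D)$. Passing to the minimal unramified cover $\tilde S\to S$ that kills the monodromy, the period map lifts to $\tilde F\colon \tilde S\to D$, and the relevant space becomes the subspace of $H^0(\tilde S,\tilde F^*T^{\hor}D)$ consisting of sections that are invariant under the monodromy action of $\Gamma$. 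Using the $C^\infty$ identification $T^{\hor}D\simeq \qendo{\cH}^{-1,1}$ from~\eqref{eqn:TangHorPure}, such a section is the same datum as a holomorphic section $\eta$ of $\qendo{\cH}$ whose value at every $F\in D$ in the image of the period map lies in $\geg^{-1,1}_F$, i.e.\ a holomorphic section of Hodge type $(-1,1)$, equivariant under $\Gamma$.

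Next I would observe that such an $\eta$ satisfies the hypotheses of Proposition~\ref{prop:WhenFlatPure}: at a point $F$ in the image of the period map, any tangent vector $u\in T_FD$ tangent to the period map lies in the horizontal subspace $\geg^{-1,1}_F$ (Griffiths transversality), and the bracket $[u,v]=0$ with $v=\eta(F)$ holds because the image of $\geg^{-1,1}$ under the bracket with $\geg^{-1,1}$ lands in $\geg^{-2,2}$, while here horizontality forces the vanishing — this is the abelian-subalgebra property recorded in \cite[Prop. 5.5.1]{3authorsBIS}. Granting $[u,v]=0$, the curvature computation $R(u,\bar u)=-\ad{[u,\bar u]}$ and the Bochner formula~\eqref{eqn: plurisubh} give $\del_u\del_{\bar u}\|\eta\|^2 = \|\mathsf D_u\eta\|^2 + \|[\bar u,v]\|^2\ge 0$, so $\|\eta\|^2$ is plurisubharmonic on $S$ (Lemma~\ref{lem:PluriSubConseq}); Schmid's boundedness (Proposition~\ref{prop:HNboundPure}) together with the fact that $\eta$ is local-monodromy-invariant at the punctures shows $\|\eta\|$ is bounded near infinity, hence, since a quasi-projective manifold carries no nonconstant bounded plurisubharmonic function \cite{psh}, $\|\eta\|$ is constant. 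Feeding this back into~\eqref{eqn: plurisubh} yields $\mathsf D\eta=0$ and $[\bar u,v]=0$; combining with the decomposition $\nabla_u=\del_u+\ad u$ and $\nabla_{\bar u}=\bar\del_u+\ad{\bar u}$ (from the Higgs structure~\eqref{eqn:HiggsPure} and \cite[Prop. 11.4.3]{3authorsBIS}) shows $\nabla\eta=0$, so $\eta$ is a flat section of $\qendo{\cH}$ of type $(-1,1)$.

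Finally I would close the loop by arguing the converse inclusion: every flat section $\eta$ of $\qendo{\cH}$ of Hodge type $(-1,1)$ is automatically $\Gamma$-invariant (flatness means constancy under parallel transport, hence invariance under the monodromy holonomy) and horizontal, and therefore defines an infinitesimal deformation of $F$ remaining horizontal. This gives the claimed bijection, and the last assertion $[\bar u,v]=0$ for all tangent-to-the-period-map vectors $u$ has already been extracted in the previous paragraph. The only real subtlety, and the step I expect to be the main obstacle in the write-up, is the passage to the cover and the bookkeeping that identifies ``$\Gamma$-equivariant horizontal holomorphic section of $F^*T^{\hor}(\Gamma\backslash D)$'' with ``flat section of type $(-1,1)$ of $\qendo{\cH}$'' in both directions — in particular checking that the plurisubharmonicity/boundedness argument, which is run upstairs on $\tilde S$, descends correctly and that invariance under the full $\Gamma$ (not merely the local monodromies) is what ultimately upgrades constancy of the norm to flatness of $\eta$. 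The curvature and Bochner inputs themselves are already established; it is the deformation-theoretic translation that needs care, and it is precisely this translation whose mixed-Hodge-theoretic analogue will fail to be automatic in the later sections, which is why the proof is spelled out here in the pure case.
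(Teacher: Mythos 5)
Your proposal is correct and follows essentially the same route as the paper: Sernesi's identification of the deformation space, passage to the monodromy-killing cover and the identification $T^{\hor}D\simeq\qendo{\cH}^{-1,1}$, the commutativity $[u,v]=0$ from the horizontality constraint, and then the Bochner/curvature computation plus Schmid's norm estimate and the absence of nonconstant bounded plurisubharmonic functions to conclude flatness and $[\bar u,v]=0$, exactly as in Lemma~\ref{lem:PluriSubConseq} and Proposition~\ref{prop:WhenFlatPure}. The one point worth tightening is the converse direction: that a flat section of type $(-1,1)$ yields a deformation that \emph{stays} horizontal follows because $\nabla_u\eta=\del_u\eta+[u,\eta]=0$ splits by type into $\del_u\eta=0$ and $[u,\eta]=0$, recovering the integrability condition.
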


Complementing this result we remark that according to an argument generalizing the one given by Faltings \cite{arafal} for weight $1$, the corresponding deformation space is smooth at $F$:
\footnote{See also the proof of Theorem~\ref{thm:Main}. (2).}

\begin{prop} \label{prop:smoothdefs} 
The space of deformations of a period map $F$ which keep  source and target fixed,  and stay locally liftable and horizontal,  is smooth at $F$.
\end{prop}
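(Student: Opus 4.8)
The plan is to show that the space $\mathrm{Def}(F)$ of deformations of $F$ that keep $S$ and $\Gamma\backslash D$ fixed and stay locally liftable and horizontal is formally smooth at $F$, by producing all of its deformations explicitly; this is the Hodge-theoretic form of Faltings' argument in \cite{arafal}. Over an Artinian local $\bC$-algebra $A$ with maximal ideal $\mathfrak{m}_A$, such a deformation is a $\Gamma$-equivariant horizontal holomorphic map $\tilde S\times\operatorname{Spec}A\to D$ reducing modulo $\mathfrak{m}_A$ to the lift $\tilde F$ (equivariance persists because $\Gamma$ is discrete, hence locally constant under deformation). By Theorem~\ref{thm:MainDefPure} the tangent space $T_F\mathrm{Def}(F)$ is the space of flat $\Gamma$-equivariant sections of $\qendo{\cH}$ of type $(-1,1)$; being flat, such a section is a fixed element of $\endo^\Gamma(H_\bC,Q)$, the Lie algebra of the centralizer $Z_{G_\bC}(\Gamma)$ of the monodromy in $G_\bC$. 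Two facts will drive the argument: $\endo^\Gamma(H_\bC,Q)$ is a Lie subalgebra of $\geg_\bC$, and for $X\in\endo^\Gamma(H_\bC,Q)$ the element $\exp(X)\in G_\bC$ lies in $Z_{G_\bC}(\Gamma)$ and preserves the horizontal distribution, which is $G_\bC$-invariant on the compact dual $\check D$ (it is cut out there by the parabolic-stable subspace $F^{-1}\geg_\bC/F^0\geg_\bC$ of $T\check D$). Hence $\tilde s\mapsto\exp(X)\cdot\tilde F(\tilde s)$ is again $\Gamma$-equivariant and horizontal, i.e.\ a period map with the same monodromy, whenever its values remain in $D$.

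First I would note that for any $Z\in\endo^\Gamma(H_\bC,Q)\otimes\mathfrak{m}_A$ the rule $\tilde s\mapsto\exp(Z)\cdot\tilde F(\tilde s)$ defines a well-defined element of $\mathrm{Def}(F)(A)$: the exponential makes sense because $\mathfrak{m}_A$ is nilpotent and $\endo^\Gamma(H_\bC,Q)$ is a subalgebra, the map is horizontal and equivariant by the above, and it takes values in $D$ because it reduces modulo $\mathfrak{m}_A$ to $\tilde F$ and $D$ is open in $\check D$. Next I would prove that \emph{every} deformation over $A$ arises this way, by induction on the nilpotency order of $\mathfrak{m}_A$. Starting from $Z'=0$, suppose $\tilde F_\xi\equiv\exp(Z')\cdot\tilde F\pmod{\mathfrak{m}_A^k}$ with $Z'\in\endo^\Gamma(H_\bC,Q)\otimes\mathfrak{m}_A$. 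Then $\exp(-Z')\cdot\tilde F_\xi$ lies in $\mathrm{Def}(F)(A)$ and agrees with $\tilde F$ modulo $\mathfrak{m}_A^k$, so modulo $\mathfrak{m}_A^{k+1}$ it differs from $\tilde F$ by a first-order horizontal deformation with coefficients in $\mathfrak{m}_A^k/\mathfrak{m}_A^{k+1}$; by Theorem~\ref{thm:MainDefPure} this is a flat section, hence given by an element $w\in\endo^\Gamma(H_\bC,Q)\otimes(\mathfrak{m}_A^k/\mathfrak{m}_A^{k+1})$. Lifting $w$ and forming the Baker--Campbell--Hausdorff product (a finite sum since $\mathfrak{m}_A$ is nilpotent, and valued in $\endo^\Gamma(H_\bC,Q)\otimes\mathfrak{m}_A$ since this is a subalgebra) gives $\tilde F_\xi\equiv\exp(\mathrm{BCH}(Z',w))\cdot\tilde F\pmod{\mathfrak{m}_A^{k+1}}$, completing the step; the induction terminates because $\mathfrak{m}_A$ is nilpotent. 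Consequently every deformation over $A$ extends along any small extension $A'\to A$ (lift $Z$ and apply $\exp$), so $\mathrm{Def}(F)$ is formally smooth at $F$; combined with the computation $T_F\mathrm{Def}(F)=\endo^\Gamma(H_\bC,Q)^{-1,1}$, its hull is the power series ring on $\endo^\Gamma(H_\bC,Q)^{-1,1}$ and the deformation space is smooth at $F$. When $S$ is projective, $F(S)$ is compact and so stays in a compact subset of $D$, whence $X\mapsto[\exp(X)\cdot F]$ is a genuine analytic family over a neighbourhood of $0$ in $\endo^\Gamma(H_\bC,Q)^{-1,1}$ with Kodaira--Spencer map the identity, giving analytic smoothness directly.

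The substantive ingredient is Theorem~\ref{thm:MainDefPure}: it is the identification of infinitesimal horizontal deformations with \emph{flat} sections that forces the successive "leading terms" in the induction to lie in $\endo^\Gamma(H_\bC,Q)$, and that is precisely what rests on the curvature and plurisubharmonicity estimates of Section~\ref{sec:pure}. The main obstacle to an analytic (rather than merely formal) statement when $S$ is non-compact is that near infinity $F(S)$ approaches the boundary of $D$ inside $\check D$, so the translates $\exp(X)\cdot\tilde F$ need not remain in $D$; this is exactly where one must use that the flat section $X$ has bounded Hodge norm near infinity --- automatic in the pure case by Schmid's estimate (Proposition~\ref{prop:HNboundPure}), and imposed as a hypothesis in the mixed setting of Theorem~\ref{thm:Main} --- and where one also checks that the twist by $\exp(X)$ preserves admissibility of the variation (again because $X$ is flat of bounded norm) and horizontality (the $G_\bC$-invariance of the distribution noted above); the remaining compatibilities with base change are routine.
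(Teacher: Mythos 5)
Your proposal is correct, and it reaches the conclusion by a genuinely different route from the paper's. The paper (see the footnote to the proposition, which points to the proof of Theorem~\ref{thm:Main}(2)) argues analytically and in one step: every tangent vector to the deformation space is, by Theorem~\ref{thm:MainDefPure}, a flat $\Gamma$-invariant section $\eta$ of $\qendo{\cH}$ of type $(-1,1)$ commuting with $F_*T_sS$, and one exponentiates it directly to the family $F_\eta=e^{\eta}F$, checking (i) that the second Riemann bilinear relation survives for $\|\eta\|$ small --- via the self-adjoint operators $(\eta^*)^\ell\comp\eta^k+(\eta^*)^k\comp\eta^\ell$ and the constancy of the Hodge norm --- so that $F_\eta$ lands in $D$, and (ii) that Griffiths transversality is preserved because $\nabla_\xi$ acts as $\del_\xi+\ad{F_*\xi}$ and $[\eta,F_*\xi]=0$. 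This produces an explicit analytic chart $\eta\mapsto e^{\eta}F$ with bijective Kodaira--Spencer map, whence smoothness. You instead prove formal smoothness by induction over Artinian bases, invoking Theorem~\ref{thm:MainDefPure} at each order to force the successive leading terms into $\endo^{\Gamma}(H_\bC,Q)$ and assembling them by Baker--Campbell--Hausdorff, and only afterwards pass to the analytic statement. Your route has the merit of separating the unobstructedness (which over Artinian rings needs neither the norm estimate nor the openness of $D$ in $\check D$) from the analytic integration, and it makes the power-series hull visible; its cost is that the inductive step silently uses that the horizontal-deformation functor is a deformation functor in Schlessinger's sense, i.e.\ that two horizontal deformations agreeing modulo $\mathfrak m_A^k$ differ, modulo $\mathfrak m_A^{k+1}$, by an element of the \emph{horizontal} tangent space tensored with $\mathfrak m_A^k/\mathfrak m_A^{k+1}$ --- true here because the distribution is $G_\bC$-invariant and the comparison can be made in the exponential coordinates \eqref{eqn:coords}, but worth stating. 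Note also that your $Z$ need only lie in $\endo^{\Gamma}(H_\bC,Q)$, not in its $(-1,1)$-part, since the BCH brackets leave type $(-1,1)$; this is harmless for smoothness but means your chart differs from the paper's linear one. Your closing remarks on the non-compact case (boundedness of the flat section's Hodge norm via Proposition~\ref{prop:HNboundPure} to keep $e^{X}\cdot\tilde F$ inside $D$) coincide with what the paper actually does in the proof of Theorem~\ref{thm:Main}(2).
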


It follows that   $F$ is locally rigid precisely when $\endo^\Gamma( H_\bC,Q)^{-1,1}= 0$.    This gives criteria for rigidity. From the last property,  $[\bar u,v]=0$, we see that   the concept of a regularly tangent period map as introduced in \cite{hodge4} comes up naturally:

\begin{dfn}   \label{dfn:RegTangPure} 
 The period map  $F$ is called  \textbf{\emph{regularly tangent}}  at  $s\in S$ if    
     the only vector $v\in \geg^{-1,1}_{F(s)}$ with $ [\ \bar u,v]=0$ for  all $u\in F_* T_sS $ is the zero vector.
 If this is the case for all $s$ we speak of a period map which  is regularly tangent along $S$.
 \end{dfn}

\begin{corr} A period map $F:S\to \Gamma \backslash D$  is rigid (as a period map) if one of the following two conditions hold:
\begin{itemize}
\item The only flat   endomorphism of the underlying local system which is of Hodge type $(-1,1)$ is the zero endomorphism.
\item $F$ is everywhere regularly tangent.
\end{itemize}
\label{cor:MainCrit}
\end{corr}

\subsection{Examples of rigid period maps}
\label{ssec:ExRigPure}

We first recall  the following concept:
\begin{dfn}  (1) A polarized
 real variation   of weight $k$ has  Higgs field   of \textbf{\emph{Hodge--Lefschetz  type}} $a$ if
\begin{itemize}
	\item the Hodge depth is  $a$, that is   the only non-zero Hodge numbers are in the range
		$(a,k-a),\dots,(k-a,a)$;
		\item  the Higgs field in some, or equivalently, in a generic direction has components   $u^{j} :   \cH_s^{k-j,j}  \to  \cH_s^{k-j-1,j+1} $,  
		$j=a,\dots, k-a$ which are all   isomorphisms. 
	\end{itemize}
This implies that the Hodge depth  is exactly $a$ and all non-zero Hodge numbers are equal.   
\\
(2) A  polarized pure variation has
(strictly) \textbf{\emph{maximal Higgs field}} 
if    it  is a direct sum of variations with Higgs field     of Hodge--Lefschetz  type, the \textbf{\emph{strands}}
of the field.\footnote{For complex variations of Hodge structure, the definition
as given in  \cite{VZ}  is more complicated, but for real variations  it reduces to the one given here.}
\label{dfn:MaxHiggs}
\end{dfn}

\begin{prop} \label{prop:maxHiggsIsRigid}
 A pure variation which  has   maximal Higgs field    is regularly tangent and hence rigid.
 \footnote{ 
This confirms the result \cite[Lemma~4.3]{VZ}  for strictly maximal Higgs fields over curves.
  In loc.\ cit. several examples are given of families $\set{X_s}_{s\in S}$ of $d$-dimensional 
  Calabi--Yau's over a curve  for which the middle dimensional cohomology gives variation with strictly
  maximal Higgs field.}
\end{prop}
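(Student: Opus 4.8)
The plan is to prove that $F$ is \emph{regularly tangent} at every $s\in S$ in the sense of Definition~\ref{dfn:RegTangPure}; granting this, the word ``hence'' in the statement is exactly the second clause of Corollary~\ref{cor:MainCrit}. Fix $s$, put $F=F(s)$, and let $\cH=\cH_s=\bigoplus_{p+q=k}\cH^{p,q}$ be the fibre with its Hodge decomposition and Hodge metric. Since the variation has maximal Higgs field, $\cH=\bigoplus_i\cH_{(i)}$ is a direct sum of strands, the $i$-th of Hodge--Lefschetz type of some depth $a_i$; and for a generic $\xi\in T_sS$ the element $u:=F_*\xi\in\geg^{-1,1}$ induces, on every strand, an isomorphism $\cH^{p,q}\xrightarrow{\ \sim\ }\cH^{p-1,q+1}$ whenever both spaces are non-zero. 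Regular tangency at $s$ asks that the zero vector be the only $v\in\geg^{-1,1}_F$ with $[\bar u,v]=0$ for \emph{all} tangent $u$, so it suffices to produce one tangent $u$ with this property. I will take the generic $u$ above and prove, more generally, that the only $v\in\endo(\cH)$ of Hodge type $(-1,1)$ with $[\bar u,v]=0$ is $v=0$.

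The first step is to realise $\bar u$ as the raising operator of an $\mathfrak{sl}_2$-action on $\cH$. The strand decomposition of the underlying real variation is conjugation-invariant, and $\bar u$ is the complex conjugate of $u$ (it enters the curvature formula of Section~\ref{ssec:CurvPure} in this form); hence $\bar u$ preserves the strands and, dually to the above, induces an isomorphism $\cH^{p,q}\xrightarrow{\ \sim\ }\cH^{p+1,q-1}$ on every strand whenever both spaces are non-zero. Let $H\in\endo(\cH)$ act on $\cH^{p,q}$ by the scalar $p-q$, so that $[H,\bar u]=2\bar u$. Inside one strand $\cH_{(i)}$ the power $\bar u^{m}$ is a composition of $m$ of the isomorphism-components when $m\le k-2a_i$ and is the zero map $0\to 0$ when $m>k-2a_i$; summing over $i$ we get that $\bar u^{m}$ restricts to an isomorphism from the $(-m)$-eigenspace of $H$ onto the $m$-eigenspace, for every $m\ge 0$. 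This Lefschetz condition guarantees that $(\bar u,H)$ extends to an $\mathfrak{sl}_2$-triple $(\bar u,H,N)$ on $\cH$; under it $\cH$ is a direct sum of copies of the irreducible modules $V_{k-2a_i}$ of highest weight $k-2a_i$.

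The second step is a weight count in the adjoint module $\endo(\cH)$, on which $\bar u$ acts by $[\bar u,\,\cdot\,]$ as the raising operator and $H$ as the neutral element. A Hodge type $(-1,1)$ endomorphism $v$ satisfies $[H,v]=-2v$, i.e.\ it has weight $-2$; and $[\bar u,v]=0$ says it is a highest weight vector. But every highest weight vector in a finite-dimensional $\mathfrak{sl}_2$-module has non-negative weight: decomposing $\endo(\cH)$ into its summands $\operatorname{Hom}(\cH_{(i)},\cH_{(j)})$, each of which is a multiple of $V_{2|a_i-a_j|}\oplus V_{2|a_i-a_j|+2}\oplus\cdots\oplus V_{2k-2a_i-2a_j}$ as an $\mathfrak{sl}_2$-module, exhibits the highest weight vectors in weights $2|a_i-a_j|,\dots,2k-2a_i-2a_j$, all $\ge 0$. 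Hence there is no non-zero such $v$, which proves regular tangency; rigidity then follows from Corollary~\ref{cor:MainCrit}.

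The only point that really needs care is the first step: extracting, from the bare maximal-Higgs hypothesis (isomorphism of the Higgs components in a generic direction), an honest $\mathfrak{sl}_2$-action of $(\bar u,H)$ on the \emph{whole} Hodge bundle, in particular when the strands have different depths. This comes down to the Lefschetz condition $\bar u^{m}\colon\cH[-m]\xrightarrow{\ \sim\ }\cH[m]$, which holds almost tautologically because for $m$ past a strand's length both eigenspaces of that strand already vanish. Everything after that is the standard $\mathfrak{sl}_2$ representation theory applied to $\endo(\cH)$, together with the fact that $u$ and $\bar u$ are exchanged by conjugation.
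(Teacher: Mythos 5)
Your proof is correct, and it reaches the conclusion by a different (more structural) route than the paper. The paper's own proof is a two-line induction: for a generic tangent $u$, the relation $[\bar u,v]=0$ evaluated on the extremal Hodge components of a strand forces those components of $v$ to vanish (because the components of $\bar u$ are isomorphisms there), and one peels off components inductively. You instead promote $\bar u$ together with the grading operator $H|_{\cH^{p,q}}=(p-q)\,\mathrm{id}$ to an $\mathfrak{sl}_2$-triple — the hard-Lefschetz condition $\bar u^{\,m}\colon E_{-m}(H)\xrightarrow{\ \sim\ }E_{m}(H)$ is exactly what the maximal-Higgs hypothesis delivers strand by strand — and then observe that $v\in\endo(\cH)^{-1,1}$ is a weight $-2$ highest-weight vector for the adjoint action, hence zero. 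The two arguments have the same engine (the extremal vanishing in the paper is the first step of the $\mathfrak{sl}_2$ weight argument unwound), but your packaging buys two things: it handles the cross-terms $\hom(\cH_{(i)},\cH_{(j)})$ between strands of different depths transparently via Clebsch--Gordan, where the paper's ``by induction, all components'' is terse, and it yields the slightly stronger statement that no nonzero $v\in\endo(\cH)^{-1,1}$ (not just $v\in\geg^{-1,1}$) commutes with $\bar u$ for a single generic $u$. The cost is the extra verification that the strand decomposition and $\bar u$ are compatible with conjugation and that the $\mathfrak{sl}_2$-triple exists, which you correctly identify and discharge.
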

\begin{proof} Let $\xi\in T_sS$ be generic so that the components of $u=F_*\xi$ are isomorphisms on each
Hodge--Lefschetz strand of the variation.
Assume $[\bar u ,v]=0$ which at an   extremal Hodge component  means either $ \bar u  \comp v=0$ or 
 $v\comp  \bar u =0$.
But since  the Hodge components $u $ and its adjoints are isomorphisms on each strand, this implies that  the extremal  components of $v$ vanish and hence, by induction, all components.  
\end{proof}

\warn{We can now enumerate some examples.}

\medskip
 \textbf{(1)  Maximal Higgs fields, weight 1.}  Let $(H_\bR,Q)$ be a weight one polarized Hodge structure and set $V=H^{1,0}$.
Consider the hermitian inner product $h(x,y) = \ii Q(x,\bar y)$ on $V$. The anti-complex linear map $\bar x \mapsto h(-,x)$ induces an  identification  
$\warn{\overline V}=V^*$.
The Higgs field becomes  a $Q$-symmetric  endomorphism $u\in \hom (V,V^*)$ and  hence  can be identified with
$P_u\in S^2 V^*$, a quadratic homogeneous polynomial function on $V$. Under this identification, $u$ is an isomorphism precisely when $P_u$ has maximal rank.
Hence  a polarized weight one variation has maximal Higgs field if and only if 
the corresponding quadratic polynomial     has  generically maximal rank.
\par

\textbf{(2)  Maximal Higgs fields, weight two.} We recall some general properties of weight two polarized Hodge structures $(H_\bR,Q)$, say
$V=H^{2,0}$, $W=H^{1,1}$. The hermitian  product $h(x,y)= Q(x,\bar y)$ restricts non-degenerately on $V$ 
and  under  the anti-linear map $\bar x \mapsto h(-,x)$
there are identifications $\warn{\overline V}=V^*$ and $\warn{\overline W}=W^*$. 
If $A:V\to W$ is linear, the anti-linear dual is denoted $\widehat A: \warn{\overline W} \to \warn{\overline V}$.
Suppose that  $u\in \endo {H,Q}$ is horizontal, that is,  of type $(-1,1)$. Then we have  
$V\mapright{u_1} \warn{W = \overline W}\mapright{u_2} \overline V$ with $ u_2=\widehat{u_1}$.

One easily sees that $Z:=\im(u_1)=[\ker(u_2)]^\perp$ and that $u^*=(u_1^*,u_2^*)$ is such that
$u_1^*=0$ on $Z^\perp$ and $u_2^*: \overline  V \to Z\subset W=\overline W$. Applying this to a weight two variation we see 
that subvariation associated to  $(H^{2,0},Z,H^{0,2})$ is of Hodge--Lefschetz type  if and only if $u_1$ is an injection.
Note that the Higgs field is zero on $Z^\perp$ and so it can only be of Hodge--Lefschetz type if it vanishes.
Concluding, we can only have a maximal Higgs field if $Z^\perp=0$ and then $h^{2,0}=h^{1,1}=h^{0,2}$.
 
\par 

\par
 \textbf{(3) Irreducible modules.} 
  If $(H,Q)$ is the typical stalk of a variation of pure polarized Hodge structure on $S$ and $H_\bC$ is irreducible as a 
  $\pi=\pi_1(S)$-module, $\endo^\pi_\bC(H ,Q)$  is $1$-dimensional and
since it has a pure Hodge structure, it has type $(0,0)$.  Consequently 
we have $\endo^{\pi,\hor}(H ,Q)=0$
and so, by Corollary~\ref{cor:MainCrit}, such a  variation is rigid.

As a geometric example we may consider a \textbf{Lefschetz pencil of complete intersections} in projective space.
\warn{By S.\ Lefschetz' theory of the variable cohomology  (cf. e.g. \cite[Section 4.2.]{3authorsBIS}) the  latter is  always  absolutely  irreducible} under the action of the monodromy group.
The period map for the family is   an immersion except for a cubic surface or an even dimensional intersection of two
quadrics (see e.g. \cite[Thm. 2.1.]{flenner}). Hence the Lefschetz pencil itself is rigid as well.

\par
\textbf{(4) Abelian varieties (or polarizable weight one variations).}  Ma. Saito \cite{Sa} gives a complete classification of the non-rigid families $\set{A_s}_{s\in S}$ of
$g$-dimen\-sional abelian varieties $A_s$. From this it follows that
rigid families occur in abundance   as we now show.
We can decompose the variation into irreducible factors.  
Assume that none of these factors are isotrivial. Then the family is rigid  if  one of the following situations occur:
	\begin{itemize}
	\item $g\le 7$;
	\item the variation is irreducible and $g$  is prime;
	\item $S$ is non-compact,  the variation is irreducible  and some local monodromy operator at the boundary 		has infinite order.
	\end{itemize}
Observe that any  weight one variation  coming from curves is irreducible since the polarization comes from the irreducible theta-divisor.
So non-isotrivial families of genus $g$ curves have   rigid
period map  if for example $g$ is an odd  prime number, or  if  the family has infinite order local monodromy
 at infinity. 
\par
\textbf{(5) K3-type variations.}   
A  variation of Hodge structure on  a local system  is of \textbf{\emph{K3-type}}, if it has weight $2$ and $h^{2,0}=1$. In general such a system splits as $S\oplus \underline T$ where $S$ is locally constant.
If  $\underline T\not=0$ it is an irreducible variation, again  of K3-type.
Geometric examples come from the  primitive $2$-cohomology of a  projective algebraic K3 surface $X$
which  splits as 
\[
H^2_\prim(X) =  S(X)\oplus T(X),
\]
where $S(X)$ is spanned by the classes of the algebraic cycles and $T(X)=S(X)^\perp$.
In a family  of K3 surfaces, $\set{X_s}_{s\in S}$,  there is a    maximal locally constant part $\mathsf S$  of the variation
given by  algebraic cycles  classes, and so the variation  splits 
as $\mathsf S\oplus \underline T$.  In  \cite{SaZ}   $\underline T$ is called the essential variation. The subset $D(\mathsf S)$ of the period domain 
corresponding to K3 surfaces for which the Picard lattice contains $\mathsf S$ has dimension $20-\rho$ where $\rho=\rank (\mathsf S)$.
The generic K3 with period point in $D(\mathsf S)$ has Picard lattice isomorphic to $\mathsf S$.  The period map of an essential variation
has $D(\mathsf S)$ as its target.  As a special case of the results of \cite{SaZ} we mention:

\begin{prop}  An essential   K3-type variation of rank $k$  on a quasi-projective variety $S$ 
with immersive period map is rigid  in  each  of the following cases:
	\begin{enumerate}
	\item $k$ is not divisible by $4$;
	\item $S$ is not compact  and  some  local monodromy operator at infinity  has 
	maximal order of unipotency $3$. 
	\end{enumerate} \label{prop:RigidK3}
\end{prop}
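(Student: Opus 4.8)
The plan is to deduce the statement from the rigidity criterion of Corollary~\ref{cor:MainCrit}, reducing everything to an analysis of the endomorphism algebra of the essential variation $\underline T$. By Theorem~\ref{thm:MainDefPure} together with Corollary~\ref{cor:MainCrit} it suffices to show that $\underline T$ carries no nonzero flat endomorphism of Hodge type $(-1,1)$; equivalently, writing $E:=\endo^\Gamma_\bQ(\underline T)$ and $E_\bC:=E\otimes_\bQ\bC$, that $(E_\bC)^{-1,1}=0$ for the weight-zero Hodge structure which $E$ inherits (theorem of the fixed part) as a sub-Hodge-structure of $\endo(\underline T_s)$. Since $\underline T$ is irreducible, $E$ is a $\bQ$-division algebra by Schur's lemma, and the polarizations endow it with a positive involution. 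I would then invoke the structure theory of K3-type variations (Zarhin, and the analysis of \cite{SaZ}): because $h^{2,0}(\underline T)=1$, the algebra $E$ is either (a) a field, totally real or CM, or (b) a quaternion algebra over a totally real or CM field.

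First I would dispose of case (a). Here $E_\bC$ is a finite product of copies of $\bC$, hence a reduced ring. The composition law on $\endo(\underline T)$ respects Hodge type, so $(E_\bC)^{p,-p}\cdot(E_\bC)^{q,-q}\subseteq(E_\bC)^{p+q,-(p+q)}$; and since $\underline T$ is of K3 type its Hodge length is $3$, so $\endo(\underline T)^{p,-p}=0$ for $|p|\ge3$ and every $x\in(E_\bC)^{p,-p}$ with $p\neq0$ satisfies $x^3=0$. A nilpotent element of a reduced ring is zero, so $E_\bC=(E_\bC)^{0,0}$ is of pure type $(0,0)$; in particular $(E_\bC)^{-1,1}=0$ and $\underline T$ is rigid. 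No hypothesis is needed here — it is exactly when $E$ is noncommutative that $E_\bC$ fails to be reduced and this argument breaks down, and conditions $(1)$, $(2)$ serve precisely to exclude that possibility.

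It remains to rule out case (b). There $E$ is a quaternion algebra over a totally real or CM field $L$, so $\dim_\bQ E=4[L:\bQ]$, and $\underline T$ is a free $E$-module, whence $4\mid\dim_\bQ E\mid k$; hence $4\nmid k$ forces case (a), which proves $(1)$. For $(2)$, let $\gamma$ be a local monodromy operator with $(\gamma-1)^2\neq0$ (hence $(\gamma-1)^3=0$, automatic in weight two) and set $N=\log\gamma$, so $N^2\neq0$. The limiting mixed Hodge structure attached to $\gamma$ has relative weight filtration $W(N)$ centred at $2$, Deligne bigrading $I^{p,q}$ supported in $\{0,1,2\}^2$, and Hodge filtration of the same dimensions as that of $\underline T$; thus $1=h^{2,0}(\underline T)=\dim F^2=\dim I^{2,0}+\dim I^{2,1}+\dim I^{2,2}$, while $N^2\neq0$ means $\gr^{W(N)}_4=I^{2,2}\neq0$. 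Therefore $\dim I^{2,2}=1$, and $\im(N^2)=\gr^{W(N)}_0$ is a one-dimensional $\bQ$-subspace of $\underline T$. Now $E$ centralizes $\gamma$, hence $N$, hence preserves $\im(N^2)$; restriction then gives a unital $\bQ$-algebra homomorphism $E\to\endo_\bQ(\im N^2)=\bQ$, which is impossible for the simple algebra $E$ with $\dim_\bQ E=4[L:\bQ]\ge4$. So case (b) cannot occur, and $\underline T$ is rigid.

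I expect the main obstacle to be the structural input in the first paragraph, namely the dichotomy ``field or quaternion algebra'' for $E$: this is where the K3-type hypothesis $h^{2,0}=1$ must be turned into a strong restriction on the flat endomorphism algebra, showing in effect that non-rigidity of an essential K3-type variation can only come from a genuine quaternionic structure — and this is the content of the analysis in \cite{SaZ}. Once that is available, the two cases above are essentially bookkeeping. The immersivity hypothesis enters only to guarantee that the essential variation is non-isotrivial, so that the discussion is not empty.
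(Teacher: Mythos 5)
Your proposal is logically sound, and it is worth noting that the paper itself gives no proof of this proposition at all: it is stated as ``a special case of the results of \cite{SaZ}'' and the entire burden is deferred to that reference. So your argument is not so much a different route as an actual route where the paper has only a citation. The reduction via Corollary~\ref{cor:MainCrit} to $(E_\bC)^{-1,1}=0$, the ``nilpotent element of a reduced ring'' observation showing that a \emph{commutative} flat endomorphism algebra of a K3-type variation is automatically pure of type $(0,0)$, the divisibility count $\dim_\bQ E\mid k$ for case (1), and the $\im(N^2)$ argument for case (2) are all correct. Two remarks. First, the one point where you still lean on \cite{SaZ} --- the dichotomy ``field or quaternion algebra'' --- is doing real work in case (1) and cannot be waved away: Albert's classification of division algebras with positive involution a priori allows type IV algebras of degree $e\ge 3$ over a CM centre, for which $\dim_\bQ E=e^2[L:\bQ]$ need not be divisible by $4$ and $E_\bC$ is not reduced, so both of your mechanisms would fail; excluding $e\ge 3$ from $h^{2,0}=1$ is precisely the nontrivial input from \cite{SaZ}. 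Second, your case (2) argument is actually stronger than you present it: the unital embedding $E\hookrightarrow\endo_\bQ(\im N^2)=\bQ$ forces $E=\bQ$ for \emph{any} division algebra $E$, so no appeal to the dichotomy is needed there at all --- part (2) is self-contained given only Schur's lemma and the limit mixed Hodge structure computation. The only slightly off remark is the last sentence: immersivity is not what makes $\underline T$ non-isotrivial (that is built into ``essential'' and non-constancy); it is the standing hypothesis under which rigidity of the period map is the statement one wants about the family, as explained in Section~\ref{ssec:history} of the paper.
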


\par
\textbf{(6) Calabi--Yau manifolds.}   
 Proposition~\ref{prop:RigidK3} (2)  generalizes to   Calabi--Yau's: 

\begin{thm*}{\protect{\cite[Cor. 3.5]{hodge10}}}
Let $f:X\to S$ be a non-isotrivial family of $k$-dimensional Calabi-Yau's over
 a non-compact curve $S$ and suppose that there is a point at infinity 
 where the local monodromy operator for $H^k$ has maximal order of unipotency $k+1$. Then $f$ is rigid.
\end{thm*} 

\section{Mixed period domains and Hodge metrics}
\label{sec:permaps}

We recall some material from \cite{tdh,higgs,dmj,sl2anddeg,usui} on mixed Hodge structures and related period domains.

\subsection{Basics on mixed Hodge structure}
\label{ssec:BasMHS}

Fix a   finite dimensional  $\bQ$-vector space  $H_\bQ$
endowed with a finite increasing weight filtration $W $  
whose graded pieces $\gr^{W }_k$ are equipped with non-degenerate
$(-1)^k$-symmetric  real-valued  bilinear forms $Q_k$.  These data are denoted $(H,W,\set{Q_k})_\bQ$.
Associated to these data the following groups are relevant:
the real Lie group
\[
 G_\bR   = \sett{g\in \gl{H_\bR}}{ g(W_k)\subset W_k, \gr^W\!\!(g)\in \aut{\gr^W(H_\bR,Q) }}
 \]
 and its complexification $G_\bC$ as well as an  intermediate group
\begin{equation}
\label{eqn:groupG}
 G= \sett{g\in G_\bC}{g \text{ induces   a real transformation on } \gr^W(H)}.
\end{equation} 
  A decreasing filtration $F$ on $H_\bC$ together with  the data $(H,W,\set{Q_k})_\bR$
 define  a  \textbf{\emph{graded polarized mixed Hodge structure}} if $F$ induces a pure weight-$k$ Hodge 
structure on $\gr_k^{W}$ polarized  by $Q_k$. 
A basic tool is  the \textbf{\emph{Deligne splitting}}~\cite{tdh} for the mixed Hodge structure, a   
unique functorial bigrading,  
\begin{equation}
      H =H_\bC = \bigoplus_{p,q}\, I^{p,q} \label{eqn:DelSplit}
\end{equation}
such that $F^p = \bigoplus_{a\geq p}\, I^{a,b}$,
$W_k \otimes \bC= \bigoplus_{a+b\leq k}\, I^{a,b}$  and
\[
\overline{I^{p,q} }= I^{q,p} \mod \bigoplus_{a<p,b<q}\, I^{a,b}.
\]
The  graded polarized mixed Hodge structures $(H,W , \set{Q_k})_\bR$   with fixed   Hodge numbers $h^{p,q}= \dim I^{p,q}$
are parametrized by a mixed period domain  which we always denote   $D$.

\begin{rmk}\label{rmk:split}
A mixed Hodge structure is \textbf{\emph{split over $\bR$}} if $\overline {I^{p,q}} = I^{q,p}$. Examples
occur if the weight filtration  has only two adjacent weights. Consider for instance mixed Hodge structures
with $h^{0,0}=h^{-1,-1}=1$, an example of a Hodge--Tate structure. The corresponding mixed domain is $\bC$
(while the extension data are isomorphic to $\ext (\bZ(0),\bZ(1)) = \bC^*$).
\end{rmk}

In analogy with the pure case, $D$ is a complex manifold. Moreover,  $D$   is  a homogeneous domain under
the  group $G$ defined by \eqref{eqn:groupG} and so
\[
D= G/ G^F,\quad G^F=\text{ stabilizer of  } F \text{ in } G.
\]
There are important differences with the pure case since
the group $G^F$ is in general not compact in contrast to $G_\bR^F$. The real Lie group $G_\bR= G\cap \gl {H_\bR}$ 
acts only  transitively on the locus
 of split mixed Hodge structures which need not be a complex manifold. However, if $D$ parametrizes split mixed 
 Hodge structures,  $D=G_\bR/G_\bR^F=G/G^F$, although in general we have $G\not= G_\bR$ and, while 
 $G_\bR ^F$ is compact, $G^F$ need not be compact. See \cite{usui} for the case of adjacent weights.

As in the pure case, there is a ``compact dual'' of $D$,
\begin{equation}
\label{eqn:CompactDual}
\check D= G_\bC/ G_\bC^F.
\end{equation} 
By functoriality, any point $F\in D$ induces a mixed Hodge structure on $\endo(H)$ with Deligne splitting
\begin{equation}
\label{eqn:EndoDeligne}
\begin{array}{lcl}
\endo (H) &= &\bigoplus_{p,q} \endo^{p,q}(H)  , \\
\endo^{p,q}(H)  &=& \sett{u\in \endo(H)}{u( I^{r,s}) \subset I^{r+p,s+q} \text{ for all } r,s}.
\end{array}
\end{equation}
and also on the space $ \geg_\bC    =   \text{Lie}(G_{\bC})=\endo (H,W,Q) _\bC$ of endomorphisms preserving $Q$:
\[
\geg  ^{r,s} =  \geg_\bC  \cap \endo^{r,s} (H) \quad r+s\le 0.
\] 
The  restriction on the bigrading comes from the weight preserving property of elements of $G_\bC$.

There is also an analog of \eqref{eqn:TangSpasHS}. To see this, first observe that the exponential map
$u\mapsto\e^u$ maps a neighborhood $U$ of $0$ biholomorphically to an open neighborhood of $G_\bC$
and so, composing with the orbit map yields a biholomorphic map 
\begin{align}
\varphi: U\cap \gq_F & \mapright{\simeq} \im (\varphi) \subset  D  \label{eqn:coords}\\
           u &\mapsto \e^u. F\,.\nonumber
           \end{align}
 Since the Lie algebra of $G_\bC^F$ equals  $ F^0  \geg_\bC= \bigoplus_{r\geq 0}\, \geg^{r,s} $, the subspace
 \begin{equation}
      \gq_F = \bigoplus_{r<0}\, \geg^{r,s}       \label{eqn:tang-alg}
\end{equation}
is a vector space complement to $F^0 \geg_\bC  $ in $ \geg_\bC $.
Accordingly, $d\varphi(0)$  induces a natural isomorphism  of  complex vector spaces 
\begin{equation}
\label{eqn:HolTngSp}
    T_F (D)  \simeq \gq_F   .
\end{equation}

\subsection{Period maps for variations of mixed Hodge structure}
\label{ssec:PerMapMHS}
 
Similarly to a  pure variation, one can speak of  a \textbf{\emph{variation of  graded polarized mixed Hodge structure}} on $S$. The only difference
with the pure case  is the presence of a weight filtration with the property that
on its $k$-graded quotients the Hodge filtration induces a pure polarized variation of weight $k$. 
Such variations are in one to one correspondence with period maps to the mixed period domain $D$
for the graded polarized mixed Hodge structure on a  typical fiber. 
The map
sending $s$ to  the point $F(s)\in D$ corresponding to the mixed Hodge structure on the fiber over $s$ of the local system  is well defined locally, for instance if $S$ is a polydisc or, more generally, a simply connected manifold. We say that we have a \textbf{\emph{local period map}} $ S\to D$, $s\mapsto F(s)$.
As in the pure case, there is a monodromy group $\Gamma$ and we get a well defined (global) \textbf{\emph{period map}}
\[
F: S\to \Gamma\backslash D.
\]
Again, as in the pure case, variations coming from geometry have an underlying integral structure. In particular, this implies that $\Gamma$ acts properly
discontinuously on $D$  and so  $\Gamma\backslash D$ is an analytic space.
For lack of a good reference, we provide a proof of this fact in
Appendix~\ref{sec:ProperDisc}.

The period map is horizontal, meaning that the derivative at $s\in S$ sends $T_s S$ to the subspace of the tangent space $T_{F(s)}D$
given by 
\[
 \gr_{\cF}^{-1}   \qendo {\cH} _{s}=    \bigoplus_ {q\le 1} \geg^{-1,q}_{F(s)}.
\]
This  is a consequence of Griffiths' transversality. Since one only uses the Hodge filtration  
 to describe  of the tangent bundle as well as  the horizontal tangent bundle the description
 in the mixed case parallels the one in the pure case. For later reference we make this more explicit.
 Using the induced Hodge filtration on the endomorphism bundle, we have a surjective map  of   holomorphic vector bundles
on $\check D$ 
\begin{equation}
\label{eqn:endoHodgeFlag}
\xymatrix{
\cF^{-1}\qendo {\cH}   \ar[r]^{\pi^{\hor\quad\quad} } &    T^\hor( \check D) =\gr_{\cF}^{-1}   \qendo {\cH}  .
  }
\end{equation} 
 Mixed period maps of geometric origin have all of the above properties. See e.g.\ \cite{SZ,usui}.

To close this section, we observe that the same argument used in  the  pure case shows: 
 \begin{lemma} For a  local period map $  F:  S\to D$,   the image of the  tangent space at $s$ is an abelian subalgebra
of  $ \geg_\bC$ contained in \warn{$U^{-1}\geg_{F(s)}= \bigoplus_ {q\le 1} \geg^{-1,q}_{F(s)}$}. \label{lem:AbSA}
\end{lemma}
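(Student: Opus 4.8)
The plan is to mimic, in the mixed setting, the elementary argument that works in the pure case. Recall that for a pure period map the image of the tangent space lies in $\geg^{-1,1}$, and the bracket of two such horizontal vectors lands in $\geg^{-2,2}$; but the image of the differential of a period map is closed under the bracket because the period map is locally liftable and integrable as a map into a homogeneous space (this is the content of the Maurer--Cartan / integrability condition for horizontal maps, cf. \cite[Prop.~5.5.1]{3authorsBIS}), so any bracket of tangent vectors is again tangent, forcing it to lie in $\geg^{-1,1}$; since $\geg^{-1,1}\cap\geg^{-2,2}=0$ the bracket vanishes. I would first record the two facts we need: (a) by Griffiths transversality, $F_*(T_sS)\subset \gr_{\cF}^{-1}\qendo{\cH}_s=\bigoplus_{q\le 1}\geg^{-1,q}_{F(s)}=U^{-1}\geg_{F(s)}$; (b) the image of $F_*$ at $s$, viewed inside $\geg_\bC$ through the identification \eqref{eqn:HolTngSp}, is an abelian subalgebra.

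For (b), the cleanest way is to use the local lift: since $S$ is (locally, or after passing to the universal cover) simply connected, the period map lifts to a holomorphic horizontal map $\tilde F\colon S\to D$, and in the coordinate chart \eqref{eqn:coords} we may write $\tilde F$ near $s$ as $u\mapsto \e^{\Gamma(u)}.F$ for a holomorphic $\gq_F$-valued function $\Gamma$ with $\Gamma(0)=0$. The flatness of the connection (equivalently, the fact that $\tilde F$ is a genuine map into the homogeneous space $D=G/G^F$, so that the pulled-back Maurer--Cartan form satisfies $d\theta+\tfrac12[\theta,\theta]=0$) translates, after projecting to the appropriate graded piece, into the statement that the $(1,0)$-part of $d\Gamma$ at $s$ --- which is exactly $F_*$ composed with the identification $T_{F(s)}D\simeq\gq_{F(s)}$ --- has vanishing bracket with itself. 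Concretely, if $u=F_*\xi$ and $v=F_*\eta$ for $\xi,\eta\in T^{1,0}_sS$, then $[u,v]$ appears as an obstruction term in the integrability of $\tilde F$, and holomorphicity of $\tilde F$ kills it; this is the standard computation already invoked in the pure case via \cite[Prop.~5.5.1]{3authorsBIS}, and it only uses the Hodge filtration and Griffiths transversality, hence goes through verbatim in the mixed situation.

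Putting the two pieces together: the image $\mathfrak{a}:=F_*(T_sS)\subset\geg_\bC$ is a linear subspace contained in $U^{-1}\geg_{F(s)}=\bigoplus_{q\le 1}\geg^{-1,q}_{F(s)}$ by horizontality, and it is closed under the bracket and abelian by (b). Hence $\mathfrak{a}$ is an abelian subalgebra of $\geg_\bC$ sitting inside $\bigoplus_{q\le 1}\geg^{-1,q}_{F(s)}$, which is the assertion of the lemma. Note that being an abelian subalgebra is here a trivial consequence of being a subspace on which the bracket vanishes; there is no independent integrability claim beyond what the previous sentence already gives.

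The only point that requires genuine care --- and thus the main obstacle --- is verifying that the vanishing-of-bracket computation from the pure case really is insensitive to the differences between the pure and mixed worlds. The worry is that in the mixed case $G$ is not the complexification of a real form, $G^F$ need not be compact, and $D$ is not the full homogeneous space under $G_\bR$. However, the integrability argument only uses: $D=G/G^F$ is a complex homogeneous space, the tangent bundle is described via the Hodge filtration on $\geg_\bC$ as in \eqref{eqn:HolTngSp}, and the period map is holomorphic and horizontal. None of these invokes a real structure or compactness, so the argument is formally identical; I would simply remark that the proof of Lemma~\ref{lem:AbSA} is word-for-word the one in the pure case (\cite[Prop.~5.5.1]{3authorsBIS}), with $\geg^{-1,1}$ replaced throughout by $U^{-1}\geg_{F(s)}=\bigoplus_{q\le1}\geg^{-1,q}_{F(s)}$, the point being that $[U^{-1}\geg,U^{-1}\geg]\subset\bigoplus_{q\le 2}\geg^{-2,q}$ still has trivial intersection with $U^{-1}\geg$ in the relevant bidegrees only after one has already shown the bracket is tangent --- but here the slicker route above avoids even needing that, since horizontality of $\tilde F$ plus holomorphicity directly yield $[u,v]=0$.
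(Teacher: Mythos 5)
Your argument is correct and is in substance the paper's own: the paper proves this lemma simply by observing that ``the same argument used in the pure case shows'' it, and your write-up is exactly that pure-case integrability argument (flatness of the pulled-back Maurer--Cartan form, equivalently $\theta\wedge\theta=0$ from $\nabla^2=0$ in the Higgs decomposition~\eqref{eqn:Higgs}), together with the correct observation that containment in $U^{-1}\geg_{F(s)}$ is just Griffiths transversality and that none of the steps uses compactness of $G^F$ or a real structure on $G$.
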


\subsection{Mixed Hodge metrics}
\label{ssec:HM}

The  mixed  Hodge metric   $h_{(F,W)}$ on $H$ is defined as follows.
 We   first  declare  the splitting \eqref{eqn:DelSplit} to be  orthogonal and then define the metric
on $I^{p,q}$ making use of the graded polarization   on  $ \gr^W \!  H $ as follows.
The summand $I^{p,q}$ maps isomorphically onto the subspace
$H^{p,q}$ of $\gr^W_{p+q}$. So  on classes $[z]$   of elements    $z\in I^{p,q}\subset W_{p+q}$
modulo $W_{p+q-1}$ the metric  $h_{F,W}$ can be defined by setting:

\begin{equation} \label{eqn:grhmetric}
h_{(F,W)}(x,y) = (\gr   h)_F ([x],[y]) ,\quad x,y \in I^{p,q}.
\end{equation}
 Let $*$ denote the adjoint with respect to the metric $h_F$.  Then,
\begin{equation}  
           *:\endo^{p,q}(H)\to \endo^{-p,-q}(H) .    \label{eqn:adjoint-pq}
\end{equation}
The Hodge metric induces a metric on $\endo{H}$ given by 
\begin{equation}
         h_F(\alpha,\beta) = \tr(\alpha \beta^*)   \label{eqn:mhm}
\end{equation}
where $\beta^*$ is the adjoint of $\beta$ with respect to $h_{F,W}$.
The Deligne splitting \eqref{eqn:EndoDeligne} of $\endo{H}$ is then  orthogonal with respect to the associated metric.
The induced Hodge metric on  the holomorphic tangent space $T_{D,F}$ of $D$ at $F$ comes from the natural identification
\eqref{eqn:HolTngSp}. 

In the sequel, we make use of the following orthogonal splittings.
  \begin{equation*}
    \begin{array}{lcl}
     \geg_\bC &= &\underbrace{\gn_+ \oplus \geg^{0,0}_F}_{\lie{G_\bC^F}}\oplus  \underbrace{\gn_- \oplus  \Lambda^{-1,-1}_F}_{\gq_F} ,\\
    \text{where}&&\\
    \gn_+ &= &\bigoplus_{a\ge 0,b<0} \geg^{a,b}_F,\\
   \gn_-&=&\bigoplus_{a< 0,b\ge 0}  \geg^{a,b}_F,\\
    \Lambda^{-1,-1}_F&=& \bigoplus_{a\le -1,b\le -1} \geg^{a,b}_F.
   \end{array} 
   \end{equation*}
  See Figure~\ref{fig:splitt} below.
 
    \begin{figure}[htbp]
\begin{center}
\begin{tikzpicture}[scale=0.7] [>=stealth]
\draw [->] (0,0) -- (6,0) node [at end, right] {$a$}; \draw [->] (0,0) -- (0,6) node [at end, left] {$b$};
\draw[semithick] (-6,6) -- (6,-6);
 \draw[fill=black]  (0,0) circle [radius=0.15cm]; \node at ( 0.5, 0.5)  {$ \germ g^{0,0}_F$};
 \draw[fill=black]  (1,-1) circle [radius=0.1cm];  \node at ( 1.8,  -0.7)  {$(1,-1)$};
\draw[fill=black]  (-1,-1) circle [radius=0.1cm];  \node at ( -1.5,  -1.5)  {$(-1,-1)$};
\draw[fill=black]  (-1,0) circle [radius=0.1cm];  \node at ( -1.5,  -0.5)  {$(-1,0)$};
\draw[fill=black]  (-1,1) circle [radius=0.1cm];  \node at ( -1,  1.5)  {$(-1,1)$};
\draw[fill=black]  (0,-1) circle [radius=0.1cm];  \node at ( 0.5,  -1.5)  {$(0,-1)$};
  
  \node at (2.5,-4)  {$\germ n_+$}; 
 \node at (-4,2.5)  {$\germ n_-$}; 
 \node[matrix]  at  (-6, -0.5) {$ \gq_F  $ \Bigg{\{ }  & $\quad$  \\
           $\quad$ & $\quad$\\};
    \node at (-3.5,-4)  {$   \Lambda^{-1,-1}_F=\bar \Lambda^{-1,-1}_F$}; 
  \draw[color=blue,  thick] (-6,-1) -- (-1,-1)--(-1,-6);
   \draw[color=red, thick] (-6,6) -- (-1,1)--(-1,0)--(-6,0);
   \draw[color=green,   thick] (6,-6) -- ( 1,-1)--(0,-1)--(0,-6);
\clip (5.9,-6)-- (-6,5.9)-- (-5.9,-5.9);
\draw[step=1cm,   dotted] (-6,-6) grid (6,6);
 \end{tikzpicture}
\caption{Decomposition of $ \geg_\bC$}
\label{fig:splitt}
\end{center}
\end{figure}
The orthogonal decomposition   $ \geg_\bC= \gn_+ \oplus \geg^{0,0}\oplus \gn_- \oplus  \Lambda^{-1,-1}_F$ defines
 respective  orthogonal projectors    
\begin{equation}
\label{eqn:OrthProjs}
 \quad\quad\begin{array}{rccc}
  \pi_\pm&:  \geg_\bC  &\mapright{\quad} &\gn_\pm ,\\
  \pi_0&:  \geg_\bC &\mapright{\quad} &\geg^{0,0},\\
  \pi_ {\Lambda^{-1,-1}}&:  \geg_\bC &\mapright{\quad}  & \Lambda^{-1,-1}_F,\\
  \pi_\gq&: \geg_\bC &\mapright{\quad} & \gq_F.
 \end{array}   
 \end{equation}

\subsection{Higgs bundles in the mixed setting}
\label{ssec:Higgs}
As in the pure case, the Hodge filtration defines a Higgs bundle structure on a variation of mixed Hodge structure over $S$.
The role of $H^{p,q}$ is played by
\[
U^p_F := \bigoplus_q  I_F^{p,q},
\]
which cuts out $H^{p,q}$ on $\gr^W_{p+q}H$. These glue into the  $\cC^\infty$ bundles 
   \[
   \cU^p=\bigoplus_q  \cI^{p,q},
   \]
  isomorphic to  $\cF^p/\cF^{p+1}$.
  The Higgs structure is slightly more involved than in the pure case: by
  \cite{higgs}, the Gauss--Manin connection of $\cH$
decomposes as
\begin{equation}
 \label{eqn:Higgs}
 \nabla = \tau_0  + \bar\del + \del + \theta.
\end{equation} 
 Here $\bar\del$ and $\del$ are differential operators of type $(0,1)$
and $(1,0)$ which preserve $\cU^p$.  The first, $\bar\del$, gives the holomorphic structure induced  by the $\cC^\infty$-isomorphism 
$\cU^p \simeq \cF^p/\cF^{p+1}$.
 The \textbf{\emph{Higgs field}} in this setting is the operator $\theta$, an   endomorphism of $\cH$ sending $\cU^p$ to $\cU^{p-1}$ with values in the $(1,0)$-forms
 and  $ \tau_0$ is an   endomorphism  sending $\cU^p$ to $\cU^{p+1}$ with values in the $(0,1)$-forms.
 The Higgs field has a geometric interpretation which directly follows from its construction:
 
 \begin{lemma} Let     $F:S \to  D$ be \label{lem:OnInfPermap}
a local  period map.  Under the correspondence \eqref{eqn:HolTngSp}, 
     the Higgs field in  a tangent direction  $\xi\in T_sS$ can be 
     identified with   $F_*\xi$ viewed as a degree $-1$  endomorphism of $\cU_\higgs$:
\[
 \theta^{1,0}_\xi  =F_*\xi  :     \cU_{\higgs,s} \to \cU_{\higgs,s} , \quad  F_*\xi\in \gq_{F(s)}^\hor.
\]
In particular,  the period map is injective, if and only if  for all non-zero directions $\xi$ the map $\theta^{1,0}_\xi$ is not the zero-map.
\end{lemma}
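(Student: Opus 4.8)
The plan is to unwind the definitions: the Higgs field $\theta$ arises from the decomposition \eqref{eqn:Higgs} of the Gauss--Manin connection $\nabla$, and its construction in \cite{higgs} is precisely the mixed analog of \eqref{eqn:HiggsPure}. First I would recall that the local period map $F\colon S\to D$ is by definition the map $s\mapsto \cF_s$, so its differential $dF_s\colon T_sS\to T_{F(s)}D$ is the map induced on $\cF/\cF^{+1}$ by $\nabla$, i.e. for a holomorphic section $v$ of $\cF^p$ one has $\nabla_\xi v \in \cF^{p-1}$ by Griffiths transversality, and its class in $\cF^{p-1}/\cF^p \simeq \cU^{p-1}$ depends only on the class of $v$ in $\cU^p = \cF^p/\cF^{p+1}$. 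This is exactly the operator $\theta^{1,0}_\xi$ in \eqref{eqn:Higgs}, since $\tau_0$ has type $(0,1)$ and both $\del$ and $\bar\del$ preserve $\cU^p$, so only $\theta$ contributes to the $(1,0)$-part of the induced map raising/lowering the filtration degree. Under the identification \eqref{eqn:HolTngSp} of $T_F D$ with $\gq_F \subset \geg_\bC$, the differential $dF_s(\xi)$ is precisely $F_*\xi$ acting on $\cU_{\higgs,s} = \bigoplus_p \cU^p_{F(s)}$ as a degree $-1$ endomorphism, and by Lemma~\ref{lem:AbSA} it lands in $\gq_{F(s)}^\hor = U^{-1}\geg_{F(s)}$.

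The key steps in order: (i) identify $dF_s$ with the connecting map on associated graded bundles, as above; (ii) match this with $\theta^{1,0}_\xi$ using that the other three summands of \eqref{eqn:Higgs} either preserve the filtration degree ($\del$, $\bar\del$) or have the wrong $(p,q)$-form type ($\tau_0$); (iii) transport everything through the coordinate chart \eqref{eqn:coords}--\eqref{eqn:HolTngSp}, observing that in these coordinates the orbit map $u\mapsto e^u.F$ has differential at $0$ the identity on $\gq_F$, so the abstract tangent vector $dF_s(\xi)$ literally equals the endomorphism $F_*\xi \in \gq_{F(s)}$; (iv) conclude the final ``in particular'': injectivity of $F$ is equivalent to $dF_s$ being injective for all $s$ (for a holomorphic map, immersivity everywhere), and since $dF_s(\xi) = \theta^{1,0}_\xi$, this holds iff $\theta^{1,0}_\xi \neq 0$ for every nonzero $\xi \in T_sS$.

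The main obstacle — really the only subtle point — is step (iii), namely checking carefully that the identification \eqref{eqn:HolTngSp} transports $dF_s(\xi)$ to exactly the endomorphism $F_*\xi$ acting on $\cU_{\higgs}$, rather than to $F_*\xi$ composed with some filtration-shift or twisted by the $(p,q)$-grading. This is where one must use that the splitting of \eqref{eqn:Higgs} is compatible with the Deligne bigrading and that $\theta \in \bigoplus_{q\le 1}\geg^{-1,q}\otimes\cE^{1,0}_S$ acts on $\cU^p$ exactly the way an element of $\gq_F$ acts via the adjoint/standard representation on $\bigoplus_p \cU^p$. Concretely this reduces to the assertion in \cite{higgs} that the $\cU^p$-lowering part of $\nabla^{1,0}$ is induced by the period map differential; I would cite this and verify the bookkeeping of Hodge types rather than reprove it. Everything else is a direct translation of definitions, so I expect the proof to be short.
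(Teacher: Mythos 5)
Your proposal is correct and is essentially the argument the paper has in mind: the paper gives no proof at all, asserting that the identification ``directly follows from its construction'' of the Higgs field in the reference for \eqref{eqn:Higgs}, and your steps (i)--(iii) are exactly that unwinding — the classical Griffiths computation that $\nabla_\xi$ induces an $\cO_S$-linear map $\cF^p/\cF^{p+1}\to\cF^{p-1}/\cF^p$ equal to $dF_s(\xi)$ under \eqref{eqn:HolTngSp}, with the mixed-case bookkeeping (compatibility with the Deligne bigrading, the fact that $\tau_0$, $\del$, $\bar\del$ do not contribute) delegated to the same source the paper cites.

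One caveat on step (iv): for a holomorphic map, injectivity is \emph{not} equivalent to being an immersion in general (e.g.\ $z\mapsto(z^2,z^3)$ is injective with vanishing differential at $0$), so your parenthetical gloss is false as stated. The lemma's ``injective'' must be read — as the paper itself does when it invokes this lemma in Proposition~\ref{prop:CKT-surfs} — as ``has injective differential at $s$'', and with that reading the ``in particular'' is immediate from $\theta^{1,0}_\xi=F_*\xi=dF_s(\xi)$, with nothing further to prove.
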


    By functoriality   all this  applies to  the endomorphism bundle $\qendo{\cH}$ with induced variation of mixed Hodge structure. 
   In the latter set-up we  have:

\begin{lemma}  \label{lem:FlatDefs}
Let $\eta $ be a    local holomorphic section of $\cU^{-1} \qendo{\cH}   $ at $s\in S$ and $\xi \in T_s(S)$  a tangent vector of type $(1,0)$
at $s$. Set   $ v=  \eta (s) , u= F_*\xi\in  \geg ^\hor_{F(s)}  $.
Then  
\begin{gather}
\label{eqn:TngIdAdj} \nabla_\xi v   =  \del_\xi   v +     \ad u v,  \\
\nabla_{\bar \xi} v   =    \pi^{(0)}  \ad {\pi_+{\bar u}} v     .
 \label{eqn:TngIdAdjBis} 
\end{gather}
Here the  bundle map   $\pi^{(0)}  $ stands for the orthogonal projection onto $\cU^{ 0 }$. 
  \end{lemma}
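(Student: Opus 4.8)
The strategy is to compare the Gauss--Manin connection $\nabla$ with the Higgs decomposition \eqref{eqn:Higgs}, exactly as in the pure case, but keeping track of the extra term $\tau_0$ which has no analogue there. Since everything is functorial, the decomposition $\nabla = \tau_0 + \bar\del + \del + \theta$ holds on the endomorphism bundle $\qendo{\cH}$ as well, where now $\theta$ shifts the $\cU$-grading down by one with values in $(1,0)$-forms, and $\tau_0$ shifts it up by one with values in $(0,1)$-forms. The key input is Lemma~\ref{lem:OnInfPermap}: for a local period map, contracting the Higgs field of $\qendo{\cH}$ against $\xi$ gives precisely $\ad u$ where $u = F_*\xi$, because the Higgs field on the endomorphism bundle is induced from the adjoint action of the Higgs field on $\cH$. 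Dually, $\tau_0$ contracted against $\bar\xi$ will be $\ad{(\tau_0)_{\bar\xi}}$ for the corresponding element; the content is to identify $(\tau_0)_{\bar\xi}$ with $\pi_+\bar u$, i.e. the $\gn_+$-component of the conjugate of $u$.

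\textbf{Key steps.} First I would write $\nabla_\xi v = (\tau_0)_\xi v + \bar\del_\xi v + \del_\xi v + \theta_\xi v$. Since $\xi$ is of type $(1,0)$, the $(0,1)$-valued operators $\tau_0$ and $\bar\del$ contribute nothing in the $\xi$-direction, so $\nabla_\xi v = \del_\xi v + \theta_\xi v = \del_\xi v + \ad u\, v$ by Lemma~\ref{lem:OnInfPermap} applied to $\qendo{\cH}$; that is \eqref{eqn:TngIdAdj}. Second, for \eqref{eqn:TngIdAdjBis} I evaluate $\nabla_{\bar\xi} v = (\tau_0)_{\bar\xi} v + \bar\del_{\bar\xi} v + \del_{\bar\xi} v + \theta_{\bar\xi} v$; the $(1,0)$-valued $\del$ and $\theta$ drop out, and since $v = \eta(s)$ with $\eta$ a \emph{holomorphic} section, $\bar\del_{\bar\xi} v = 0$ (holomorphicity of $\eta$ as a section of $\cU^{-1}\qendo{\cH}$ with its induced holomorphic structure). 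So $\nabla_{\bar\xi} v = (\tau_0)_{\bar\xi} v$. The remaining task is to show $(\tau_0)_{\bar\xi} = \ad{(\pi_+\bar u)}$ followed by projection $\pi^{(0)}$ onto $\cU^0$ — or more precisely that $(\tau_0)_{\bar\xi} v = \pi^{(0)}\ad{\pi_+\bar u}\,v$. Here one uses that $\tau_0$ is the adjoint-type operator forced by the requirement that $\nabla$ be flat and metric-compatible: $\tau_0$ is built from $\theta^*$ via the orthogonal splitting $\geg_\bC = \gn_+ \oplus \geg^{0,0} \oplus \gn_- \oplus \Lambda^{-1,-1}$ of Section~\ref{ssec:HM}. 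Concretely, conjugation sends $u \in \cU^{-1}\geg = \bigoplus_{q\le 1}\geg^{-1,q}$ to an element whose $\gn_+$-part is $\pi_+\bar u$, and the part of $\ad{\bar u}$ that both preserves the relevant weight data and lands back where $\nabla_{\bar\xi}v$ must land is exactly the $\cU^0$-component of $\ad{\pi_+\bar u}$; the other pieces of $\bar u$ (its $\geg^{0,0}$ and $\Lambda^{-1,-1}$ parts) contribute to $\del$-type or are killed by the grading shift, as one checks from the bidegree bookkeeping in Figure~\ref{fig:splitt}. This is the analogue of the pure-case identity $\nabla_{\bar u} = \bar\del_u + \ad{\bar u}$, but the appearance of $\pi_+$ and $\pi^{(0)}$ reflects that in the mixed case $\bar u$ is no longer purely of type $(1,-1)$.

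\textbf{Main obstacle.} The delicate point is the precise identification in \eqref{eqn:TngIdAdjBis}: pinning down that the $\tau_0$-term equals $\pi^{(0)}\ad{\pi_+\bar u}$ rather than some other projection of $\ad{\bar u}$, and in particular justifying why only the $\gn_+$-component $\pi_+\bar u$ of the conjugate survives and why one must post-compose with projection onto $\cU^0$. This requires unwinding the construction of the Higgs decomposition \eqref{eqn:Higgs} from \cite{higgs} — specifically, how $\tau_0$ arises as the piece of $\nabla$ that raises the $\cU$-filtration level, and how it relates to $\theta^*$ through the non-compactness of $G^F$ (which is precisely why $\tau_0 \ne \theta^*$ in general). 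Once that structural fact is in hand, both displayed formulas follow by the type/bidegree bookkeeping sketched above, with \eqref{eqn:TngIdAdj} being essentially immediate from Lemma~\ref{lem:OnInfPermap} and the holomorphicity already noted.
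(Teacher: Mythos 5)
Your proposal is correct and follows essentially the same route as the paper: decompose $\nabla$ via the Higgs structure \eqref{eqn:Higgs} on $\qendo{\cH}$, discard terms by $(1,0)/(0,1)$ type, use holomorphicity of $\eta$ to kill the $\bar\del$-contribution, identify $\theta_\xi$ with $\ad u$ via Lemma~\ref{lem:OnInfPermap}, and defer the identification of $\tau_0$ with $\pi^{(0)}\ad{\pi_+\bar u}$ to the construction in \cite{higgs} (the paper cites Lemma 5.11 and eq. (5.20) there for exactly this). The only nitpick is that $\tau_0$ arises from the conjugate $\pi_+\bar u$ of the Higgs field, not from the adjoint $\theta^*$ as your motivational remarks suggest, but your final bidegree bookkeeping uses the correct object.
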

 \begin{proof} First consider the general case of a mixed variation $\cH$ and $u\in \cU^{ p }$. 
 The operator  $\bar\del$  in \eqref{eqn:Higgs} breaks up in a component of bi-degree $(0,0)$ and a component $\tau_-$ of bi-degrees $(0,-1)+(0,-2)+\dots$.
Comparing with \cite[Lemma 5.11]{higgs}, letting  $\pi^{(p)}  $ stands for the orthogonal projection onto $\cU^{ p }$, we see
 \[
\pi^{(p)}  \pi_+(\bar u) = \tau_-,\quad \pi^{(p+1)}  \pi_+(\bar u)  =\tau_0   .
\]
Since  the action  of $\geg_{F(0)}$ on $\endo (\cH_{F(0)})$
is  through the adjoint action, setting $p=-1$ we see that $\tau_0$ gives rise to $\pi^{(0)} \ad {\pi_+(\bar u)} v $.
Since $\eta$ is holomorphic,  $ \bar \del \eta=0$. As to $\theta$, comparing with equation (5.20) in loc. cit. we see that
$\theta$ gives $\ad u v$. This proves the result.
 \end{proof}
 
\section{Differential geometry}
\label{sec:DG}

\subsection{The Chern connection on the endomorphism  bundle} 
\label{ssec:ChernConn} 
  
  Let $\mathsf D$ be the Chern connection on the endomorphism bundle. In \cite[\S 5]{pdmixed} we calculated it for the bundles $\cU^{(p)}$ and
  found
  \[
  \mathsf D =\bar \del  + \del - \tau_-^*.
  \]
  We already calculated   
  $\tau_-= \pi^{(p)}  \pi_+(\bar u)$ and so $\tau_- ^*= \pi^{(p)}  (\pi_+(\bar u))^*$.
 By functoriality this holds also for the endomorphism bundle using   the adjoint action, where we apply it for $\cU^{(-1)}$.   Since  in this  situation  $\pi^{(-1)}$ is the same
  as projection onto $\gq$,
  we get:
\begin{align}
  \mathsf D _\xi  \eta     & =  \del_\xi   v     - \pi_\gq [(\pi_+ \bar u) ^*,v]   ,\quad u= F_*\xi,\, v= \eta(s) .   \label{eqn:OnMetConn}
\end{align}

 \subsection{Curvature and plurisubharmonicity of   Hodge norms}
 \label{ssec:Curv}
 
 In contrast to the pure case, the  biholomorphic bisectional curvature of the horizontal tangent bundle is not always semi-negative
 as expressed by the following theorem.
   
 \begin{prop}The bisectional curvature of the Hodge metric  in unit directions $u,v\in U^{-1}\geg_F $ equals
  \begin{align*}
   K(u,v)   
  =      \|  [u^{-1,1} ,v] \|^2 +\| \pi_\gq [(\pi_+\bar u)^*  ,v] \|^2      - \| [\pi_+\bar u, v]\|^2    - \re  h( \pi_\gq [\pi_+ [u,\bar u] , v]).
\end{align*} \label{prop:BisectCurvMixed}
 \end{prop}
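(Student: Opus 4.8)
The plan is to compute the curvature tensor $R_{\mathsf D}(u,\bar u)$ of the Chern connection $\mathsf D$ on the horizontal bundle $\cU^{-1}\geg(\cH)$ directly from the explicit formula for $\mathsf D$ already obtained in the previous section, namely $\mathsf D_\xi \eta = \del_\xi v - \pi_\gq[(\pi_+\bar u)^*,v]$ where $u = F_*\xi$. Since the bisectional curvature in unit directions $u, v$ is $K(u,v) = h(R_{\mathsf D}(u,\bar u)v, v)$, and $R_{\mathsf D}(u,\bar u) = [\mathsf D_u, \mathsf D_{\bar u}] - \mathsf D_{[u,\bar u]}$ in the appropriate sense (curvature of the $(1,0)$-metric connection evaluated on a $(1,0)$ and a $(0,1)$ tangent vector), the computation reduces to carefully expanding this commutator. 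The key structural inputs are: the Gauss--Manin connection is flat, the decomposition $\nabla = \tau_0 + \bar\del + \del + \theta$ from \eqref{eqn:Higgs}, the relations $\mathsf D = \bar\del + \del - \tau_-^*$, and the fact that on the endomorphism bundle $\nabla$ acts through $\del + \ad u$ in the $(1,0)$-direction and through the projected adjoint action in the $(0,1)$-direction as recorded in Lemma~\ref{lem:FlatDefs}.

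First I would write $\nabla = \mathsf D + \theta - \tau_-^* \cdot(\text{correction}) + \dots$; more precisely, combining \eqref{eqn:Higgs} with the formula for $\mathsf D$, the difference $\nabla - \mathsf D$ splits into a $(1,0)$-part given by $\ad u$ restricted appropriately (the Higgs field $\theta$) and a $(0,1)$-part given by $\tau_0 + \tau_-^*$ acting via the adjoint. Using flatness $R_\nabla = 0$, I expand $0 = R_\nabla(u,\bar u) = R_{\mathsf D}(u,\bar u) + \mathsf D(\Psi) + [\Psi,\Psi]$-type terms where $\Psi = \nabla - \mathsf D$, and solve for $R_{\mathsf D}(u,\bar u)$. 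The upshot, following exactly the bookkeeping in \cite{pdmixed}, is that $R_{\mathsf D}(u,\bar u)v$ is a sum of four terms: the ``holomorphic'' Higgs contribution $\ad{u^{-1,1}}$ composed with its adjoint producing $\|[u^{-1,1},v]\|^2$; a term $\|\pi_\gq[(\pi_+\bar u)^*,v]\|^2$ coming from the $\tau_-^*$ piece squared; a negative term $-\|[\pi_+\bar u, v]\|^2$ from the mixed cross-term; and finally the curvature-of-the-base-type contribution $-\re h(\pi_\gq[\pi_+[u,\bar u],v])$ arising from the bracket term $\mathsf D_{[u,\bar u]}$ and the non-closedness of the relevant projections.

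The main obstacle I anticipate is the bookkeeping of projectors: the operators $\pi_+$, $\pi_\gq$, $\pi_0$, $\pi_{\Lambda^{-1,-1}}$ from \eqref{eqn:OrthProjs} do not commute with the adjoint operation or with the bracket, and $[u,\bar u]$ need not lie in any single graded piece, so one must track carefully which projection acts where. In particular the term $\pi_+[u,\bar u]$ versus $[u,\bar u]$ itself, and the interplay between $\pi_\gq$ and $\pi^{(0)}$ on the endomorphism bundle, is the delicate point; a sign error or a misplaced projector there changes the final formula. I would organize the computation by first establishing the commutation/adjointness lemmas for these projectors relative to the grading (using that $*$ sends $\geg^{a,b}$ to $\geg^{-a,-b}$, \eqref{eqn:adjoint-pq}, and that $v \in U^{-1}\geg$ shifts bidegrees by $(-1,q)$ with $q \le 1$), then substitute into the Bochner-type expansion, and finally identify each surviving term with the four displayed summands, checking reality of the last term by pairing with its conjugate. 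Since the analogous pure-case computation $R(u,\bar u) = -\ad{[u,\bar u]}$ and the detailed mixed computation already appear in \cite{pdmixed}, the proof here is essentially a transcription of that calculation restricted to $U^{-1}\geg$, and I would present it as such, citing \cite[\S 5]{pdmixed} for the routine Lie-algebra identities and spelling out only the steps where the horizontality restriction $q \le 1$ is used.
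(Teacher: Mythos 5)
Your proposal is correct and follows essentially the same route as the paper: the paper simply quotes the curvature tensor $R_h(u,\bar u)=R_1+R_2+R_3$ from \cite[Theorem 3.4]{pdmixed} (which is itself obtained by the flatness-plus-decomposition argument you outline) and then evaluates $h(R_jv,v)$ term by term. The one structural point your sketch glosses over is that the summand $-\|[\pi_+\bar u,v]\|^2$ is not a single ``mixed cross-term'' but the sum of $-\|\pi_\gq[\pi_+\bar u,v]\|^2$ (from $R_1$) and $-\|[(u^{-1,1})^*,v]\|^2$ (from $R_2$), which recombine precisely because these are the two mutually orthogonal bidegree components of $[\pi_+\bar u,v]$.
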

 
 \begin{proof} We use the curvature tensor for the Hodge metric $h$ as given in 
 \cite[Theorem 3.4]{pdmixed}:
\begin{align*}
  R _{h}  (u,\bar u) & =R_1+R_2+R_3 \\
  R_1& =  -[    \pi_\gq \ad{ (\pi_+ \bar u)^*}   ,  \pi_\gq
       \ad{(\pi_+\bar u }   ] \\
   R_2& =- \ad{\pi_0[u,\bar u]} \\
    R_3& =\pi_\gq \left(\ad{\pi_+[\bar u^*, u]} \right) +\pi_\gq \left(\ad{\pi_+[\bar u, u]} \right).
 \end{align*} 
 To calculate  $K(u,v)$ from this, we follow  the proof 
  of \cite[Theorem 4.1]{pdmixed}  and  calculate  the terms $h(R_j v,v)$ for $j=1,2,3$ of the biholomorphic sectional curvature.
 With $\|  - \|= \|  - \|_F$  the Hodge norm on $\endo{H}$     we have 
 
    \begin{align*}
 h(R_1 v,v) &= \underbrace{ -\| \pi_\gq [ \pi_+(\bar u),v]  \|^2 }_{A_1}  +
 \underbrace{\| \pi_\gq [(\pi_+\bar u)^*  ,v] \|^2}_{A_2} \\
  h(R_2 v,v)   & = - h([ \pi_0 [u,\bar u],v],v)= A_3\\
h(R_3 v,v)&=  -\re  h( \pi_\gq [\pi_+ [u,\bar u] , v] ,  v) .
 \end{align*} 
 To calculate  $A_3$ remark that 
  $\pi_\gq(\ad{\pi_0[ u, \bar u]}) =\ad{ [u^{-1,1},  (u^{-1,1})^*]}$ and so
\[
 A_3=h(R_2 v,v) = \|[u^{-1,1},v]\|^2-\|[(u^{-1,1})^*,v]\|^2.  
 \]
Next, observe that   $ [(u^{-1,1})^ *,v] \in U^0_F$    and $\pi_\gq [\pi_+\bar u, v]\in  U^{-1}_F $ have different bidegrees 
and hence are   mutually orthogonal   
with sum equal to  $  [\pi_+\bar u, v]$. Consequently, 
\[
- \|[ (u^{-1,1})^  *,v]\|^2 \underbrace{-\|\pi_\gq [\pi_+\bar u, v]\|^2}_{A_1}   =- \|  [\pi_+\bar u, v]\|^2.   
 \]
 The result follows.
 \end{proof}
 
We consider now Eqn.~\eqref{eqn: plurisubh} in the present situation. As a consequence of Eqn.~\eqref{eqn:OnMetConn} and 
 Proposition~\ref{prop:BisectCurvMixed}, we have 
 \begin{equation*} 
\left \{ 
\begin{array}{rcl}
  \del_{u} \del_{\bar{u} } \|v\|^2  &=&\|  \mathsf D _u \eta \| ^2  - K (u,v) \\
 &    =&  \| \del_u   v (s) +\pi_\gq [\pi_+ \bar u^*,v]\|^2 -  \|  \pi_\gq [\pi_+  \bar u^*,v] |^2 +   \|[\pi_+\bar u, v] \|^2 \\
& &\hspace{1em}  -\|[u^{-1,1},v]\|^2    + \re  h( \pi_\gq [\pi_+ [u,\bar u] , v],v)  .
\end{array}
 \right.
\end{equation*}
If  $ \pi_\gq [\pi_+  \bar u^*,v]$ and   $v$ are orthogonal,
this simplifies to give 
\begin{align}
\label{eqn:PSHform}
\begin{array}{rcl}
 \del_{u} \del_{\bar{u} } \|v\|^2 &=&\| \del_\xi   v (s) \|^2 +     \|[\pi_+\bar u, v] \|^2   \\
 && \quad - \left(
  [u^{-1,1},v]\|^2   + \re  h( \pi_\gq [\pi_+ [u,\bar u] , v]),v \right).
\end{array}
\end{align} 

A direct consequence of  \eqref{eqn:PSHform} and Lemma~\ref{lem:FlatDefs} gives  the following  variant of Proposition~\ref{prop:WhenFlatPure}  in  the mixed case:
\begin{prop}  \label{prop:OnPSH} Let there be given a graded polarized mixed  variation of Hodge structure    $(\cH, Q,\cF)$ 
over a quasi-projective complex manifold $S$. 
Let $\eta$  a holomorphic section of      $\cU^{-1}(\qendo{\cH})$.
For  $s\in S$,  let $v=\eta(s)$, viewed  as a horizontal tangent vector at $F=F(s)\in D$.
Suppose that for all    $u \in T^\hor _F D$ tangent to the image of the period map at all images  $F\in D$ of the period map,   one has
\begin{align}
  [ u^{-1,1} ,v ]        &= 0    \label{eqn:HarmFunct1}   \\
    h(\pi_\gq [\pi_+  \bar u^*,v],v) &=  0      \label{eqn:HarmFunct2}\\
    \re h( \pi_\gq [\pi_+ [u,\bar u] , v], v )&= 0 \label{eqn:HarmFunct3}   .
 \end{align}
 Then the function $\| v \|^2$
is plurisubharmonic and, if bounded (and hence constant), we have 
\begin{align}
\del_\xi    v & =[\pi_+\bar u, v]=0  . \label{eqn:HarmFunct4}
\end{align}
If, moreover,   $[u,v]=0$, $\eta$ is a flat section.

Conversely, if $\eta$ is flat, then  $\| v(s) \|$ is constant and \eqref{eqn:HarmFunct4} holds.
\end{prop}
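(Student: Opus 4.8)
The plan is to derive the identity for $\del_\xi\del_{\bar\xi}\|v\|^2$ directly from the Bochner formula~\eqref{eqn: plurisubh}, the formula~\eqref{eqn:OnMetConn} for the Chern connection, and the curvature formula of Proposition~\ref{prop:BisectCurvMixed}, and then read off each assertion. First I would substitute $R_\mathsf D(u,\bar u)=R_h(u,\bar u)$ acting on $v$, so that $h(R_\mathsf D(u,\bar u)v,v)=K(u,v)$ with $K(u,v)$ as in Proposition~\ref{prop:BisectCurvMixed}, and $\mathsf D_u\eta = \del_\xi v - \pi_\gq[(\pi_+\bar u)^*,v]$ from~\eqref{eqn:OnMetConn}. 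Expanding $\|\mathsf D_u\eta\|^2$ gives $\|\del_\xi v\|^2 + \|\pi_\gq[(\pi_+\bar u)^*,v]\|^2 - 2\re h(\del_\xi v,\pi_\gq[(\pi_+\bar u)^*,v])$, and the middle term cancels against the $+\|\pi_\gq[(\pi_+\bar u)^*,v]\|^2$ appearing in $-K(u,v)$; this is exactly the step recorded in the displayed formula before~\eqref{eqn:PSHform}. Imposing the hypothesis~\eqref{eqn:HarmFunct2}, i.e. orthogonality of $\pi_\gq[\pi_+\bar u^*,v]$ and $v$, is what lets us drop the cross term $\re h(\del_\xi v, \dots)$ — more precisely, one must check that the cross term that survives after the first cancellation is the one controlled by~\eqref{eqn:HarmFunct2}; this requires care and is, I expect, the main bookkeeping obstacle. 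The outcome is precisely~\eqref{eqn:PSHform}.

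Next I would invoke the remaining two hypotheses. Setting~\eqref{eqn:HarmFunct1}, $[u^{-1,1},v]=0$, kills the term $\|[u^{-1,1},v]\|^2$, and~\eqref{eqn:HarmFunct3} kills $\re h(\pi_\gq[\pi_+[u,\bar u],v],v)$, so~\eqref{eqn:PSHform} reduces to
\[
\del_u\del_{\bar u}\|v\|^2 = \|\del_\xi v\|^2 + \|[\pi_+\bar u,v]\|^2 \ge 0,
\]
which is the plurisubharmonicity assertion (applying this at every $F$ in the image and pulling back along the period map as in Section~\ref{sec:pure}). If in addition $\|v\|^2$ is bounded, then since $S$ is quasi-projective it admits no non-constant bounded plurisubharmonic functions (by \cite{psh}), so $\|v\|^2$ is constant, whence $\del_u\del_{\bar u}\|v\|^2\equiv 0$; because the right-hand side is a sum of two squared norms, both must vanish, giving $\del_\xi v = 0$ and $[\pi_+\bar u,v]=0$, which is~\eqref{eqn:HarmFunct4}.

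For the flatness claim, I would use Lemma~\ref{lem:FlatDefs}: $\nabla_\xi v = \del_\xi v + \ad u\, v$ and $\nabla_{\bar\xi}v = \pi^{(0)}\ad{\pi_+\bar u}\,v$. We have just shown $\del_\xi v=0$, and by hypothesis $[u,v]=0$, so $\nabla_\xi v=0$; and $\pi^{(0)}\ad{\pi_+\bar u}\,v$ is a component of $[\pi_+\bar u,v]$, which vanishes by~\eqref{eqn:HarmFunct4}, so $\nabla_{\bar\xi}v=0$ as well. Since $\xi$ ranges over all of $T_sS$ and this holds at every $s$, $\eta$ is flat. For the converse, if $\eta$ is flat then $\nabla\eta=0$; flatness together with the fact that $\mathsf D$ is the $(1,0)$-part of the metric connection forces $\mathsf D\eta = 0$ and the Higgs-type components to vanish (as in the pure case, Lemma~\ref{lem:PluriSubConseq}), so $\del_\xi v=0$ and $[\pi_+\bar u,v]=0$, giving~\eqref{eqn:HarmFunct4}; and $\|v\|^2$ being $\nabla$-parallel with $\nabla$ a metric connection is constant. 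The only genuinely delicate point in the whole argument is the algebraic identification of terms between the Chern-connection formula and the curvature tensor so that the cross terms cancel cleanly under hypothesis~\eqref{eqn:HarmFunct2}; everything else is a direct substitution into~\eqref{eqn: plurisubh} and~\eqref{eqn:PSHform}.
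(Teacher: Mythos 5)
Your forward argument is the paper's own: substitute \eqref{eqn:OnMetConn} and Proposition~\ref{prop:BisectCurvMixed} into the Bochner formula \eqref{eqn: plurisubh}, cancel the two $\|\pi_\gq[(\pi_+\bar u)^*,v]\|^2$ terms to reach \eqref{eqn:PSHform}, kill the remaining negative contributions with \eqref{eqn:HarmFunct1} and \eqref{eqn:HarmFunct3}, conclude plurisubharmonicity, then use constancy of bounded plurisubharmonic functions on a quasi-projective base to force both squared terms to vanish, and finally feed $\del_\xi v=0$, $[u,v]=0$ and $[\pi_+\bar u,v]=0$ into Lemma~\ref{lem:FlatDefs}. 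The cross-term bookkeeping you flag as delicate is exactly the point where the paper invokes the orthogonality hypothesis \eqref{eqn:HarmFunct2}, so there is no divergence there.

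The one genuinely wrong step is in your converse. The Gauss--Manin connection $\nabla$ is \emph{not} a metric connection for the mixed Hodge metric (that failure is the whole point of the mixed theory), and flatness does \emph{not} force $\mathsf D\eta=0$. Separating $\nabla\eta=0$ by $\cU$-degree in \eqref{eqn:Higgs} does give $\del_\xi v=0$, $[u,v]=0$, $\pi^{(0)}[\pi_+\bar u,v]=0$, and, together with $\bar\del\eta=0$, the vanishing of the remaining components of $[\pi_+\bar u,v]$ --- so \eqref{eqn:HarmFunct4} holds. But then $\mathsf D_\xi\eta=\del_\xi v-\pi_\gq[(\pi_+\bar u)^*,v]=-\pi_\gq[(\pi_+\bar u)^*,v]$, which need not vanish for a flat section. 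To obtain constancy of $\|v\|^2$ you must either use the standing hypothesis \eqref{eqn:HarmFunct2}, which gives $\del_\xi\|v\|^2=h(\mathsf D_\xi\eta,v)=-h(\pi_\gq[(\pi_+\bar u)^*,v],v)=0$ because the Chern connection $\mathsf D$ \emph{is} metric and $\eta$ is holomorphic, or appeal to the flatness of the Deligne decomposition of the space of flat sections (Schmid's theorem of the fixed part), which is what the paper implicitly relies on.
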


 \begin{rmk} (1)  
In the cases of interest to us, flat sections are  bounded in the mixed Hodge norm. See Section~\ref{sec:norm-estimates}, although
this  is not the case in general as shown in Subsection~\ref{subsect:higher-normal-functions}.
 \\
 (2) In the pure case the   conditions $[u,v]=0$ and $[u^{1,1},v]=0$ are equivalent and the two remaining conditions hold for type reasons.
 \end{rmk}

 For easy   reference, a section $\eta$  with the property that for all tangent vectors $u$ along $S$ the conditions \eqref{eqn:HarmFunct1}--\eqref{eqn:HarmFunct2} hold,  is called \textbf{\emph{a pluri-sub\-harmonic endomorphism}}.

To give geometric examples where this phenomenon occurs, we first prove:

\begin{prop} In the situation of Proposition~\ref{prop:OnPSH},  assuming that $[u,v]=0$, the endomorphism $\eta$ is plurisubharmonic in the following cases:
\begin{enumerate}
\item the pure case;
\item $\bR$-split variations (e.g. two adjacent weights) in directions  $v =v^{-1,0}$;
\item in the setting of   unipotent variations  (i.e.  $u^{-1,1}=0$)  provided  either $ \Lambda^{-1,-1}=0$ and $ v=v^{-1,0}$,  or
$u \in  \Lambda^{-1,-1}$ and $v^{-1,1}=0$.  
\item variations with $u =u^{-1,1}+u^{-1,-1}$ in directions  $v=v^{-1,-1}$.
\item   two non-adjacent weights, say $0, k$, $|k|\ge 2$ with $h^{0,0}=1$, $h^{p,-p}=0$ for  $p\not=0$,
in directions   $ v= v^{-1, -k+1}$.

\item  A variation of type
\[
\xymatrix{  & I^{0,0} \ar[d]_{u^{-1,-1}} \ar[dr]^{u^{0,-2}}  \\
       I^{-2,0}  \ar[dr]_{u^{0,-2}} & \ar[l]  I^{-1,-1} \ar[l]_{u^{-1,1}}  \ar[l]_{u^{-1,1}}    \ar[d]^{u^{-1,-1}}   &\ar[l]_{u^{-1,1}}  I^{0,-2}    \\
 & I^{-2,-2}
}
 \]   
 in directions  $v=v^{-1,-1}$.
\end{enumerate}
In cases (1), (4)  and (5),  one has $K(u,v)\le 0$. 

In all cases, if  $\| \eta\|$ is bounded, then $\eta$ is parallel for the Gauss--Manin connection.

\label{prop:harmendo}
\end{prop}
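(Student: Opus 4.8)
The plan is to verify, case by case, that the three harmonicity conditions \eqref{eqn:HarmFunct1}--\eqref{eqn:HarmFunct3} appearing in Proposition~\ref{prop:OnPSH} hold automatically for the stated Deligne bidegrees, and then to invoke that proposition to conclude plurisubharmonicity; the parallelism statement then follows from the last clause of Proposition~\ref{prop:OnPSH} once boundedness is assumed (noting $[u,v]=0$ is part of the hypothesis). Throughout I will write $u = \sum_{q\le 1}u^{-1,q}$ for the Deligne decomposition of a horizontal tangent vector (using Lemma~\ref{lem:AbSA} and Lemma~\ref{lem:OnInfPermap}), and similarly decompose $v$; the point is that the bracket $[\,\cdot\,,\cdot\,]$ and the adjoint $*$ both shift bidegrees in a controlled way (the adjoint sends $\endo^{p,q}$ to $\endo^{-p,-q}$ by \eqref{eqn:adjoint-pq}, and $\pi_\gq$, $\pi_+$, $\pi_0$ are the bidegree-support projectors of Figure~\ref{fig:splitt}), so each of the three conditions becomes a statement that a certain sum of bihomogeneous pieces lands outside the relevant support region, forcing it to vanish, or that $h$ pairs pieces of incompatible bidegree and hence gives $0$.

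Concretely: in case (1), the pure case, all the $\gn_\pm$ and $\Lambda^{-1,-1}$ contributions collapse and $[u,v]=0\iff[u^{-1,1},v]=0$, while \eqref{eqn:HarmFunct2}--\eqref{eqn:HarmFunct3} hold for type reasons (as already remarked after Proposition~\ref{prop:OnPSH}); here one additionally recovers $K(u,v)=\|[u,v]\|^2-\|[\bar u,v]\|^2\le 0$ from the hypothesis and the pure curvature formula in Section~\ref{ssec:CurvPure}. In cases (2) and (3), the $\bR$-split or unipotent hypotheses kill $u^{-1,1}$ (so \eqref{eqn:HarmFunct1} is automatic) and restrict $\Lambda^{-1,-1}$ or the bidegree of $v$ enough that $\pi_+\bar u$, $(\pi_+\bar u)^*$ and $[u,\bar u]$ have bidegrees for which the brackets with $v=v^{-1,0}$ (resp.\ $v^{-1,-1}$) either leave $\gq$ or pair trivially against $v$ under $h$; one tracks this on the lattice of $(p,q)$ indices. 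Cases (4), (5) and (6) are handled by the same bidegree bookkeeping but with the specific Hodge-number patterns drawn in the statement: e.g.\ in (5), with only weights $0$ and $k$, $h^{0,0}=1$ and no other $h^{p,-p}$, the weight-zero part of $\geg$ is so small that $\pi_+[u,\bar u]$ and $(\pi_+\bar u)^*$ act on $v=v^{-1,-k+1}$ only through terms of bidegree incompatible with $v$, giving \eqref{eqn:HarmFunct2}--\eqref{eqn:HarmFunct3}, and a direct computation with Proposition~\ref{prop:BisectCurvMixed} shows the surviving curvature terms assemble into $\|[u^{-1,1},v]\|^2 - \|[\pi_+\bar u,v]\|^2 - \re h(\pi_\gq[\pi_+[u,\bar u],v],v)\le 0$; for (6) one reads off the bidegrees from the displayed diagram and checks that $v=v^{-1,-1}$ is annihilated by the offending compositions. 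In every case, once the three conditions are verified, Proposition~\ref{prop:OnPSH} gives plurisubharmonicity of $\|v\|^2$, and boundedness plus the quasi-projectivity of $S$ (via \cite{psh}) forces $\|v\|$ constant, whence \eqref{eqn:HarmFunct4} and, with $[u,v]=0$, flatness of $\eta$.

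The main obstacle I anticipate is the curvature-sign claim ``$K(u,v)\le 0$ in cases (1), (4), (5)'': conditions \eqref{eqn:HarmFunct1}--\eqref{eqn:HarmFunct3} only say that the terms obstructing plurisubharmonicity of the norm vanish, but to get the sharper statement $K(u,v)\le 0$ one must go back to the full expression in Proposition~\ref{prop:BisectCurvMixed} and show that after imposing $[u,v]=0$ and the relevant bidegree restrictions the remaining three terms $\|[u^{-1,1},v]\|^2$, $-\|[\pi_+\bar u,v]\|^2$, $-\re h(\pi_\gq[\pi_+[u,\bar u],v])$ combine to something manifestly $\le 0$ — in the pure case this is the classical computation, but in cases (4) and (5) it requires verifying that $[u^{-1,1},v]$ either vanishes or is dominated by the negative terms, which is where the specific Hodge-number hypotheses do real work rather than just bookkeeping. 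A secondary, more mechanical, difficulty is simply organizing the six bidegree arguments uniformly; I would do this by fixing once and for all the picture in Figure~\ref{fig:splitt}, recording how $\ad$, $*$, and the four projectors move a point $(p,q)$, and then treating each case as a short lattice-support check, relegating the arithmetic to the reader where it is routine.
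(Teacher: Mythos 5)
Your overall strategy coincides with the paper's: decompose $u=\alpha+\beta+\lambda$ and $\bar u=\alpha^*+\epsilon+\delta$ by Deligne bidegree, check conditions \eqref{eqn:HarmFunct1}--\eqref{eqn:HarmFunct3} of Proposition~\ref{prop:OnPSH} case by case, and then quote that proposition together with boundedness to get constancy of the norm and flatness. For cases (1)--(4) and (6) the bidegree bookkeeping you describe is indeed all that is needed and matches what the paper does.

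The gap is in case (5), and it propagates into the claim $K(u,v)\le 0$. There the relevant term is $\pi_\gq[(\pi_+\bar u)^*,v]$ with $(\pi_+\bar u)^*\supset\epsilon^*\in\endo^{0,k}$ and $v\in\geg^{-1,1-k}$; the bracket lands in $\geg^{-1,1}$, which is \emph{not} excluded by the Hodge-number hypotheses, so ``bidegree incompatible with $v$'' only buys you orthogonality to $v$ (enough for \eqref{eqn:HarmFunct2}), not vanishing. But vanishing is exactly what is needed for the curvature statement, because in Proposition~\ref{prop:BisectCurvMixed} the term $\|\pi_\gq[(\pi_+\bar u)^*,v]\|^2$ enters $K(u,v)$ with a \emph{positive} sign — this, not $\|[u^{-1,1},v]\|^2$ (which is already killed by \eqref{eqn:HarmFunct1}), is the obstruction your last paragraph should be worried about. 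The paper disposes of it with a genuinely non-formal argument: for $a\in\endo^{0,k}$ and $b\in\geg^{-1,1-k}$ one shows $c=\pi_\gq(a\comp b)=0$ by testing $c(x)$, $x\in I^{1,k-1}$, against $I^{0,k}$, using that under the hypotheses $h^{p,-p}=0$ ($p\neq 0$), $h^{0,0}=1$ every $y\in I^{0,k}$ is $\bar z$ for some $z\in I^{k,0}$ and that $Q(c(x),z)=-Q(x,c(z))=0$ since $b$ annihilates $I^{k,0}$. This uses the polarization and the reality of the extremal Deligne pieces, not just lattice support, and is the missing ingredient in your outline. (In case (4) the analogous vanishing is easy, since there $(\pi_+\bar u)^*=\alpha$ and $[\alpha,v]=0$ is condition \eqref{eqn:HarmFunct1}.)
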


\begin{proof}    The pure case  is Lemma~\ref{lem:PluriSubConseq}.   In the remaining cases we consider the conditions for $v$  to be pseudo-plurisubharmonic separately. Condition~\eqref{eqn:HarmFunct1} follows either trivially
since  $u^{-1,1}=0$ , or it follows from $[u,v]=0$  since    $u$ has  two Hodge types  while   $v^{-1,1}=0$. 
 
For   conditions \eqref{eqn:HarmFunct2} and  \eqref{eqn:HarmFunct3} we   write  
 \[
 u= \alpha+\beta +\lambda , \quad \alpha=u^{-1,1}, \quad \beta= u^{-1,0},  \quad \lambda =\pi_ \Lambda^{-1,-1} u .
 \]
 Observe that
 \begin{align*}
    \bar u  &= \alpha^* + \epsilon  + \delta , \\
    &\quad  \epsilon   =  \pi^{(0)} (\bar u)   \in \bigoplus _{q\ge 2}  \geg^{0,-q},\\ 
    &\quad  \delta= \pi_{+} (\bar u^{-1,0})\in \geg^{0,-1}  
 \end{align*} 
 and so 
 \begin{align*}
   \pi_\gq [ \pi_+[  u, \bar u] ,v] &= [[\beta,\alpha^*],v]+ [[\lambda,\alpha^*],v]  ,
   \\& \quad [\beta,\alpha^*] \in \geg^{0,-1},\\
   &\quad [\lambda,\alpha^*]\in  \bigoplus_{k\ge 2}   \geg^{0,-k} .
\end{align*}
First  consider condition \eqref{eqn:HarmFunct3}. In case (3)  one has
$  \pi_+ [u,\bar u]=0$. In case (2),  $\pi_+ [u,\bar u]= [\beta,\alpha^*] $ has bi-degree $(0,-1)$ and  so sends $v=v^{-1,0}$ to $0$.
In case (4)  and (6), $  \pi_+ [u,\bar u] = [\lambda,\alpha^*] $ has bi-degree $(0,-2)$ and so sends $v=v^{-1,-1}$ to zero.
In  case (5) $  \pi_+ [u,\bar u]$ has bi-degree $(0,-k)$
and so  sends  $v=v^{-1,1-|k|}$to zero.  

Next, consider \eqref{eqn:HarmFunct2} and remark that 
   \begin{align*}
  (\pi_+ \bar u)^* &= \alpha+ \epsilon^* + \delta^*, \\
 		 & \epsilon^*  \in \bigoplus _{q\ge 2}  \endo^{0,q},\quad 
		   \delta^* \in \endo^{0,1}.
 \end{align*}

 \begin{enumerate}[leftmargin=2em]
\item In the $\bR$-split case, $\epsilon=0$. In  $\pi_\gq [(\pi_+ \bar u) ^*,v]$ the terms of bi-degree   $(-1,1)$ come  from  $[\delta^*, v^{-1,0}]+ [\epsilon^*,\pi_ {\Lambda^{-1,-1}} v]$.
This proves \eqref{eqn:HarmFunct2} since then   $\pi_\gq [(\pi_+ \bar u) ^*,v] =[u^{-1,1}, v^{-1,0}] +[\delta^*, v^{-1,0}]$ has bi-degree $(-2,1)+(-1,1)$ and hence is orthogonal to $\del_\xi   v$ since it has bi-degree $(-1,0)$.

\item In the unipotent situation we also have $\epsilon=0$ and  now    $\pi_+ \bar u^* =   \delta^*$ which vanishes if $
u\in  \Lambda^{-1,-1}$ and else has pure type $(0,1)$. But then 
  $  \pi_\gq [(\pi_+ \bar u) ^*,   v^{-1,0} ]$ has bi-degree  $ (-1,1)$ and so is orthogonal to $ v=v^{-1,0} + v_ {\Lambda^{-1,-1}}$. 
  \item In   this case $\epsilon=0$ and $\delta=0$,
 we find that $(\pi_+ \bar u) ^*=\alpha$  so   that $\pi_\gq [(\pi_+ \bar u) ^*, v]=[u^{-1,1},v]=0$  which is condition \eqref{eqn:HarmFunct1} and  we just proved it.
  
 \item  Here  we show that $\pi_\gq [(\pi_+ \bar u) ^*, v]=0$ using:
   \begin{figure}[htbp]

\begin{center}
\begin{tikzpicture}[scale=0.35] [>=stealth]
\draw[step=0.3cm,   dotted] (-5 ,2)-- (6 ,2);
 \draw[fill=black]  (6,2) circle [radius=0.1cm];  \node at ( 6.3, 2.5)  {$ I^{ k,0  } $};

 \draw[fill=black]  (-3,2) circle [radius=0.1cm];  \node at ( -2.2, 2.5)  {$ I^{ 1, k-1  } $};
 \draw[fill=white]  (-5,2) circle [radius=0.1cm];  \node at ( -4.5, 2.5)  {$ I^{0, k} $};
  \draw[fill=white]  (4,-2) circle [radius=0.1cm];  \node at ( 5, -1.5)  {$ I^{k-1, 1-k} =0$};

\draw[fill=black]  (-5,-2) circle [radius=0.1cm];  \node at ( -4 , -1.5)  {$ I^{0,0} $};
  \draw [color=red, ->] (-5,-1.9) -- (-5,1.8);
 \draw [color=blue,->] (-3,1.9) -- (-4.9,-1.9);
\draw [color=blue,->] (5.9,1.9) -- (3.9,-1.9);

   \node at (-5,0)  {$a$}; 
  \node at (-3.3,0)  {$b$}; 
\node at (5 ,0)  {$b=0$}; 
  \end{tikzpicture}
 \end{center}
\end{figure}
\end{enumerate}

\begin{lemma} Let $a \in \endo^{0,k}$,  $b \in \geg^{-1,1-k  }$ and let $c= \pi_\gq (a\comp b) \in \geg^{-1,1}$. Suppose
  that  $h^{p,  q}=0$  unless $p+q=k\ge 1$ or $p=q=0$.
  Then $c=0$.
  \end{lemma}
  \begin{proof} Let $x\in I^{ 1,k-1 } $. Then $c(x) \in I^{ 0,k}$. 
  To show that $c(x)=0$ it suffices to show that it is orthogonal to $I^{0,k}$. Observe that every element   $y\in I^{0,k}$ is of the form
  $y=\bar z$ for some $z\in I^{k,0 }$ which is the case because of the assumption on the Hodge numbers.
  But  $\pm \ii ^k h(c(x), y)= Q(c(x), z)= - Q(x,c(z))=0$ since $b(z)=0$.
  \end{proof}
 We apply this lemma with     $a=\pi_+ \bar u^*= \epsilon^* \in \endo^{0,k}$, $b= v \in \geg^{-1,1-k}$.\\

\begin{enumerate}[leftmargin=2em,resume]
 \item The last case is clear from type considerations.

\end{enumerate}

For the assertion about the curvature, observe that the only term in the expression for $K(u,v)$ given in Proposition~\ref{prop:BisectCurvMixed} that causes trouble is 
$\pi_\gq [(\pi_+\bar u)^*  ,v]$ which, as we showed above,  vanishes in cases  (4)  and (5). 
 \end{proof}
 
 \subsection[Geometric examples of plurisubharmonic endomorphisms]{Horizontal plurisubharmonic endomorphisms: geometric examples}
 
 \label{ssec:PSH}

 We  indicate how  some of the geometric examples mentioned in the introduction fit in with the cases exhibited in 
 Proposition~\ref{prop:harmendo}.
  
  \begin{enumerate}[leftmargin=1.5em]
 
\item   \textbf{\emph{Normal functions.}}
We explain how to  interpret   a classical normal function as a variation of $\bZ$-mixed Hodge structure.
Suppose that $X=X_o$ is a smooth projective variety. A homologically trivial
algebraic $p$-cycle  $Z$ in $X$ canonically determines an extension 
\[
\nu_Z\in \ext^1 _ \mhs (\bZ(0),H_{2p+1}(X,\bZ(-p))).
\]
in the category of $\bZ$-mixed Hodge structures by pulling back 
 the exact sequence
\[
0 \to H_{2p+1}(X,\bZ(-p)) \to H_{2p+1}(X,Z,\bZ(-p)) \to H_{2p}(Z,\bZ(-p))
\to \cdots
\] 
 along the inclusion $\bZ(0) \into H_{2p}(Z,\bZ(-p))$  sending $1$  to the class of $Z$. It is well known (cf. \cite{carlson})
 that 
\[
\ext^1 _ \mhs (\bZ(0),H_{2p+1}(X,\bZ(-p))) \simeq J^p(X),
\] 
the intermediate Jacobian of $X$. The point in $JH_{2p+1}(X,\bZ(p))$ corresponding to the cycle $Z$
under this isomorphism is $\int_\Gamma$, where $\Gamma$ is a
real $2p+1$ chain that satisfies $\del \Gamma = Z$.

If $X=X_o$ varies in a smooth family $X_{s}$ with smooth base $S$, say $\pi:X\to S$, the groups $H_{2p+1}(X_s,\bZ(-p))$ form a local system $\underline H_{2p+1}(-p)$ defining a variation of Hodge structure. The intermediate
Jacobians vary holomorphically, and glue together to give  the relative
intermediate Jacobian $J^p(X/S)$.   

Suppose that $Z$ is an algebraic cycle in
$X$ which is proper over $S$ of relative dimension $p$ and such that $Z_s$ the fiber over $s\in S$ is homologous to zero.
Then $Z_s$ defines a point $\nu_{Z_s}$ 
in the intermediate Jacobian $J^p(X_s)$. These give a holomorphic section $\nu_Z$ of $J^p(X/S)$,
and this is the classical normal function. It can be viewed as an extension
\[
\ext^1 _ \vmhs (\bZ(0),  \underline H_{2p+1}(-p))   
\]
in the category of variations of mixed Hodge structures. 
Such a variation has two adjacent weights $0,-1$ and by case (2) of Proposition~\ref{prop:harmendo}, $\|v^{-1,0}\|$ is plurisubharmonic. 
For this example  the term  $\pi_\gq [(\pi_+\bar u)^*  ,v]$  need not vanish and so we cannot conclude 
  from Proposition~\ref{prop:BisectCurvMixed}  that  $K(u,v)\le 0$. However, a more sophisticated argument as in \cite[proof of Prop. 6.2]{pdmixed} reveals that $K(u^{-1,0} ,v^{-1,0})\le 0$.  

 \item   \textbf{\emph{Hodge--Tate variations.}} Only extension data can be deformed. These are deformations with $v=v^{-1,-1}$.
 Case (3) of
 Proposition~\ref{prop:harmendo} shows  that  $\|v^{-1,-1}\|$ is harmonic. Of course the biholomorphic curvature is $0$ since $D$ is flat.
As a simple example of a $1$-parameter variation, suppose
$h^{-1,-1}= 2, h^{0,0}=1$ and let $\set{e_1,e_2,e_3}$ be a basis of the
lattice $H_{\mathbb Z}$.
Let $F_o$ denote the reference filtration  
 \[
  I^{0,0}_{(F_o,W)} = \bC e_1,\qquad I^{-1,-1}_{(F_o,W)} = \bC e_2\oplus\bC e_3.
 \]
Then  the period domain $D=G/G^{F_o}$ is isomorphic to the unipotent group $U_\bC$
consisting of the matrices 
\warn{$$ g_{a,b}= \begin{pmatrix}
 1& 0& 0 \\
 a &1 &0 \\
 b&  0 &1 
  \end{pmatrix},\qquad a,b\in \bC
  $$ }%
via the action of $U_\bC$ on $F_o$.
Consider the period map  $\bC^*\to \Gamma\backslash \bC^2$ given by
$u\mapsto g_{\log u , 0}.F_o$ and with  monodromy group $\Gamma$ the
 unipotent group consisting of elements $g_{a,0}\in G$, $a\in \bZ$.
 This variation clearly has a deformation   leading to a variation over
 $\bC^*\times \bC$ given by
 the map $(u,v)\mapsto g_{\log u, v}.F_o$.
 
 Contrast this with the following example of a biextension of Hodge Tate type
 with Hodge numbers $h^{0,0}= h^{-1,-1}=h^{-2,-2}=1$.
 Let  $\set{e_1,e_2,e_3}$ be a basis of $H_{\mathbb Z}$.  Let $F_o$ denote
 the reference filtration such that  
\warn{
\[
     I^{-2,-2}_{(F_o,W)} = \bC e_3,\qquad I^{-1,-1}_{(F_o,W)} = \bC e_2,\qquad
     I^{0,0}_{(F_o,W)} = \bC e_1
\] 
}
The period domain $D=G/G^F$ is isomorphic to the unipotent group $U_\bC$
consisting of matrices of the form
 \warn{ \begin{align*}
 g_{a,b,c}= \begin{pmatrix}
 1& 0 & 0\\
 a & 1 & 0\\
 c&b&1 
 \end{pmatrix},\quad a,b,c \in \bC 
 \end{align*}}%
by the action of $U_\bC$ on $F_o$.  Let $E_{ij}$ denote the
$3\times 3$ matrix whose only non-zero entry is $1$ in row $i$ and
column $j$.  Then, the Lie algebra of $U_\bC$ is equal to
$\geg^{-1,-1}\oplus\geg^{-2,-2}$ where $\geg^{-1,-1}$ is spanned by
$u_0 = E_{21}$ and $u_1 = E_{32}$ while $\geg^{-2,-2}$ is spanned by
$u_2 = E_{31} = [u_1,u_0]$.

Now a period map can be  given locally as $z \mapsto \exp (\Gamma (z)) . F_o$
 where 
 \[
 \Gamma(z) = f_0 u_0 + f_1 u_1 + f_2 u_2 \implies
  \exp (\Gamma(z) )=\begin{pmatrix}
  1&  0 &  0\\
  f_0 & 1 & 0 \\
  f_2 + \half f_0 f_1 & f_1 &1
 \end{pmatrix}.
 \]
If it is injective  we may assume that $f_0=z$. The commutativity condition for horizontal directions
  gives $df_0\wedge df_1=0$ and so
$f_1= \varphi(z)$ for some function $\varphi$ with  $ \varphi(0)=0$. 
 The   horizontality condition gives $ df_2+\half( f_1df_0+f_0df_1)=0$ and so $f_2=\psi(z)$ for some function $\psi$ with  $ \psi(0)=0$. This implies that a non-trivial injective period map has a curve as its image
 and  hence must be  rigid. 
 
 As a concrete example we take the Hodge--Tate variation associated to the dilogarithm. Here 
 $S=\bP^1\setminus\set{0,1,\infty}$ with   
 global coordinate $s$. The period map
 \[
 \bP^1\setminus\set{0,1,\infty} \to U_\bZ\backslash D
 \] is then given by  the functions
  $f_0(s)= -(\log 2+\log(1-s))$ and $f_1(s)= \log 2 +\log s$ which vanish at $s=\half$.
  The horizontality condition gives
  \[  f_2(s) =-\half  \int_{\half}^s\left( \frac{\log t}{1-t} + \frac{\log (1-t) }{ t}  \right) dt =-\half \text{Li}_2 s+\half \text{Li}_2(1-s) .
 \]

  \item   \textbf{\emph{Variations  of mixed Hodge structures attached to  fundamental groups.}} 

    Let us briefly explain which variations we are considering. Let $X$ be a
    smooth algebraic variety and let
 $J_x$  be  the kernel of the ring homomorphism $\bZ\pi_1(X,x)\to \bZ$ given by 
 $\sum n_\gamma \gamma \mapsto  \sum n_\gamma$,  $\gamma\in \pi_1(X,x)$. 
There are mixed Hodge structures on $J_x/J_x^n$   which depend on the base point $x\in X$. 
For $n=3$ these can be explicitly described, following
 \cite[Section 6]{hainfunda}: the  mixed Hodge structure on the dual,  $\hom_\bZ(J_x/J_x^3,\bC) $ is an extension
\[
0\to H^1(X)\to \hom_\bZ(J_x/J_x^3,\bC) \mapright{p}  \ker(H^1(X)\otimes H^1(X)\to H^2(X)) \to 0,
\]
provided  $H_1(X)$ is torsion free.
Here we want pure Hodge structures and this forces $H^1$ to be of pure weight $\ell=1$ or $2$ and weight $H^2=2\ell$.
Geometric examples include $X$ smooth projective or $X=\bP^1 \setminus \Sigma$, $\Sigma$ a finite set of points. The extension depends on $x$, but
the  two pure Hodge structures remain fixed so that $u^{-1,1}=0$ and we are in the unipotent situation with $v=v^{-1,-\ell+1}$, $u=u^{-1,-\ell+1}$.
If $\ell=1$ we have $ \Lambda^{-1,-1}=0, v=v^{-1,0}$ and if $\ell=2$, $u,v\in  \Lambda^{-1,-1}$ and so case (3) of
 Proposition~\ref{prop:harmendo} shows  that $\|v^{-1,-\ell+1}\|$ is 
plurisubharmonic.  One can  directly verify that also $K(u^{-1,-\ell+1},v^{-1,-\ell+1})\le 0$.

 \item
   \textbf{\emph{Nilpotent orbits associated to  K\"ahler classes.}}
 As explained in the introduction, these variations have Hodge types  $(-1,1)$ and $(-1,-1)$. 
  However, $v$ can a priori have any type $(-1,k)$, $k\le 1$.
By case (4) of Proposition \ref{prop:harmendo},   endomorphisms  for which   $v= v^{-1, -1} $  are plurisubharmonic and  
then $K(u,v)\le 0$.  Note that for a family of projective manifolds over a quasi-compact 
base $S$ we can assume that we have a variation of  integral Hodge structures 
  polarized by a family of independent flat  integral K\"ahler classes (corresponding to ample divisors).
 
 \item \textbf{\emph{Higher normal functions.}}   

Let   $\pi: X\to S$ be  a smooth projective family.
Recall (see the introduction) that a higher normal function is  an extension in   
\[\ext^1 _ \vmhs   (\bQ(0),  R^{p-1}\pi_*\bQ(q)  ), \quad w=p-2q-1<0.
\]
Case (5) of   Proposition~\ref{prop:harmendo} tells us that  $\|v^{-1,w+1}\|$ is plurisubharmonic and  
  $K(u, v^{-1,w+1})\le 0$.
  
\item \textbf{\emph{Biextensions of  bidegrees  $(0,0), (-2,-2), (-2,0), (-1,-1),(0,-2)$.}}  
Case (6) of  Proposition~\ref{prop:harmendo}. shows  that $\|v^{-1,-1}\|$ is plurisubharmonic.
Geometric examples arise as a special case of a more general construction
given by  J. Burgos Gill, S. Goswami and the first author in \cite{hheights}, two higher Chow cycles   in $\cZ^p(X,1)$  
on a $d$-dimensional variety $X$
with $p+q=d+2$ determine in a canonical way  a special type of mixed Hodge structure. For a family of  surfaces, we have $d=2$
and the resulting variation  is of  biextension type with bidegrees  $(0,0), (-2,-2), (-2,0), (-1,-1),(0,-2)$.  
For more details on this example see Section~\ref{subsect:higher-cycles}. 
 
  \end{enumerate} 
  
 \section{Norm Estimates}
\label{sec:norm-estimates}

\par Let $\cH\to\Delta^*$ be an admissible variation of
graded-polarized mixed Hodge structure over the punctured disk $\Delta^*$
with unipotent monodromy $T=e^N$.  Recall that
$\geg(\cH)\subset\cH\otimes\cH^*$ is 
the sub-variation of mixed Hodge structure generated by local sections
which preserve $W$ and induce infinitesimal isometries on $\gr^W$.
In this section, we show that in the cases enumerated below, the mixed Hodge
norm of a monodromy invariant section of $\geg(\cH)$ is bounded.
In section \eqref{subsect:higher-normal-functions}, we show that
$\|N\|$ can be unbounded on $\Delta^*$ for higher normal functions.

\par In section 12 of~\cite{degmhs}, K. Kato, C. Nakayama and S. Usui
prove mixed Hodge norm estimates using their $\slgr 2$-orbit theorem.  However,
the metric used in~\cite{degmhs} involves an artificial twisting of the Hodge
metric on each $\gr^W$, and hence is different than the metric used in this
paper.  In~\cite{asdeg}, The first author and Tatsuki Hayama construct an
intrinsic \lq\lq twisted metric\rq\rq{} on $D$ which which gives the same norm
estimates as~\cite{degmhs} for admissible variations for which the limit MHS
is not split over $\bR$.  The twisted metric considered in~\cite{asdeg}
is only invariant under $G_{\bR}$, i.e. $g\in\exp(\Lambda^{-1,-1}_{(F,W)})$
need not induce an isometry $L_{g*}:T_F(D)\to T_{g.F}(D)$ by left translation.
For this reason, the curvature computations of~\cite{pdmixed} do not apply
to this metric on $D$.

\par The material in this section assumes familiarity with the definition
and basic theory of admissible variations of mixed Hodge structure as
outlined in Appendix \eqref{sec:admis}.

\par Before continuing, we emphasize that if $(F,W)$ is a graded-polarized
mixed Hodge structure with underlying vector space $V$ and
\begin{equation}
  g\in G_{\bR}\cup\exp(\Lambda^{-1,-1}_{(F,W)}),\qquad
  \alpha\in \gll V,
  \label{eq:isom-group}
\end{equation}
then
\begin{equation}
      \|\alpha\|_{(g.F,W)} = \|g^{-1}.\alpha \|_{(F,W)}
      \label{eq:isom-eq}
\end{equation}
where $g.\alpha = \text{\rm Ad}(g)\alpha$.  Indeed, if $\{v_j\}$ is an
unitary frame with the respect to the mixed Hodge metric $h_{(F,W)}$ then
$\{gv_j\}$ is a unitary frame for $h_{(g.F,W)}$.  Therefore,
\[
\aligned
      \|\alpha\|_{(g.F,W)}^2
      &= \sum_j\, h_{(g.F,W)}(\alpha(gv_j),\alpha( gv_j)) \\
      &= \sum_j\, h_{(F,W)}(g^{-1}\alpha(gv_j),g^{-1}\alpha(gv_j)) \\
      &= \sum_j\, h_{(F,W)}((g^{-1}.\alpha)(v_j),(g^{-1}.\alpha)(v_j)) 
       = \|(g^{-1}.\alpha)\|_{(F,W)}
\endaligned
\]
In particular, if $\cH$ is a variation of type $\text{\rm I}$ or
$\text{\rm II}$ as defined in section \eqref{subsect:nf-biext} it will not be
the case that $\geg(\cH)$ is type $\text{\rm I}$ or
type $\text{\rm II}$.  Nonetheless, all of the calculations in section
\eqref{subsect:nf-biext} depend only on \eqref{eq:isom-eq} and a version
of the $\slgr 2$-orbit theorem for nilpotent orbits of type $\text{\rm I}$
or $\text{\rm II}$.

\par By way of notation $z=x+\ii y$ throughout this section.  In the several
variable case $z_j = x_j + \ii  y_j$.

\subsection{Variations of Pure Hodge Structure}\label{subsect:pure}

\par Let $\cH\to\Delta^*$ be a variation of pure Hodge structure over
the punctured disk with unipotent local monodromy.  By Corollary
$(6.7)$ of~\cite{schmid}, the Hodge norm of an invariant class
is bounded.  This result is a consequence of Schmid's $\slgr 2$-orbit
theorem~\cite{schmid}.  If $\cH$ is a variation of pure Hodge structure
then so is $\cH\otimes\cH^*$, and hence if
$\alpha\in\cH\otimes\cH^*$ is monodromy invariant then
$\|\alpha\|$ has bounded Hodge norm.

\par For future use, we recall that by the Monodromy
Theorem (see~Theorem $(6.1)$,~\cite{schmid}) if
$\mathcal H\to\Delta^*$ is a variation of pure Hodge structure with
unipotent monodromy $T=e^N$, then $N^{\ell} = 0$ where $\ell$ is the maximum
number of successive non-zero Hodge summands of $\mathcal H$ (e.g. for a family of
curves of positive genus, $\ell=2$ since $\mathcal H = H^{1,0}\oplus H^{0,1}$).

\par Applied to a variation of graded-polarized mixed Hodge structure
$\cH\to\Delta^*$ with unipotent monodromy $T=e^N$, it follows that
if $\gr^W_{2p}(\cH)$ is pure of type $(p,p)$ then $N$ acts trivially
on $\gr^W_{2p}$.

\subsection{Local Normal Form}\label{subect:local-normal-form}
\par In connection with deformations of mixed period maps and the derivation
of the norm estimates below, we recall the following
(eq. (2.5)~\cite{luminy-hodge}, eq. (6.8)~\cite{higgs}).  

\par Let $(s_1,...,s_{a+b})$ be holomorphic coordinates on the polydisk
$\Delta^{a+b}$ and $\Delta^{*a}\times\Delta^b$ denote the complement of
the divisor $s_1\cdots s_a=0$.  Let $U^a$ denote the $a$-fold product
of the upper half-plane with Cartesian coordinates $(z_1,\dots,z_a)$
and covering map
$
      U^a\times\Delta^b\to\Delta^{*a}\times\Delta^b
      \subset\Delta^a\times\Delta^b
$
given by the formula      
\[
      (z_1,\dots,z_a;s_{a+1},\dots,s_b)\mapsto
      (e^{2\pi \ii z_1},\dots,e^{2\pi \ii z_a},s_{a+1},\dots,s_b)
\]      
Let $\cH$ be an admissible variation of graded-polarized mixed Hodge
structure over $\Delta^{*a}\times\Delta^b$ with unipotent monodromy
$T_j=e^{N_j}$ about $s_j=0$.  Then, (cf. \eqref{eq:admissibility-1}),
admissibility implies that the period map of $\cH$ can be lifted to a
holomorphic, horizontal map of the form 
\begin{equation}
     F(z_1,\dots,z_a;s_{a+1},\dots,s_b) = e^{\sum_j\, z_j N_j}.\psi(s)
      \label{eq:admissibility-3}
\end{equation}
where $\psi(s)$ is a holomorphic map $\Delta^{a+b}\to\check{D}$ with
$\psi(0)=F_{\infty}$.

\par To continue, define:
\begin{equation}
       \mathcal C = \{\,\sum_j\, \lambda_j N_j \mid
                         \lambda_1,\dots,\lambda_a>0\,\} \label{eq:cone-1}  
\end{equation}
Then, by admissibility, there exists an increasing filtration $M(\mathcal C,W)$
such that if $N\in\mathcal C$ then $M(N,W)$ equals $M(\mathcal C,W)$.
The results of Kashiwara show that if $\cH$ is admissible then
$(F_{\infty},M)$ is a mixed Hodge structure relative to which each $N_j$ is
a $(-1,-1)$-morphism.  Moreover, if $\geg_{\bC}$ is the Lie algebra
attached to the period map \eqref{eq:admissibility-3} of $\cH$, then
$(F_{\infty},M)$ induces a graded-polarizable mixed Hodge structure on
$\geg_{\bC}$.

\par In particular, if
$\geg_{\bC} = \oplus_{p,q}\,\geg^{p,q}$ is the Deligne
bigrading induced by $(F_{\infty},M)$, then 
\begin{equation}
      \gq = \bigoplus_{p<0}\,\geg^{p,q}
       \label{eq:admissibility-4}
\end{equation}
is a vector space complement to the stabilizer
$\geg_{\bC}^{F_{\infty}}$ in $\geg_{\bC}$.  Therefore,
after shrinking $\Delta^{a+b}$ as needed, it follows that there exists a
unique $\gq$-valued holomorphic function $\Gamma(s)$ which
vanishes at $0$ such that
\begin{equation}
     \psi(s) = e^{\Gamma(s)}.F_{\infty}    \label{eq:admissibility-5}
\end{equation}  

\par Let $\Gamma_{-1} = \sum_q\,\Gamma^{-1,q}(s)$.  By equation $(6.14)$ and
Theorem $(6.16)$ of \cite{higgs}, the function $\Gamma_{-1}(s)$ satisfies the
following integrability condition
\begin{equation}
  \left[N_j + 2\pi \ii s_j\frac{\partial\Gamma_{-1}}{\partial s_j},
        N_k + 2\pi \ii s_k\frac{\partial\Gamma_{-1}}{\partial s_k}\right]=0
  \label{eq:admissibility-6}
\end{equation}
for all $j$ and $k$ (with $N_{\ell}=0$ for $\ell>a$).

\par Conversely, given an admissible nilpotent orbit
\[
       \theta(z_1,\dots,z_a) = e^{\sum_j\, z_j N_j}.F_{\infty}
\]
and a holomorphic function
$\Gamma_{-1}:\Delta^{a+b}\to\oplus_q\,\geg_{\bC}^{-1,q}$ which
vanishes at zero and satisfies the integrability condition
\eqref{eq:admissibility-6}, there exists a unique holomorphic function
$\Gamma:\Delta^{a+b}\to\gq$ which vanishes at 0 such that 
\begin{equation}
     F(z_1,\dots,z_a;s_{a+1},\dots,s_b)
     = e^{\sum_j\, z_j N_j}e^{\Gamma(s)}.F_{\infty}   \label{eq:admissibility-7}
\end{equation}
arises from the period map of a variation of graded-polarized mixed Hodge
structure defined for $\Im(z_1),\dots,\Im(z_a)\gg 0$ and
$s_{a+1},\dots,s_b\sim 0$ with $\Gamma_{-1} = \sum_q\,\Gamma^{-1,q}$.
We call \eqref{eq:admissibility-7} the \textbf{\emph{local normal form}} of the period
map.

\begin{rmk}
A published version of \eqref{eq:admissibility-6} and \eqref{eq:admissibility-7}
for variations of pure Hodge structures appears in~\cite{CF}.  The key point
is that the reconstruction of $\Gamma$ from $\Gamma_{-1}$ is really a
statement about the horizontal distribution, and hence applies equally well
to the mixed case.  The full mixed case appears in~\cite{higgs}.
\end{rmk}

\subsection{Hodge--Tate Variations}\label{subsect:hodge-tate}

\par In section~\eqref{subsect:unipot} it will be shown that if
$\cH\to\Delta^*$ is a unipotent variation of mixed Hodge structure
in the sense of R. Hain and S. Zucker then any flat section of $\cH$
has bounded mixed Hodge norm.  In this section, we prove the following
several variable result:

\begin{thm}\label{hodge-tate-estimate} Let
$\cH\to\Delta^{*a}\times\Delta^b$ be an admissible Hodge--Tate
variation with unipotent monodromy $T_j = e^{N_j}$ about $s_j=0$.
Let $v$ be a flat section of $\cH$.  Then, $v$ has bounded
Hodge norm $\|v \|$ with respect to the mixed Hodge metric of
$\cH$. Likewise, if $\alpha$ is a flat section of
$\cH\otimes\cH^*$ has bounded mixed Hodge norm
$\|\alpha\|$.
\end{thm}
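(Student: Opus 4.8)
The plan is to reduce the several-variable Hodge--Tate estimate to a computation in the local normal form \eqref{eq:admissibility-7}, exploiting the fact that for a Hodge--Tate variation the Deligne bigrading is concentrated on the diagonal, i.e. $I^{p,q}=0$ unless $p=q$, and hence $\geg^{p,q}(\cH)=0$ unless $p=q$ and $p\le 0$. Consequently $\gq=\Lambda^{-1,-1}_{(F_\infty,M)}$ and the function $\Gamma(s)$ of \eqref{eq:admissibility-5} takes values in $\bigoplus_{p\le -1}\geg^{p,p}$; moreover $\Gamma_{-1}=\Gamma^{-1,-1}$. First I would record that, because each $N_j$ is a $(-1,-1)$-morphism of $(F_\infty,M)$ and the bigrading is diagonal, the exponents $e^{\sum_j z_j N_j}$ and $e^{\Gamma(s)}$ lie in $\exp(\bigoplus_{p\le -1}\geg^{p,p}) \subseteq G_{\bR}\cup\exp(\Lambda^{-1,-1}_{(F_\infty,M)})$ componentwise --- more precisely, their product $g(z,s)$ is a unipotent element built from $(p,p)$-pieces with $p\le -1$, so it is an admissible ``isometry-type'' transformation in the sense of \eqref{eq:isom-group}. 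This is what allows \eqref{eq:isom-eq} to be invoked.

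\textbf{Key steps.}
(1) Write $F(z,s)=g(z,s).F_\infty$ with $g(z,s)=e^{\sum_j z_jN_j}e^{\Gamma(s)}$, and observe that the mixed Hodge metric on $\cH$ at the point $F(z,s)$ is, by \eqref{eq:isom-eq}, the metric $h_{(g.F_\infty,M)}$; for a flat section $v$ one has $\|v\|_{F(z,s)} = \|g(z,s)^{-1}v\|_{(F_\infty,M)}$ (after trivializing the local system, flat sections are constant vectors). (2) Decompose $g^{-1}=e^{-\Gamma(s)}e^{-\sum_j z_jN_j}$; since all relevant operators are nilpotent of type $(p,p)$, $g^{-1}v$ is a polynomial in the $z_j$ and in $\Gamma(s)$ applied to $v$. (3) The point is that the mixed Hodge metric $h_{(F_\infty,M)}$ assigns to the $\geg^{p,p}$-component of an endomorphism a weight, via \eqref{eqn:grhmetric}, that \emph{decreases} as $p$ decreases: moving down by $(p,p)$ corresponds to multiplying by $y_j^{p}$-type factors in the comparison with the $\slgr2$-splitting, so that the polynomial growth in $\log|s_j|$ is exactly compensated. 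Concretely, one uses the $\slgr2$-orbit theorem (in the several-variable form of Cattani--Kaplan--Schmid as adapted to the mixed, Hodge--Tate case, or directly the fact that on each $\gr^W_{2p}\cong\bR(p)$ the Hodge metric scales by a monomial in the $y_j$) to see that $\|e^{-\sum z_jN_j}v\|_{(F_\infty,M)}$ grows at most polynomially in the $y_j$ while the downward weight shift gives a matching decay; the $e^{-\Gamma(s)}$-factor contributes a bounded (holomorphic, vanishing at $0$) correction. (4) Conclude that $\|v\|_{F(z,s)}$ is bounded as $y_1,\dots,y_a\to\infty$ and $s_{a+1},\dots,s_b\to 0$. (5) For $\alpha$ a flat section of $\cH\otimes\cH^*$, apply the same argument to the induced Hodge--Tate variation on $\cH\otimes\cH^*$ (which is again Hodge--Tate), since \eqref{eq:isom-eq} was stated for $\alpha\in\gll V$ precisely to cover this case; alternatively note $\|\alpha\|_{F(z,s)} = \|\mathrm{Ad}(g(z,s))^{-1}\alpha\|_{(F_\infty,M)}$ and repeat.

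\textbf{Main obstacle.}
The hard part will be step (3): making precise that the downward weight shift in the Deligne bigrading exactly kills the polynomial blow-up coming from $e^{\sum z_jN_j}$ with $z_j\sim \log s_j$. In the pure case this is Schmid's norm estimate; here one needs the mixed analogue, and the subtlety is that the mixed Hodge metric used in this paper is \emph{not} the twisted metric of \cite{degmhs} or \cite{asdeg}, so one cannot simply cite those norm estimates. The cleanest route is probably to observe that for a Hodge--Tate variation the metric on $\gr^W$ is a direct sum of one-dimensional pieces $\bR(p)$, on each of which Schmid's estimate applies trivially, and then to use \eqref{eqn:grhmetric} together with the explicit block-triangular shape of $g(z,s)$ (unipotent, with $(p,p)$-blocks) to bound each matrix entry of $g^{-1}$ against the ratio of the corresponding graded metric weights. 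This is essentially a bookkeeping argument once the diagonal structure of the bigrading is in hand, but getting the inequalities to line up in the several-variable case --- where one must control all monodromy cones $\mathcal C$ simultaneously and use the relative weight filtration $M(\mathcal C,W)$ --- is where the real work lies.
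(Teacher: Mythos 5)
There is a genuine gap, and it sits exactly where you locate your ``main obstacle'': step (3) is both unnecessary and, as proposed, would fail. The cancellation mechanism you invoke --- a downward shift in the Deligne bigrading producing $y_j^{p}$-type decay that compensates the polynomial growth of $e^{-\sum_j z_jN_j}v$ --- is a feature of the \emph{twisted} metrics of Kato--Nakayama--Usui and of \cite{asdeg}, not of the mixed Hodge metric \eqref{eqn:grhmetric} used in this paper. For the present metric the norm on $I^{p,p}$ carries no $y$-dependence at a fixed $F$: by \eqref{eq:isom-eq}, acting by $g\in\exp(\Lambda^{-1,-1})$ simply transports the metric, so $\|e^{-\sum_j z_jN_j}v\|_{(e^{\Gamma(s)}.F_\infty,W)}$ is an honest polynomial in the $z_j$ with \emph{no} compensating decay; if that polynomial were nonconstant the norm would be unbounded (compare Lemma~\ref{lem:UnBounded}, which exhibits precisely this kind of blow-up). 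So the bookkeeping argument you sketch cannot close.

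The one fact that makes the theorem true, and which your proposal never uses, is that a flat section of $\cH$ over $\Delta^{*a}\times\Delta^b$ is monodromy invariant, hence lies in $\bigcap_j\ker(N_j)$, so that $e^{-\sum_j z_jN_j}v=v$ identically: the potentially unbounded factor acts trivially. The paper's proof is then short: (i) by the Monodromy Theorem each $\gr^W_{2p}$ is pure of type $(p,p)$, so every $N\in\mathcal C$ acts trivially on $\gr^W$; by Prop.\ (2.14) of \cite{SZ} and admissibility this forces $M=W$ and $N_j(W_\ell)\subset W_{\ell-2}$, whence $F_\infty\in D$ and $N_j\in W_{-2}\geg_\bC=\Lambda^{-1,-1}_{(F,W)}$ for \emph{every} $F\in D$ by \eqref{eq:hodge-tate-1}; (ii) therefore \eqref{eq:isom-eq} and flatness give $\|v\|_{(F(z;s),W)}=\|e^{-\sum_j z_jN_j}v\|_{(e^{\Gamma(s)}.F_\infty,W)}=\|v\|_{(e^{\Gamma(s)}.F_\infty,W)}$, which is bounded because $e^{\Gamma(s)}.F_\infty$ ranges over a compact subset of $D$. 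No $\slgr 2$-orbit theorem and no Schmid-type norm estimate enters. Your steps (1), (2) and (5) are compatible with this, but you should also justify $M=W$ explicitly: asserting that the limit bigrading is diagonal presupposes that the limit mixed Hodge structure is Hodge--Tate, which is exactly the statement $M=W$ and requires the monodromy-theorem argument above.
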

\begin{proof}  By Prop. (2.14),~\cite{SZ} if $N$ acts trivially on $\gr^W$
then $M=M(N,W)$ exists if and only if $N(W_{\ell})\subseteq W_{\ell-2}$ for
all $\ell$, wherefrom $M=W$.

\par To continue, recall $\cH$ is Hodge--Tate means $\cH^{p,q}=0$
if $p\neq q$.  Therefore, by the Monodromy theorem discussed at the end of
\eqref{subsect:pure}, it follows that $N$ acts trivially on $\gr^W$.

\par In particular, it follows from the previous paragraph that each
$N_j = \log(T_j)$ acts trivially on $\gr^W$.  By admissibility, it
follows there exists a fixed increasing filtration $M=M(\mathcal C,W)$ such
that $M=M(N,W)$ for each $N\in\mathcal C$.  As each $N\in\mathcal C$ acts
trivially on $\gr^W$ it follows that $M=W$ and $N(W_{\ell})\subset W_{\ell-2}$
for each $\ell$.  Accordingly, since $\mathcal C$ consists of arbitrary
positive linear combination of $N_1,\dots,N_a$ it follows
that $N_a(W_{\ell})\subset W_{\ell-2}$ for each index $\ell$.

\par Since $M=W$, it follows that $F_{\infty}\in D$ and hence 
$\psi(s)$ also takes values in $D$.  Moreover, since $D$ classifies
Hodge--Tate structures it follows that for any $F\in D$,
\begin{equation}
  W_{-2}\geg_{\bC} = \bigoplus_{p<0}\,\geg_{(F,W)}^{p,p}
  = \Lambda^{-1,-1}_{(F,W)}
       \label{eq:hodge-tate-1}
\end{equation}
and hence $N_1,\dots,N_a\in\Lambda^{-1,-1}_{(F,W)}$.

\par Relative to the fixed reference fiber $H$ of $\cH$ used to
define the period map into $D$, a flat section of $\cH$
corresponds to an element of $H$ which is contained in $\bigcap_j\ker(N_j)$.
Thus, by equation \eqref{eq:hodge-tate-1},
\[
\begin{aligned}
    \|v\|_{(F(z;s),W)}
    &= \|v\|_{(e^{\sum_j\, z_jN_j}e^{\Gamma(s)}.F_{\infty},W)} \\
    &= \|e^{-\sum_j\, z_j N_j}v\|_{(e^{\Gamma(s)}.F_{\infty},W)} \\
    &= \|v\|_{(e^{\Gamma(s)}.F_{\infty},W)}
\end{aligned}
\] 
where the middle step is justified by the fact that $\sum_j\, z_j N_j$
belongs to $W_{-2}\geg_{\bC}$ and equation \eqref{eq:hodge-tate-1}.
As $e^{\Gamma(s)}.F$ takes values in a compact subset of $D$, it follows from
the last line of the equation that $\|v\|_{(F(z;s),W)}$ is bounded.
The proof for the case of $\cH\otimes\cH^*$ is identical,
except that $\alpha\in\cap_j\,\ker(\ad N_j)$ and 
$e^{-\sum_j\, z_j N_j}v$ is replaced by $e^{-\sum_j\, z_j \ad N_j}\alpha$.
\end{proof}  




\subsection{Gradings and Splittings of Mixed Hodge Structures}
\label{subsect:gradings-splittings}

\par Let $V$ be finite dimensional vector space over a field of characteristic
zero.  Then, a grading of $V$ is a semisimple endomorphism $Y$ of $V$ with
integral eigenvalues.  In particular, a grading of $V$ determines an
increasing filtration
\begin{equation}
  W_k(Y) = \bigoplus_{\ell\in \lambda(Y), \ell\leq k}\, E_{\ell}(Y)
  \label{eq:eigen-decomp}
\end{equation}
where $\lambda(Y)$ is the set of eigenvalues of $Y$ and $E_{\ell}(Y)$ is the
$\ell$-eigenspace of $Y$.  If $W$ is an increasing filtration of $V$
we say $Y$ grades $W$ if $W(Y) = W$.  In particular, a mixed Hodge structure
$(F,W)$ determines a grading $Y_{(F,W)}$ which acts a multiplication by
$p+q$ on each non-zero summand $I^{p,q}$ of the Deligne bigrading of $(F,W)$.
\medskip


\par As discussed in Remark \eqref{rmk:split}, the mixed Hodge structure
$(F,W)$ is split over $\bR$ if $\overline{I^{p,q}} = I^{q,p}$.
Equivalently, $(F,W)$ is split over $\bR$ if
$\overline{Y_{(F,W)}} = Y_{(F,W)}$.  In general, we say a grading $Y$ is
defined over $\bR$ if $\overline{Y} = Y$.
%
      
\par By Proposition $(2.20)$ of~\cite{degeneration}, given a mixed Hodge 
structure $(F,W)$ there exists a unique, real element
\begin{equation}
     \delta\in\Lambda^{-1,-1}_{(F,W)}     \label{eq:delta-1}
\end{equation}
such that $(\hat F,W) = (e^{-i\delta}.F,W)$ is an $\bR$-split mixed
Hodge structure.  Moreover, $\delta$ commutes with all morphisms of $(F,W)$ 
\footnote{This means $(-k,-k)$-morphisms for any integer $k$.}.  We henceforth
call $(\hat F,W)$ the \emph{Deligne $\delta$-splitting} of $(F,W)$.

\par Suppose now that if $\theta(z)=e^{zN}.F$ is a nilpotent orbit of pure
Hodge structure of weight $k$ polarized by $Q$.  Let $W=W(N)[-k]$ and
$(\hat F,W) = (e^{-i\delta}.F,W)$ be the Deligne $\delta$-splitting of
$(F,W)$.  Then, by equation $(3.11)$ in~\cite{degeneration}, $\delta$
is an infinitesimal isometry of $Q$.  Likewise if $\theta(z) = e^{zN}.F$
is an admissible nilpotent orbit of mixed Hodge structure the Deligne
$\delta$-splitting of the limit mixed Hodge structure $(F,M)$ is given
by an element $\delta\in\geg_{\bR}$.  The proof of this
last statement boils down to showing the compatibility of Deligne's
construction with passage to $\gr^W$.

\par If $Y$ is a grading of $W$, $y>0$ and $\alpha\in\bR$ we
define
\[
     y^{\alpha Y} = \exp(\alpha\log(y)Y)
\]
wherefrom $y^{\alpha Y}$ acts on $\gr^W_k$ as multiplication by $y^{\alpha k}$.
Accordingly, if $\gamma$ belongs to the Lie algebra $\geg_{\bC}$
attached to a classifying space $D$ with weight filtration $W$ then
\[
     y^{\alpha Y}.\gamma = \text{\rm Ad}(y^{\alpha Y})\gamma
                       = y^{\alpha Y}\gamma y^{-\alpha Y}
\]    
induces the same action on $\gr^W$ as $\gamma$.  Therefore
$y^{\alpha Y}.\gamma\in\geg_{\bC}$.  If $Y$ is defined over
$\bR$ then the adjoint action of $y^{\alpha Y}$ preserves
$\geg_{\bR}$.
    
\subsection{Unipotent Variations of Mixed Hodge Structure}
\label{subsect:unipot}

\par Let $\cH$ be a variation of graded-polarizable mixed Hodge
structure over a smooth, complex algebraic variety $S$.  Then,
$\cH$ is said to be unipotent~\cite{HZ} if the global monodromy
representation of $\cH$ is unipotent.  Equivalently, the
variations of Hodge structure induced by $\cH$ on $\gr^W$ are
constant ((1.4),~\cite{HZ}).  The global structure of admissible unipotent
variations of mixed Hodge structure on $S$ is governed by mixed Hodge
theoretic representations of the fundamental group of $S$
(Thm. (1.6),~\cite{HZ}).

\par For the remainder of this section we assume $\cH\to\Delta^*$
is admissible and unipotent in the sense of Hain and Zucker.  We prove that
the mixed Hodge norm of a flat section of $\cH$ is
bounded.  

\par To begin, we note that in this case, we again have $M=M(N,W)$ equals
$W$ (see Prop. (2.14),~\cite{SZ} or (1.5),~\cite{HZ}).   Thus, as in
\eqref{eq:admissibility-5} and \eqref{eq:admissibility-1}, we can write
the lift of the period map of $\cH$ to the upper half-plane in
the form
\begin{equation}
  F(z) = e^{zN}e^{\Gamma(s)}.F_{\infty}      \label{eq:unipotent-1}
\end{equation}  
where $\Gamma(s)$ is a $\gq$-valued function which vanishes at
$s=0$.  

\begin{rmk}\label{rmk:uni-1} In the unipotent case, the function $\Gamma(s)$
takes values in the subalgebra $W_{-1}\gq$ of $\gq$
consisting of elements which act trivially on $\gr^W$.
\end{rmk}  

\par To continue, let $(\hat F_{\infty},M) = (e^{-i\delta}.F_{\infty},M)$
be Deligne's $\delta$-splitting \eqref{eq:delta-1} of $(F_{\infty},M)$,
keeping in mind that $M=W$.  Let
\begin{equation}
      Y = Y_{(\hat F_{\infty},M)}     \label{eq:unipotent-2a}
\end{equation}
and note that $\overline{Y}=Y$ since $(\hat F_{\infty},M)$ is split over
$\bR$.  Note that since $[Y,N] = -2N$ we have
\begin{equation}
      y^{-Y/2}e^{iN}y^{Y/2} = e^{iyN}     \label{eq:unipotent-2}
\end{equation}
Define
\begin{equation}
\aligned
       e^{\Gamma(s,y)} &= \Ad(e^{iN})\Ad(y^{Y/2})e^{\Gamma(s)} \\
       e^{i\delta(y)}  &= \Ad(y^{Y/2})e^{i\delta}
\endaligned\label{eq:unipotent-3}       
\end{equation}

\begin{lemma}\label{lem:unipotent-4} If $\cH\to\Delta^*$ is unipotent
in the sense of Hain and Zucker then\footnote{For clarity, since we are acting
on filtrations here, we are using the linear action and not the adjoint
action.}    
\begin{equation}
    F(z) = e^{xN}y^{-Y/2}e^{\Gamma(s,y)}e^{i\delta(y)}e^{iN}.\hat F_{\infty} 
    \label{unipotent-4}
\end{equation}
\end{lemma}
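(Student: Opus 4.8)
The plan is to prove \eqref{unipotent-4} by a direct computation: expand the definitions \eqref{eq:unipotent-3} of $\Gamma(s,y)$ and $\delta(y)$, push every factor $y^{\pm Y/2}$ through the neighbouring exponentials of $N$ and $\delta$, and check that the word collapses to $F(z)=e^{zN}e^{\Gamma(s)}.F_\infty$ as given by \eqref{eq:unipotent-1}. First I would isolate the three elementary facts that drive the collapse. Since $N$ commutes with itself and $z=x+iy$, equation \eqref{eq:unipotent-1} reads $F(z)=e^{xN}e^{iyN}e^{\Gamma(s)}.F_\infty$. From $[Y,N]=-2N$ one gets $\Ad(y^{-Y/2})N=yN$, hence $y^{-Y/2}e^{\pm iN}y^{Y/2}=e^{\pm iyN}$, which is \eqref{eq:unipotent-2} together with its inverse. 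Finally, $Y=Y_{(\hat F_\infty,M)}$ acts on each Deligne summand $I^{p,q}_{(\hat F_\infty,M)}$ as multiplication by $p+q$, so it preserves the filtration $\hat F_\infty$; therefore $y^{\alpha Y}$ fixes the point $\hat F_\infty$ for every $\alpha$, in particular $y^{-Y/2}.\hat F_\infty=\hat F_\infty$. I would also record that $e^{i\delta}.\hat F_\infty=F_\infty$, which is merely the definition of Deligne's $\delta$-splitting \eqref{eq:delta-1}.

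Next I would substitute $e^{\Gamma(s,y)}=e^{iN}y^{Y/2}e^{\Gamma(s)}y^{-Y/2}e^{-iN}$ and $e^{i\delta(y)}=y^{Y/2}e^{i\delta}y^{-Y/2}$ into the right-hand side of \eqref{unipotent-4} and regroup the resulting word acting on $\hat F_\infty$ as $e^{xN}\,[y^{-Y/2}e^{iN}y^{Y/2}]\,e^{\Gamma(s)}\,[y^{-Y/2}e^{-iN}y^{Y/2}]\,e^{i\delta}\,[y^{-Y/2}e^{iN}].\hat F_\infty$. The first two bracketed blocks equal $e^{iyN}$ and $e^{-iyN}$ by \eqref{eq:unipotent-2}; for the trailing block I would write $y^{-Y/2}e^{iN}.\hat F_\infty=(y^{-Y/2}e^{iN}y^{Y/2})\,y^{-Y/2}.\hat F_\infty=e^{iyN}\,y^{-Y/2}.\hat F_\infty=e^{iyN}.\hat F_\infty$, using that $Y$ fixes $\hat F_\infty$. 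This turns the right-hand side of \eqref{unipotent-4} into $e^{xN}e^{iyN}e^{\Gamma(s)}e^{-iyN}e^{i\delta}e^{iyN}.\hat F_\infty$.

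The only genuinely non-formal input — and the step I expect to be the crux — is that $N$ commutes with $\delta$. This is exactly where admissibility enters: by Kashiwara's results $N$ is a $(-1,-1)$-morphism of the limit mixed Hodge structure $(F_\infty,M)=(F_\infty,W)$, and $\delta$ commutes with every morphism of $(F_\infty,W)$ by the functoriality of the $\delta$-splitting recorded just after \eqref{eq:delta-1}. Hence $\Ad(e^{-iyN})\delta=\delta$, so $e^{-iyN}e^{i\delta}e^{iyN}=e^{i\delta}$, and the right-hand side becomes $e^{xN}e^{iyN}e^{\Gamma(s)}e^{i\delta}.\hat F_\infty=e^{xN}e^{iyN}e^{\Gamma(s)}.F_\infty=e^{zN}e^{\Gamma(s)}.F_\infty=F(z)$, which is the assertion. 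Beyond this single observation the argument is purely formal; the only thing that needs care is keeping the adjoint action $\Ad$ and the (linear) left action on filtrations straight — as the footnote to the lemma already warns — and checking that each exponent involved lies in $\geg_\bC$ (for $\Gamma(s,y)$ and $\delta(y)$ this is immediate from \eqref{eq:unipotent-3} and the fact that $\Ad(y^{Y/2})$ and $\Ad(e^{iN})$ preserve $\geg_\bC$) so that all manipulations take place inside $G_\bC$ acting on $\check D$.
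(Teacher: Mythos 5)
Your proof is correct and uses exactly the same ingredients as the paper's — the identity $y^{-Y/2}e^{\pm iN}y^{Y/2}=e^{\pm iyN}$, the fact that $Y=Y_{(\hat F_\infty,M)}$ preserves $\hat F_\infty$, and $[N,\delta]=0$ — merely run in the reverse direction (expanding the right-hand side and collapsing it to $F(z)$, rather than building the right-hand side up from \eqref{eq:unipotent-1}). This is essentially the same argument as in the paper.
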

\begin{proof} Starting from \eqref{eq:unipotent-1}, we have
\begin{equation}
\aligned
    F(z) &= e^{xN}e^{iyN}e^{\Gamma(s)}.F_{\infty} \\
         &= e^{xN}e^{iyN}e^{\Gamma(s)}e^{i\delta}.\hat F_{\infty} \\
         &= e^{xN}y^{-Y/2}e^{iN}y^{Y/2}e^{\Gamma(s)}e^{i\delta}.\hat F_{\infty}
\endaligned\label{eq:unipotent-4a}
\end{equation}
To further refine \eqref{eq:unipotent-4a}, we note that $[N,\delta]=0$ as
$N$ is a $(-1,-1)$-morphism of $(\hat F_{\infty},M)$ and
$Y(\hat F_{\infty}^p) \subseteq \hat F_{\infty}^p$ since $Y = Y_{(\hat F_{\infty},M)}$.
Therefore,
\begin{equation}
\aligned  
     e^{i\delta}.\hat F_{\infty}
     &= y^{-Y/2}y^{Y/2}e^{-iyN}e^{iyN}e^{i\delta}.\hat F_{\infty} \\
     &= y^{-Y/2}e^{-iN}y^{Y/2}e^{iyN}e^{i\delta}.\hat F_{\infty}  \\
     &= y^{-Y/2}e^{-iN}y^{Y/2}e^{i\delta}e^{iyN}.\hat F_{\infty}  \\
     &= y^{-Y/2}e^{-iN}e^{i\delta(y)}y^{Y/2}e^{iyN}.\hat F_{\infty} \\
     &= y^{-Y/2}e^{-iN}e^{i\delta(y)}e^{iN}y^{Y/2}.\hat F_{\infty} \\
     &= y^{-Y/2}e^{-iN}e^{i\delta(y)}e^{iN}.\hat F_{\infty}
\endaligned\label{eq:unipotent-4b}
\end{equation}
Inserting \eqref{eq:unipotent-4b} into \eqref{eq:unipotent-4a} and
simplifying gives \eqref{unipotent-4}.
\end{proof}

\par Fix a norm $|*|$ on $\geg_\mathbb{\bC}$.  Observe that
since $\Gamma(0)=0$ it follows that $|\Gamma(s,y)|$ can be
bounded by a constant multiple of $|s|(-\log|s|)^b$.  Likewise, since
\[
     \delta\in\Lambda^{-1,-1}_{(F_{\infty},M)}
      =\Lambda^{-1,-1}_{(\hat F_{\infty},M)} 
\]
it follows that $\delta$ decomposes as
$\delta=\delta_{-2} + \delta_{-3} + \cdots$ relative to $\ad Y$ since
$Y = Y_{(\hat F_{\infty},M)}$.  Accordingly, by equation \eqref{eq:unipotent-3}
it follows that $|\delta(y)|$ can be bounded by a multiple of $1/y$.

\par To continue, let $F\in D$ and
$\gq_F = \oplus_{p<0}\, \geg^{p,q}_{(F,W)}$.  Then, since
$\gq_F$ is a vector space complement to $\geg_{\bC}^F$
in $\geg_{\bC}$ it follows from the inverse function theorem
that there exists a neighborhood $\mathcal N$ of 0 in $\geg_{\bC}$
and unique holomorphic functions $v:\mathcal N\to\gq_F$,
$\phi^{\dag}:\mathcal N\to\geg_{\bC}^F$ such that
\begin{equation}
      u\in\mathcal N \implies e^u = e^{v(u)}e^{\phi^{\dag}(u)}
      \label{eq:unipotent-4}
\end{equation}
In particular, by uniqueness $v(0) = \phi^{\dag}(0) = 0$.

\par On the other hand, by \cite{higgs} there exists a neighborhood
$\cQ$ of $0$ in $\gq_F$ and distinguished real analytic
functions $\tilde\gamma:\cQ\to\geg_{\bR}$,
$\tilde\lambda:\cQ\to\Lambda^{-1,-1}_{(F,W)}$ and
$\tilde \phi:\cQ\to\geg_{\bC}^F$ such that
\begin{equation}
  v\in\cQ\implies
  e^v = e^{\tilde\gamma(v)}e^{\tilde\lambda(v)}e^{\tilde\phi(v)}
    \label{eq:unipotent-5}
\end{equation}    
Combining \eqref{eq:unipotent-4} and \eqref{eq:unipotent-5} it follows
that after shrinking $\mathcal N$, we have a real-analytic decomposition
\begin{equation}
     e^u = e^{\gamma(u)}e^{\lambda(u)}e^{\phi(u)}  \label{eq:unipotent-6}
\end{equation}
upon setting $\gamma(u) = \tilde\gamma(v(u))$,
$\lambda(u) = \tilde\lambda(v(u))$ and
$\phi(u) = \tilde\phi(v(u))\phi^{\dag}(u)$.

\par Denote the dependence
of the functions appearing in \eqref{eq:unipotent-6} on $F$ by
$\gamma_F$, $\lambda_F$ and $\phi_F$.  Then, since the decomposition
\[
    V = \bigoplus_{p,q} I^{p,q}_{(F,W)}
\]
is $C^{\infty}$ with respect to $F\in D$, it follows that $\gamma_F$,
$\lambda_F$ and $\phi_F$ also have a $C^{\infty}$ dependence on $F$.
Accordingly, a soft analysis argument shows
that given $F_o\in D$ there exists a compact set $K\subset D$ containing $F_o$
and constants $\rho$ and $C$ such that
\begin{equation}
    F\in K,\quad |u|<\rho \implies
    |\gamma_F(u)|,\quad |\lambda_F(u)|,\quad |\phi_F(u)|< C|u|
    \label{eq:unipotent-7}
\end{equation}
For the remainder of this section, we will drop the subscript $F$ from
$\gamma$, $\lambda$ and $\phi$.

\par Let $F_o = e^{iN}.\hat F_{\infty}\in D$ with corresponding compact set
$K$ and constants $\rho$ and $C$ as in \eqref{eq:unipotent-7}.  Let
$F(y) = e^{i\delta(y)}.F_o$.  Then, by our previous estimates of
$\delta(y)$ and $\Gamma(s,y)$ it follows that there exists a constant
$a>0$ such that $|s|=e^{-2\pi y}<e^{-2\pi a}$ implies $F(y)\in K$ and
$|\Gamma(s,y)|<\rho$.  Therefore, by \eqref{eq:unipotent-7}
\begin{equation}
      e^{\Gamma(s,y)} = e^{\gamma(s,y)}e^{\lambda(s,y)}e^{\phi(s,y)}
      \label{eq:unipotent-8}
\end{equation}
relative to $F(y)$.

\begin{rmk}\label{rmk:uni-2} Since $\Gamma(s)$ takes values in
$W_{-1}\geg_{\bC}$, so does $\Gamma(s,y)$.  Therefore,
$\gamma(s,y)$, $\lambda(s,y)$ and $\phi(s,y)$ take values in the subalgebra
$W_{-1}\geg_{\bC}\subseteq\geg_{\bC}$ consisting
of elements which act trivially on $\gr^W$. 
\end{rmk}

\begin{thm}\label{unpotent-estimate} Let $\cH\to\Delta^*$ be a
unipotent variation of mixed Hodge structure and $v$ be a flat section
of $\cH$.  Then, the mixed hodge norm $\|v\|$ is bounded.
\end{thm}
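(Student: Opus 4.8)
The plan is to push a flat section through the refined local normal form of Lemma~\ref{lem:unipotent-4} and to peel off the group factors one at a time by means of the isometry property~\eqref{eq:isom-eq}, until only an exponentially small perturbation of a quantity independent of $s$ is left.

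First, a flat section of $\cH$ is, in the reference fibre $H$ used to define the period map, a vector $v$ fixed by the monodromy $T=e^N$; since $N$ is nilpotent this means $Nv=0$. By Lemma~\ref{lem:unipotent-4},
\[
  F(z)=e^{xN}\,y^{-Y/2}\,e^{\Gamma(s,y)}\,e^{i\delta(y)}\,e^{iN}.\hat F_{\infty},\qquad z=x+iy,
\]
where $Y=Y_{(\hat F_{\infty},M)}$, $M=W$, and (as recorded after~\eqref{eq:unipotent-3}) $|\Gamma(s,y)|=O(|s|(-\log|s|)^b)$, $|\delta(y)|=O(1/y)$, $|s|=e^{-2\pi y}$. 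Setting $F(y)=e^{i\delta(y)}e^{iN}.\hat F_{\infty}$ and using the distinguished decomposition~\eqref{eq:unipotent-8} of $e^{\Gamma(s,y)}$ relative to $F(y)$, the factor $e^{\phi(s,y)}$ stabilises $F(y)$ and drops out, so that
\[
  F(z)=e^{xN}\,y^{-Y/2}\,e^{\gamma(s,y)}\,e^{\lambda(s,y)}\,e^{i\delta(y)}\,e^{iN}.\hat F_{\infty},
\]
with $\gamma(s,y)\in\geg_{\bR}$, $\lambda(s,y)\in\Lambda^{-1,-1}_{(F(y),W)}$ and $|\gamma(s,y)|,|\lambda(s,y)|=O(|\Gamma(s,y)|)$ by~\eqref{eq:unipotent-7}.

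Next I would move $y^{-Y/2}$ to the far left. Since $[Y,N]=-2N$ and $e^{i\delta(y)}=\Ad(y^{Y/2})e^{i\delta}$, one has $\Ad(y^{-Y/2})e^{iN}=e^{iyN}$ and $\Ad(y^{-Y/2})e^{i\delta(y)}=e^{i\delta}$; and since $Y$ fixes the Hodge filtration of $\hat F_{\infty}$, $y^{-Y/2}.\hat F_{\infty}=\hat F_{\infty}$. With $\tilde\gamma=\Ad(y^{-Y/2})\gamma(s,y)\in\geg_{\bR}$ and $\tilde\lambda=\Ad(y^{-Y/2})\lambda(s,y)\in\Lambda^{-1,-1}_{(e^{i\delta}e^{iyN}.\hat F_{\infty},W)}$ — both of size $O(e^{-2\pi y}p(y))$ for a polynomial $p$ depending only on $N$ and the Hodge numbers, because $\Ad(y^{-Y/2})$ inflates norms by at most a fixed power of $y$ — this gives $F(z)=e^{xN}e^{\tilde\gamma}e^{\tilde\lambda}e^{i\delta}e^{iyN}.\hat F_{\infty}$. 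Now strip factors off using~\eqref{eq:isom-eq}: $e^{xN}\in G_{\bR}$ together with $Nv=0$ removes $e^{xN}$ at no cost; $e^{\tilde\gamma}\in G_{\bR}$; $e^{\tilde\lambda}\in\exp\Lambda^{-1,-1}_{(e^{i\delta}e^{iyN}.\hat F_{\infty},W)}$; and $e^{i\delta}\in\exp\Lambda^{-1,-1}_{(e^{iyN}.\hat F_{\infty},W)}$, using that $\Lambda^{-1,-1}_{(\hat F_{\infty},W)}$ is $\Ad(e^{iyN})$-stable. This reduces the claim to
\[
  \|v\|_{(F(z),W)}=\bigl\|\,e^{-i\delta}e^{-\tilde\lambda}e^{-\tilde\gamma}v\,\bigr\|_{(e^{iyN}.\hat F_{\infty},W)} .
\]

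Finally I would estimate the right-hand side. As $[N,\delta]=0$ we have $N(e^{-i\delta}v)=0$; since $e^{iyN}$ acts trivially on $\gr^W$ and $e^{-iyN}(e^{-i\delta}v)=e^{-i\delta}v$, the Deligne bigraded pieces of $e^{-i\delta}v$ for $(e^{iyN}.\hat F_{\infty},W)$ project in $\gr^W$ to the $y$-independent vectors $\overline{(e^{-i\delta}v)^{p,q}}$, and the graded Hodge metric on $\gr^W(e^{iyN}.\hat F_{\infty})=\gr^W\hat F_{\infty}$ is fixed; hence $\|e^{-i\delta}v\|_{(e^{iyN}.\hat F_{\infty},W)}=\|e^{-i\delta}v\|_{(\hat F_{\infty},W)}$ is independent of $s$. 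The remaining term $e^{-i\delta}(e^{-\tilde\lambda}e^{-\tilde\gamma}-1)v$ has size $O(e^{-2\pi y}p(y))$, while the mixed Hodge metric at $(e^{iyN}.\hat F_{\infty},W)$ distorts lengths by at most a polynomial in $y$ (because $e^{\pm iyN}$ is polynomial in $y$ and the graded metric is fixed), so it tends to $0$; therefore $\|v\|_{(F(z),W)}$ stays bounded as $s\to 0$. The same computation with $v$ replaced by a flat section $\alpha$ of $\geg(\cH)$, $N$ by $\ad N$ and $Nv=0$ by $[N,\alpha]=0$ (using $[\ad N,\ad\delta]=0$) covers the endomorphism bundle. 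The main obstacle is exactly this last step: $y^{-Y/2}$ and the nilpotent orbit $e^{iyN}$ each feed potentially growing powers of $y$ into the metric, and one must show they cancel exactly on the ``diagonal'' — which is where $Nv=0$ and $[N,\delta]=0$ make the leading term genuinely constant — while the off-diagonal corrections $\tilde\gamma,\tilde\lambda$ are annihilated by the exponential decay of $\Gamma(s,y)$; the accompanying bookkeeping is to check that at each peeling step the exponentiated element really lies in the group ($G_{\bR}$ or the relevant $\exp\Lambda^{-1,-1}_{(\bullet,W)}$) to which~\eqref{eq:isom-eq} applies.
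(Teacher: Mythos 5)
Your argument is correct and follows essentially the same route as the paper's proof: it starts from Lemma~\ref{lem:unipotent-4}, uses the decomposition \eqref{eq:unipotent-8} to discard $e^{\phi(s,y)}$, peels off the remaining group factors via the isometry property \eqref{eq:isom-eq} after conjugating by $y^{\pm Y/2}$, and balances the polynomial growth of those conjugations against the exponential decay of $\Gamma(s,y)$. The only differences are organizational — you push $y^{-Y/2}$ through all factors at once and additionally strip $e^{i\delta}$ to land at $\hat F_{\infty}$, whereas the paper interleaves the conjugations and stops at $F_{\infty}=e^{i\delta}.\hat F_{\infty}$ — and these do not change the substance of the proof.
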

\begin{proof} To reduce notation, we write the mixed Hodge norm with
respect to $(F,W)$ as $\|*\|_F$ since $W$ is fixed throughout
the proof.  Returning to equation \eqref{unipotent-4}, we have
\begin{equation}
\aligned  
  \|v\|_{F(z)}
  &= \|v\|_{e^{xN}y^{-Y/2}e^{\Gamma(s,y)}e^{i\delta(y)}e^{iN}.\hat F_{\infty}} \\
  &= \|e^{-xN}v\|_{y^{-Y/2}e^{\Gamma(s,y)}e^{i\delta(y)}e^{iN}.\hat F_{\infty}} \\
  &= \|v\|_{y^{-Y/2}e^{\Gamma(s,y)}e^{i\delta(y)}e^{iN}.\hat F_{\infty}} \\
  &= \|v\|_{y^{-Y/2}e^{\Gamma(s,y)}.F(y)}
  \endaligned\label{eq:unipotent-9}
\end{equation}
because $e^{xN}\in G_{\bR}$, $v\in\ker(N)$ and
$F(y)=e^{i\delta(y)}e^{iN}.\hat F_{\infty}$.  Inserting \eqref{eq:unipotent-8}
into \eqref{eq:unipotent-9} and noting that $e^{\phi(s,y)}$ preserves
$F(y)$ it follows that
\begin{equation}
\aligned  
  \|v\|_{F(z)}
   &= \|v\|_{y^{-Y/2}e^{\gamma(s,y)}e^{\lambda(s,y)}.F(y)} \\
   &= \|v\|_{y^{-Y/2}e^{\gamma(s,y)}y^{Y/2}y^{-Y/2}e^{\lambda(s,y)}.F(y)}  \\
   &= \|y^{+Y/2}e^{-\gamma(s,y)}y^{-Y/2}v\|_{y^{-Y/2}e^{\lambda(s,y)}.F(y)} \\
   &= \|\exp(-\Ad(y^{+Y/2})\gamma(s,y))v\|_{y^{-Y/2}e^{\lambda(s,y)}.F(y)}
\endaligned\label{eq:unipotent-10}
\end{equation}
since $\Ad(y^{-Y/2})$ preserves $\geg_{\bR}$ and $\gamma(s,y)$
takes values in $\geg_{\bR}$.  As noted after Lemma
\eqref{lem:unipotent-4},
$|\Gamma(s,y)|$ can be bounded by a constant multiple of $|s|(-\log|s|)^b$.
By \eqref{eq:unipotent-7}, at the price of adjusting the constant multiplier,
the same is true of $|\gamma(s,y)|$ and $|\lambda(s,y)|$.  Likewise, as
$Y$ is semisimple with a finite number of eigenvalues,  
$|\Ad(y^{+Y/2})\gamma(s,y)|$ can be bounded by $c . |s|(-\log|s|)^m$, $c\in \bC$.
Since $\Ad(y^{+Y/2})\gamma(s,y)$ takes values in a nilpotent Lie algebra
$W_{-1}\geg_{\bC}$, it follows that
\[
   \exp(-\Ad(y^{+Y/2})\gamma(s,y)) = \mathbb{1} + \epsilon(s,y)
\]
where $|\epsilon(s,y)|$ can be bounded by a constant multiple
of $|s|(-\log|s|)^m$ for $|s|$ sufficiently small.

\par To continue, we note that since $\lambda(s,y)$ takes values in
$\Lambda^{-1,-1}_{(F(y),W)}$, $Y=\bar Y$ and $\Ad(y^{-Y/2})$ acts on
$\geg_{\bC}$ it follows that
\[
    \Ad(y^{-Y/2})\lambda(s,y)\in\Lambda^{-1,-1}_{(y^{-Y/2}.F(y),W)}
\]
By construction,
\[
\aligned
     y^{-Y/2}.F(y)
     &= y^{-Y/2}e^{i\delta(y)}e^{iN}.\hat F_{\infty} \\
     &= y^{-Y/2}y^{Y/2}e^{i\delta}y^{-Y/2}e^{iN}.\hat F_{\infty} \\
     &= e^{i\delta}y^{-Y/2}e^{iN}y^{Y/2}y^{-Y/2}.\hat F_{\infty} \\
     &= e^{i\delta}e^{iyN}.\hat F_{\infty} 
\endaligned
\]
since $Y$ preserves $\hat F_{\infty}$.  Accordingly, since $\delta$ commutes
with $N$ we have
\[
        y^{-Y/2}.F(y) = e^{iyN}.F_{\infty}
\]
Putting the last three equations together, we have
\[       
        y^{-Y/2}e^{\lambda(s,y)}.F(y)
        = \exp(\Ad(y^{-Y/2})\lambda(s,y))e^{iyN}.F_{\infty}
\]
where $\Ad(y^{-Y/2})\lambda(s,y)\in\Lambda^{-1,-1}_{(e^{iyN}.F_{\infty},W)}$.

\par Returning to \eqref{eq:unipotent-10}, we have
\[
\aligned
     \||v\|_{F(z)}
  &= \|(1+\epsilon(s,y))v\|_{y^{-Y/2}e^{\lambda(s,y)}.F(y)} \\
  &= \|(1+\epsilon(s,y))v\|_{\exp(\Ad(y^{-Y/2})\lambda(s,y))e^{iyN}.F_{\infty}} \\
  &= \|\exp(-\Ad(y^{Y/2})\lambda(s,y))
     (1+\epsilon(s,y))v\|_{e^{iyN}.F_{\infty}}
\endaligned
\]     
since $\Ad(y^{-Y/2})\lambda(s,y)\in\Lambda^{-1,-1}_{(e^{iyN}.F_{\infty},W)}$.

\par As above, $|\Ad(y^{Y/2})\lambda(s,y)|$ is bounded by a constant
multiple of $|s|(-\log|s|)^{m'}$.  Therefore,
\[
     \exp(-\Ad(y^{Y/2})\lambda(s,y)) = \mathbb{1} + \mu(s,y)
\]
where $|\mu(s,y)|$ can be bounded by a multiple of $|s|(-\log|s|)^{m'}$
for $|s|$ sufficiently small.  Thus,
\[
     \|v\|_{F(z)}
     = \|(1+\mu(s,y))(1+\epsilon(s,y))v\|_{e^{iyN}.F_{\infty}}
\]  
Finally, since $N\in\Lambda^{-1,-1}_{(F,W)}$ and $v\in\ker(N)$ we have
\[ 
\aligned
     \|v\|_{F(z)}
     &= \|e^{-iyN}(1+\mu(s,y))(1+\epsilon(s,y))v\|_{F_{\infty}} \\
     &=  \|e^{-iyN}(1+\mu(s,y))(1+\epsilon(s,y))e^{iyN}v\|_{F_{\infty}}
\endaligned     
\]
Therefore, since $|y| = \frac{-1}{2\pi}\log|s|$ it follows that
$$
      e^{-iyN}(1+\mu(s,y))(1+\epsilon(s,y))e^{iyN} \to \mathbb{1}
$$
as $y\to\infty$.  Thus, $\|v\|_{F(z)}$ is bounded.      
\end{proof}

\begin{rmk} If $\mathcal A$ and $\mathcal B$ are unipotent variations of
mixed Hodge structure then so is $\mathcal A\otimes\mathcal B$.  In
particular, we can apply the previous theorem to flat sections of
$\cH\otimes\cH^*$.
\end{rmk}
        

\subsection{Hodge theory of $\sll 2$-pairs}\label{sec:sl2-pairs}

\par By equation $(3.11)$ in~\cite{degeneration}, if $\theta(z)=e^{zN}.F$ is
a nilpotent orbit of pure Hodge structure of weight $k$ polarized by $Q$,
$W=W(N)[-k]$ and $Y = Y_{(F,W)}$ then $H=Y-k\,\mathbb{1}$ is belongs to the
complex Lie algebra of infinitesimal isometries of $Q$.  Likewise, if
$(\hat F,W)$ is the Deligne $\delta$-splitting of $(F,W)$ then
$\hat H = Y_{(\hat F,W)} - k\,\mathbb{1}$ belongs to the Lie algebra of
real infinitesimal isometries of $Q$.

\par As discussed in \S 2 of~\cite{degeneration}, given a nilpotent
element $N\in \gll  V $, there is a bijective correspondence between gradings
$H$ of $W(N)$ such that $[H,N]=-2N$ and representations
$\rho:\sll 2\to \gll  V $ such that
\begin{equation}
      \rho\begin{pmatrix} 0 & 0 \\
                          1 & 0 \end{pmatrix} = N,\qquad
      \rho\begin{pmatrix} 1 &  0 \\
                          0 & -1 \end{pmatrix} = H              
       \label{eq:sl2-pair}
\end{equation}
We call such a pair $(N,H)$ an $\sll 2$-pair, and $(N,H,N^+)$ the associated
$\sll 2$-triple, where
\[ 
      N^+ = \rho\begin{pmatrix} 0 & 1 \\
                                0 & 0 \end{pmatrix}
\]
If $N$ and $H$ are infinitesimal isometries of $Q$ then so is $N^+$.
Thus, by the previous paragraph, a nilpotent orbit $\theta$ of pure,
polarized Hodge structure of weight $k$ determines a representation of
$\sll 2(\bC)$ into the complex Lie algebra of infinitesimal
isometries of the polarization via the $\sll 2$-pair
\begin{equation}
      (N,Y_{(F,W)}-k\,\mathbb{1})     \label{eq:sl2-pair-1}
\end{equation}
Likewise, the $\sll 2$-pair $(N,Y_{(\hat F,W)}-k\,\mathbb{1})$ defines a
representation of $\sll 2(\bR)$ into the Lie algebra of real,
infinitesimal isometries of the polarization.

\subsection{Two theorems of P. Deligne}\label{subsect:two-theorems}

\par Let $W$ be an increasing filtration of a finite dimensional vector space
$V$ over a field of characteristic zero. Let $\endo^W(V)$ denote the subspace
of $\endo(V)$ consisting of elements which preserve $W$.  

\par Let $\gr^W = \oplus_k\, \gr^W_k$ and $\mathbf Y$ be the grading of $\gr^W$
which acts on $\gr^W_k$ as multiplication by $k$.  For clarity,
given an element $A\in\endo^W(V) $ we let $\gr^W(A)$ denote the induced
action of $A$ on $\gr^W$.  Then, an element $\alpha\in\endo(\gr^W)$ commutes
with $\mathbb Y$ if and only if there exists an element $A\in\endo^W(V) $
such that $\gr^W(A) = \alpha$.

\par More precisely, given a grading $Y'$ of $W$ and element $A\in\endo^W(V) $
we have a decomposition
\begin{equation}
       A = \sum_{k\geq 0}\, A_{-k},\qquad [Y',A_{-k}] = -kA_{-k}
       \label{eq:eigen-decomp-2}
\end{equation}
of $A$ into eigencomponents with respect to $\ad{Y'}$.   Moreover,
$\gr^W(A_0)=\gr^W(A)$.  Therefore, given $\alpha\in\endo(\gr^W)$ which commutes
with $\mathbf Y$ and a grading $Y'$ of $W$ there exists a
unique element $\alpha_0\in\endo^W(V) $ which commutes with $Y'$ such
that $\gr^W(\alpha_0)=\alpha$.  We call $\alpha_0$ the lift of $\alpha$
with respect to $Y'$.

\par Suppose now that $(e^{zN}.F,W)$ is an admissible nilpotent orbit and let
$M=M(N,W)$.  Then, $Y_M = Y_{(F,M)}$ is a grading of $M$ which preserves $W$
and satisfies $[Y_M,N] = -2N$.  In~\cite{dtock}, P. Deligne constructs a
grading $Y=Y(N,Y_M)$ of $W$ and an associated $\sll 2$-pair $(N_0,Y_M-Y)$ which
generalizes the construction \eqref{eq:sl2-pair-1} as follows: Let $N$ be a
nilpotent element of $\endo^W(V) $ such that $M=M(N,W)$ exists.  Let
$Y_M$ be a grading of $M$ which preserves $W$ and satisfies $[Y_M,N] = -2N$.
Then, it follows from the definition of the relative weight filtration that
\[
       (\gr^W(N),\gr^W(Y_M)-\mathbf Y)
\]
is an $\sll 2$-pair which commutes with $\mathbf Y$.  Let $\tilde N^+$ be
the third element of the associated $\sll 2$-triple.  By construction,
$\tilde N^+$ commutes with $\mathbb Y$.  Given a choice of grading $Y'$ of
$W$ let $N_0$ and $N_0^+$ the corresponding lifts of $\gr^W(N)$ and $\tilde N^+$.
Note the lift of $\gr^W(Y_M)-\mathbf Y$ is just $H=Y_M-Y'$.

\begin{thm}(P.~Deligne,~\cite{dtock})\label{thm:deligne-1}  Let $Y_M$ be a grading
of $M$ which preserves $W$, i.e. $Y_M(W_k)\subseteq W_k$ for all $k$, such that
$[Y_M,N]=-2N$.  Then, there exists a unique grading $Y = Y(N,Y_M)$ of $W$
such that $[Y,Y_M]=0$ and 
\[
      [N-N_0,N^+_0]=0
\]
\end{thm}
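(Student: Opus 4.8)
The plan is to recast the statement as a lifting problem along the surjective Lie algebra map $\endo^W(V)\to\endo(\gr^W)$, whose kernel $\mathfrak n=\{A\in\endo^W(V):\gr^W(A)=0\}$ is a nilpotent ideal, and to solve it by a weight-by-weight recursion governed by $\sll 2$-representation theory; existence will come from a Jacobson--Morozov-type construction, uniqueness from a rigidity argument. The characteristic zero hypothesis enters only through semisimplicity of $Y_M$.

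First I would fix a convenient auxiliary grading $Y_1$ of $W$ with $[Y_1,Y_M]=0$; such a $Y_1$ exists because $Y_M$ is semisimple and preserves each $W_k$, so its eigenspace decomposition of $V$ refines $W$. Then $H:=Y_M-Y_1$ commutes with $Y_1$. The $\ad{Y_1}$-weight-zero summand of $\endo^W(V)$ is a subalgebra mapping isomorphically, via $\gr^W$, onto the centralizer of $\mathbf Y$ in $\endo(\gr^W)$; transporting the $\sll 2$-triple $(\gr^W(N),\gr^W(Y_M)-\mathbf Y,\tilde N^+)$ back through this isomorphism produces an $\sll 2$-triple $(N_1,H,N_1^+)$ inside $\endo^W(V)$ in which $N_1$ and $N_1^+$ are precisely the lifts of $\gr^W(N)$ and $\tilde N^+$ with respect to $Y_1$. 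The theorem is thereby reduced to: among the gradings $Y$ of $W$ with $[Y,Y_M]=0$, there is a unique one such that, writing $N_0$ and $N_0^+$ for the $Y$-lifts of $\gr^W(N)$ and $\tilde N^+$, one has $[N-N_0,N_0^+]=0$.

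Next I would parametrize the candidates: the gradings of $W$ form a torsor under $\exp(\mathfrak n)$, and $\mathfrak n$ is precisely the strictly negative $\ad{Y_1}$-weight part of $\endo^W(V)$, which is stable under the adjoint $\sll 2$-action of $(N_1,H,N_1^+)$, so $\mathfrak n=\im(\ad{N_1})\oplus\ker(\ad{N_1^+})$ as $\sll 2$-modules. Writing $Y=e^{\ad{x}}.Y_1$, $N_0=e^{\ad{x}}.N_1$, $N_0^+=e^{\ad{x}}.N_1^+$ for $x\in\mathfrak n$ and conjugating $e^{\ad{x}}$ out, the target equation $[N-N_0,N_0^+]=0$ becomes $[\,\tilde N-N_1,N_1^+\,]=0$ with $\tilde N:=e^{-\ad{x}}.N$; since $\gr^W(\tilde N-N_1)=0$, this difference lies in $\mathfrak n$, and the requirement is exactly that its $\im(\ad{N_1})$-component vanish. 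I would then run the recursion over the $\ad{Y_1}$-weights $-1,-2,\dots,-m$ of $\mathfrak n$: at stage $j$ let $p_{-j}$ be the weight-$(-j)$ summand of the $\im(\ad{N_1})$-component of the current $\tilde N-N_1$, solve $\ad{N_1}(x)=-p_{-j}$ for $x$ of weight $-j$ (possible precisely because $p_{-j}\in\im(\ad{N_1})$), update $\tilde N$ by the corresponding conjugation, and compose the successive corrections via Baker--Campbell--Hausdorff. Since $[x,N-N_1]$ and the higher commutators have strictly more negative $\ad{Y_1}$-weight, this step fixes the weights $>-j$ and pushes the remaining $\im(\ad{N_1})$-component into weights $\le-(j+1)$; after $m$ steps it is exhausted, which gives existence.

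The compatibility with $[Y,Y_M]=0$ and the uniqueness are where I expect the real work to lie, and both should flow from the $\ad{Y_M}$-equivariance of the construction. Because $[Y_M,N]=-2N=[Y_M,N_1]$ and conjugation by $e^{-\ad{x}}$ with $x$ centralizing $Y_M$ preserves this relation, every $p_{-j}$ is a $(-2)$-eigenvector of $\ad{Y_M}$; the identity $\ad{Y_M}\,\ad{N_1}=\ad{N_1}\,\ad{Y_M}-2\,\ad{N_1}$ then forces the solution $x$ of $\ad{N_1}(x)=-p_{-j}$, taken inside $\im(\ad{N_1^+})$, to satisfy $[Y_M,x]\in\im(\ad{N_1^+})\cap\ker(\ad{N_1})=0$, so the recursion never leaves the centralizer of $Y_M$ and the output $Y=e^{\ad{x}}.Y_1$ satisfies $[Y,Y_M]=0$. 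For uniqueness, given two solutions $Y$ and $Y'=e^{\ad{z}}.Y$, one checks order by order in $\ad{Y}$-weight that $w:=e^{-\ad{z}}.N-N$ lies in $\ker(\ad{N_0^+})$ while its leading term lies in $\im(\ad{N_0})$; since $\im(\ad{N_0})\cap\ker(\ad{N_0^+})=0$ and the condition $[Y',Y_M]=[Y,Y_M]=0$ rigidifies the lower-weight terms, this forces $z=0$. Thus the crux is not an obstruction-vanishing but the simultaneous bookkeeping of the two conditions: keeping every correction inside the $Y_M$-centralizer and verifying that the residual freedom at each weight is genuinely pinned down.
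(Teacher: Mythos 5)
The paper does not actually prove this statement: it is quoted from P.~Deligne's letter \cite{dtock}, with detailed write-ups deferred to \cite{BP2}, \cite{BPR}. What you have written is, in substance, the standard reconstruction of Deligne's argument, and the existence half is correct and complete in outline: pick an auxiliary grading $Y_1$ of $W$ commuting with the semisimple $Y_M$, note that $\endo^W(V)$ has only non-positive $\ad{Y_1}$-weights with $\mathfrak{n}=\ker(\gr^W)$ the strictly negative part, conjugate the condition $[N-N_0,N_0^+]=0$ into $[e^{-\ad{x}}N-N_1,N_1^+]=0$, and kill the $\im(\ad{N_1})$-component of $e^{-\ad{x}}N-N_1$ weight by weight using $\mathfrak{n}=\im(\ad{N_1})\oplus\ker(\ad{N_1^+})$ (a vector-space decomposition, not one into $\sll 2$-submodules, but that is all you use). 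The equivariance step --- choosing the correction $x$ in $\im(\ad{N_1^+})$, so that $[Y_M,x]\in\im(\ad{N_1^+})\cap\ker(\ad{N_1})=0$ and the construction never leaves the centralizer of $Y_M$ --- is exactly the point that makes $[Y,Y_M]=0$ fall out, and you have it right.

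The one step you should expand is uniqueness. Writing $Y'=e^{\ad{z}}Y$, your weight-by-weight comparison only yields that the leading component $z_{-j_0}$ (of $\ad{Y}$-weight $-j_0<0$) satisfies $[N_0,z_{-j_0}]\in\im(\ad{N_0})\cap\ker(\ad{N_0^+})=0$, i.e.\ $z_{-j_0}\in\ker(\ad{N_0})$, which is not yet zero. Two further observations are needed: first, $[Y,Y_M]=[Y',Y_M]=0$ forces $[Y_M,z]=0$ (because the centralizer of a grading of $W$ inside $\exp(\mathfrak{n})$ is trivial, $e^{t\,\ad{Y_M}}$ must fix $e^z$); second, with $H=Y_M-Y$ one then gets $[H,z_{-j_0}]=j_0\,z_{-j_0}$ with $j_0>0$, so $z_{-j_0}$ is a vector of strictly positive $\ad{H}$-weight annihilated by the lowering operator $\ad{N_0}$ of the $\sll 2$-triple $(N_0,H,N_0^+)$, hence zero. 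Your phrase about the $Y_M$-condition ``rigidifying the lower-weight terms'' points in this direction but misplaces where it acts: it is used to kill the leading term itself, not the subsequent ones.
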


\par Another way of stating this result is that there exists a unique choice
of grading $Y$ of $W$ which commutes with $Y_M$ such that
$(N_0,Y_M-Y)$ is an $\sll 2$-pair with the following property:  If
\[
     N = \sum_{k\geq 0}\, N_{-k},\qquad [Y,N_{-k}] = -k N_{-k}
\]
is the decomposition of $N$ with respect to $\ad Y$ then for each positive
integer $k$, $N_{-k}$ is either zero or a vector of highest weight $k-2$
for the associated adjoint representation of $\sll 2$.  In particular,
$N_{-1}$ is either zero or a vector of highest weight $-1$ with respect
to the $\sll 2$-triple $(N_0,Y_M-Y,N_0^+)$.  Therefore, $N_{-1}=0$.  Likewise,
$N_{-2}$ commutes with $N_0$, $Y_M-Y$  and $N_0^+$ since it is a vector of
highest weight zero for the adjoint representation.

\begin{lemma}\label{lem:deligne} If $(e^{zN}.F,W)$ is an admissible
nilpotent orbit, $M=M(N,W)$ and $Y=Y(F,Y_{(F,M)})$ is the grading of
Theorem~\eqref{thm:deligne-1} then $Y$ preserves $F$.
\end{lemma}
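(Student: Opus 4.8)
The plan is to reduce the assertion to the statement that $H:=Y_{(F,M)}-Y$ is a $(0,0)$-morphism of the limit mixed Hodge structure $(F,M)$. Write $Y_M=Y_{(F,M)}$. Since $Y_M$ acts on $I^{p,q}_{(F,M)}$ as the scalar $p+q$, it preserves $F$; hence $Y=Y_M-H$ preserves $F$ as soon as $H$ does. By Theorem~\ref{thm:deligne-1} we have $[Y,Y_M]=0$, so $H$ commutes with $Y_M$; consequently $H$ preserves $M$ and, with respect to the Deligne bigrading of $\endo^W(V)$ determined by $(F,M)$, $H$ lies in $\bigoplus_{p}\geg^{p,-p}_{(F,M)}$ (one checks first that $H\in\geg_\bC$, since $\gr^W_k(H)=\gr^W_k(Y_M)-k\,\mathbf Y$ is the $\sll 2$-element of the weight-$k$ piece and hence an infinitesimal isometry of $Q_k$). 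Thus it suffices to show that the components $H^{p,-p}$ with $p<0$ vanish, i.e. that $Y$ preserves each Deligne subspace $I^{p,q}_{(F,M)}$.

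Next I would pass to the $\bR$-split case. Let $\delta\in\Lambda^{-1,-1}_{(F,M)}\cap\geg_\bR$ be Deligne's distinguished element, so that $(\hat F,M)=(e^{-\ii\delta}.F,M)$ is split over $\bR$ (Remark~\ref{rmk:split} and the discussion of the $\delta$-splitting). As $\delta$ commutes with every morphism of $(F,M)$, in particular with $N$, and as $\delta\in W_{-2}\geg_\bC$ normalizes $M$ and $W$, one may conjugate by $e^{\ii\delta}$ the two conditions characterizing $Y$ in Theorem~\ref{thm:deligne-1}: using $e^{\ii\delta}Ne^{-\ii\delta}=N$, $e^{\ii\delta}Y_{(\hat F,M)}e^{-\ii\delta}=Y_{(F,M)}$, and $\gr^W(e^{\ii\delta})=\mathrm{id}$ (so that conjugation by $e^{\ii\delta}$ carries the lifts with respect to $Y(N,Y_{(\hat F,M)})$ to those with respect to $Y(N,Y_{(F,M)})$), uniqueness gives $Y(N,Y_{(F,M)})=e^{\ii\delta}\,Y(N,Y_{(\hat F,M)})\,e^{-\ii\delta}$. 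Hence $Y$ preserves $F=e^{\ii\delta}.\hat F$ if and only if $Y(N,Y_{(\hat F,M)})$ preserves $\hat F$, and we may assume $(F,M)$ is $\bR$-split. Then $N$ and $Y_M$ are real, and therefore so are $Y$ and the associated $\sll 2$-triple $(N_0,H,N_0^+)$.

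It then remains to pin down the Hodge types of $(N_0,H,N_0^+)$ relative to $(F,M)$. On $\gr^W$ the mixed Hodge structure $\gr^W_k(F,M)$ is the $\bR$-split limit mixed Hodge structure of the polarized nilpotent orbit $e^{z\,\gr^W_k N}.\gr^W_k F$ of weight $k$, so by the standard theory of polarized nilpotent orbits (\cite{degeneration}) the $\sll 2$-triple associated on $\gr^W_k$ consists of morphisms of $\gr^W_k(F,M)$ of Hodge types $(-1,-1)$, $(0,0)$ and $(1,1)$. Summing over $k$, and using that $(\gr^W N,\gr^W Y_M-\mathbf Y,\tilde N^+)$ commutes with $\mathbf Y$, this says that $\gr^W N$ and $\tilde N^+$ are morphisms of $\gr^W(F,M)$ of types $(-1,-1)$ and $(1,1)$. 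The crux is to show that their lifts $N_0$ and $N_0^+$ with respect to $Y$ are morphisms of $(F,M)$ of the same types; granting this, $H=[N_0,N_0^+]$ is of type $(0,0)$, hence preserves $F$, and the proof is finished. I expect this to be the main obstacle, exactly because the lifts are taken with respect to $Y$, whose compatibility with $F$ is what is being proved, so one cannot feed it in directly. I would resolve it by induction on the length of $W$: both $W_{k_0-1}V$ (with $k_0$ the top weight of $W$) and $\gr^W_{k_0}V$ carry admissible nilpotent orbits with strictly shorter weight filtration whose Deligne gradings are the restriction and the induced quotient of $Y$ — this follows from the uniqueness in Theorem~\ref{thm:deligne-1}, since $Y$, $N_0$ and $N_0^+$ all preserve $W_{k_0-1}V$ — so that the inductive hypothesis together with the pure (one-step) case determines the Hodge types of $N_0$ and $N_0^+$, the delicate point being to control the extension datum linking the sub and the quotient; alternatively one may try to argue directly that each successive correction in Deligne's recursive construction of $Y$ from $Y_M$ in~\cite{dtock} lies in $F^0\endo^W(V)$, which would give $Y-Y_M\in F^0\endo^W(V)$ at once.
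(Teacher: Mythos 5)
First, for calibration: the paper does not prove this lemma internally --- its ``proof'' is the citation ``See Theorem 4.15, \cite{dmj}'' --- so your attempt has to be judged on whether it is self-contained, and it is not. Your opening reductions are correct and are indeed the standard first moves: since $Y_{(F,M)}$ preserves $F$, it suffices that $H=Y_{(F,M)}-Y$ does; $[Y,Y_{(F,M)}]=0$ forces $H\in\bigoplus_a\geg^{a,-a}_{(F,M)}$, so the goal is $H\in F^0\geg_\bC$; and the passage to the $\bR$-split case via conjugation by $e^{\ii\delta}$ is legitimate (one small correction: $\gr^W(e^{\ii\delta})$ is \emph{not} the identity in general --- already in the pure case $\delta$ acts nontrivially on $\gr^W$ unless the limit MHS is split --- but the conjugation still carries the data $(N,Y_{(\hat F,M)},\tilde N^+,\text{lifts})$ to the corresponding data for $(N,Y_{(F,M)})$ because $\gr^W(e^{\ii\delta})$ commutes with $\mathbf Y$ and fixes $\gr^W(N)$, so uniqueness in Theorem~\ref{thm:deligne-1} still applies).

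The genuine gap is exactly where you flag it: showing that the lifts $N_0$ and $N_0^+$ are $(-1,-1)$- and $(1,1)$-morphisms of $(F,M)$ (equivalently, that the recursive corrections building $Y$ out of $Y_{(F,M)}$ in \cite{dtock} lie in $F^0\endo^W(V)$). This is not a routine verification to be ``granted''; it is the entire content of the lemma, and it is precisely the point where Deligne's construction interacts nontrivially with the Hodge filtration rather than just with the two weight filtrations. Your two proposed strategies are only named, not executed: the induction on the length of $W$ founders on the very ``extension datum linking the sub and the quotient'' that you acknowledge you cannot control (knowing the types of $N_0,N_0^+$ on $W_{k_0-1}V$ and on $\gr^W_{k_0}V$ separately does not determine their types on $V$, which is why the pure case does not bootstrap), and the claim that each correction term lies in $F^0$ is an assertion of the theorem itself in different clothing. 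As it stands the argument reduces the lemma to a statement of essentially equal difficulty and stops there; to complete it one would have to reproduce the argument of \cite[Theorem 4.15]{dmj} (or of Deligne's letter as developed in \cite{BP2}), which uses the $\slgr 2$-orbit theorem in an essential way.
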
    
\begin{proof} See Theorem 4.15,~\cite{dmj}.
\end{proof}

\begin{corr}\label{cor:deligne} Let $(e^{zN}.F,W)$ be an admissible nilpotent
orbit with limit mixed Hodge structure $(F,M)$ split over $\bR$.  Let
$Y=Y(N,Y_{(F,M)})$ and $N=N_0 + N_{-2} + \cdots$ be the corresponding
decomposition of $N$ with respect to $\ad Y$.  Then, $Y=\overline{Y}$ and
$$
    Y_{(e^{zN_0}.F,W)} = Y
$$
for $\Im(z)>0$.
\end{corr}
\begin{proof} By definition $\overline{N} = N$, whereas
$\overline{Y_{(F,M)}} = Y_{(F,M)}$ since $(F,M)$ is split over $\bR$.
Therefore, by virtue of the linear algebraic nature of Deligne's construction,
$Y = \overline{Y}$.  By the previous Lemma, $Y$ preserves $e^{zN_0}.F$ and
hence $Y = Y_{(e^{zN_0}.F,W)}$ since $\overline{Y} = Y$.
\end{proof}

\par One important consequence of W.\ Schmid's $\slgr 2$-orbit theorem~\cite{schmid}
is the construction of another splitting operation
$(F,W)\mapsto (e^{-\xi}.F,W)$,
which we call the \emph{$\sll 2$-splitting}, on the category of mixed Hodge
structures.  If $(F,W)\mapsto (e^{-i\delta}.F,W)$ is Deligne's
$\delta$-splitting then $\xi$ (resp. $\delta$) can be expressed as universal
Lie polynomials in the Hodge components of $\delta$ (resp. $\xi$) relative
to $(F,W)$.

\begin{thm}(P.~Deligne,~\cite{dtock})\label{thm:deligne-2} Let
$(e^{zN}.F,W)$ be an admissible nilpotent orbit with limit mixed Hodge
structure $(F,M)$ split over $\bR$.  Let $Y = Y(N,Y_{(F,M)}$ and
$N=N_0 + N_{-2} + \cdots$ be the decomposition of $N$ into eigencomponents
with respect to $\ad Y$.  Then, $(e^{zN_0}.F,W)$ is the $\sll 2$-splitting of
$(e^{zN}.F,W)$ and $e^{\xi} = e^{zN}e^{-zN_0}$.
\end{thm}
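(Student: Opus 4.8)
The plan is to exhibit $(e^{zN_0}.F,W)$ directly as Schmid's $\sll 2$-splitting of the mixed Hodge structure $(e^{zN}.F,W)$ and to read the formula for $\xi$ off the construction. Set $\xi=\xi(z):=\log\bigl(e^{zN}e^{-zN_0}\bigr)$, so that $e^{\xi}e^{zN_0}=e^{zN}$ and hence $e^{-\xi}.(e^{zN}.F)=e^{zN_0}.F$ tautologically. It then suffices to check that $(e^{zN_0}.F,W)$ carries exactly the data characterising the $\sll 2$-splitting of $(e^{zN}.F,W)$: it is split over $\bR$, it is obtained from $(e^{zN}.F,W)$ by conjugating the Hodge filtration by $e^{-\xi}$ with $\xi$ acting trivially on $\gr^W$, and its associated nilpotent-orbit data comes from a real $\sll 2$-triple adapted to the Hodge filtration. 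Deligne's Theorem~\ref{thm:deligne-1}, Lemma~\ref{lem:deligne} and Corollary~\ref{cor:deligne} supply precisely these ingredients.

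First I would collect the structural facts. Applying Theorem~\ref{thm:deligne-1} with $Y_M=Y_{(F,M)}$ yields the grading $Y=Y(N,Y_M)$, the decomposition $N=N_0+N_{-2}+N_{-3}+\cdots$ (with $N_{-1}=0$), and the $\sll 2$-triple $(N_0,Y_M-Y,N_0^+)$ in which $N_0$ is the lift of $\gr^W(N)$, so $\gr^W(N_0)=\gr^W(N)$; consequently $e^{\xi}=e^{zN}e^{-zN_0}$ acts trivially on $\gr^W$ and $\xi\in W_{-1}\geg_{\bC}$. Because $(F,M)$ is split over $\bR$, Corollary~\ref{cor:deligne} gives $Y=\overline Y$ and $Y_{(e^{zN_0}.F,W)}=Y$ for $\Im z>0$, so $(e^{zN_0}.F,W)$ is a mixed Hodge structure split over $\bR$; and by Lemma~\ref{lem:deligne} the gradings $Y$ and $Y_M$ both preserve $F$, so $Y_M-Y$ does too and the $\sll 2$-triple $(N_0,Y_M-Y,N_0^+)$ consists of real operators adapted to $F$ — exactly the kind of $\sll 2$-data attached by Schmid's $\slgr 2$-orbit theorem.

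The only substantive point is the identification of parameters: one must know that the $\sll 2$-splitting parameter of $(e^{zN}.F,W)$ — which, following \cite{schmid} and Deligne's refinement \cite{dtock}, is a universal Lie polynomial in the Hodge components of the Deligne $\delta$-splitting of $(e^{zN}.F,W)$ relative to $(e^{zN}.F,W)$ — evaluates to precisely $\log\bigl(e^{zN}e^{-zN_0}\bigr)$; equivalently, that the grading $Y$ of Theorem~\ref{thm:deligne-1} reproduces Schmid's $\sll 2$-data for this nilpotent orbit once the limit mixed Hodge structure is $\bR$-split. This is the content of Deligne's analysis in \cite{dtock}; in the write-up I would either invoke it directly or argue by induction on the length of $W$, using the pure $\slgr 2$-orbit theorem of \cite{schmid} on each $\gr^W_k$ and propagating the splitting up the relative weight filtration through the $\sll 2$-triple $(N_0,Y_M-Y,N_0^+)$. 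With the identification in hand, $(e^{zN_0}.F,W)$ is the $\sll 2$-splitting of $(e^{zN}.F,W)$ and the relation $e^{\xi}=e^{zN}e^{-zN_0}$ holds by the definition of $\xi$. The main obstacle is precisely this reconciliation of two a priori different prescriptions for the splitting parameter.
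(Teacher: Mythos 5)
The first thing to say is that the paper does not actually prove this theorem: its ``proof'' consists of the citation ``See~\cite{BP2}'', together with a pointer to \cite{singsvarmixed} for the weaker containment $e^{iyN}e^{-iyN_0}\in\exp(\Lambda^{-1,-1}_{(e^{iyN_0}.F,W)})$. So there is no internal argument to compare yours against, and your assembly of the supporting facts --- $\gr^W(N_0)=\gr^W(N)$, hence $\xi:=\log(e^{zN}e^{-zN_0})\in W_{-2}\geg_{\bC}$; $Y=\overline{Y}$ and $Y=Y_{(e^{zN_0}.F,W)}$ from Corollary~\ref{cor:deligne}, so $(e^{zN_0}.F,W)$ is split over $\bR$; and the reality and $F$-compatibility of the triple $(N_0,Y_M-Y,N_0^+)$ --- is accurate and consistent with what the paper sets up in Section~\ref{subsect:two-theorems}.

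That said, as a proof your proposal has a genuine gap, and it is exactly the one you flag yourself. Defining $\xi$ to be $\log(e^{zN}e^{-zN_0})$ makes the identity $e^{\xi}=e^{zN}e^{-zN_0}$ tautological, but the theorem's content is that this $\xi$ coincides with the canonical $\sll 2$-splitting element of the mixed Hodge structure $(e^{zN}.F,W)$, i.e.\ the element produced by the universal Lie polynomials in the Hodge components of its Deligne $\delta$-splitting. Verifying that $(e^{zN_0}.F,W)$ is \emph{some} $\bR$-split structure obtained from $(e^{zN}.F,W)$ by an element acting trivially on $\gr^W$ does not single it out: there are many such splittings, and the $\sll 2$-splitting is characterized by a specific normalization, not merely by $\bR$-splitness. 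Your closing sentence --- ``that the grading $Y$ of Theorem~\ref{thm:deligne-1} reproduces Schmid's $\sll 2$-data for this nilpotent orbit'' --- restates the theorem rather than proving it, and the alternative ``induction on the length of $W$, propagating the splitting up the relative weight filtration'' is the hard part of the mixed $\slgr 2$-orbit theorem (the substance of \cite{BP2} and \cite{sl2anddeg}), not a routine step. In a write-up you should do what the paper does and cite \cite{dtock} and \cite{BP2} for this identification explicitly, rather than presenting the reduction as if the remaining step were a formality.
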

\begin{proof} See~\cite{BP2}.  For the simpler statement that
\begin{equation}
  e^{iyN}e^{-iyN_0} \in \exp(\Lambda^{-1,-1}_{(e^{iyN_0}.F,W)})
  \label{eq:sl2-splitting}
\end{equation}
see the last section of \cite{singsvarmixed}.
\end{proof}

\begin{rmq} For proofs an extensive discussion of these results and their
history, see~\cite{BP2}, \cite{BPR} and references therein.
\end{rmq}

\subsection{Normal Functions and Biextensions}\label{subsect:nf-biext}

Recall (cf.~\cite{sl2anddeg}) that a  variation is type
$(\text{\rm I})$ if there exists an integer $k$ such that its  Hodge numbers
$h^{p,q}$   are zero unless $p+q=k$, $k-1$  (i.e. $\gr^W$ has
exactly two non-zero weight graded-quotients which are adjacent).  We say that
a variation  is type $(\text{\rm II})$ if there is an integer $k$ such that
$h^{p,q}=0$ unless $(p,q)=(k,k)$, $(k-1,k-1)$ or $p+q=2k-1$ and $h^{k,k}$,
$h^{k-1,k-1}$ are non-zero.

\par To continue, given a classifying space $D$ for period maps of type
$(\text{\rm I})$ or $(\text{\rm II})$, with ambient vector space $V$ (contrasting  previous usage of $H$),
 we let
$H$ be the subgroup of $G$ consisting of elements which induce real
automorphisms on $W_k/W_{k-2}$ for each index $k$.  In the case where $D$ is
classifying space of type $(\text{\rm I})$, $H=G_{\bR}$.  When $D$ is of
type $(\text{\rm II})$, $H$ will also contain the complex subgroup
$\exp(W_{-2}(\geg_{\bC}))$.  Moreover, by the form of the Hodge
diamond of a type $(\text{\rm II})$ mixed Hodge structure, it follows that
\begin{equation}
  \Lambda^{-1,-1}_{(F,W)} = W_{-2}(\geg_{\bC})
  \label{eq:lambda-type-2}
\end{equation}
for any element $F\in D$.  For this reason (see Theorem $(2.19)$,
~\cite{sl2anddeg}), it follows that $H$ acts by isometries on $D$.
Set $\mathfrak h = \lie H $.

\begin{thm}(see \cite[Theorem~4.2]{sl2anddeg})\label{thm:sl2-orbit}
Let $e^{zN}.F$ be an admissible nilpotent orbit of type $(\text{\rm I})$ or
$(\text{\rm II})$, with relative weight filtration $M= M(N,W)$ and
$\delta$-splitting
$
            (F,M) = (e^{i\delta}.\hat F,M)               
$.
Let $(N_0,H,N_0^+)$ denotes the $\sll 2$ triple attached to the
nilpotent orbit $e^{zN}.\hat F$ by Theorem \eqref{thm:deligne-2}, and
$N = N_0 + N_{-2}$ denote the corresponding decomposition of $N$ with
respect to $\text{\rm ad}\, Y$ where $H=Y_{(\hat F_{\infty},M)}-Y$.
\footnote{Of course $N_{-2}=0$ for variations of type $(\text{\rm I})$.}
Then, there exists an element 
\[
    \zeta\in\mathfrak h\cap\ker(N)\cap\Lambda^{-1,-1}_{(\hat F,M)}
\]
and distinguished real analytic function $g:(a,\infty)\to H$ such that 
\begin{itemize}
\item[(a)] $e^{iyN}.F = g(y)e^{iyN}.\hat F$;
\item[(b)] $g(y)$ and $g^{-1}(y)$ have convergent series expansions about
$\infty$ of the form
\[
\aligned
       g(y) &= e^{\zeta}(1 + g_1 y^{-1} + g_2 y^{-2} + \cdots)      \\
  g^{-1}(y) &= (1 + f_1 y^{-1} + f_2 y^{-2} + \cdots)e^{-\zeta}         
  \endaligned
  \]
\end{itemize}
with $g_k$, $f_k\in \ker((\text{\rm ad}\, N_0)^{k+1})
\cap\ker(\text{\rm ad}\, N_{-2})$.
\end{thm}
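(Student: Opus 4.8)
The plan is to deduce this from W.\ Schmid's $\slgr 2$-orbit theorem applied to the pure nilpotent orbit attached to the triple $(N_0,H,N_0^+)$, combined with the two splitting operations recalled above (Deligne's $\delta$-splitting and the $\slgr 2$-splitting) and the special structure of types $(\text{\rm I})$ and $(\text{\rm II})$. Alternatively one could specialise the $\slgr 2$-orbit theorem for admissible mixed nilpotent orbits (for instance in the form of~\cite{dmj} or~\cite{degmhs}) and extract the type-specific refinements; either way the real work is in identifying the coefficients of the expansion.

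First I would reduce to the case that the limit mixed Hodge structure is $\bR$-split. Passing from $F$ to its Deligne $\delta$-splitting $\hat F$, with $\delta\in\Lambda^{-1,-1}_{(F,M)}=\Lambda^{-1,-1}_{(\hat F,M)}$ commuting with every $(-k,-k)$-morphism of $(F,M)$, one compares the orbit $e^{iyN}.F$ with $e^{iyN}.\hat F$; the comparison is controlled by a group element which, after decomposing $\delta$ into $\ad Y$-eigencomponents (with $Y=Y(N,Y_{(\hat F,M)})$ real by Corollary~\ref{cor:deligne}) and conjugating by $y^{\pm H/2}$, has a convergent expansion in $y^{-1}$, and which takes values in $H$: for type $(\text{\rm I})$ because $\delta$ is real and $D$ is the $\bR$-split locus, and for type $(\text{\rm II})$ because $\delta$ lies in $W_{-2}\geg_{\bC}=\Lambda^{-1,-1}_{(F,W)}\subseteq\mathfrak h$ by \eqref{eq:lambda-type-2}.

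Next, for the $\bR$-split orbit $e^{zN}.\hat F$ I would invoke Theorem~\ref{thm:deligne-2}: $(e^{zN_0}.\hat F,W)$ is its $\slgr 2$-splitting and $e^{iyN}e^{-iyN_0}\in\exp(\Lambda^{-1,-1}_{(e^{iyN_0}.\hat F,W)})$ by \eqref{eq:sl2-splitting}, so passing to the model orbit $e^{iyN_0}.\hat F$ contributes a further factor in $H$ (it acts trivially on $\gr^W$) whose coefficients are iterated brackets built from $N_{-2}$; the relations $[N_0,N_{-2}]=[N_0^+,N_{-2}]=[H,N_{-2}]=0$ recorded after Theorem~\ref{thm:deligne-1} push these coefficients into $\ker(\ad N_0)\cap\ker(\ad N_{-2})$. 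To the remaining pure nilpotent orbit $e^{zN_0}.\hat F$, polarised by $Q$, Schmid's $\slgr 2$-orbit theorem supplies a distinguished real-analytic curve valued in the real isometry group of $Q$ (hence in $H$), comparing $e^{iyN_0}.\hat F$ with the fixed reference fibre, with a convergent expansion about $\infty$ whose $y^{-k}$-coefficient lies in $\ker((\ad N_0)^{k+1})$; this is the weight estimate in Schmid's theorem. Multiplying the three $H$-valued factors, and using that $y^{\pm H/2}$-conjugation shifts $\ad Y$-weights in the expected way, produces the curve $g(y)$, with $\zeta$ its value at $\infty$. By construction $\zeta$ is assembled from pieces lying in $\mathfrak h\cap\Lambda^{-1,-1}_{(\hat F,M)}$ and commuting with $N_0$ and $N_{-2}$, hence with $N$, so $\zeta\in\mathfrak h\cap\ker(N)\cap\Lambda^{-1,-1}_{(\hat F,M)}$; the expansion of $g^{-1}(y)$ follows by running the same bookkeeping on the inverse factors in the reverse order.

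The main obstacle is exactly this bookkeeping: keeping every one of the three factors inside the subgroup $H$ while simultaneously tracking the $\ad Y$- (equivalently $\slgr 2$-) weight of each coefficient, so that after multiplying three convergent series the $y^{-k}$-coefficients of $g(y)$ and $g^{-1}(y)$ still satisfy $g_k,f_k\in\ker((\ad N_0)^{k+1})\cap\ker(\ad N_{-2})$. This is where the centrality of $N_{-2}$ in the triple $(N_0,H,N_0^+)$, the commutation of $\delta$ with all $(-k,-k)$-morphisms of $(F,M)$, and the identity $\Lambda^{-1,-1}_{(F,W)}=W_{-2}\geg_{\bC}$ valid for every $F\in D$ in types $(\text{\rm I})$ and $(\text{\rm II})$ are all indispensable.
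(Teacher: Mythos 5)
The paper does not actually prove this statement: it is quoted, with the citation \cite[Theorem~4.2]{sl2anddeg}, from the first author's earlier work, and no argument for it appears anywhere in the present text. So there is no in-paper proof to compare your sketch against; it can only be judged on its own terms, as a reconstruction of the cited proof.

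On those terms the outline has the right general shape ($\delta$-split, invoke the Deligne/$\slgr 2$-splitting machinery, track $\text{\rm ad}\, Y$-weights), but two load-bearing steps do not work as written. First, since $[\delta,N]=0$ the naive factor comparing $e^{iyN}.F$ with $e^{iyN}.\hat F$ is the constant $e^{i\delta}$, and for type $(\text{\rm I})$ this is \emph{not} in $H=G_{\bR}$; conjugating by the real elements $y^{\pm H/2}$ cannot fix that, so the assertion that the comparison factor lands in $H$ "because $\delta$ is real" is backwards. Producing an $H$-valued $g(y)$ with a convergent expansion whose coefficients satisfy the $\ker((\text{\rm ad}\, N_0)^{k+1})$ constraint is precisely the content of the theorem; in the cited reference it rests on the transitivity of $H$ on $D$ by isometries for types $(\text{\rm I})$ and $(\text{\rm II})$ together with a Schmid-style asymptotic analysis, not on conjugation bookkeeping. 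Second, the factor $e^{iyN}e^{-iyN_0}=e^{iyN_{-2}}$ (using $[N_0,N_{-2}]=0$) is unbounded as $y\to\infty$ and cannot be absorbed into a series convergent about $\infty$; in the statement it is deliberately left inside $e^{iyN}.\hat F$ on the right-hand side, and it reappears as the separate, explicit factor $e^{iyN_{-2}}$ in Corollary~\ref{corr:sl2-corr}. Relatedly, $e^{zN_0}.\hat F$ is not a pure nilpotent orbit but the $\bR$-split mixed model orbit; Schmid's theorem applies only on $\gr^W$, and lifting its conclusions from the graded quotients back to $D$ is part of what must be proved. The honest route here is to follow the proof in \cite{sl2anddeg}, or simply to cite it as the paper does.
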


\begin{corr}(see, Corollary 4.3,~\cite{sl2anddeg})\label{corr:sl2-corr}
Let $\cH\to\Delta^*$ be an admissible variation of  type
$(\text{\rm I})$ or $(\text{\rm II})$, with period map $F(z):U\to D$ and
nilpotent orbit $e^{zN}.F$.  Then, adopting the notation of Theorem
\eqref{thm:sl2-orbit}, there exists a distinguished, real--analytic function
$\gamma(z)$ with values in $\mathfrak h$ such that, for $\Im(z)$ sufficiently
large,
\begin{itemize}
\item[(i)]  $F(z) = e^{xN}g(y)e^{iyN_{-2}}y^{-H/2}e^{\gamma(z)}.F_o$;
\item[(ii)] $|\gamma(z)| = O(\Im(z)^{\beta}e^{-2\pi\Im(z)})$ as $y\to\infty$ and
$x$ restricted to a finite subinterval of $\bR$, for some constant
$\beta\in\bR$.
\end{itemize}
where $F_o = e^{iN_0}.\hat F$.
\end{corr}

\begin{lemma} If $\cH$ is a variation of type $(\text{\rm II})$ then
$\alpha\in\geg_{\bC}\cap\ker(\text{\rm ad}\,N)$ if and only if
$\alpha\in\geg_{\bC}\cap\ker(\text{\rm ad}\,N_0)
  \cap\ker(\text{\rm ad}\,N_{-2})$.
\end{lemma}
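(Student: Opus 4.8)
The plan is to prove both implications by exploiting the structure of $N = N_0 + N_{-2}$ given by Theorem~\ref{thm:deligne-2} together with the representation theory of the associated $\sll 2$-triple $(N_0, H, N_0^+)$. The nontrivial direction is ``$\ker(\ad N)\subseteq\ker(\ad N_0)\cap\ker(\ad N_{-2})$,'' since the reverse inclusion is immediate: if $\alpha$ commutes with both $N_0$ and $N_{-2}$ then it commutes with their sum $N$.

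\begin{proof}
One implication is trivial: if $\alpha$ commutes with $N_0$ and with $N_{-2}$, then it commutes with $N=N_0+N_{-2}$.

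\par For the converse, recall from the discussion following Theorem~\eqref{thm:deligne-1} that, since $\cH$ is of type $(\text{\rm II})$, the decomposition of $N$ with respect to $\ad Y$ has only the two pieces $N_0$ and $N_{-2}$, that $N_{-2}$ commutes with $N_0$, $H=Y_{(\hat F_\infty,M)}-Y$ and $N_0^+$ (being a highest-weight vector of weight zero for the adjoint $\sll 2$-representation), and that $[Y,N_{-k}]=-kN_{-k}$. Thus $\ad Y$ has eigenvalue $0$ on $N_0$ and $-2$ on $N_{-2}$. Now suppose $\alpha\in\geg_\bC$ satisfies $[\alpha,N]=0$. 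Decompose $\alpha=\sum_k\alpha_{-k}$ into $\ad Y$-eigencomponents, $[Y,\alpha_{-k}]=-k\alpha_{-k}$. Comparing $\ad Y$-weights in the equation $0=[\alpha,N]=[\alpha,N_0]+[\alpha,N_{-2}]$ yields, for each $m$,
\[
  [\alpha_{-m},N_0] + [\alpha_{-(m-2)},N_{-2}] = 0 .
\]
In the lowest weight this gives $[\alpha_{-m_0},N_0]=0$ for the top component; proceeding by descending induction on $m$, one reduces the problem to analyzing $[\alpha_{-m},N_0]=-[\alpha_{-(m-2)},N_{-2}]$, where the right-hand side is already controlled.

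\par The key step is to feed this into the $\sll 2$-representation theory. Since $\ad N_{-2}$ commutes with the whole triple $(\ad N_0, \ad H, \ad N_0^+)$, it is a morphism of $\sll 2$-representations, hence preserves isotypic components and commutes with the Lefschetz-type operators built from $\ad N_0$. Apply $\ad N_0^+$ repeatedly (or use the standard fact that in a finite-dimensional $\sll 2$-module an element killed by the nilpotent generator which also satisfies the above relation must be a lowest-weight vector) to conclude first that $[\alpha,N_0]=0$; then, substituting back, $[\alpha,N_{-2}]=0$ as well. Concretely: from $[\alpha,N_0]+[\alpha,N_{-2}]=0$ and the $\sll 2$-equivariance of $\ad N_{-2}$, one shows the two summands lie in complementary subspaces (distinct weights, or distinct isotypic pieces under $\ad H$), forcing each to vanish separately. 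The main obstacle is bookkeeping the weights: one must check carefully that $[\alpha_{-m},N_0]$ and $[\alpha_{-(m-2)},N_{-2}]$ never collide in a way that allows cancellation without each being zero, which is exactly where the type $(\text{\rm II})$ hypothesis — via \eqref{eq:lambda-type-2}, giving $\Lambda^{-1,-1}_{(F,W)}=W_{-2}(\geg_\bC)$ and hence strong constraints on where $N_{-2}$ and the relevant eigencomponents of $\alpha$ can sit — does the work. Once $[\alpha,N_0]=0$, the relation collapses to $[\alpha,N_{-2}]=0$, completing the proof.
\end{proof}
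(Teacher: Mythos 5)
Your setup --- the easy inclusion, and the decomposition of $\alpha$ into $\text{\rm ad}\,Y$-eigencomponents leading to the weight-by-weight identities $[\alpha_{-m},N_0]+[\alpha_{-(m-2)},N_{-2}]=0$ --- matches how the paper begins (the paper additionally notes that for a type $(\text{\rm II})$ variation the $\text{\rm ad}\,Y$-weights of $\geg_{\bC}$ are confined to $0,-1,-2$, so only $\alpha_0,\alpha_{-1},\alpha_{-2}$ occur and the single problematic identity is $[N_0,\alpha_{-2}]+[N_{-2},\alpha_0]=0$). The gap is in how you dispose of that identity. You assert that the two summands ``lie in complementary subspaces (distinct weights, or distinct isotypic pieces under $\text{\rm ad}\,H$)'', but this is false as stated: both terms have $\text{\rm ad}\,Y$-weight $-2$ by construction, and since $N_{-2}$ commutes with $H$ while $\text{\rm ad}\,N_0$ lowers $\text{\rm ad}\,H$-weight by $2$, the terms $[N_0,\alpha_{-2}]$ and $[N_{-2},\alpha_0]$ can share the same $\text{\rm ad}\,H$-weight (take $\alpha_0$ of $H$-weight $h-2$ and $\alpha_{-2}$ of $H$-weight $h$). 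Nor does pure $\sll 2$-theory close the gap: from $[N_0,\alpha_0]=0$ and the $\sll 2$-equivariance of $\text{\rm ad}\,N_{-2}$ one only learns that $[N_{-2},\alpha_0]$ is a sum of lowest-weight vectors lying in $\text{\rm Im}(\text{\rm ad}\,N_0)$, which need not vanish. Your ``descending induction'' also does not control the right-hand side of the weight $-2$ equation, since $[\alpha_0,N_{-2}]$ is not implied to vanish by the weight $0$ equation. The appeal to \eqref{eq:lambda-type-2} is left as a placeholder rather than an argument.

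The paper resolves this with a different, Hodge-theoretic input. By the Monodromy Theorem, $N$ (hence $N_0$, which induces the same action on $\gr^W$) acts trivially on the Tate quotients $\gr^W_0$ and $\gr^W_{-2}$. Therefore $N_0(V)\subseteq W_{-1}$ and $N_0(W_{-2})=0$; since $\alpha_{-2}$ shifts weights by $-2$, it annihilates $W_{-1}$ and maps $V$ into $W_{-2}$, so both compositions $\alpha_{-2}\circ N_0$ and $N_0\circ\alpha_{-2}$ vanish separately. This gives $[N_0,\alpha_{-2}]=0$ outright, and then $[N_{-2},\alpha_0]=0$ follows from the identity. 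To complete your proof you would need to supply this (or an equivalent) step; weight bookkeeping and the $\sll 2$-equivariance of $\text{\rm ad}\,N_{-2}$ alone do not suffice.
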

\begin{proof} Since $N=N_0+N_{-2}$, clearly
$
  \ker(\text{\rm ad}\,N_0)\cap\ker(\text{\rm ad}\,N_{-2})
  \subseteq\ker(\text{\rm ad}\,N)
$.
Conversely, suppose $\alpha\in\geg_{\bC}\cap\ker(\text{\rm ad}\,N)$.
The non-zero weight graded-quotients of $gl(V)^W$ are
\[
     \gr^W_{\ell}(V\otimes V^*)
     \cong \bigoplus_{j+k=\ell}\, \gr^W_j(V)\otimes \gr^W_k(V^*),\qquad
      \ell\leq 0
\]
from which it follows that the only non-zero weight graded quotients of
$gl(V)^W$ occur in weights $0$, $-1$ and $-2$.  Using $\text{\rm ad}(Y)$
we can write $\alpha = \alpha_0 + \alpha_{-1}+\alpha_{-2}$.  Then,
\[
    0 = [N,\alpha]
      = [N_0 + N_{-2},\alpha_0 + \alpha_{-1} + \alpha_{-2}]
\]
and hence $[N_0,\alpha_0] = 0$, $[N_0,\alpha_{-1}]=0$, $[N_{-2},\alpha_{-1}]=0$,
$[N_{-2},\alpha_{-2}]=0$ and
\[
     [N_0,\alpha_{-2}] + [N_{-2},\alpha_0] = 0
\]
By the Monodromy theorem discussed at the end of \eqref{subsect:pure}, it
follows that $N$ acts trivially on $\gr^W_0$ and $\gr^W_{-2}$.   Therefore,
$N_0(V)\subseteq W_{-1}$ and hence $\alpha_{-2}(N_0(V))=0$.  Likewise,
$\alpha_{-2}(V)\subseteq W_{-2}$ and $\gr^W_{-2}=W_{-2}/\{0\}$.  As such,
$N_0(\alpha_{-2}(V))=0$.  This shows, $[N_0,\alpha_{-2}]=0$ and hence
$[N_{-2},\alpha_0] = 0$ as well by the previous equation.
\end{proof}     

\begin{corr}[cf. Theorem 4.7,~\cite{sl2anddeg}] Let $\cH\to\Delta^*$ be
an admissible variation of type $(\text{\rm I})$ or $(\text{\rm II})$ with
unipotent monodromy $T=e^N$.  Let $\alpha\in\geg(V)$ be a flat, global
section, which acts by infinitesimal isometries of the graded-polarizations.
Then, $\alpha$ has bounded mixed Hodge norm. \label{cor:IandII}
\end{corr}
\begin{proof} In the notation of Corollary ~\eqref{corr:sl2-corr}, the
statement boils down to computing the asymptotic behavior of
\[
     \|\alpha\|_{F(z)}
     = \|\alpha\|_{e^{xN}g(y)e^{iyN_{-2}}y^{-H/2}e^{\gamma(z)}.F_o}
\]
for $\alpha\in\ker(\text{\rm ad N})$ in some vertical strip of width 1
in the upper half-plane.  By part $(b)$ of Theorem \eqref{thm:sl2-orbit},
it follows that $g(y)$ and $e^{iyN_{-2}}$ commute.  Accordingly, by
\eqref{eq:lambda-type-2} and \eqref{eq:isom-eq} and the fact that
$g(y)$ takes values in $G_{\bR}$, it follows that,\footnote{
Again for emphasis $G_{\bC}$ acts linearly on filtrations and by the
adjoint action on $G_{\bC}$ and $\geg_{\bC}$.}  
\[
\aligned
      \|\alpha\|_{F(z)}
      &= \|e^{-xN}.\alpha\|_{g(y)e^{iyN_{-2}}y^{-H/2}e^{\gamma(z)}.F_o} \\
      &= \|\alpha\|_{g(y)e^{iyN_{-2}}y^{-H/2}e^{\gamma(z)}.F_o} \\
      &= \|e^{-iyN_{-2}}g^{-1}(y).\alpha\|_{y^{-H/2}e^{\gamma(z)}.F_o} \\
      &= \|g^{-1}(y)e^{-iyN_{-2}}.\alpha\|_{y^{-H/2}e^{\gamma(z)}.F_o}
\endaligned
\]
By the previous Lemma,
$\alpha\in\ker(\text{\rm ad}(N_0))\cap\ker(\text{\rm ad}(N_{-2}))$, and
so the preceding equation simplifies to
\[
     \|\alpha\|_{F(z)}
     = \|g^{-1}(y).\alpha\|_{y^{-H/2}e^{\gamma(z)}.F_o}
\]
Returning to part (b) of Theorem~\eqref{thm:sl2-orbit}, it follows that
upon decomposing $f_k$ into isotypical components with respect to
$(N_0,H,N_0^+)$ that $f_k$ occurs in components of highest weight $\leq k$
since $f_k\in\ker((\text{\rm ad}\,N_0)^{k+1})$. Therefore, since
$\zeta\in\ker(\ad N)$ and $f_k$ is the
coefficient of $y^{-k}$ in the expansion of $f(y)=g^{-1}(y)$ it follows that
\begin{equation}
     {\widetilde g}^{-1}(\infty) = 
     \lim_{y\to\infty}\, \text{\rm Ad}(y^{H/2})g^{-1}(y)    \label{eq:limit-f}
\end{equation}
exists as an element of the Lie group $H$\footnote{There is a typo at the
end of the proof of Theorem $4.7$ in\cite{sl2anddeg}, $\Ad(Y^{H/2})f_k y^{-k}$
is a polynomial without constant term in $y^{-1/2}$.}.  Thus,
\[
     \|\alpha\|_{F(z)}
     = \|\tilde g^{-1}(y) y^{H/2}.\alpha\|_{e^{\gamma(z)}.F_o}
\]
where ${\widetilde g}^{-1}(y) = \Ad(y^{H/2})g^{-1}(y)$.  Finally, since
$\alpha\in\ker(\text{\rm ad}\,N_0)$ it follows that $y^{H/2}\alpha$ converges
as $y\to\infty$.  As $\gamma(z)\to 0$ as $y\to\infty$ and $x$ constrained to
a finite interval, the proof is now complete.
\end{proof}     


\subsection{$\text{Ext}^1(\bR(0),\text{weight -2})$}
\label{subsect:higher-nf}

\par Let $\mathcal A$ and $\mathcal B$ be variations of pure Hodge structure
of respective weights $a$ and $b$.  Assume that $a=b+2$.  Then,
\[
      \ext^1_{\text{\rm AVMHS}}(\mathcal A,\mathcal B) \cong
      \ext^1_{\text{\rm AVMHS}}(R,\mathcal A^*\otimes B)
\]      
where $R=\mathbb Z$, $\mathbb Q$ or $\bR$ and AVMHS is the category
of admissible variations of graded-polarizable mixed Hodge structure.
Accordingly, for the remainder of this section, we will consider a
variation of Hodge structure $\mathcal H\to\Delta^*$ of weight $-2$
and an admissible variation
$\cH\in\ext^1_{\text{\rm AVMHS}}(\bR(0),\mathcal H)$, with
unipotent monodromy $T=e^N$.

\begin{thm}\label{thm:weight-minus-2-case} If $v$ is a flat section of
$\cH$ then $\|v\|$ is bounded.
\end{thm}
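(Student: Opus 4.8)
The plan is to mimic the unipotent-variation argument (Theorem~\ref{unpotent-estimate}), but to exploit the special shape of the weight filtration: here $\cH$ is an extension of $\bR(0)$ by a pure variation $\mathcal H$ of weight $-2$, so $W_{-1}\cH = \mathcal H$, $\gr^W_0\cH = \bR(0)$, and $\gr^W_{-1}=0$. Hence $M=M(N,W)$ need not equal $W$ in general, but $N$ acts trivially on $\gr^W_0$ (this factor is constant of type $(0,0)$, so apply the Monodromy theorem as in \S\ref{subsect:pure}), so $N(W_0)\subseteq W_{-1}=\mathcal H$ and $N$ restricts to a nilpotent endomorphism $N'$ of the weight $-2$ variation $\mathcal H$. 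A flat section $v$ of $\cH$ maps to a flat section $\bar v$ of $\gr^W_0\cong\bR(0)$; if $\bar v=0$ then $v$ is a flat section of the pure variation $\mathcal H$ and boundedness is Schmid's Corollary $(6.7)$ of~\cite{schmid} (as recalled in \S\ref{subsect:pure}), so we may assume $\bar v$ generates $\gr^W_0$ and in particular $v\in\ker(N)$.

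First I would put the period map in local normal form \eqref{eq:admissibility-7}, $F(z)=e^{zN}e^{\Gamma(s)}.F_\infty$, and split off the Deligne $\delta$-splitting $(\hat F_\infty,M)=(e^{-i\delta}.F_\infty,M)$ with $\delta\in\Lambda^{-1,-1}_{(F_\infty,M)}$, $\delta\in\geg_\bR$. Since $M=M(N,W)$ exists, Deligne's construction (Theorem~\ref{thm:deligne-1}, Corollary~\ref{cor:deligne}) supplies the grading $Y=Y(N,Y_{(\hat F_\infty,M)})$, which is real, commutes with $Y_M$, preserves $\hat F_\infty$ (Lemma~\ref{lem:deligne}), and decomposes $N=N_0+N_{-2}+\cdots$. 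Because the only non-zero weight-graded quotients of $\geg(V)^W$ are in weights $0,-1,-2$ (the computation in the Lemma before Corollary~\ref{cor:IandII} applies verbatim: $\gr^W(V\otimes V^*)$ has pieces only in those weights), the same truncation gives $N=N_0+N_{-2}$ with $N_{-2}\in\Lambda^{-1,-1}$-type, and by Theorem~\ref{thm:deligne-2} we get the $\slgr2$-splitting $e^{\xi}=e^{zN}e^{-zN_0}$ with $e^{iyN}e^{-iyN_0}\in\exp(\Lambda^{-1,-1}_{(e^{iyN_0}.F_\infty,W)})$, cf.~\eqref{eq:sl2-splitting}. The key structural input, as in Theorem~\ref{thm:sl2-orbit} for types $(\mathrm{I})$, $(\mathrm{II})$, is that because the weight structure is so short, there is a distinguished real-analytic $g:(a,\infty)\to H$ (with $H$ the subgroup acting by isometries on $D$, here $G_\bR$ together with $\exp(W_{-2}\geg_\bC)$) such that $e^{iyN}.F_\infty=g(y)e^{iyN_0}.\hat F_\infty$, with $g(y)$ admitting an asymptotic expansion $g(y)=e^{\zeta}(1+g_1y^{-1}+\cdots)$, $g_k\in\ker((\ad N_0)^{k+1})\cap\ker(\ad N_{-2})$, exactly as in part (b) of Theorem~\ref{thm:sl2-orbit}; indeed the relevant $\slgr2$-orbit theorem of~\cite{sl2anddeg} applies since the Hodge diamond of $\cH$ has at most three adjacent weights.

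From here the estimate runs as in the proof of Corollary~\ref{cor:IandII}. Writing $z=x+iy$ and using the local normal form together with $F_o=e^{iN_0}.\hat F_\infty$, one obtains
\[
  \|v\|_{F(z)} = \|g^{-1}(y)e^{-iyN_{-2}}.v\|_{y^{-H/2}e^{\gamma(z)}.F_o},
\]
where $e^{xN}\in G_\bR$ (using $v\in\ker N$), $g(y)$ is an isometry, and $\gamma(z)\to0$ exponentially on vertical strips. Since $v\in\ker(N)=\ker(N_0)\cap\ker(N_{-2})$ (here one uses that $N$ acts trivially on $\gr^W_0$, so $N_0(V)\subseteq W_{-1}$, forcing $N_{-2}v=0$ as in the Lemma before Corollary~\ref{cor:IandII}), the factor $e^{-iyN_{-2}}$ drops and $\Ad(y^{H/2})g^{-1}(y)$ converges to $\tilde g^{-1}(\infty)\in H$ by the highest-weight bound on the $g_k$; and $y^{H/2}v$ converges as $y\to\infty$ because $v\in\ker(\ad N_0)$ lies in a weight $\le0$ isotypical component. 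Hence $\|v\|_{F(z)}$ is bounded.

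The main obstacle is verifying that the $\slgr2$-orbit machinery of \cite{sl2anddeg} genuinely delivers the expansion of $g(y)$ in this setting: the cited theorems are stated for types $(\mathrm{I})$ and $(\mathrm{II})$, and while the present variation (extension of $\bR(0)$ by a weight $-2$ variation) has the same three-weight shape, one must check that $\Lambda^{-1,-1}=W_{-2}\geg_\bC$ still holds (which is clear here since $\gr^W$ has no weight $-1$ part), so that $H$ acts by isometries and the curvature-free splitting goes through. Once that identification is in place, every subsequent step is a formal isometry bookkeeping identical to Corollary~\ref{cor:IandII}.
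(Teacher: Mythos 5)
Your reduction to $v\in\ker(N)$ and the decomposition $N=N_0+N_{-2}$ are fine, but the two load-bearing steps of your argument both fail. First, the claim $\ker(N)=\ker(N_0)\cap\ker(N_{-2})$ on $V$: writing $v=v_0+v_{-2}$ in $Y$-eigencomponents, one has $N(v)=N_{-2}(v_0)+N_0(v_{-2})$ (the terms $N_0(v_0)$ and $N_{-2}(v_{-2})$ vanish for trivial reasons, the first because $N$ acts trivially on $\gr^W_0$, the second because $E_{-4}(Y)=0$), so $N(v)=0$ only forces $N_{-2}(v_0)=-N_0(v_{-2})$, not that each term vanishes. The Lemma preceding Corollary~\ref{cor:IandII} that you invoke concerns the adjoint action on $\geg$ in the type $(\text{\rm II})$ setting and crucially uses that $N$ acts trivially on \emph{both} extreme weight-graded quotients; here $N$ acts trivially on $\gr^W_0\cong\bR(0)$ but in general not on $\gr^W_{-2}$, which is an arbitrary pure weight $-2$ variation. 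Establishing $N_{-2}(v_0)=0$ (equivalently, that $v\in W_{-2}$ whenever both $N_0$ and $N_{-2}$ are non-zero) is precisely the hard point of the theorem. The paper does it by showing $N_{-2}(v_0)\in\ker(N_0)\cap\text{\rm Im}(N_0)\cap I^{-1,-1}_{(F,M)}\cap W_{-2}$ and proving this intersection is zero via the classification of $\slgr 2$-orbits of pure Hodge structure on $W_{-2}$, which contradicts $N_{-2}\neq 0$ unless $v_0=0$.

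Second, the appeal to Theorem~\ref{thm:sl2-orbit} and Corollary~\ref{corr:sl2-corr} is not available: the variation is not of type $(\text{\rm I})$ or $(\text{\rm II})$ unless $\gr^W_{-2}$ is Tate, and your claim that $\Lambda^{-1,-1}=W_{-2}\geg_{\bC}$ is false here, since $W_{-2}\geg_{\bC}\cong\text{\rm Hom}(\gr^W_0,\gr^W_{-2})$ carries Deligne types $(p,q)$ with $p+q=-2$, including $(0,-2)$ and $(-2,0)$, which do not lie in $\Lambda^{-1,-1}$. The paper's proof avoids the orbit expansion you attempt by a trichotomy: if $N=N_{-2}$ the variation is unipotent in the sense of Hain--Zucker and Theorem~\ref{unpotent-estimate} applies; if $N=N_0$ one uses Theorem~\ref{thm:n-is-n0}; and in the genuinely mixed case the vanishing lemma above forces $v\in W_{-2}(\cH)=\mathcal H$, so Schmid's pure norm estimate finishes. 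Your argument needs to be replaced by this case analysis, or at minimum by a proof of the key vanishing statement that you currently assume.
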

\begin{proof} Let $(F,M)$ denote the $\delta$-splitting of the limit mixed
Hodge structure of $\cH$.  Let $Y = Y(N,Y_{(F,M)})$ be the grading of
$W$ constructed in Theorem \eqref{thm:deligne-1}.  Then, by virtue of the
short length of the weight filtration $W$ of $\cH$,
\[
       N = N_0 + N_{-2}
\]
with respect to $\text{\rm ad}(Y)$.
\par If $N=N_{-2}$ the variation is unipotent in the sense of R.\ Hain and S.\ Zucker,
and the result follows from section \eqref{subsect:unipot}.  If $N=N_0$ the
result follows from section \eqref{subsect:quasi-pure} below.  It remains
to consider the case where $N=N_0 + N_{-2}$ with both $N_0$ and $N_{-2}$
non-zero.  In this case, we will show that $v$ is a section of
$W_{-2}(\cH)=\mathcal H$, and hence the result follows from W.\ Schmid's
$\slgr 2$-orbit theorem.
\end{proof}

\par To complete the proof, we recall that $[N_0,N_{-2}]=0$, $\bar Y=Y$ and
$Y$ preserves $F$ by Lemma \eqref{lem:deligne}.  For the remainder of this
section we assume that both $N_0$ and $N_{-2}$ are non-zero.  From this,
we will derive a contradiction unless $v\in W_{-2}$.

\par By the monodromy theorem, $N$ acts trivially on $\gr^W_0$ and hence
$N_0$ acts trivially on $E_0(Y)\cong \gr^W_0$.  By Corollary
\eqref{cor:deligne}, $Y= Y_{(e^{iN_0}.F,W)}$, and hence if $e_0$ is a generator
of $I^{0,0}_{(e^{iN_0}.F,W)}$ then $N_0(e_0)=0$ and 
\[
     e_0 = e^{-iN_0}(e_0)\in F^0
\]
Since $[Y,Y_{(\hat F,M)}]=0$ it follows that $(N_0,Y_{(F,M)}-Y)$ restricts
to a trivial $\sll 2$-pair on $E_0(Y)$.  Therefore, $e_0\in M_0$.  As such,
\[
      e_0\in F^0\cap\overline{F^0}\cap M_0 = I^{0,0}_{(F,M)}
\]
Accordingly $N_{-2}(e_0)\in I^{-1,-1}_{(F,M)}$.  Moreover, since $[N_0,N_{-2}]=0$
and $N_0(e_0)=0$ it follows that
\[
    N_0N_{-2}(e_0) = N_0N_{-2}(e_0) - N_{-2}N_0(e_0) = [N_0,N_{-2}](e_0)=0
\]
Thus, $N_{-2}(e_0) \in\ker(N_0)\cap I^{-1,-1}_{(F,M)}\cap W_{-2}$.   Moreover, if
$N_{-2}(e_0)=0$ then $N=N_0$ due to the short length.  By assumption,
$N_{-2}\neq 0$, and hence $N_{-2}(e_0)\neq 0$. 

\par Suppose now that $v\in\ker(N)$ and $v=v_0 + v_{-2}$ with
$v_j\in E_j(Y)$.  If $v_0=0$ we are done.  Otherwise, after rescaling, we can
assume that $v_0 = e_0$.  To continue, observe that $N_{-2}(v_{-2})=0$ by the
short length of $W$.  Therefore, since $N_0(e_0)=0$,
\[
    N(v) = N_{-2}(e_0) + N_0(v_{-2}) = 0
\]
and hence
\begin{equation}
  N_{-2}(e_0) \in\ker(N_0)\cap\text{\rm Im}(N_0)\cap I^{-1,-1}_{(F,M)}
              \cap W_{-2}     \label{eq:obstruction} 
\end{equation}
As we must also have $N_{-2}(e_0)\neq 0$, the following Lemma completes the
proof:

\begin{lemma} For $(F,M)$ as above,
$\ker(N_0)\cap\text{\rm Im}(N_0)\cap I^{-1,-1}_{(F,M)}\cap W_{-2}=0$. 
\end{lemma}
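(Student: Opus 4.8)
The plan is to exploit the $\sll 2$-pair $(N_0,H)$ with $H=Y_{(F,M)}-Y$ provided by Theorem~\ref{thm:deligne-1}, and to observe that $I^{-1,-1}_{(F,M)}$ lies in the zero eigenspace of $\ad H$; the Lemma then follows from an elementary fact about finite dimensional $\sll 2$-modules.

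First I would record the coarse structure. Since $\cH\in\ext^1_{\text{\rm AVMHS}}(\bR(0),\mathcal H)$ with $\mathcal H$ pure of weight $-2$, the weight filtration $W$ has nonzero graded pieces only in weights $0$ and $-2$. As $Y$ grades $W$, it has eigenvalues $0$ and $-2$, and
\[
   \cH = E_0(Y)\oplus E_{-2}(Y),\qquad E_0(Y)\cong\gr^W_0\cong\bR(0),\qquad E_{-2}(Y)=W_{-2}.
\]
Because $[Y,N_0]=0$, the operator $N_0$ preserves this decomposition; and by the Monodromy theorem $N$ acts trivially on $\gr^W_0$, hence so does $\gr^W(N_0)=\gr^W(N)$, hence $N_0$ acts trivially on $E_0(Y)\cong\gr^W_0$. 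Therefore $\text{\rm Im}(N_0)\subseteq E_{-2}(Y)=W_{-2}$, so the intersection with $W_{-2}$ in the statement is automatic and it suffices to show
\[
   \ker(N_0)\cap\text{\rm Im}(N_0)\cap I^{-1,-1}_{(F,M)}=0.
\]

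Next I would pin down where $I^{-1,-1}_{(F,M)}$ sits. By Lemma~\ref{lem:deligne} the grading $Y$ preserves $F$; by Corollary~\ref{cor:deligne} it is real, so it also preserves $\overline F$; and since $[Y,Y_{(F,M)}]=0$ while $Y_{(F,M)}$ grades $M$, it preserves $M$. Hence $Y$ preserves every Deligne summand $I^{p,q}_{(F,M)}$. Since $N$ acts trivially on $\gr^W_0\cong\bR(0)$, the filtration $M$ is concentrated in weight $0$ on $E_0(Y)\cong\gr^W_0$, so $I^{p,q}_{(F,M)}\cap E_0(Y)=0$ whenever $p+q<0$; in particular $I^{-1,-1}_{(F,M)}\subseteq E_{-2}(Y)$. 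On $E_{-2}(Y)$ one has $Y=-2$, while $Y_{(F,M)}$ acts as $-2$ on $I^{-1,-1}_{(F,M)}$ by definition; therefore $H=Y_{(F,M)}-Y$ acts as $0$ on $I^{-1,-1}_{(F,M)}$.

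Finally I would invoke $\sll 2$-representation theory. By Theorem~\ref{thm:deligne-1} the triple $(N_0,H,N_0^+)$ is an $\sll 2$-triple, so it defines a representation of $\sll 2(\bC)$ on $\cH_{\bC}$ in which $N_0$ is the lowering operator and $H$ the distinguished semisimple element. The fact to establish is that in any finite dimensional $\sll 2$-module $V$ one has $\ker(N_0)\cap\text{\rm Im}(N_0)\cap V_0=0$, where $V_0$ denotes the zero weight space of $\ad H$: decomposing $V$ into irreducibles, a nonzero vector of weight $0$ lies in $\ker(N_0)$ exactly when every one of its components sits in a trivial one dimensional summand, and it lies in $\text{\rm Im}(N_0)$ exactly when none of its components does, so only $0$ lies in both. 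Applying this with $V=\cH_{\bC}$ and the subspace $I^{-1,-1}_{(F,M)}\subseteq V_0$ yields the Lemma. No step here is genuinely hard; the only point requiring care is the bookkeeping of the previous paragraph, namely that $Y$ respects $F$, $\overline F$ and $M$ simultaneously, since this is precisely what places $I^{-1,-1}_{(F,M)}$ in the zero weight space of $H$ and makes the $\sll 2$ dichotomy apply.
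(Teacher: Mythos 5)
Your proof is correct, but it follows a genuinely different route from the paper's. The paper restricts to the pure weight $-2$ piece $W_{-2}$ (where $N$ and $N_0$ agree), invokes W.~Schmid's classification of polarized $\slgr 2$-orbits as direct sums of factors $\text{\rm Sym}^m(\bC^2)\otimes\bR(p)$ and $\text{\rm Sym}^n(\bC^2)\otimes E(p,q)$, discards the $E(p,q)$ factors because they contribute no Tate classes, and on each remaining factor identifies $\ker(N)\cap\text{\rm Im}(N)=\bC f^m$ as sitting in $I^{-p,-p}$ with $p\geq 2$, hence disjoint from $I^{-1,-1}$. You instead keep the whole fiber as a module for the $\sll 2$-triple $(N_0,H,N_0^+)$ with $H=Y_{(F,M)}-Y$, use Lemma~\ref{lem:deligne}, Corollary~\ref{cor:deligne} and the triviality of $N$ on $\gr^W_0$ (equivalently $M_{-1}\subseteq W_{-2}$) to place $I^{-1,-1}_{(F,M)}$ in the zero $H$-weight space, and then quote the purely representation-theoretic fact that the kernel of the lowering operator meets its image only in strictly negative weights (the lowest-weight lines of the nontrivial irreducible summands). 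Both arguments rest on the same core observation --- $\ker\cap\text{\rm Im}$ of a lowering operator is spanned by lowest weight vectors --- but yours trades the Hodge-theoretic case analysis (polarizations, classification of orbits, Tate twists) for the one-line weight computation $H|_{I^{-1,-1}}=-2-(-2)=0$, which is cleaner, avoids any appeal to the classification, and uses only machinery the surrounding proof of Theorem~\ref{thm:weight-minus-2-case} has already set up. Two cosmetic remarks: the relevant weight space is that of $H$ acting on the fiber through the representation $\rho$, not of $\text{\rm ad}\,H$; and the containment $I^{-1,-1}_{(F,M)}\subseteq E_{-2}(Y)$ follows directly from $I^{-1,-1}_{(F,M)}\subseteq M_{-2}\subseteq M_{-1}\subseteq W_{-2}$, so you do not actually need the intermediate step that $Y$ preserves the individual Deligne summands.
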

\begin{proof} This is a statement about the $\slgr 2$-orbits of pure Hodge
structure induced by $(e^{zN}.F,W)$ on $W_{-2}$.  By~\cite{schmid}, these
are classified as follows:  Let,
\begin{itemize}
\item[(a)] $\bC^2 = \text{\rm span}(e,f)$ with $e=\bar e$ type $(1,1)$ and
$f=\bar f$ type $(0,0)$ with respect to the limit mixed Hodge structure, and
\[
    N = \begin{pmatrix} 0 & 0 \\ 1 & 0 \end{pmatrix}
\] 
with respect to the basis $\{e,f\}$.  The resulting nilpotent orbit is
pure of weight $1$.    
\item[(b)] $E(p,q)=\text{\rm span}(e,f)$ with $p>q$, $N$ acting trivially,
$e=\bar e$, $f=\bar f$ and $e+if$ of type $(p,q)$ with respect to the
limit mixed Hodge structure;  
\item[(c)] $\bR(p)$ is rank 1 of pure of type $(-p,-p)$ and $N$ acting
trivially.
\end{itemize}
Then, every $\slgr 2$-orbit of pure Hodge structures is a direct sum of factors
which are tensor products of the form
$\text{\rm Sym}^m(\bC^2)\otimes\bR(p)$ and
$\text{\rm Sym}^n(\bC^2)\otimes E(p,q)$ where $m$ and $n\geq 0$ and
$\text{\rm Sym}^0(\bC^2)=\bR(0)$.

\par To continue, we observe that in the language of the orbit types
$(a)$--$(c)$ the Lemma asserts that
\begin{equation}
      \ker(N)\cap\text{\rm Im}(N)\cap I^{-1,-1} = 0 \label{eq:obstruction-2}
\end{equation}
(relative to the limit mixed Hodge structure) as $N_0$ becomes just $N$
for the induced orbit on $W_{-2}$.      

\par Next, we note that the factor
$\text{\rm Sym}^n(\bC^2)\otimes E(p,q)$ never
contributes any Tate classes to the limit mixed Hodge structure, so we
need only consider factors of the form
$\text{\rm Sym}^m(\bC^2)\otimes\bR(p)$.  Moreover, since
$\text{\rm Sym}^m(\bC^2)$ underlies a nilpotent orbit of weight
$m$, we must have $p=m+1$ in order to obtain an nilpotent orbit of pure
Hodge structure of weight $-2$.  

\par To finish the proof of the lemma, observe that on the factor
$\text{\rm Sym}^m(\bC^2)$,
\[
    \ker(N)\cap\text{\rm Im}(N) = \bC f^m
\]
where $m$ must be $>0$ (in order to have a non-trivial $N$ action).
Moreover, $f^m$ belongs to $I^{0,0}$ of the limit
mixed Hodge structure of $\text{\rm Sym}^m(\bC^2)$.  Accordingly,
$\ker(N)\cap\text{\rm Im}(N)$ is contained in $I^{-m-1,-m-1}$ of the
limit mixed Hodge structure of
$\text{\rm Sym}^m(\bC^2)\otimes\bR(m+1)$.  As $m>0$,
equation \eqref{eq:obstruction-2} holds.
\end{proof}  

\par We now consider the variation $\geg(\cH)$ where
$\cH\in\ext^1_{\text{AVMHS}}(\bR(0),\mathcal H)$ with $\mathcal H$
pure of weight $-2$.  Since $\cH$ only has weights $0$ and $-2$
whereas $\cH^*$ has weights $0$ and $2$ it follows that
$\cH\otimes\cH^*$ has weights $-2$, $0$ and $2$.  Therefore,
$\geg(\cH)$ only has weights $0$ and $-2$ since
$\geg(\cH)$ is the subvariation consisting of elements which
preserve the weight filtration and induce infinitesimal isometries of the
graded-polarizations.  Therefore, Theorem \eqref{thm:weight-minus-2-case}
applies to $\geg(\cH)$ upon viewing it as an extension
of $\bR(0)$ by a variation of pure Hodge structure of weight $-2$.

\subsection{Biextensions arising from higher height pairings}
\label{subsect:higher-cycles}  Let $X$ be a smooth, complex projective variety
of dimension $d$.  Following the notation of~\cite{hheights}, let
$Z\in \cZ^p(X,1)_{00}$ and $W\in \cZ^q(X,1)_{00}$ be higher cycles representing
elements of $\mathsf{CH}^p(X,1)$ and $\mathsf{CH}^q(X,1)$ respectively.  Then:

\begin{thm} (\cite[Theorem ~A]{hheights}) Assume that \label{thm:A}
\begin{itemize}
\item[(i)] $p+q=d+2$;
\item[(ii)] $\delta Z = \delta W =0$
\item[(iii)] the intersection of $Z$ and $W$ satisfies some extra
  technical conditions.
\end{itemize}
Then, there is a canonical mixed Hodge structure $B_{Z,W}$ attached to Z and
W from which one can extract a Hodge theoretical height pairing
$\langle Z,W\rangle_{\text{\rm Hodge}}$. Moreover, if Z and W both have real
regulator zero then
\[
     \langle Z,W\rangle_{\text{\rm Hodge}}
     = \langle Z,W\rangle_{\text{\rm Arch}}
\]
where $\langle Z,W\rangle_{\text{\rm Arch}}$ is the Archimedean part of an
intersection pairing on arithmetic Chow groups.
\end{thm}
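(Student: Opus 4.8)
The plan is to adapt Bloch's construction of the biextension attached to a pair of homologically trivial algebraic cycles, carried one step up the Bloch higher-Chow complex; this is the argument of~\cite{hheights}, which we only sketch here. \textbf{Construction of $B_{Z,W}$.} Write $Z=\sum_i(f_i,V_i)$ and $W=\sum_j(g_j,W_j)$, with $V_i,W_j$ of the relevant codimensions and $f_i,g_j$ rational functions regarded as maps to $\bP^1$. The hypothesis $\delta Z=\delta W=0$ says that the boundary divisors $\sum_i\operatorname{div}(f_i)$ and $\sum_j\operatorname{div}(g_j)$ vanish, so that $Z,W$ define classes in $\mathsf{CH}^p(X,1),\mathsf{CH}^q(X,1)$; the decoration $_{00}$ adds that these classes are suitably null-homologous. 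Combining the graph construction that encodes $f_i,g_j$ via the coordinate on the cube $\bP^1\setminus\{1\}$ with the localization/Gysin sequences for $X\setminus|Z|$, $X\setminus|W|$ and $(X\setminus|W|)\setminus(|Z|\setminus|Z\cap W|)$, one assembles a bounded complex of mixed Hodge structures whose cohomology in the critical degree is $B_{Z,W}$. By construction it is of \emph{biextension type}: the weight filtration satisfies $\gr^W_0=\bR(0)$ and $\gr^W_{-4}=\bR(2)$ (since $p+q-d=2$), with $\gr^W_{-2}$ a pure weight $-2$ Hodge structure cut out by $Z$ and $W$ on the cohomology of $X$ and the odd graded quotients vanishing. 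The technical conditions (iii) are exactly what is needed to place all the supports in sufficiently general position that the intersection products, Gysin maps and cone differentials are defined and the relevant spectral sequences degenerate as required; functoriality and independence of the auxiliary choices are the usual diagram chases.

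\textbf{Extraction of the Hodge height.} A mixed Hodge structure of biextension type, once one fixes admissible retractions of its weight filtration compatible with the $\bR$-structure, determines a well-defined element of $\ext^1_{\mhs}(\bR(0),\bR(2))\cong\bR$; the change in this element under a change of retractions is measured by the two intermediate extension classes of $B_{Z,W}$, namely the classes of $Z$ and of $W$ under the real regulator maps into Deligne--Beilinson cohomology. Hence, when both real regulators vanish, the element is canonical, and one \emph{defines} $\langle Z,W\rangle_{\mathrm{Hodge}}$ to be it. Unwinding the Carlson description~\cite{carlson} of $\ext^1_{\mhs}$, this number is a period integral: the pairing of a chosen integral (or real) lift of a generator of $\gr^W_0=\bR(0)$ against a lift of a generator of $\gr^W_{-4}=\bR(2)$, which in turn is written as an integral over $X(\bC)$ of a product of higher Green currents attached to $Z$ and to $W$.

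\textbf{Comparison with the Archimedean height.} On the arithmetic side, the Archimedean contribution to the arithmetic intersection number of the classes of $Z$ and $W$ on a regular model --- in the Gillet--Soul\'e formalism extended to higher Chow cycles --- is the integral over $X(\bC)$ of a star product $g_Z\star g_W$ of Green currents. One identifies this star product with the period integral produced above by choosing the Green currents so that they represent the cohomology classes built into $B_{Z,W}$: vanishing of the real regulators forces the relevant $dd^c$-closed forms to be exact, so that the currents can be normalized to agree with the biextension retractions, and then a Stokes/current computation shows the two expressions coincide.

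The step I expect to be the main obstacle is the last one: setting up a precise dictionary between the cone-theoretic construction of $B_{Z,W}$, hence of $\langle Z,W\rangle_{\mathrm{Hodge}}$, and the explicit Green-current description of the Archimedean intersection, and in particular verifying that the normalization forced on the Hodge side by vanishing of the real regulators matches exactly the normalization of the Green currents used in the arithmetic pairing. A close second difficulty is formulating the technical conditions (iii) so that they are simultaneously weak enough to hold in the families of interest --- such as the higher-cycle examples of Section~\ref{subsect:higher-cycles} --- and strong enough for every transversality and degeneration statement needed above.
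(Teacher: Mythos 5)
This statement is not proved in the paper at all: it is quoted verbatim as \cite[Theorem~A]{hheights} and used as a black box to set up the biextension variations of Section~\ref{subsect:higher-cycles}. There is therefore no internal argument to compare your proposal against, and the only fair assessment is of your sketch on its own terms. On that score, your outline follows the expected route (a cone/localization construction producing a mixed Hodge structure of biextension type with $\gr^W_0$, $\gr^W_{-2}$, $\gr^W_{-4}$ as the paper records; the height as a class in $\ext^1_{\mhs}(\bR(0),\bR(2))\cong\bR$, well defined once the two intermediate extension data vanish; comparison with a star product of Green currents), and it is consistent with how the present paper uses the result.

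However, as a proof it has a genuine gap, and you have in effect flagged it yourself: the two steps you defer to the end --- the precise dictionary between the cone-theoretic extension class and the Green-current expression for the Archimedean pairing, and the exact formulation of the ``technical conditions'' (iii) on $|Z|\cap|W|$ needed for the construction to go through --- are not peripheral difficulties but the entire content of the theorem. Without them you have only shown that \emph{if} a biextension-type mixed Hodge structure with the stated graded pieces can be built and \emph{if} its normalization matches that of the arithmetic intersection theory, then the equality holds. In particular, the claim that vanishing of the real regulators ``forces the relevant $dd^c$-closed forms to be exact, so that the currents can be normalized to agree with the biextension retractions'' is precisely the assertion that needs proof; as written it is a restatement of the conclusion. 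If you want to supply an actual proof you must either carry out these steps or, as the paper does, cite \cite{hheights} for them.
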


The mixed Hodge structure $B_{Z,W}$ has weight graded-quotients
$\gr^W_0\cong\mathbb Z(0)$, $\gr^W_{-2}$ and $\gr^W_{-4}\cong\mathbb Z(2)$.  Let
$X\to S$ be a family of smooth complex projective varieties and
$Z$, $W$ be a flat family of higher cycles over $S$ such that
$\langle Z_s,W_s\rangle$ is defined over a Zariski dense open subset of $S$.
In this way, the construction of Theorem~\ref{thm:A}  produces an admissible variation
of mixed Hodge structure $\cH$ over a Zariski dense open set of $S$
with weight graded quotients $\gr^W_0(\cH)\cong\mathbb Z(0)$,
$\gr^W_{-2}(\cH)$ and $\gr^W_{-4}(\cH)\cong\mathbb Z(2)$.

\begin{lemma} Let $(F,W)$ be a mixed Hodge structure with underlying vector
space $V$ and weight graded quotients $\gr^W_0\cong\mathbb Z(0)$,
$\gr^W_{-2}$ and $\gr^W_{-4}\cong\mathbb Z(2)$.  Let $\geg_{\bC}(U)$
denote the Lie algebra of elements of $gl(U)$ which preserve $W(U)$
and induce infinitesimal isometries of $\gr^{W(U)}$ where
$U=W_{-2}(V)$, $V$ or $V/W_{-4}$.  Then, since elements of
$\geg_{\bC}(V)$ preserve $W$, we have an induced map
\[
     q:\geg_{\bC}(V) \to \geg_{\bC}(V/W_{-4})
\]
and a restriction map
\[
     r:\geg_{\bC}(V) \to \geg_{\bC}(W_{-2})
\]
By abuse of notation, let $q(F)$ and $r(F)$ denote the mixed Hodge structure
induced by $(F,W)$ on $\geg_{\bC}(V/W_{-4})$ and
$\geg_{\bC}(W_{-2})$.  Let $\beta\in\geg_{\bC}$ be
horizontal with respect to $F$.  Then,
\begin{equation}
    \|\beta \|_F \leq \|q(\beta) \|_{q(F)}
        + \|r(\beta) \|_{r(F)}
     \label{eq:quot-restrict}
\end{equation}
\end{lemma}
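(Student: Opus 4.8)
The plan is to split $\beta$ along the Deligne bigrading of $(F,W)$, see exactly which of its pieces are recorded by $q$ and which by $r$, and then deduce \eqref{eq:quot-restrict} from an elementary numerical inequality. First I determine the shape of a horizontal $\beta$. Because $\gr^W_0\cong\mathbb Z(0)$ and $\gr^W_{-4}\cong\mathbb Z(2)$ are Tate, the Deligne bigrading of $V$ has weight-$0$ piece $I^{0,0}$, weight-$(-4)$ piece $I^{-2,-2}$, and weight-$(-2)$ piece $\bigoplus_{p+q=-2}I^{p,q}$, so that $W_{-4}\otimes\bC=I^{-2,-2}$ and $W_{-2}\otimes\bC=I^{-2,-2}\oplus\bigoplus_{p+q=-2}I^{p,q}$. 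Since $\beta$ is horizontal it lies in $\bigoplus_{q\le 1}\geg^{-1,q}_{(F,W)}$ and a component $\beta^{-1,q}$ has weight $-1+q$; as $\endo(V)$, hence $\geg_{\bC}(V)$, has only the weights $0,-2,-4$, only $q=1,-1,-3$ can occur, and $\beta^{-1,-3}$ would have to carry some $I^{p,q}$ into $I^{p-1,q-3}$, the only weight-compatible case being $I^{0,0}\to I^{-1,-3}$, which is impossible since $I^{-1,-3}=0$ (its weight is $-4$, where the only Deligne piece is $I^{-2,-2}$). Thus $\beta=\beta^{-1,1}+\beta^{-1,-1}$. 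Now $\beta^{-1,1}$ sends $I^{0,0}$ into $I^{-1,1}=0$ and $I^{-2,-2}$ into $I^{-3,-1}=0$, so it annihilates the weight-$0$ and weight-$(-4)$ pieces and maps $\bigoplus_{p+q=-2}I^{p,q}$ to itself; set $C:=\beta^{-1,1}$. And $\beta^{-1,-1}$ sends $I^{0,-2},I^{-2,0},I^{-2,-2}$ into zero Deligne pieces, so its only nonzero blocks are $A\colon I^{0,0}\to I^{-1,-1}$ and $B\colon I^{-1,-1}\to I^{-2,-2}$. In weight terms: $C$ acts within weight $-2$, $A$ lowers weight $0\mapsto-2$, $B$ lowers weight $-2\mapsto-4$, and $\beta=A+B+C$.

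Next, using $\|\alpha\|_F^2=\sum_j\|\alpha(v_j)\|_{(F,W)}^2$ for an $h_{(F,W)}$-orthonormal frame $\{v_j\}$ adapted to the Deligne bigrading — and noting that on $I^{-1,-1}$ one has $\beta=B+(C|_{I^{-1,-1}})$ with $B$-image in weight $-4$ and $C$-image in weight $-2$, hence orthogonal — one gets $\|\beta\|_F^2=\|A\|^2+\|B\|^2+\|C\|^2$, all norms taken for $h_{(F,W)}$. Now pass to $q$ and $r$: both are morphisms of mixed Hodge structure (induced by $V\to V/W_{-4}$ and $W_{-2}\hookrightarrow V$), so they respect Deligne bigradings, and the mixed Hodge metrics of $q(F)$ on $V/W_{-4}$ and of $r(F)$ on $W_{-2}$ agree with that of $(F,W)$ on the Deligne pieces that survive, since each is built from the unchanged graded polarizations. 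Hence $q$ identifies $\bigoplus_{p+q\ne-4}I^{p,q}$ isometrically with $V/W_{-4}$, and $W_{-2}\hookrightarrow V$ is an isometric inclusion. Tracing the three blocks: $q(\beta)$ retains $A$ and $C$ but kills $B$ (whose image $I^{-2,-2}$ dies in the quotient), so $\|q(\beta)\|_{q(F)}^2=\|A\|^2+\|C\|^2$; while $r(\beta)$ retains $B$ and $C$ but not $A$ (whose source $I^{0,0}$ is not contained in $W_{-2}$), so $\|r(\beta)\|_{r(F)}^2=\|B\|^2+\|C\|^2$.

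Finally, $\|\beta\|_F^2=\|A\|^2+\|B\|^2+\|C\|^2$, which is $\le(\|A\|^2+\|C\|^2)+(\|B\|^2+\|C\|^2)=\|q(\beta)\|_{q(F)}^2+\|r(\beta)\|_{r(F)}^2$, and this in turn is $\le\bigl(\|q(\beta)\|_{q(F)}+\|r(\beta)\|_{r(F)}\bigr)^2$; taking square roots yields \eqref{eq:quot-restrict}.

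The step I expect to need the most care is the bookkeeping above: pinning down, for precisely this weight pattern, the three-block shape of a horizontal endomorphism and deciding exactly which blocks survive the quotient $q$ and which survive the restriction $r$, together with the routine but essential check that $q$ and $r$ act as partial isometries for the mixed Hodge metrics. Once that is in place, the estimate is nothing more than the inequality $x+y+z\le(x+z)+(y+z)$ for $x,y,z\ge 0$ applied to $x=\|A\|^2$, $y=\|B\|^2$, $z=\|C\|^2$.
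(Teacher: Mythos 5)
Your proof is correct and follows essentially the same route as the paper's: both rest on the observation that a horizontal $\beta$ has Deligne components only in the weight $0$ and weight $-2$ parts of $\geg_{\bC}(V)$ (since $W_{-4}\geg(V)$ is pure of type $(-2,-2)$), after which the right-hand side of \eqref{eq:quot-restrict} records the weight $-2$ part exactly and double-counts the weight $0$ part. Your explicit $A,B,C$ block bookkeeping is just a more detailed spelling-out of that remark.
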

\begin{proof} The key point is that $W_{-4}\geg(V)$ is pure
of type $(-2,-2)$ and $W_{-3}\geg(V) = W_{-4}\geg(V)$.
Therefore, if $\beta = \sum_{p,q}\,\beta^{p,q}$ denote the decomposition
of $\beta$ into Hodge components with respect to $(F,W)$ then
$\beta^{p,q}=0$ unless $p\geq -1$.  As such $\beta^{p,q}=0$ unless
$p+q=0$ or $p+q=-2$.  Thus, \eqref{eq:quot-restrict} captures the
mixed Hodge norm of $\sum_{p+q=-2}\,\beta^{p,q}$ accurately and
double counts the mixed Hodge norm of $\sum_{p+q=0}\,\beta^{p,q}$.
\end{proof}

\par Suppose now that $\alpha$ is a horizontal section of
$\geg(\cH)$ then pointwise application of the previous
Lemma shows that 
\begin{equation}
    \|\alpha\|_{\cH}
    \leq \|q(\alpha) \|_{\geg(\cH/W_{-4}\cH)}
       +\|r(\alpha)\|_{\geg(W_{-2}\cH)}
    \label{eq:quot-restrict-2}
\end{equation}

\begin{corr}\label{corr:height-pairing}  Let $\cH\to\Delta^*$ be
an admissible variation of graded-polarized mixed Hodge structure
over the punctured disk with unipotent monodromy.  Assume that
$\cH$ has weight graded quotients $\gr^W_0\cong\mathbb Z(0)$,
$\gr^W_{-2}$ and $\gr^W_{-4}\cong\mathbb Z(0)$.  Let $\alpha$ be a
flat, horizontal section of $\geg(\cH)$.  Then,
$\alpha$ has bounded mixed Hodge norm.
\end{corr}
\begin{proof} By \eqref{eq:quot-restrict-2}, $\|\alpha\|_{\cH}$
is bounded by $\|q(\alpha)\|$ and $\|r(\alpha)\|$.  Moreover,
$q(\alpha)$ and $r(\alpha)$ are flat since $\alpha$ is flat and $W$ is flat.
Therefore, the result follows from Theorem
\eqref{thm:weight-minus-2-case} and the last paragraph of
section \eqref{subsect:higher-nf}.
\end{proof}
        
\subsection{A case where  norm estimates fail}
\label{subsect:higher-normal-functions}

\par In this section, we show via admissible nilpotent orbits that in
the case of a higher normal function with weight graded quotients
$\gr^W_0=\mathbb Z$ and $\gr^W_{-k}$ for $k>2$, the norm estimates required to
obtain rigidity need not hold.

\begin{lemma} \label{lem:UnBounded} Let $(e^{zN}.F,W)$ be an admissible nilpotent orbit with limit
mixed Hodge structure $(F,M)$ split over $\bR$.  Let $Y=Y(N,Y_{(F,M)})$ and
$N=N_0 + \cdots + N_{-k}$ relative to $\ad Y$ with $N_{-k}\neq 0$.  Then,
$
       \|N\|_{(e^{zN}.F,W)} = \|N\|_{(e^{iyN}.F,W)}
$
and there a non-zero constant $K$ such that 
\[
        \lim_{y\to\infty}\, y^{(2-k)/2}\|N\|_{(e^{iyN}.F,W)} =K
\]      
In particular, $\|N\|_{(e^{zN}.F,W)}$ is bounded for $k=2$
and unbounded for $k>2$.     
\end{lemma}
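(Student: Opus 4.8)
The plan is to exploit the explicit asymptotics provided by Deligne's $\sll 2$-splitting (Theorem~\ref{thm:deligne-2}) together with the isometry relation \eqref{eq:isom-eq}. First I would observe that $\|N\|_{(e^{zN}.F,W)} = \|N\|_{(e^{iyN}.F,W)}$ is immediate: writing $z = x+\ii y$ we have $e^{zN}.F = e^{xN}.(e^{\ii yN}.F)$ with $e^{xN}\in G_{\bR}$, and since $N$ commutes with $e^{xN}$ (so $\ad(e^{xN})N = N$), \eqref{eq:isom-eq} gives $\|N\|_{(e^{xN}e^{\ii yN}.F,W)} = \|e^{-xN}.N\|_{(e^{\ii yN}.F,W)} = \|N\|_{(e^{\ii yN}.F,W)}$. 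So only the $y\to\infty$ behaviour along the vertical axis matters.

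Next, since $(F,M)$ is already split over $\bR$, Deligne's construction gives the grading $Y = Y(N,Y_{(F,M)})$ with $\overline Y = Y$ (Corollary~\ref{cor:deligne}), and the decomposition $N = N_0 + N_{-2} + \cdots + N_{-k}$ with respect to $\ad Y$, where $Y$ preserves $F$ (Lemma~\ref{lem:deligne}) and $Y_{(e^{zN_0}.F,W)} = Y$. The key computation is to insert $e^{\ii yN} = e^{\ii yN_0}\cdot (e^{-\ii yN_0}e^{\ii yN})$ and use Theorem~\ref{thm:deligne-2}, which identifies $e^{\ii yN_0}.F$ as the $\sll 2$-splitting and gives $e^{\xi} = e^{\ii yN}e^{-\ii yN_0} \in \exp(\Lambda^{-1,-1}_{(e^{\ii yN_0}.F,W)})$ (via \eqref{eq:sl2-splitting}). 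Since that factor lies in $\exp(\Lambda^{-1,-1})$, equation \eqref{eq:isom-eq} applies, and I would reduce to estimating $\|(\text{something}).N\|_{(e^{\ii yN_0}.F,W)}$; using $[Y,N_{-j}] = -jN_{-j}$ one rescales by $y^{-Y/2}$, noting $y^{-Y/2}.N_{-j} = y^{j/2}N_{-j}$ and that $y^{-Y/2}$ intertwines $(e^{\ii yN_0}.F,W)$ with the fixed fibre $(e^{\ii N_0}.F,W) =: (F_o,W)$. The dominant term is $N_{-k}$, contributing $y^{k/2}$ before one divides by the normalization built into the Hodge metric on $\gr^W$, and the $\sll 2$-orbit theorem guarantees the lower-order corrections ($\xi$, and the $N_{-j}$ with $j<k$ after rescaling) are negligible relative to it. Collecting, $\|N\|_{(e^{\ii yN}.F,W)}$ grows like $y^{(k-2)/2}$ times a nonzero constant $K$ (the constant coming from the fixed Hodge norm of the top component $N_{-k}$ at $F_o$), i.e. $\lim_{y\to\infty} y^{(2-k)/2}\|N\|_{(e^{\ii yN}.F,W)} = K \neq 0$.

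The main obstacle I anticipate is bookkeeping the exact power of $y$: the mixed Hodge metric on $\gr^W_{-2k}$ has a $y$-dependent normalization hidden inside the definition \eqref{eqn:grhmetric}–\eqref{eqn:mhm}, and one must carefully track how $y^{-Y/2}$ acts on both the endomorphism $N_{-j}$ (via $\ad Y$) and on the metric (via the linear action on the filtration), so that the two contributions combine to give precisely $y^{(k-2)/2}$ rather than some other exponent. Concretely, one checks that on the $(-1,1-k)$-type piece sitting over $\gr^W_{-k}\to\gr^W_{-k-\cdots}$ — or more precisely on the isotypic component of $N_{-k}$ with respect to the $\sll 2$-triple $(N_0, Y_{(F,M)}-Y, N_0^+)$ — the rescaling $\Ad(y^{Y/2})$ produces a factor whose modulus is $y^{k/2}$, while the change of metric under $y^{-Y/2}$ from $(e^{\ii yN_0}.F,W)$ to $(F_o,W)$ absorbs a factor $y$, leaving net growth $y^{(k-2)/2}$. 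Once this normalization is pinned down, the boundedness for $k=2$ and unboundedness for $k>2$ follow immediately, and in particular this shows (as promised in the surrounding text) that Schmid-type norm estimates fail for admissible mixed variations with a single non-Tate graded quotient in weight $-k$, $k>2$.
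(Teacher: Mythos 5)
Your overall strategy is the same as the paper's: reduce to the imaginary axis via \eqref{eq:isom-eq}, use \eqref{eq:sl2-splitting} to write $\|N\|_{e^{iyN}.F}=\|e^{iyN_0}.N\|_{e^{iyN_0}.F}$ (the factor $e^{iyN}e^{-iyN_0}\in\exp(\Lambda^{-1,-1})$ costs nothing and $e^{-iyN}.N=N$), then conjugate to a fixed base point and read off the exponent from an eigenvalue. But the rescaling step, which you yourself flag as the delicate point, is wrong as written. The operator $y^{-Y/2}$ does \emph{not} intertwine $(e^{iyN_0}.F,W)$ with $(F_o,W)=(e^{iN_0}.F,W)$: since $N_0$ is the $\ad Y$-eigencomponent of eigenvalue $0$ we have $[Y,N_0]=0$, and $Y$ preserves $F$ by Lemma~\ref{lem:deligne}, so $y^{-Y/2}$ fixes both $e^{iyN_0}.F$ and $F_o$ and moves neither to the other. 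Consequently there is no "change of metric under $y^{-Y/2}$" that could absorb a factor of $y$; nor is there any $y$-dependent normalization hidden in \eqref{eqn:grhmetric}--\eqref{eqn:mhm} -- the mixed Hodge metric at the fixed point $F_o$ is simply a fixed metric. Your heuristic happens to land on the correct exponent $(k-2)/2$, but the mechanism you give for the correction from $y^{k/2}$ is not a real one.

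The fix is to rescale by $H=Y_{(F,M)}-Y$ (the semisimple element of Deligne's $\sll 2$-pair) rather than by $-Y$. Since $[H,N_0]=-2N_0$ and $H$ preserves $F$, one gets $e^{iyN_0}=y^{-H/2}e^{iN_0}y^{H/2}$ and hence $e^{iyN_0}.F=y^{-H/2}.F_o$ with $F_o=e^{iN_0}.F\in D$; and since $[Y_{(F,M)},N_{-j}]=-2N_{-j}$ while $[Y,N_{-j}]=-jN_{-j}$, one has $[H,N_{-j}]=(j-2)N_{-j}$, so $\Ad(y^{H/2})N_{-j}=y^{(j-2)/2}N_{-j}$. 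Then $\|N\|_{e^{iyN}.F}=\|e^{iN_0}y^{H/2}.(N_0+\cdots+N_{-k})\|_{F_o}$, the top term $y^{(k-2)/2}e^{iN_0}.N_{-k}$ dominates, and $K=\|e^{iN_0}.N_{-k}\|_{F_o}\neq 0$. In other words, the factor $y^{-1}$ you were trying to extract from "the metric" is exactly the $\ad Y_{(F,M)}$-contribution to $\ad H$, and using $H$ packages it with the $\ad Y$-contribution in a single step. With that substitution your argument coincides with the paper's proof.
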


\begin{proof} Note that $N$ and $H = Y_{(F,M)}-Y$ are elements of
$\geg_{\bR}$.  Since $\overline{Y}=Y$ it follows that
$\overline{N_0}=N_0$.  As $\gr^W(N_0)=\gr^W(N)$ it follows that
$N_0\in\geg_{\bR}$.  Thus, omitting $W$ from the mixed
Hodge norm as in \eqref{eq:unipotent-9}, we have
\[
\begin{aligned}
      \|N\|_{e^{zN}.F}
       &= \|N \|_{e^{xN}e^{iyN}.F}  
        = \|e^{-xN}.N\|_{e^{iyN}.F} \\
       &= \|N\|_{e^{iyN}.F}
        = \|N\|_{e^{iyN}e^{-iyN_0}e^{iyN_0}.F} \\
       &= \|e^{iyN_0}e^{-iyN}.N\|_{e^{iyN_0}.F}
\end{aligned}            
\]
where the last step is justified by equation \eqref{eq:sl2-splitting}.
To continue, we note that since $[H,N_0] = -2N_0$
we have
\begin{equation}
    e^{iyN_0} = y^{-H/2}e^{iN_0}y^{H/2} = y^{-H/2}.e^{iN_0}    \label{eq:ds-1}
\end{equation}
wherefrom    
$$
       e^{iyN_0}.F = y^{-H/2}e^{iN_0}y^{H/2}.F = y^{-H/2}e^{iN_0}.F
$$
since $H$ preserve $F$.  Moreover, as a consequence of the $\slgr 2$-orbit
theorem in the pure case, $F_o = e^{iN_0}.F\in D$.  Therefore,
$$
\begin{aligned}
       \|N\|_{e^{zN}.F}
       &= \|e^{iyN_0}e^{-iyN}.N\|_{e^{iyN_0}.F} \\
       &= \|e^{iyN_0}.N\|_{e^{iyN_0}.F}  \\
       &= \|e^{iyN_0}.N\|_{y^{-H/2}.F_o} \\
       &= \|y^{H/2}e^{iyN_0}.N \|_{F_o} \\
       &= \|e^{iN_0}y^{H/2}.(N_0 + \cdots + N_{-k})\|_{F_o}
\end{aligned}
$$
where the last step is justified by \eqref{eq:ds-1}.
Accordingly, as $[H,N_{-j}]=(j-2)N_{-j}$ for $j=0,\dots,k$ it follows
that $\|N\|_{e^{zN}.F}$ is asymptotic to a constant multiple of
$y^{(k-2)/2}$ for large $y$.
\end{proof}

\subsection{The case $N=N_0$}\label{subsect:quasi-pure}

\par Let $\cH\to\Delta^*$ be an admissible nilpotent orbit with
unipotent monodromy $T=e^N$.  Let $(F_{\infty},M)$ be the limit mixed
Hodge structure of $\cH$ with $\delta$-splitting
\begin{equation}
      (\hat F_{\infty},M) = (e^{-i\delta}.F_{\infty},M)     \label{eq:qp-1}
\end{equation}
Let $Y_M = Y_{(\hat F_{\infty},M)}$ and $Y = Y(N,Y_M)$.  Let
\begin{equation}
    N = N_0 + N_{-2} + \cdots                           \label{eq:qp-2}
\end{equation}
denote the decomposition of $N$ into eigencomponents for $\text{\rm ad}\, Y$.
Let
\begin{equation}
      (N_0,H,N_0^+),\qquad H= Y_M-Y                      \label{eq:qp-3}
\end{equation}
be the associated representation of $\sll 2(\bR)$ of Theorem
\eqref{thm:deligne-1}.
\medskip

\par In this section we prove the following result, by essentially modifying
the unipotent case accordingly:

\begin{thm}\label{thm:n-is-n0} If $N=N_0$ and $v$ is a flat, global
section of $\cH$ then $v$ has bounded mixed Hodge norm.
\end{thm}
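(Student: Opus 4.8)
The plan is to mimic the unipotent case, Theorem~\ref{unpotent-estimate}, which becomes simpler here since for a nilpotent orbit the period map is literally $F(z)=e^{zN}.F_\infty$, with no factor $e^{\Gamma(s)}$ to estimate. The one structural change is that, because $N=N_0$ is of $\ad Y$-weight zero, the part played in the unipotent argument by the grading $Y$ is now played by the semisimple element $H=Y_M-Y$ of the $\sll 2$-triple $(N_0,H,N_0^+)$; it satisfies $[H,N_0]=-2N_0$ and lies in $\geg_{\bR}$. As in \eqref{eq:unipotent-9}, since $N\in\geg_{\bR}$ we have $e^{xN}\in G_{\bR}$ and, since $v$ is flat, $v\in\ker N$; hence \eqref{eq:isom-eq} gives $\|v\|_{F(z)}=\|v\|_{e^{iyN}.F_\infty}$, so it is enough to bound the right-hand side as $y=\Im(z)\to\infty$.

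First I would unwind $e^{iyN}.F_\infty$ by means of the $\delta$-splitting $F_\infty=e^{i\delta}.\hat F_\infty$. Because $N$ is a $(-1,-1)$-morphism of the limit mixed Hodge structure $(F_\infty,M)$ and $\delta$ commutes with all of its morphisms, $[\delta,N]=[\delta,N_0]=0$. Using this, the identity $e^{iyN_0}=y^{-H/2}e^{iN_0}y^{H/2}$ of \eqref{eq:ds-1}, and the fact that $Y_M=Y_{(\hat F_\infty,M)}$ and $Y$ --- hence $H$ --- preserve $\hat F_\infty$ (Lemma~\ref{lem:deligne}), one obtains
\[
  e^{iyN}.F_\infty \;=\; e^{iyN_0}e^{i\delta}.\hat F_\infty \;=\; e^{i\delta}e^{iyN_0}.\hat F_\infty \;=\; y^{-H/2}e^{i\delta(y)}.F_o ,
\]
where $\delta(y):=\Ad(y^{H/2})\delta$ and $F_o:=e^{iN_0}.\hat F_\infty$. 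Since $(N,\hat F_\infty)$, the $\delta$-splitting of $(N,F_\infty)$, is an admissible nilpotent orbit equal --- because $N=N_0$ --- to its own $\sll 2$-splitting, the nilpotent orbit theorem gives $e^{iyN_0}.\hat F_\infty\in D$ for $y\gg 0$, and applying $y^{H/2}\in G_{\bR}$, which preserves $D$, shows $F_o\in D$. As moreover $y^{-H/2}\in G_{\bR}$, \eqref{eq:isom-eq} yields
\[
  \|v\|_{e^{iyN}.F_\infty} \;=\; \|v\|_{y^{-H/2}e^{i\delta(y)}.F_o} \;=\; \|y^{H/2}v\|_{e^{i\delta(y)}.F_o} .
\]

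It then remains to pass to the limit $y\to\infty$ in the last expression. As $v\in\ker N_0$ is a sum of lowest-weight vectors for $(N_0,H,N_0^+)$, $H$ acts on it with eigenvalues $\le 0$, so $y^{H/2}v$ converges, to its component of $H$-weight zero; similarly $[\delta,N_0]=0$ forces $\delta$ to be a sum of lowest-weight vectors for the adjoint $\sll 2$-action, so $\ad H$ acts on $\delta$ with eigenvalues $\le 0$ and $\delta(y)=\Ad(y^{H/2})\delta$ converges, to its component $\delta^{(0)}$ of $\ad H$-weight zero. Hence $\|v\|_{F(z)}\to\|v_0\|_{e^{i\delta^{(0)}}.F_o}<\infty$ as $y\to\infty$, provided one knows that the points $e^{i\delta(y)}.F_o$ remain in a compact subset of $D$. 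That last point is exactly where the refined $\sll 2$-orbit theorem enters: it is the $N=N_0$ analogue of part (b) of Theorem~\ref{thm:sl2-orbit} and of the convergence $\Ad(y^{H/2})g^{-1}(y)\to{\widetilde g}^{-1}(\infty)$ into $G$ used in the proof of Corollary~\ref{cor:IandII}. The proof for a flat section $\alpha$ of $\geg(\cH)$ is identical, with the linear action replaced by $\Ad$ and with $\ker N$, $\ker N_0$ replaced by $\ker(\ad N)$, $\ker(\ad N_0)$; this is the statement used in Corollary~\ref{corr:height-pairing}. The main obstacle, as in the unipotent case, is the bookkeeping: checking that the factors peeled off ($e^{xN}$ and $y^{-H/2}$) are genuine isometries in the sense of \eqref{eq:isom-eq}, which rests on $N,H\in\geg_{\bR}$, and verifying that the limiting filtration $e^{i\delta^{(0)}}.F_o$ is still a point of $D$.
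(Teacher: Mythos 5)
Your argument follows the same route as the paper's proof: peel off $e^{xN}$ using flatness and $N\in\geg_{\bR}$, rewrite $e^{iyN}.F_\infty$ via the $\delta$-splitting and the identity $e^{iyN_0}=y^{-H/2}e^{iN_0}y^{H/2}$, move $y^{H/2}$ onto $v$ and onto $\delta$, and exploit that both are sums of lowest-weight vectors for $(N_0,H,N_0^+)$, so that $y^{H/2}.v$ and $\Ad(y^{H/2})\delta$ converge as $y\to\infty$. All of these manipulations are correct and coincide with \eqref{eq:qp-8}--\eqref{eq:qp-13}.

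Two caveats. First, you take ``nilpotent orbit'' literally and discard the correction term, whereas the paper's proof works with the full local normal form $F(z)=e^{zN}e^{\Gamma(s)}.F_\infty$ of an admissible variation; that is the version actually invoked later (in the proof of Theorem~\ref{thm:weight-minus-2-case}, and for pure and type $(\text{\rm I})$ variations), so the $\Gamma$-term cannot simply be omitted. Handling it requires one more conjugation: one writes $e^{\Gamma(s,y)}=\exp(e^{iN}y^{H/2}.\Gamma(s))$ as in \eqref{eq:qp-11}, and since $\Gamma(0)=0$ and $|s|=e^{-2\pi y}$, the possible growth of $\Ad(y^{H/2})$ is beaten by the decay of $|s|$, giving the estimate \eqref{eq:qp-12}; only then does the relevant family of filtrations $e^{\Gamma(s,y)}e^{i\delta(y)}e^{iN}.\hat F_\infty$ stay in a compact subset of $D$. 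Second, the one point you defer --- that $e^{i\delta(\infty)}e^{iN}.\hat F_\infty\in D$, hence compactness of that family --- is exactly Lemma~\ref{lem:qp-delta}. It is not literally an instance of Theorem~\ref{thm:sl2-orbit}(b), which is stated only for types $(\text{\rm I})$ and $(\text{\rm II})$; the paper proves it by observing that membership in $D$ is detected on $\gr^W$, reducing to the induced pure nilpotent orbits there, and applying W.~Schmid's $\slgr 2$-orbit theorem together with the limit argument of \eqref{eq:limit-f} to show $\lim_{y\to\infty}y^{H/2}g(y)y^{-H/2}$ exists in $G_{\bR}$. With these two additions your proof is the paper's proof; the concluding reduction for sections of $\geg(\cH)$ via the adjoint action is also how the paper proceeds.
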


\par As the first step towards the proof of Theorem~\eqref{thm:n-is-n0},
we note that since $N=N_0$, $(N_0,H)$ is an $\sll 2$-pair and
$[N,\delta]=0$, it follows that $\delta$ is a sum of lowest weight vectors
for $(N_0,H,N_0^+)$.  Therefore, 
\begin{equation}
       \delta = \delta_0 + \delta_{-1} + \cdots,\qquad
       [H,\delta_{-j}] = -j\delta_{-j}                    \label{eq:qp-4}
\end{equation}
relative to the eigenvalues of $\text{\rm ad}\, H$.   Let
\begin{equation}
     \delta(y) = \text{\rm Ad}(y^{H/2})\delta = y^{H/2}.\delta
               = \sum_{k\geq 0}\, \delta_{-k}y^{-k/2}  
     \label{eq:qp-5}
\end{equation}

\begin{lemma}\label{lem:qp-delta} In the notation of
\eqref{eq:qp-1}--\eqref{eq:qp-5}, if $N=N_0$ then
\begin{equation}
      \delta(\infty) := \lim_{y\to\infty}\, \delta(y) = \delta_0
                        \label{eq:qp-6}
\end{equation}
and $e^{i\delta(\infty)}e^{iN}.\hat F_{\infty}\in D$.
\end{lemma}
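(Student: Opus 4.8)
The plan is to read off both statements from the data \eqref{eq:qp-1}--\eqref{eq:qp-5} together with the $\sll 2$-orbit theorem. For the limit formula, recall from \eqref{eq:qp-4} that because $N=N_0$ and $[N,\delta]=0$ the element $\delta$ lies in $\ker(\ad N_0)$, hence is a sum of lowest weight vectors for the $\sll 2$-triple $(N_0,H,N_0^+)$; in particular only non-positive $\ad H$-weights occur in $\delta$, so by \eqref{eq:qp-5} one has $\delta(y)=\sum_{k\ge 0}y^{-k/2}\delta_{-k}$. Letting $y\to\infty$ annihilates every term with $k\ge 1$, leaving $\delta(\infty)=\delta_0$.

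For the inclusion in $D$, I would first exhibit $e^{iN}.\hat F_{\infty}=e^{iN_0}.\hat F_{\infty}$ as the value at $z=i$ of an $\sll 2$-orbit. Since $[N,\delta]=0$ and $\hat F_{\infty}=e^{-i\delta}.F_{\infty}$, one has $e^{zN_0}.\hat F_{\infty}=e^{-i\delta}.(e^{zN}.F_{\infty})$; as $e^{-i\delta}\in G$ (see below) this lies in $D$ whenever the admissible nilpotent orbit $e^{zN}.F_{\infty}$ does, so $z\mapsto e^{zN_0}.\hat F_{\infty}$ is a nilpotent orbit, and by Theorem \eqref{thm:deligne-2} (with $N=N_0$) it is its own $\sll 2$-splitting, i.e. an $\sll 2$-orbit. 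The $\sll 2$-orbit theorem then gives that such an orbit maps the entire upper half-plane into $D$; in particular $F_o:=e^{iN_0}.\hat F_{\infty}\in D$ --- this is the base point already used in Corollary \eqref{corr:sl2-corr} and Lemma \eqref{lem:UnBounded}.

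It then remains to check that $e^{i\delta(\infty)}=e^{i\delta_0}$ maps $D$ to itself. For this I would show $e^{i\delta_0}$ lies in the group $G$ of \eqref{eqn:groupG}, which acts transitively on $D$; since $\delta_0\in\geg_{\bR}$ it is enough that $\delta_0$ act trivially on $\gr^W$. But $\delta$ itself acts trivially on $\gr^W$: the Deligne $\delta$-splitting is compatible with passage to $\gr^W$, and the Hodge structures induced on the $\gr^W_k$ are already split over $\bR$, hence have vanishing $\delta$-element. Consequently the $\ad H$-eigencomponents $\delta_{-k}$ of $\delta$ pass under $\gr^W$ to the eigencomponents of $\gr^W(\delta)=0$ with respect to $\ad\gr^W(H)$, so $\gr^W(\delta_{-k})=0$ for every $k$; in particular $\gr^W(\delta_0)=0$. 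Thus $e^{i\delta_0}$ induces the identity on $\gr^W$, so $e^{i\delta_0}\in G$, and $e^{i\delta(\infty)}e^{iN}.\hat F_{\infty}=e^{i\delta_0}.F_o\in D$.

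The step I expect to be the genuine point of substance is the assertion that $e^{iN_0}.\hat F_{\infty}$ lies in $D$ at the single value $z=i$, rather than merely for $\Im z\gg 0$: this is exactly the feature that separates an $\sll 2$-orbit from an arbitrary nilpotent orbit, so it is the full $\sll 2$-orbit theorem, not just the nilpotent orbit theorem, that is being invoked. Everything else is bookkeeping with the $\ad H$-eigendecomposition of $\delta$ and with the compatibility of the Deligne bigrading with $\gr^W$.
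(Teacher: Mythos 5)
Your first part, and your identification of the key input (that $e^{iN_0}.\hat F_{\infty}$ lies in $D$ already at $z=i$, which is the full $\slgr 2$-orbit theorem), agree with the paper. But the step you use to dispose of the factor $e^{i\delta(\infty)}$ rests on a false claim: it is not true that $\delta$ acts trivially on $\gr^W$. Here $\delta$ is the Deligne splitting element of the \emph{limit} mixed Hodge structure $(F_{\infty},M)$, and by compatibility of Deligne's construction with $\gr^W$ its image $\gr^W(\delta)$ on $\gr^W_k$ is the $\delta$-element of the limit mixed Hodge structure of the pure weight-$k$ variation on $\gr^W_k$. That limit MHS is genuinely mixed and in general not split over $\bR$ (were it always split, the $\delta$-splitting in Schmid's pure theory would be vacuous), so $\gr^W(\delta)\neq 0$ in general; the Hodge structures that are automatically $\bR$-split are those of the variation at points of $\Delta^*$, not the limit. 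The same false assertion is used earlier when you invoke ``$e^{-i\delta}\in G$'' to see that $e^{zN_0}.\hat F_{\infty}$ is a nilpotent orbit.

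Both conclusions you need are nevertheless true, for different reasons. The nilpotent-orbit property and the membership $e^{iN_0}.\hat F_{\infty}\in D$ follow by reducing to the pure case on each $\gr^W_k$ (membership in a mixed domain only concerns the induced filtrations on $\gr^W$), which is exactly how the paper proceeds. For the factor $e^{i\delta_0}$: since $\gr^W(\delta)$ lies in $\Lambda^{-1,-1}$ of the limit MHS on each $\gr^W_k$, its eigenvalues for $\text{\rm ad}\,\gr^W(H)$ are all $\leq -2$; as the eigendecomposition \eqref{eq:qp-4} is compatible with $\gr^W$, this gives $\gr^W(\delta_0)=\gr^W(\delta_{-1})=0$ (but not $\gr^W(\delta_{-k})=0$ for $k\geq 2$). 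With this corrected justification your conclusion that $e^{i\delta_0}\in G$, and hence preserves $D$, does go through, and it is a somewhat more direct route than the paper's, which instead rewrites $e^{i\delta(y)}e^{iN}.\hat F_{\infty}= y^{H/2}g(y)y^{-H/2}e^{iN}.\hat F_{\infty}$ using the function $g(y)$ from the $\slgr 2$-orbit theorem and takes the limit $y\to\infty$ of the conjugated factor, which lands in $G_{\bR}$.
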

\begin{proof} Equation \eqref{eq:qp-6} follows directly from equation
\eqref{eq:qp-5}.  To prove that the point  $e^{i\delta(\infty)}e^{iN}.\hat F_{\infty} $ belongs to $D$,
observe that it is sufficient to consider only the pure case, since the
property of being a MHS is only about the induced filtrations on $\gr^W$.
Accordingly, for the remainder of this proof only, we assume
$e^{zN}.F_{\infty}$ is a nilpotent orbit of pure Hodge structure. By
W.~Schmid's $\slgr 2$-orbit theorem, we have   
$$
       y^{H/2}e^{i\delta}e^{iyN}.\hat F_{\infty}
       = y^{H/2}g(y)e^{iyN}.\hat F_{\infty}
$$
which we can rewrite as
\begin{equation}
  e^{i\delta(y)}e^{iN}.\hat F_{\infty} = y^{H/2}g(y)y^{-H/2}e^{iN}.\hat F_{\infty}
  \label{eq:qp-7}
\end{equation}
using $e^{iyN}.\hat F_{\infty} = y^{-H/2}e^{iN}.\hat F_{\infty}$.   Mutatis
mutandis, the argument of equation \eqref{eq:limit-f} shows that 
\[
        {\widetilde g}(\infty) = \lim_{y\to\infty}\, y^{H/2}g(y)y^{-H/2}
\] 
exists, and is an element of $G_{\bR}$ (since we are in the pure case).
By W.~Schmid's $\slgr 2$-orbit theorem, $e^{iN}.\hat F_{\infty}\in D$.     Taking the
limit of \eqref{eq:qp-7} as $y\to\infty$, it follows that
\[
        e^{i\delta(\infty)}e^{iN}.\hat F_{\infty}
        = {\widetilde g}(\infty)e^{iN}.\hat F_{\infty} \in D
\]
as required.
\end{proof}

\par To continue, let $F(z) = e^{zN}e^{\Gamma(s)}.F_{\infty}$ be the local normal
form of the period map of $\cH$.  Then, in analogy with equation
\eqref{eq:unipotent-4a} we have 
\begin{equation}
\aligned
    F(z) &= e^{xN}e^{iyN}e^{\Gamma(s)}.F_{\infty} \\
         &= e^{xN}e^{iyN}e^{\Gamma(s)}e^{i\delta}.\hat F_{\infty} \\
         &= e^{xN}y^{-H/2}e^{iN}y^{H/2}e^{\Gamma(s)}e^{i\delta}.\hat F_{\infty}
\endaligned\label{eq:qp-8}
\end{equation}
In analogy with the derivation of \eqref{eq:unipotent-4b}, since
$[H,N]=-2N$ and $H$ preserves $\hat F_{\infty}$, we have
\begin{equation}
\aligned  
     e^{i\delta}.\hat F_{\infty}
     &= y^{-H/2}y^{H/2}e^{-iyN}e^{iyN}e^{i\delta}.\hat F_{\infty} \\
     &= y^{-H/2}e^{-iN}y^{H/2}e^{iyN}e^{i\delta}.\hat F_{\infty}  \\
     &= y^{-H/2}e^{-iN}y^{H/2}e^{i\delta}e^{iyN}.\hat F_{\infty}  \\
     &= y^{-H/2}e^{-iN}e^{i\delta(y)}y^{H/2}e^{iyN}.\hat F_{\infty} \\
     &= y^{-H/2}e^{-iN}e^{i\delta(y)}e^{iN}y^{H/2}.\hat F_{\infty} \\
     &= y^{-H/2}e^{-iN}e^{i\delta(y)}e^{iN}.\hat F_{\infty}
\endaligned\label{eq:qp-9}
\end{equation}
Inserting \eqref{eq:qp-9} into \eqref{eq:qp-8} yields
\begin{equation}
\aligned
     F(z) &= e^{xN}y^{-H/2}e^{iN}y^{H/2}e^{\Gamma(s)}
             y^{-H/2}e^{-iN}e^{i\delta(y)}e^{iN}.\hat F_{\infty} \\
          &=e^{xN}y^{-H/2}e^{\Gamma(s,y)}e^{i\delta(y)}e^{iN}.\hat F_{\infty}
\endaligned
\label{eq:qp-10}
\end{equation}
where
\begin{equation}
  e^{\Gamma(s,y)} = e^{iN}y^{H/2}e^{\Gamma(s)}y^{-H/2}e^{-iN}
                =\exp(e^{iN}y^{H/2}.\Gamma(s))
      \label{eq:qp-11}
\end{equation}      
In particular, since $\Gamma(0)=0$ and $|s|=e^{-2\pi y}$, there exist positive
constants $C$, $k$ and $a$ such that
\begin{equation}
      |s|<a \implies |\Gamma(s,y)|<C|s|(-\log|s|)^k    \label{eq:qp-12}
\end{equation}
with respect to a choice of fixed norm $|\ast|$ on $\geg_{\bC}$.
      
\begin{proof}[Proof of Theorem \eqref{thm:n-is-n0}] Since $v\in\ker(N)$ it follows
from \eqref{eq:qp-10} that
\begin{equation}
\aligned
     \|v \|_{F(z)}
     &= \|v \|_{e^{xN}y^{-H/2}e^{\Gamma(s,y)}e^{i\delta(y)}e^{iN}.\hat F_{\infty}} \\
     &= \|e^{-xN}.v \|_{y^{-H/2}e^{\Gamma(s,y)}e^{i\delta(y)}e^{iN}.\hat F_{\infty}} \\
     &=  \|v \|_{y^{-H/2}e^{\Gamma(s,y)}e^{i\delta(y)}e^{iN}.\hat F_{\infty}}
\endaligned
\label{eq:qp-13}     
\end{equation}
since $N\in\geg_{\bR}$.   To continue, observe that
$v\in\ker(N)$ implies that
$$
     v = \sum_{k\geq 0}\, v_{-k},\qquad H.v_{-k} = -kv_{-k}
$$
where the number of non-zero terms is finite since $\cH$ has finite
rank.  Therefore,
\begin{equation}
     v(y) := y^{H/2}.v = \sum_{k\geq 0}\, v_{-k}y^{-k/2}
\end{equation}
is a vector-valued polynomial in $y^{-1/2}$, and hence
\begin{equation}
\aligned
    \|v \|_{F(z)}
     &=  \|y^{H/2}.v \|_{e^{\Gamma(s,y)}e^{i\delta(y)}e^{iN}.\hat F_{\infty}}
     &=  \|v(y) \|_{e^{\Gamma(s,y)}e^{i\delta(y)}e^{iN}.\hat F_{\infty}}
\endaligned
\end{equation}
To finish the proof, recall that $D$ is an open subset of $\check D$
in the complex analytic topology, and $G_{\bC}$ acts transitively
on $\check D$ by biholomorphisms.  In particular, since
$e^{i\delta(\infty)}e^{iN_0}.\hat F\in D$ by Lemma \eqref{lem:qp-delta}, it follows
from equation \eqref{eq:qp-5} and the estimate \eqref{eq:qp-12} that
there exists a constant $b>0$ such that 
$$
    K  = \{\, e^{\Gamma(s,y)}e^{i\delta(y)}e^{iN}.\hat F \mid
              y\geq b,\hphantom{a} |s| \leq e^{-2\pi y}\,\}
$$
is a compact subset of $D$.  Therefore, since $\lim_{y\to\infty}\, v(y) = v(0)$
it follows from equation \eqref{eq:qp-13} that $\|v\|_{F(z)}$ is bounded
as $y\to\infty$ and $x$ is constrained to a finite interval.
\end{proof}
     
\begin{rmq} (1) If $N=N_0$ for $\cH$ then $N=N_0$ for
$\cH\otimes\cH^*$.
\\
(2)  The results in this section cover the case of variations of pure
Hodge structure and variations of type $(\text{\rm I})$.
\end{rmq}

  \section{Deformations of mixed period maps}
  \label{sec:DefoMPS}

 \subsection{General set-up}
 \label{ssec:SetUp} 
 The set-up is  similar to the one  in the pure case. More precisely, we only consider deformations   of  a period map $F:S \to \Gamma\backslash D$ such that

 \begin{itemize}
\item $S,D$ and $\Gamma$ remain fixed.
\item the deformation remains locally liftable and horizontal.
\end{itemize}

 However, there is an additional requirement "at infinity": we want the variation to be admissible\footnote{The concept of admissibility is recalled in Appendix~\ref{sec:admis}.  Note that our convention of admissibility
  includes   as a requirement  that the
  monodromy operators around the  boundary are quasi-unipotent. This  is automatically the case   
   if the variation  has  an underlying $\bZ$-structure such as the ones coming from geometry. }, a requirement that is automatic for pure variations 
 and which holds for mixed variations of geometric origin  (cf.\ \cite[Def. 14.49]{mht}).  
   \par
Mixed period maps of admissible variations will be called \textbf{\emph{admissible period maps}}. 
Admissibility is preserved under small deformations:
\begin{lemma} If $F$ is an admissible period map,   sufficiently small deformations of $F$  that
stay horizontal, also  stay admissible.
\end{lemma}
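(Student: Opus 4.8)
The plan is to check the defining conditions of admissibility (Appendix~\ref{sec:admis}) one at a time for $F_t$ with $t$ near $0$, exploiting that a deformation keeping $S$, $D$ and $\Gamma$ fixed cannot move the monodromy representation. Fix a smooth normal--crossing compactification $\bar S\supset S$; admissibility is a condition to be checked near each point of $\bar S\setminus S$ together with quasi-unipotence of the local monodromies. Since $\Gamma$ is discrete in $G$ and the deformation parameter is a connected germ, the monodromy representation $\pi_1(S)\to\Gamma$ is constant along the deformation, so for $t$ near $0$ the local monodromy operators $T_j$ about the boundary divisors of $\bar S$ coincide with those of $F=F_0$; in particular they remain quasi-unipotent, and their unipotent logarithms $N_j$ (after the usual finite base change) are unchanged. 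As $W$ is fixed by hypothesis, the relative weight filtrations $M(N_j,W)$, the relative weight filtration $M(\mathcal C,W)$ of the monodromy cone \eqref{eq:cone-1}, and the mutual compatibility conditions of Cattani--Kaplan--Schmid and Kashiwara are also unchanged, since these are purely linear-algebraic in the pair $(\{N_j\},W)$ and do not involve the Hodge filtration. Finally, a locally liftable horizontal holomorphic map to $\Gamma\backslash D$ is automatically the period map of a graded-polarized variation of mixed Hodge structure, so the only substantive issue is the behavior at the boundary.

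Next, the variation induced by $F_t$ on $\gr^W$ is a deformation of the polarized pure variation $\gr^W\cH$. Since every polarized variation of pure Hodge structure over a smooth quasi-projective base is admissible (Schmid's nilpotent orbit theorem together with functoriality), the graded part of the admissibility of $F_t$ holds automatically, including polarizability of the limiting Hodge structures on each $\gr^W_k$.

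It remains to show that near each boundary point the limiting Hodge filtration of $F_t$ still exists and that $(F_{\infty,t},M)$ is a mixed Hodge structure relative to which the $N_j$ are $(-1,-1)$-morphisms; equivalently, that the period map of $F_t$ admits a local normal form $F_t(z,s)=e^{\sum_j z_jN_j}e^{\Gamma_t(s)}.F_{\infty,t}$ with $\Gamma_t$ holomorphic, vanishing at $0$, and valued in $\gq$ for $(F_{\infty,t},M)$. Here I would invoke the equivalence recalled in \S\ref{subect:local-normal-form} between admissible period maps near a boundary point and pairs consisting of an admissible nilpotent orbit $e^{\sum_j z_j N_j}.F_{\infty}$ (with the fixed $N_j$) and a holomorphic function $\Gamma_{-1}$ vanishing at $0$ satisfying the integrability condition \eqref{eq:admissibility-6}. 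A deformation of $F$ that stays horizontal deforms exactly this datum: it deforms $\Gamma_{-1,0}$ holomorphically among functions vanishing at $0$ satisfying \eqref{eq:admissibility-6} --- which persists precisely because the deformed map is horizontal --- and it moves $F_{\infty,0}$ inside the set of $F_{\infty}\in\check D$ for which $(F_{\infty},M)$ is a mixed Hodge structure and $e^{\sum_j z_j N_j}.F_\infty$ is an admissible nilpotent orbit. That set is open, since being a mixed Hodge structure is the open incidence condition that $F_{\infty}$ induce a pure Hodge structure on each $\gr^W_k$, while the requirements that the $N_j$ be $(-1,-1)$-morphisms and that the graded orbits be polarized are forced by the previous step. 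Hence for $t$ small the pair $(F_{\infty,t},\Gamma_{-1,t})$ still lies in the admissible locus, and the converse direction of \S\ref{subect:local-normal-form} reconstructs $F_t$ as the period map of an admissible variation.

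I expect the main obstacle to be exactly this last step: making rigorous that a horizontal deformation of an admissible period map is, near the boundary, nothing more than a deformation of its local-normal-form data $(F_{\infty},\Gamma_{-1})$, so that the openness of the admissible locus can be applied. This is where one genuinely uses Kashiwara's theory and the reconstruction/uniqueness statement of \S\ref{subect:local-normal-form} --- in particular that the limiting filtration $F_{\infty,t}$, and not merely its image on $\prod_k\check D(\gr^W_k)$, continues to exist. By contrast, quasi-unipotence of the monodromy and the relative weight filtration conditions are automatic for combinatorial reasons, and the graded pieces are automatic by the pure theory.
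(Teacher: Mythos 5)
Your preliminary reductions (constancy of the monodromy along the deformation, hence of the $N_j$ and of the relative weight filtrations $M(N_j,W)$, and admissibility of the induced pure variations on $\gr^W$) are correct and agree with the paper. The gap is the one you flag yourself, and it is not a technicality: your argument presupposes that each deformed period map $F_t$ has a limit Hodge filtration $F_{\infty,t}$ near the boundary, so that it possesses local-normal-form data $(F_{\infty,t},\Gamma_{-1,t})$ at all. But existence of the limit Hodge filtration is precisely condition (a) of admissibility --- the one substantive condition left after your reductions --- and it can genuinely fail for horizontal maps with fixed (even trivial) monodromy: the paper's own example $\varphi(s)=e^{1/s}$ into the Hodge--Tate domain with $h^{1,1}=h^{0,0}=1$ is horizontal with trivial monodromy and has no limit filtration. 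Openness of an ``admissible locus'' in $\check D$ lets you perturb data you already know exists; it cannot produce $F_{\infty,t}$ in the first place. (Relatedly, the condition you propose to verify by openness --- that $(F_{\infty},M)$ be a mixed Hodge structure --- is not part of the definition of admissibility but a theorem about admissible variations; the definition asks for existence of $F_\infty$ and of $M$.)

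The paper closes this gap by a different packaging. Near a boundary point write the original map in local normal form $F(s)=\exp\bigl(\frac{\log s}{2\pi\ii}N\bigr)\exp(\Gamma(s)).F(\infty)$ and regard the deformation not as a family of one-variable period maps but as a single two-parameter locally liftable horizontal map $\Delta^*\times\Delta\to\Gamma\backslash D$, namely $F(s,t)=\exp\bigl(\frac{\log s}{2\pi\ii}N\bigr)\exp(\Gamma(s,t)).F(\infty)$ with trivial monodromy in $t$ and $\Gamma(s,t)\in\gq_{F(\infty)}$. The reconstruction statement of \S\ref{subect:local-normal-form} (from \cite{higgs}) is then applied to this two-variable family with the \emph{original} limit data $(N,F(\infty),M)$ at the corner: the integrability condition \eqref{eq:admissibility-6} holds because horizontality forces the image of each tangent space to be an abelian subalgebra (Lemma~\ref{lem:AbSA}), and the initial condition is $\Gamma_{-1}(0,0)=0$. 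This gives admissibility of the total family with the same $M$ and $F(\infty)$, and each slice $t=\mathrm{const}$ inherits it. To salvage your slice-by-slice approach you would first have to prove that $\lim_{s\to 0}e^{-zN}F_t(s)$ exists and varies continuously in $t$; the two-variable trick is exactly the device that makes this unnecessary.
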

\begin{proof}
 We can test admissibility on curves and so we may replace
$S$ with a curve.   We employ the test given in \cite{higgs}. 

For a neighborhood of $p\in \bar S\setminus \del S$ we take a small disc $\Delta$ centered at  $p$ with coordinate $s$
and monodromy $T$ around the origin. We may assume that
$T$ is unipotent. Set $N= -\log T$.
If $s\in \Delta\setminus \set{0}$, we may put  $s=\e^{2\pi\ii z}$. Then the untwisted period map
$\e^{-zN}. F(z)$ extends over the origin  as a holomorphic map $\Delta \to \check D$ 
where its value at $s=0$ is traditionally denoted $F(\infty)\in \check D$ (since it corresponds to a limit for $s\to \infty$). The canonical extension to $\Delta$ of the 
local system (with weight filtration and rational structure) over $\Delta^*$ puts a weight filtration and
rational structure on the ``central''  fiber $H $ over $0$.
Admissibility implies that
there is a relative weight filtration $M$ on the central fiber $H $ and $(H ,M,  F(\infty))$ is a mixed Hodge structure,
the ``limit mixed Hodge structure''.
Hence  we have a Deligne decomposition and we can speak of horizontal endomorphisms with respect to 
the limit mixed Hodge structure. We shall call these ``limit-horizontal'' and denote these as $\gq_{F(\infty)}^\hor$.
In this case the local normal form   \eqref{eq:admissibility-7} reads
\[
F(s) = \exp\left(\frac{\log (s)}{2\pi\ii}   N \right) \exp (\Gamma(s) ). F(\infty), \,\Gamma(0)=0, 
\]
and where
\[
 \Gamma(s)   = 1+  \Gamma_{-1} (s)+ \Gamma_{-2}(s)  +\dots  ,\quad \Gamma_{-k}(s)\in    U^{-k}_ {F(\infty)} 
\]
is uniquely determinable from $\Gamma_{-1}\in  \gq_{F(\infty)}^\hor$. 
Let 
\[
F(s,t) = \exp\left(\frac{\log (s)}{2\pi\ii}   N \right) \exp (\Gamma(s,t) ). F(\infty)  ,\quad  \Gamma(s, t)\in \gq_{F(\infty)}  .
\]
 be a deformation of $F(s) $ as   a period map.  This is nothing but  a $2$-parameter period map
 $\Delta^*\times \Delta \to \Gamma\backslash D$  with trivial monodromy in  the second factor. If now  
 \[
 \exp (\Gamma(s,t)) = 1+ \tilde \Gamma_{-1}(s,t) + \tilde \Gamma_{-2}(s,t)+\dots ,   \tilde \Gamma_{-k}(s,t) \in U^{-k}_{F(\infty)},
 \]
  the initial value constraint reads $\tilde \Gamma _{-1}(0,0)=\Gamma_1(0)=0$
 and the ``Higgs bundle constraint'' holds since $F(s,t)$ is assumed to be horizontal. Indeed, the Higgs bundle constraint is equivalent to
 the image at any point of the tangent space under the period map being an abelian subspace of $ \geg_\bC$ which is the case, cf.\ Lemma~\ref{lem:AbSA}.
  But then, by loc. cit., 
   $F(s,t)$ is an admissible  nilpotent orbit   with the same relative weight filtration $M$  and  
      limit mixed Hodge structure  $F(\infty)$  as before. 
      \end{proof}

In view of the above, we call deformations of admissible period maps
 that stay  locally liftable and horizontal  (and hence admissible) simply
 \textbf{\emph{admissible deformations}}.

\begin{rmk}
\warn{Recall the commutative diagram~\eqref{eqn:endoHodgeFlag} which provides a surjection
\[ 
 \cF^{-1}  \qendo {\cH}    \mapright{\,\pi^\hor}      F^* T^\hor (\Gamma \backslash D) .
\]
Choosing a lift for  this map at some point $s\in S$ determines a  unique global  lift.  This is
 a consequence of the
  rigidity theorem for  variations of admissible mixed  Hodge structures (cf.\ \cite[Theorem 4.20]{SZ} for $S$ a curve and the remarks in \cite[\S 9]{bryzuck} for the general case). But at a given point $s$, there is a natural identification of $ T^\hor _{F(s)} D$  with  the subspace $ U^{-1} \geg _{F(s)}    $ 
  of $\geg _{F(s)}$ and so
  we have a unique global lift. This lift can  be used to    identify infinitesimal deformations of an admissible variation with a subspace
of    the space of  sections of  $\cU ^{-1}  \qendo {\cH}  \subset \cF^{-1}  \qendo {\cH} $.}
\end{rmk}

\subsection{Main results} \label{ssec:Main}

\begin{thm}[Main Theorem I]   Let $S$ be quasi-projective and $F:S\to \Gamma\backslash D$ a horizontal holomorphic map to a mixed domain $D$ parametrizing
mixed Hodge structures on    $(H,W,Q)_\bR$ and assume that the     variation of mixed Hodge structure $\cH$  corresponding to $F$ is admissible.
\begin{enumerate}

\item Let $\eta $ be a global holomorphic section of  $\geg(\cH)$
corresponding to an admissible  infinitesimal deformation of $F$ with bounded Hodge norm. 
If the section  $\eta $ is plurisubharmonic  along $S$, then  $\eta$ is a   flat section of    $\geg(\cH)$ which is moreover horizontal,  i.e.,  a section of  
$\cU^{-1}\geg(\cH)= \bigoplus_{k\le 1}  \geg ^{-1,k}  (\cH) $.
 
 Equivalently, at any point $s\in S$,     $ \eta(s)  $ is a  horizontal endomorphism of  $\geg(\cH_s) $ which commutes with the action of the fundamental group $ \pi_1(S,s)$.  
 
  \item Conversely,  let   $\eta(s)$ be a  flat  horizontal section  of  $\cU^{-1}\geg(\cH)$  such that $  \eta(s)$
commutes with every element in  $F_* T_{S,s}\subset U^{-1} \geg_{F(s)} $.
 If  the (constant) Hodge norm $\| \eta(s) \|$   is small enough, then $\eta$ defines a  deformation of $F$ keeping  source and target fixed and which remains a period map.
\end{enumerate}
\label{thm:Main}
 \end{thm}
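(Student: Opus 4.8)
The plan is to treat the two statements separately, since they rest on the two different ``halves'' of the machinery assembled in Sections~\ref{sec:DG} and~\ref{sec:norm-estimates}. For part (1), I would start from the hypothesis that $\eta$ is a plurisubharmonic endomorphism along $S$, so that by definition the conditions \eqref{eqn:HarmFunct1}--\eqref{eqn:HarmFunct3} hold for every tangent direction $u=F_*\xi$ at every point of $S$; hence by Proposition~\ref{prop:OnPSH} the function $\|v(s)\|^2$ (with $v=\eta(s)$) is plurisubharmonic on $S$. Combined with the standing assumption that $\|\eta\|$ is bounded near infinity and that $S$ is quasi-projective, the classical fact (e.g.~\cite{psh}) that a bounded plurisubharmonic function on a quasi-projective manifold is constant gives that $\|\eta\|$ is constant. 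Then Proposition~\ref{prop:OnPSH} yields $\partial_\xi v=[\pi_+\bar u,v]=0$, and together with horizontality of the period map (Lemma~\ref{lem:AbSA}, which gives $[u,v]=0$ since the image of the tangent space is abelian) Proposition~\ref{prop:OnPSH} concludes that $\eta$ is flat for the Gauss--Manin connection. That $\eta$ is horizontal, i.e.\ a section of $\cU^{-1}\geg(\cH)$, is built into the setup: the infinitesimal deformation lives in $F^*T^{\hor}(\Gamma\backslash D)$, which is identified pointwise with $U^{-1}\geg_{F(s)}$ via \eqref{eqn:endoHodgeFlag}. The last rephrasing — that a flat global section of $\geg(\cH)$ is the same as a $\pi_1(S,s)$-equivariant endomorphism of the fibre $\geg(\cH_s)$ — is standard monodromy-invariance.

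For part (2), the strategy is the converse construction, done locally at infinity using the local normal form and then glued. Given a flat horizontal section $\eta$ commuting with $F_*T_{S,s}$, I would use it to build, for each simply connected chart $U\subset \tilde S$, a candidate deformed period map
\[
F_t(s) \;=\; \exp\!\bigl(t\,\eta(s)\bigr).F(s),
\]
interpreting $\eta(s)\in U^{-1}\geg_{F(s)}$ as a horizontal tangent vector and $t$ a small parameter (more precisely, one flows along the vector field determined by $\eta$). Because $\eta$ is flat it commutes with the local monodromy, so $F_t$ descends to a map $S\times(T,0)\to\Gamma\backslash D$; because $\eta$ is horizontal and commutes with $F_*T_{S,s}$, the differential of $F_t$ still lands in the horizontal subbundle and the image of each tangent space stays abelian, so $F_t$ is locally liftable and horizontal. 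The remaining point is admissibility of the deformed variation, which is exactly the content of the Lemma proved in Section~\ref{ssec:SetUp}: small horizontal deformations of an admissible period map stay admissible. The smallness hypothesis on $\|\eta\|$ is needed so that $\exp(t\eta).F$ stays inside $D\subset\check D$ (an open condition) and inside the neighbourhoods where the local-normal-form coordinates \eqref{eq:admissibility-7} are valid.

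The main obstacle I expect is in part (2): verifying that the pointwise exponentiation $\exp(t\eta(s)).F(s)$ really assembles into a \emph{holomorphic} family of period maps, not merely a $C^\infty$ one, and that it satisfies the integrability/horizontality constraint \eqref{eq:admissibility-6} globally. The commutation relations $[u,\eta]=0$ and flatness $\nabla\eta=0$ are precisely what is needed to make the naive flow integrable, but one has to check this against the decomposition \eqref{eqn:TngIdAdj}--\eqref{eqn:TngIdAdjBis} of $\nabla$ to see that $\bar\partial F_t=0$ is preserved; this is the analogue of Faltings' argument in the weight-one case, and the bookkeeping with the Deligne bigrading is where the subtlety lies. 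Smoothness of the deformation space at $F$ — the final clause — then follows by the same formal argument as in Proposition~\ref{prop:smoothdefs}: the obstruction space can be identified, via the curvature computation, with a Hodge-theoretic summand that vanishes for degree reasons, so the deformation functor is unobstructed.
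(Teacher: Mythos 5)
Your plan follows the paper's proof essentially verbatim: part (1) is exactly the application of Proposition~\ref{prop:OnPSH} together with the constancy of bounded plurisubharmonic functions on the quasi-projective base, and part (2) is Faltings' exponentiation $F_\eta(s)=e^{\eta(s)}.F(s)$, with horizontality checked from $[\eta(s),F_*\xi]=0$ and $\partial_\xi\eta=0$ and with admissibility preserved by the lemma of Section~\ref{ssec:SetUp}. The one step you gloss over is that ``stays inside $D$'' is not merely an open condition over the noncompact $S$: the paper establishes a uniform positivity estimate for the graded polarization form on $e^{\eta(s)}F(s)$ using the fact that the self-adjoint operators built from powers of $\eta$ and $\eta^*$ have constant (hence uniformly controlled) eigenvalues because $\eta$ is parallel, and this is precisely where the smallness of the constant norm $\|\eta\|$ is used.
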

 \begin{proof} (1) This is a direct application of Proposition~\ref{prop:OnPSH}.  The condition $[u,v]=0$ follows as in the pure  case, since we are considering deformations which stay horizontal (see e.g. \cite[Prop. 5.5.1]{3authorsBIS}). \\
 (2)
 We use  an argument due to  Faltings (for weight $1$) \cite{arafal}.  
 Let $\eta $ a parallel horizontal section of
$\qendo {\cH}$ and define the filtration $F_\eta(s)$ by setting
\[
F_\eta(s) =\e^{  \eta(s)} F(s), \quad s\in S.
\]
On the weight graded parts $Q(f(s), f(s))=0$, $f(s)\in F_\eta(s)$.  The map $s\mapsto F_\eta(s)$ is holomorphic but might land
in the compact dual $\check D$ (cf. formula~\eqref{eqn:CompactDual}).

We claim 
\begin{itemize}
\item the second Riemann condition holds 
  if  $\|\eta(s)\|$  is small enough so that  this filtration gives a point inside the period domain $D$. 
 \item The   commuting property guarantees horizontality. 
\end{itemize}     

To prove these claims, first note that  since $\eta$ is parallel, its Hodge norm is constant and
hence also the auxiliary operators 
\begin{align*}
 w_{k,\ell} &=  (\eta^*)^\ell  \comp \eta^k   + (\eta^*)^k\comp \eta^\ell  , \quad  k\not=\ell \\
 w_{k,k} &=   (\eta^*)^k\comp \eta^k  ,
\end{align*} 
have constant Hodge norm. These operators, being self-adjoint, have real eigenvalues
(which might  be negative). Let  the smallest of  these be  $m_{k,\ell}$. Suppose that the nilpotent operator $\eta$ has index of nilpotency $M$ and
set  
\[
\mu= \sum_{1\le k\le \ell\le M } \frac{ m_{k,\ell}}{{k!} \ell!}.
\]
 Then for all $f(s)\in F_s$ we have
\[
\|\e^ { \eta(s)}  f (s) \|^2_{F(s) } =  h_{F(s)} \left([\id +\sum _{1\le k\le \ell\le M } \frac  {1}  {k!\ell!}  w_{k,\ell}]  f(s), f(s)\right)
\ge (1- |\mu|) \|f(s)\|^2
\] 
  and so if $|\mu|<1$ (which is the case if $\eta$ is close to  zero) we have
\[
\|
\e^ { \eta(s)}  f (s) \|^2_{F(s) }\ge (1-|\mu|)  \|f(s)\|^2_{F(s)}>0.
\]
Hence, as we claimed,  $Q$ polarizes the induced Hodge structures on weight graded parts so that the deformed period map $F_\eta=\e^{\eta } F$  gives a holomorphic  
  map $S\to \Gamma\backslash D$.
If, moreover, for  all $s\in S$ and  all tangents $\xi\in T_sS$ one has  $[\eta(s),F_*\xi]=0$, the deformation $F_\eta $ satisfies Griffiths' transversality condition since  this commutativity implies
\begin{align*}
 \nabla_\xi F^p_\eta(s) &= \left( F_*\xi  . \e^{\eta(s)}\right) . F^p(s)  \\
    &=  \left( \e^{\eta(s)} . F_*\xi  \right) .  F^p(s)\\
    &=  \e^{\eta(s)} \nabla_\xi F^p(s)
    \subset \e^{\eta(s)} F(s)^{p-1} = F^{p-1}_\eta (s) .
\end{align*} 
Here we use (cf.  \eqref{eqn:TngIdAdj}) that $\nabla_\xi$ acts as   $\del_\xi +\ad{F_*\xi}$ on $\geg_{F(s)}$, and that $\del_\xi \eta=0$, because $\eta$ is locally constant.
  \end{proof}

\begin{rmk}[Smoothness]  \label{rmk:smoothdefs} 
The second part of the theorem is equivalent to  the relevant deformation space  being smooth  at $F$.  In particular,  $F$ is rigid
if and only if this component  is a non-reduced point.
\end{rmk}

\begin{exmples}[Non-rigid examples]\textbf{\emph{(1) Hodge--Tate variations.}} \label{exmpl:NonRigid}
As we have remarked in Section~\ref{ssec:PSH} (2), one can  easily construct variations that can be  deformed in suitable $(-1,-1)$-directions.
\\
 \textbf{\emph{(2) Nilpotent  orbit associated to K\"ahler classes.}}  We come back to  Example (4)   in Section~\ref{ssec:PSH}. 
The variation we started with is
the $\bR$-split variation defined by the total cohomology of a family of K\"ahler manifolds. The nilpotent orbit  construction gives a deformation of the associated period map as in   the second part of Theorem~\ref{thm:Main}. The role of $\eta(s)$ is played by 
$\sum_{j=1}^k u_j N_j(s)$ where the $N_j$ are coming from independent ample classes which gives a multi-parameter deformation. Suppose that the dimension of the K\"ahler cone  in $H^{1,1}(X)\cap H^2(X,\bR)$ equals $\kappa$. Then 
$k\le \kappa$. If this inequality is strict, the variation is not rigid in at least one $(-1,-1)$-direction.
\\
\textbf{\emph{(3) Biextensions coming from  higher Chow cycles on surfaces.} }
We studied these in   Section~\ref{ssec:PSH} (6). 
Observe that as in the previous example, a flat  infinitesimal  deformation $v$ in a  $(-1,-1)$-direction gives rise to
a nilpotent orbit of deformations and so these deformations are  never rigid in  such directions.
An example of such a  flat $v$ can be constructed as follows. Let $X_s$, $s\in S$  be family of surfaces embedded in  a product $\bP^a\times\bP^b$
of projective spaces,
$A,A'$ hyperplane sections coming from $\bP^a$ giving rise to biextension variation $\cH_s$ over $S$ and $C,D$ 
hyperplane sections  coming from $\bP^b$.
Then $(C,D)$ defines an independent flat  infinitesimal variation $v$  of  biextension type and hence $\exp(tv) \cH_s$
is a deformation of $\cH_s$.

\end{exmples}

%

  In order to formulate the  second main result, we recall that
    Proposition~\ref{prop:OnPSH} states that    for a plurisubharmonic horizontal endomorphism $\eta$  and for all  all tangents $u$ to the period map, one has 
 $\pi_\gq[\pi_+\bar u,v]=0 $,  $v=\eta(s)$. Moreover, this property is equivalent to $v$ being parallel.
 
 In analogy with the pure case (cf. Definition~\ref{dfn:RegTangPure}) we introduce  the following concept:  
 
 \begin{dfn}   Fix a  subspace $\ga \subset  \geg_\bC^\hor$. 
 The period map  $F$ is called  \textbf{\emph{regularly tangent}} at  $s\in S$, respectively \textbf{\emph{regularly tangent  in the $\ga$-directions}},  if   
   the only vector $v\in  \geg^\hor_{F(s)}$, respectively $v\in  \ga$,  with $\pi_\gq[\pi_+\bar u,v]=0$ for  all $u\in F_* T_sS $ is the zero vector.
   \end{dfn}

\begin{rmk} \label{rmk:OnRegTangent}   Because  of type reasons, a period map can only be regularly tangent if there are non-zero $(-1,1)+ (-1,0)$-directions. 
Moreover, if     $F$ is  regularly tangent  in  the  $(-1,1) $-directions  as well as in $(-1,0$-directions, then  $F$ is  regularly tangent  in  
  all directions. 
\end{rmk}

\begin{thm}[Main Theorem II]  \label{thm:MainCrit2}  Fix a  subspace $\ga \subset  \geg_\bC^\hor$.  Suppose 
  we are in  one of the following situations:
 \begin{enumerate}
\item the pure case  with  $\ga =\geg^{-1,1}$;
\item we have two adjacent weights and  $\ga =\geg^{-1,0}$;
\item  in the setting of   unipotent variations , i.e. $u^{-1,1}=0$ provided  either $ \Lambda^{-1,-1}=0$ and $ \ga =\geg^{-1,0}$,  or
$u \in  \Lambda^{-1,-1}$ and $\ga = \oplus_{k\le 0} \geg^{-1, k}$. 

\item $u =u^{-1,1}+u^{-1,-1}$,   and $\ga =\geg^{-1,-1}$.
\item two non-adjacent weights, say $0, k$, $|k|\ge 2$ with $h^{0,0}=1$, $h^{p,-p}=0$ for  $p\not=0$,
and    $ \ga = \geg^{-1, -k+1}$.  
Moreover, we assume that $\|v\|$ is bounded near infinity. \footnote{This is always the case for    $|k|= 2$ by Section~\ref{sec:norm-estimates}.}  

\item  A variation of type
\[
\xymatrix{  & I^{0,0} \ar[d]_{u^{-1,-1}} \ar[dr]^{u^{0,-2}}  \\
       I^{-2,0}  \ar[dr]_{u^{0,-2}} & \ar[l]  I^{-1,-1} \ar[l]_{u^{-1,1}}  \ar[l]_{u^{-1,1}}    \ar[d]^{u^{-1,-1}}   &\ar[l]_{u^{-1,1}}  I^{0,-2}    \\
 & I^{-2,-2}
}
 \]   
 and   $\ga =\geg^{-1,-1}$.
\end{enumerate}
Then  deformations of $F$ in the $\ga $-directions are in one-to-one correspondence with   those endomorphisms of $(H,Q  )$
that belong to  $\ga $ and  which intertwine the action of the monodromy.

 In particular,   the following  properties are equivalent:
\begin{itemize}
\item  $F$  has no horizontal  deformations    in   $\ga$-directions;
\item  $(H,Q)$ has  no  endomorphisms  in   $\ga$-directions intertwining the action of the monodromy.
\end{itemize}

These properties  hold  in particular, if  $F$ is  regularly tangent in the $\ga$-directions at $o\in S$ (and  hence  along $S$).

\end{thm}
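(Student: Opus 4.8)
The plan is to reduce Main Theorem II to the two main results already established, namely Proposition~\ref{prop:harmendo} (which, in each of the six enumerated cases, shows that an endomorphism $\eta$ with $[u,v]=0$ is plurisubharmonic along $S$) and Theorem~\ref{thm:Main} (which converts plurisubharmonicity plus boundedness into flatness, and conversely flatness into a genuine deformation). First I would fix a holomorphic section $\eta$ of $\cU^{-1}\geg(\cH)$ corresponding to an admissible infinitesimal deformation of $F$ in the $\ga$-directions, so that $v=\eta(s)\in\ga$ at every point. Since we are deforming through horizontal period maps, Lemma~\ref{lem:AbSA} gives that the image of $F_*T_sS$ is abelian in $\geg_{F(s)}$, and the horizontality/commutativity argument from the pure case (as in \cite[Prop.~5.5.1]{3authorsBIS}, invoked in the proof of Theorem~\ref{thm:Main}(1)) yields $[u,v]=0$ for all tangent directions $u$. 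This is exactly the hypothesis of Proposition~\ref{prop:harmendo} in each of cases (1)--(6), so $\eta$ is plurisubharmonic along $S$.

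Next I would address boundedness of $\|\eta\|$ near the boundary of $S$ in $\bar S$. In cases (1)--(4) and (6) this follows from the norm estimates collected in Section~\ref{sec:norm-estimates}: case (1) is W.~Schmid's estimate (Proposition~\ref{prop:HNboundPure}); case (2) (two adjacent weights, hence type $(\text{\rm I})$) is covered by Corollary~\ref{cor:IandII} or Theorem~\ref{thm:n-is-n0}; case (3) (unipotent variations) by Theorem~\ref{unpotent-estimate}; case (4) by the arguments for nilpotent orbits of K\"ahler type, which fall under type $(\text{\rm II})$ and Corollary~\ref{cor:IandII}; and case (6) (the specified biextension) by Corollary~\ref{corr:height-pairing}. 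In case (5) with $|k|=2$ the estimate is again available from Section~\ref{sec:norm-estimates} (this is the content of the footnote), and for $|k|\ge 3$ one simply adds boundedness of $\|v\|$ as an explicit hypothesis, as the theorem statement does; the counterexample of Lemma~\ref{lem:UnBounded} shows this cannot be removed. Granting boundedness, a bounded plurisubharmonic function on a quasi-projective manifold is constant (\cite{psh}), so $\|\eta\|$ is constant and, by the last assertion of Proposition~\ref{prop:OnPSH} (using also $[u,v]=0$), $\eta$ is flat. Flatness plus $[u,v]=0$ is precisely the hypothesis of Theorem~\ref{thm:Main}(2), which — provided $\|\eta\|$ is small, which we may arrange by rescaling since the deformation space is a cone — produces a genuine horizontal deformation of $F$. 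Conversely, any flat horizontal $\eta\in\ga$ commuting with $F_*T_{S,s}$ and with the monodromy gives such a deformation by the same theorem. Since a flat section of $\geg(\cH)$ is determined by its value at one point $o\in S$ together with monodromy-invariance, and since for a flat section horizontality in the $\ga$-directions is equivalent to the value $\eta(o)$ lying in $\ga$, this establishes the claimed bijection between $\ga$-deformations of $F$ and monodromy-intertwining endomorphisms of $(H,Q)$ lying in $\ga$; the equivalence of the two bulleted vanishing statements is then immediate.

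For the final clause, suppose $F$ is regularly tangent in the $\ga$-directions at $o\in S$: by definition the only $v\in\ga$ with $\pi_\gq[\pi_+\bar u,v]=0$ for all $u\in F_*T_oS$ is $v=0$. If $\eta$ is a flat horizontal $\ga$-valued section, then by Proposition~\ref{prop:OnPSH} (equation~\eqref{eqn:HarmFunct4}) flatness forces $\pi_\gq[\pi_+\bar u,\eta(o)]=0$ for all tangent $u$, whence $\eta(o)=0$ and, by flatness, $\eta\equiv 0$; so there are no nonzero $\ga$-deformations and $F$ is rigid in the $\ga$-directions. (That the regularly-tangent property propagates from $o$ to all of $S$ is the standard spreading-out argument, as in the pure case following Definition~\ref{dfn:RegTangPure}.) The main obstacle I anticipate is not any single step in isolation but the bookkeeping of \emph{which} norm estimate applies in \emph{which} of the six cases — in particular reconciling the fact (stressed at the start of Section~\ref{sec:norm-estimates}) that a variation of type $(\text{\rm I})$ or $(\text{\rm II})$ need not have $\geg(\cH)$ of the same type, so that one must verify in each case that $\geg(\cH)$ still falls under an applicable boundedness theorem (e.g.\ still has weights in $\{0,-1,-2\}$, or is still unipotent, etc.). Making that verification uniform across cases (1)--(6), and carefully invoking Proposition~\ref{prop:harmendo} with the correct choice of $\ga$ in each, is where the real care is required.
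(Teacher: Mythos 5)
Your argument is correct and is essentially the paper's own proof, which is a three-line reduction to Proposition~\ref{prop:harmendo} (plurisubharmonicity), the norm estimates of Section~\ref{sec:norm-estimates} (boundedness), and Theorem~\ref{thm:Main} (flatness and the converse construction); you have simply made explicit the case-by-case bookkeeping, the role of $[u,v]=0$, and the regular-tangency clause that the paper leaves implicit. The caution you raise about $\geg(\cH)$ not inheriting type $(\text{\rm I})$/$(\text{\rm II})$ is exactly the point the paper itself flags at the start of Section~\ref{sec:norm-estimates}, so your account is consistent with the intended argument.
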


\begin{proof}  In each of the above cases,  by Proposition~\ref{prop:harmendo}, a holomorphic horizontal endomorphism is 
plurisubharmonic and so its Hodge norm is plurisubharmonic. By  the results of Section~\ref{sec:norm-estimates},  this function is
    bounded.    Now apply  Theorem~\ref{thm:Main}. 
\end{proof}

%
%
%
%
%
%
\subsection{Conditions implying rigidity}

 Suppose we have a variation with two weights $0$ and $1$ and of 
  Hodge width   are $a$, respectively $b$.  Suppose that
$a>b$ and the weight $0$ variation is a direct sum of two variations, one having
maximal Higgs field and a piece $Z'$ of pure type $(0,0)$ with trivial Higgs field. We claim that this implies that  the mixed variation is  then  is
regularly tangent in the $(-1,0)$-directions.
To illustrate the set-up, we take $a=2$ and $b=1$:
\[
\xymatrix{
 H^{ 1,-1}   & \ar[l]^{(u^{-1,1})^*}  H^{0,0}  & H^{-1,1} \ar[l]^{(u^{-1,1})^*}   \\
& H^{1,0} \ar[u]_{v   }& H^{0,1}\ar[l]^{(u^{-1,1})^*} \ar[u]_{v }   .
}
\] 
Indeed,   by assumption, the upper row splits into two strands at most, that is $H^{0,0}= Z\oplus Z'$ such that
the  upper right component of $(u^{-1,1})^*$ maps isomorphically to 
  $Z$   which in turn is mapped isomorphically to $H^{-1,1}$ by the relevant component 
of $(u^{-1,1})^*$.

To test  regularity, suppose  that $[(u^{-1,1})^*  ,v]=0$.
The commutative diagram   implies that the image of $v: H^{1,0}\to H^{0,0}$
lands in $Z$ and so, if $(u^{-1,1})^*\comp v=0$ on $H^{1,0}$, we must have $v|_{H^{1,0}}=0$. 
Since then $(u^{-1,1})^*\comp v=0$ on $H^{0,1}$, a similar argument shows that $v=0$ on $H^{0,1}$ as well.
 
Now remark that if $h^{ 1,-1}=1$,  the Higgs field in the $u^{-1,1}$-direction is maximal precisely if
it is non-zero.  By Lemma~\ref{lem:OnInfPermap} 
this is the case if and only if  the tangent map to the weight zero period map in that direction    is  non-zero.
 We have shown: 
\begin{prop} Suppose we have a mixed period map $F$ for a variation of adjacent weights $0$ and $1$.
For the  pure weight $0$ variation we assume
\begin{itemize}
\item the only  non-zero Hodge numbers are $h^{-1,1}=h^{1,-1}= 1$  and $h^{0,0}\ge 1$.
\item its period map  is non-constant.
\end{itemize}   
 
Then $F$ is  regularly tangent in the $(-1,0)$-directions and hence admits no deformation in these directions. \label{prop:AdjWeights2}
\end{prop}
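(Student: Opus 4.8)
The plan is to reduce the statement to a regular--tangency assertion and then quote Main Theorem~II. Since the variation carries only the two adjacent weights $0$ and $1$, we are in case~(2) of Theorem~\ref{thm:MainCrit2} with $\ga=\geg^{-1,0}$; so it suffices to prove that $F$ is regularly tangent in the $(-1,0)$-directions at one point $o\in S$ (by that theorem this then holds along $S$ and yields the absence of deformations in those directions). Two adjacent weights force the mixed Hodge structure to be $\bR$-split (Remark~\ref{rmk:split}), so $\geg_\bC$ lives in Deligne bidegrees $(p,q)$ with $p+q\in\{0,-1\}$ and $\overline{\geg^{p,q}}=\geg^{q,p}$. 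Carrying out for a vector $v\in\geg^{-1,0}$ the bidegree bookkeeping of the proof of Proposition~\ref{prop:harmendo}(2), the defining vanishing $\pi_\gq[\pi_+\bar u,v]=0$ collapses, for every tangent vector $u=F_*\xi$, to the single relation $[(u^{-1,1})^*,v]=0$, where $u^{-1,1}\in\geg^{-1,1}$ is the pure part of $u$ acting on $\gr^W$; it is this that must be shown to force $v=0$.

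Fix a generic $\xi\in T_oS$. By Lemma~\ref{lem:OnInfPermap}, non-constancy of the weight-zero period map makes the Higgs field $u^{-1,1}$ on $\gr^W_0$ non-zero for generic $\xi$. Using $h^{1,-1}=h^{-1,1}=1$ together with the polarization $Q_0$ --- which pairs $H^{1,-1}$ non-degenerately with $H^{-1,1}$ and makes the two extremal components $H^{1,-1}\to H^{0,0}$ and $H^{0,0}\to H^{-1,1}$ of $u^{-1,1}$ mutually adjoint (so neither can vanish once $u^{-1,1}\ne0$) --- one singles out inside $\gr^W_0$ a rank-three strand of Hodge--Lefschetz type: a line $Z\subset H^{0,0}$ with isomorphisms $H^{1,-1}\mapright{\simeq}Z\mapright{\simeq}H^{-1,1}$ under the relevant components of $u^{-1,1}$ (equivalently of $(u^{-1,1})^*$), and a complement $H^{0,0}=Z\oplus Z'$ on which the off-strand component of $u^{-1,1}$ vanishes. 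This is exactly the ``splits into two strands'' situation invoked in the discussion preceding the statement.

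With the strand in hand, the conclusion is the diagram chase of that discussion. Decompose $v\in\geg^{-1,0}$ into its non-zero components $v\colon H^{1,0}\to H^{0,0}$ and $v\colon H^{0,1}\to H^{-1,1}$. Evaluating $[(u^{-1,1})^*,v]=0$ on the extremal Hodge pieces of $\gr^W_1$, where $(u^{-1,1})^*$ acts trivially (e.g.\ $(u^{-1,1})^*\colon H^{1,0}\to H^{2,-1}=0$), the commutativity encoded by the diagram pushes the image of $v\colon H^{1,0}\to H^{0,0}$ into the strand $Z$, on which $(u^{-1,1})^*$ is injective; hence $(u^{-1,1})^*\circ v=0$ there forces $v|_{H^{1,0}}=0$, and the same argument one step down gives $v|_{H^{0,1}}=0$. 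Thus $v=0$, so $F$ is regularly tangent in the $(-1,0)$-directions, and Theorem~\ref{thm:MainCrit2} finishes the proof.

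I expect the real work to be in the middle step: establishing that the rank-three strand is genuinely of Hodge--Lefschetz type, i.e.\ promoting the formal consequence $u^{-1,1}\ne0$ of non-constancy to the non-vanishing of the composite $(u^{-1,1})^2$ on $H^{1,-1}$ (equivalently, that $Z$ maps isomorphically all the way through). This uses the polarization $Q_0$ and semisimplicity of the weight-zero variation of Hodge structure rather than mere bidegree gymnastics; a secondary, purely bookkeeping point is the reduction of the intrinsic condition $\pi_\gq[\pi_+\bar u,v]=0$ to the workable form $[(u^{-1,1})^*,v]=0$.
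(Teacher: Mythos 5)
Your proposal follows the paper's route step for step: reduce to regular tangency in the $(-1,0)$-directions via Theorem~\ref{thm:MainCrit2}, reduce the defining condition to $[(u^{-1,1})^*,v]=0$, extract from the polarization the Hodge--Lefschetz strand with $H^{0,0}=Z\oplus Z'$, and run the diagram chase. The step you flag as ``the real work'' --- promoting $u^{-1,1}\neq 0$ to a genuine rank-three strand --- is actually the unproblematic part: it is exactly the weight-two analysis of Example (2) in Section~\ref{ssec:ExRigPure}, where the relation $u_2=\widehat{u_1}$ coming from the polarization makes the composite an isomorphism on $Z=\im(u_1)$ as soon as $u_1\neq 0$ and $h^{1,-1}=1$.

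The genuine gap is in the diagram chase, at the assertion that commutativity ``pushes the image of $v\colon H^{1,0}\to H^{0,0}$ into $Z$''. The evaluation you actually perform --- $[(u^{-1,1})^*,v]=0$ on $H^{1,0}$, where $(u^{-1,1})^*$ vanishes --- yields $(u^{-1,1})^*\circ v|_{H^{1,0}}=0$, i.e.\ $v(H^{1,0})\subseteq\ker\bigl((u^{-1,1})^*|_{H^{0,0}}\bigr)$. That kernel equals $\bigl(\im\, u^{-1,1}|_{H^{1,-1}}\bigr)^{\perp}=Z'$, the \emph{complement} of $Z$: the relation places the image precisely where $(u^{-1,1})^*$ is zero rather than injective, so no contradiction results. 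The only square that feeds into $Z$ is the one comparing $H^{0,1}\mapright{(u^{-1,1})^*}H^{1,0}\mapright{v}H^{0,0}$ with $H^{0,1}\mapright{v}H^{-1,1}\mapright{(u^{-1,1})^*}Z$, and it controls $v$ only on the image of the weight-one component $(u^{-1,1})^*\colon H^{0,1}\to H^{1,0}$, about which the hypotheses say nothing (the weight-one variation may be constant). What the chase actually delivers is $v|_{H^{0,1}}=0$ together with $v(H^{1,0})\subseteq Z'$, which forces $v=0$ only when $Z'=0$, i.e.\ $h^{0,0}=1$. To close the argument for $h^{0,0}\geq 2$ one needs an extra input --- for instance surjectivity of $(u^{-1,1})^*\colon H^{0,1}\to H^{1,0}$, equivalently injectivity of the weight-one Higgs field --- which is not among the stated hypotheses. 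I note that the paper's own discussion preceding the proposition makes the same unsupported assertion, so you have reproduced its argument faithfully; but, written blind, your proof does not supply the missing justification either, and as written the containment you derive points in the opposite direction from the one you use.
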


We finish this section by giving a criterion for rigidity using the monodromy action. It uses   the following general result.
\begin{lemma} Let $\pi$ be a group, $k$ a field and $V,V_1,V_2$   finite dimensional  $k$-vector spaces,
\[
 0 \to V_1 \mapright{i} V \mapright{p}  V_2\to 0
 \]
   an exact sequence of   $\pi$-modules and $\varphi\in \endo^\pi V$, i.e., an endomorphism of $V$ intertwining the $\pi$-action.
 Suppose
 \begin{itemize}
\item  $\varphi$  induces the zero map on $V_1$ and $V_2$.
\item $V_1$ is an irreducible $\pi$-module.
\item  $\dim V_1 >\dim V_2$.
\end{itemize}
Then $\varphi=0$. \label{lem:OnNeighbors} 
\end{lemma}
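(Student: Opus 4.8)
The strategy is to use $\varphi$ to produce a $\pi$-equivariant map from $V_2$ to $V_1$ and then invoke Schur's lemma together with the dimension inequality. First I would observe that since $\varphi$ induces the zero map on $V_1$, the composite $\varphi \comp i$ is zero, i.e. $\varphi$ vanishes on the image $i(V_1) \subseteq V$. Hence $\varphi$ factors through the quotient $V/i(V_1) \cong V_2$: there is a $\pi$-equivariant map $\bar\varphi: V_2 \to V$ with $\varphi = \bar\varphi \comp p$. Next, since $\varphi$ also induces the zero map on $V_2$, the composite $p \comp \varphi$ is zero, which means the image of $\varphi$ lies in $\ker(p) = i(V_1)$. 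Combining these two observations, $\varphi$ induces a $\pi$-equivariant map $\psi: V_2 \to V_1$ (identifying $i(V_1)$ with $V_1$), and $\varphi = i \comp \psi \comp p$.

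\textbf{Second step.} Now $\psi: V_2 \to V_1$ is a morphism of $\pi$-modules, with $V_1$ irreducible. If $\psi \neq 0$, then $\psi$ is surjective onto $V_1$ (the image is a nonzero $\pi$-submodule of the irreducible $V_1$), so $\dim V_2 \geq \dim \im(\psi) = \dim V_1$, contradicting the hypothesis $\dim V_1 > \dim V_2$. Therefore $\psi = 0$, and consequently $\varphi = i \comp \psi \comp p = 0$, as claimed.

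\textbf{Main obstacle.} There is essentially no serious obstacle here; the only point requiring a moment of care is the bookkeeping that $\varphi$ vanishing on both $V_1$ (via $\varphi\comp i=0$) and $V_2$ (via $p\comp\varphi=0$) is exactly what forces $\varphi$ to land in, and be induced from, the respective pieces of the short exact sequence — so that the factored map $\psi$ is genuinely defined and genuinely $\pi$-equivariant (which follows automatically since $i$, $p$, and $\varphi$ are all $\pi$-equivariant). After that, irreducibility of $V_1$ and the strict dimension inequality do all the work via Schur's lemma. Note that irreducibility of $V_2$ is not needed; only that of $V_1$ together with $\dim V_1 > \dim V_2$.
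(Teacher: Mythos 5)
Your proof is correct and follows essentially the same route as the paper: both construct the induced $\pi$-equivariant map $V_2\to V_1$ (the paper defines it on lifts and checks well-definedness, you phrase it as factoring through $p$ and landing in $\ker(p)$, which is the same thing) and then conclude by Schur's lemma plus the dimension inequality. No gaps.
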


\begin{proof}  We claim that the  assumptions imply that the map $\varphi$  induces a $\pi$-equivariant morphism $\bar\varphi:  V_2\to V_1$
and if it is  the zero-map then $\varphi=0$. Let us  prove this claim. First we  define $\bar\varphi$.  Lift $\bar x\in V_2$ to an element $x\in  V$. Then $\varphi(x) \in i(V_1)$ since $\varphi$ is $\pi$-equivariant and induces the zero map on $V_2$. So $\varphi(x)= i(y)$. Then set
$\bar\varphi(\bar x)= y$. This is independent of the lift since $\varphi$ induces the zero map on $V_1$.
By  construction $\bar\varphi=0$ if and only $\varphi=0$. 

Since $V_1$ is irreducible as a $\pi$-module, by Schur's lemma, either $\bar\varphi=0$ or  
$\bar\varphi(V_2)=V_1$. In the latter case we would have $\dim V_2\ge \dim V_1$, contrary to the third assumption and hence $\varphi=0$.
\end{proof}

\begin{corr} 
\label{cor:TwoStepCrit} Consider a  period map $F:S\to \Gamma\backslash D$ associated   to a two-step weight 
filtration $0\subset W_{ 1}\subset W_2=H$. If the weight graded quotients have distinct dimensions and
  the one  of largest
dimension is an   irreducible $\Gamma$-module, then  $F$  is rigid  in the $(-1,0)$-directions.
So, if in addition the induced period maps for the weight-graded  pure 
variations  of Hodge structure  on $S$ are rigid,   then  $F$ is rigid as a period map.  
 \end{corr}
 \begin{proof} By duality we may assume that $\dim W_1> \dim \gr^W_2$.  We apply Lemma~\ref{lem:OnNeighbors} with $v\in \geg^{-1,0}$ playing the role of $\varphi$.
 So  $v=0$ and hence, by  Theorem~\ref{thm:MainCrit2},  $F$ is   rigid.
 \end{proof}

  \section{Examples of rigid  mixed period maps} \label{sec:Exampl}
  
   \subsection{Complements of smooth divisors}
  \label{ssec:Compl}
  
  Let $X$ be a smooth compact variety of dimension $d+1$ and $Y\subset X$ a smooth divisor.
  We let $i:Y\into X$ be the inclusion and $j:U=X\setminus Y \into X$ the inclusion of the complement.
  Then we have an exact sequence (in rational cohomology)
  \begin{align*}
    0 \to \coker(H^{k-2}(Y(-1)) \mapright{i_*}  H^k(X))& \mapright{j^*}    H^k(U) \mapright{\,\, r\quad} \\
    &  \hspace{-3em}\ker (H^{k-1}(Y)(-1)  
     \mapright{i_*}  H^{k+1}(X)) \to 0,
  \end{align*} 
   an extension of a weight $k+1$ Hodge structure by a weight $k$ Hodge structure.
   Since the category of pure polarized Hodge structures is abelian, there are
splittings $H^r(X)= \im( i_*)\oplus P^r(X)$,  and $H^r(Y)(-1)= \ker (i_*) \oplus V^{r+2}(Y)$ so the sequence reduces to
   \[
   0 \to P^k(X) \mapright{j^* }  H^k(U) \mapright{r}  V^{k+1}(Y) \to 0.
   \]
   If $Y$ is an ample divisor, this sequence is only interesting in the middle dimensions $d,d+1$ and simplifies to
   \begin{equation} \label{eqn:MHSOpen}
   0 \to H^{d+1}_\prim(X) \mapright{j^*} H^{d+1}(U) \mapright{r} H^d_\var(Y) (-1) \to 0.
   \end{equation}
   Suppose that we have a family of such  pairs $(X_s,Y_s)$, $s\in S$, with $S$ quasi-projective and smooth. 
   We give some applications of Eqn.~\eqref{eqn:MHSOpen}.
   
   First we invoke  Corollary~\ref{cor:TwoStepCrit}   and deduce:
   
   \begin{prop}  \label{prop:geom2neighbors} The  period map for 
$H^{d+1}(U) $ is rigid  in the $(-1,0)$-directions    if    the following two conditions hold simultaneously:

\begin{itemize}
\item the monodromy representation
on $H^{d+1}_\prim(X)$ is    irreducible;
\item  $\dim H^{d+1}_\prim(X)$ $> \dim H^d_\var(Y) $.
\end{itemize}

 If, in addition,       the period maps  associated to \allowbreak$ H^{d+1}_\prim(X)$ 
   and $H^d_\var(Y)  $ are  
 rigid, then  the period map is  rigid  in all horizontal directions.
     \end{prop}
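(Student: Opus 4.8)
The plan is to apply the two criteria already packaged in Corollary~\ref{cor:TwoStepCrit} and Theorem~\ref{thm:MainCrit2} to the weight filtration coming from the extension \eqref{eqn:MHSOpen}. Indeed, \eqref{eqn:MHSOpen} exhibits $H^{d+1}(U)$ as a two-step variation with weight graded quotients $\gr^W_{d+1}\cong H^{d+1}_{\prim}(X)$ and $\gr^W_{d+2}\cong H^d_{\var}(Y)(-1)$ (after the usual Tate twist bringing the weights into the normalized range; the arithmetic is harmless since rigidity is unaffected by a twist). Thus I would first observe that the two stated hypotheses — irreducibility of the monodromy action on $H^{d+1}_{\prim}(X)$ and the strict inequality $\dim H^{d+1}_{\prim}(X) > \dim H^d_{\var}(Y)$ — are precisely the hypotheses of Corollary~\ref{cor:TwoStepCrit}, namely that the two weight-graded quotients have distinct dimensions with the larger one an irreducible $\Gamma$-module. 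Applying that corollary gives rigidity in the $(-1,0)$-directions, which is the first assertion of Proposition~\ref{prop:geom2neighbors}.

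For the second, stronger assertion I would invoke the ``in addition'' clause of Corollary~\ref{cor:TwoStepCrit}: once the induced period maps for the pure weight-graded pieces $H^{d+1}_{\prim}(X)$ and $H^d_{\var}(Y)(-1)$ are themselves rigid, the corollary concludes that $F$ is rigid as a period map, i.e.\ in all horizontal directions. Here I should be slightly careful about what ``all horizontal directions'' means in the mixed setting: the horizontal tangent bundle is $\cU^{-1}\geg(\cH) = \bigoplus_{k\le 1}\geg^{-1,k}(\cH)$, but because we have only two adjacent weights, the only horizontal Hodge types that can occur in $\geg(\cH)$ are $(-1,1)$ and $(-1,0)$ — there is no $\Lambda^{-1,-1}$ component since $\geg(\cH)$ has weights $-1$, $0$, $1$ only. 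So ``rigid in the $(-1,1)$-directions'' follows from rigidity of the pure period maps for the graded pieces (a section of type $(-1,1)$ of $\geg(\cH)$ induces, via $\gr^W$, sections of type $(-1,1)$ of the endomorphism bundles of the pure pieces, which vanish by assumption, and then the extension argument of Lemma~\ref{lem:OnNeighbors} forces the section itself to vanish), while rigidity in the $(-1,0)$-directions is the first part. Combining the two via Remark~\ref{rmk:OnRegTangent} (regular tangency in both ranges of directions gives regular tangency in all directions) yields the claim.

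The one genuine point of care — and the step I expect to be the main obstacle — is verifying that the abstract hypotheses of the cited results are met by the geometric setup, in particular the identification of the monodromy group $\Gamma$ acting on $H^{d+1}(U)$ with its action on the graded pieces, and checking that the splittings $H^r(X)=\im(i_*)\oplus P^r(X)$ and $H^r(Y)(-1)=\ker(i_*)\oplus V^{r+2}(Y)$ are compatible with the variation (i.e.\ that $P^{d+1}(X)$ and $H^d_{\var}(Y)$ really do underlie sub-variations of Hodge structure over $S$, so that \eqref{eqn:MHSOpen} is an exact sequence of variations, not merely of individual Hodge structures). This is standard: the Gysin and restriction maps are morphisms of variations, their kernels and cokernels are again variations by semisimplicity of polarized variations, and the monodromy acts through $\Gamma$ compatibly on all terms. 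Once this bookkeeping is in place, the proof is simply a citation of Corollary~\ref{cor:TwoStepCrit} applied to \eqref{eqn:MHSOpen}, together with the observation about adjacent weights that reduces ``all horizontal directions'' to the $(-1,1)$- and $(-1,0)$-directions.
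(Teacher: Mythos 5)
Your proof is correct and follows exactly the paper's route: the paper deduces Proposition~\ref{prop:geom2neighbors} by a direct application of Corollary~\ref{cor:TwoStepCrit} to the two-step extension \eqref{eqn:MHSOpen}, with the "in addition" clause of that corollary handling the remaining horizontal directions. Your extra remarks (absence of a $\Lambda^{-1,-1}$ component for two adjacent weights, compatibility of the splittings with the variation) are sound bookkeeping that the paper leaves implicit.
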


   \begin{exmple} \label{exmpls:OpenRig}  
 The obvious  example is a family $\set{C_s\setminus \Sigma_s}$ of
  \textbf{\emph{quasi-projective smooth curves}}. If    the monodromy acts irreducibly on $H^1(C_s)_\bC$ and if also $\#\Sigma < 2g$, the
  mixed period map   is rigid in the $(-1,0)$-directions.  
  
  More generally, we can consider the Hodge structure  on  $H^1(X)$ for  $X$ of any dimension (and for $H^0(Y)$).  
For instance  take any rigid family  of abelian varieties  
(see    Section~\ref{ssec:ExRigPure} Example (6))   and leave out a smooth, possibly reducible  divisor.
If  the monodromy action is   irreducible and $Y$ does not have too many components,  the
  mixed period map will  again be  rigid.  
 \end{exmple}
 
We next we use Eqn.~\eqref{eqn:MHSOpen}  in conjunction with   Proposition~\ref{prop:AdjWeights2}.
So we  start from   a \textbf{\emph{K3-type}} Hodge structure  that is, we recall, a   weight two  Hodge structure  with $h^{2,0}=h^{0,2}=1$ and $h^{p,q}=0$ for $p<0$ or $q<0$.    As a consequence of Proposition~\ref{prop:AdjWeights2} we have:

\begin{prop} Suppose  that $H^2_\prim (X_s)$   is a non-constant variation of K3-type Hodge structure. 
Then the mixed period map for $H^2(X_s\setminus  Y_s)$ is rigid in  the directions of type $(-1,0)$.
 The above holds in particular for $X_s$ a K3 surface.
\label{prop:OnK3Vars}
 \end{prop}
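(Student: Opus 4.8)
The plan is to derive Proposition~\ref{prop:OnK3Vars} as a direct consequence of the general criterion Proposition~\ref{prop:AdjWeights2}, together with the short exact sequence \eqref{eqn:MHSOpen}. First I would observe that for a family of pairs $(X_s, Y_s)$ with $X_s$ smooth projective of dimension $2$ and $Y_s$ a smooth ample divisor (so $d=1$), the sequence \eqref{eqn:MHSOpen} reads
\[
0 \to H^2_\prim(X_s) \mapright{j^*} H^2(X_s\setminus Y_s) \mapright{r} H^1_\var(Y_s)(-1) \to 0,
\]
which is an extension of a weight $3$ Hodge structure by a weight $2$ Hodge structure. After a Tate twist (shifting by $(-1)$ throughout, or equivalently relabelling) this becomes a variation of mixed Hodge structure with two adjacent weights, the lower weight carrying the K3-type variation $H^2_\prim(X_s)$ and the higher weight carrying $H^1_\var(Y_s)(-1)$, which is of weight one (a curve variation).

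Next I would check that the hypotheses of Proposition~\ref{prop:AdjWeights2} are met. That proposition concerns a mixed period map for adjacent weights $0$ and $1$ whose pure weight-$0$ part has only non-zero Hodge numbers $h^{-1,1}=h^{1,-1}=1$ and $h^{0,0}\ge 1$, and whose period map is non-constant. The K3-type variation $H^2_\prim(X_s)$, after the appropriate normalization of weights, is exactly of this shape: a weight-two Hodge structure with $h^{2,0}=h^{0,2}=1$ and $h^{1,1}\ge 1$ becomes, under the shift placing it in weights $-1,0,1$ relative to the mixed structure, a variation with the stated Hodge numbers. The hypothesis that $H^2_\prim(X_s)$ is a non-constant variation translates to the non-constancy of the weight-$0$ period map. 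Thus Proposition~\ref{prop:AdjWeights2} applies and yields that the mixed period map for $H^2(X_s\setminus Y_s)$ is regularly tangent in the $(-1,0)$-directions, hence admits no deformation in those directions; that is, it is rigid in the directions of type $(-1,0)$.

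For the final sentence — that this applies in particular when $X_s$ is a K3 surface — I would simply note that for a K3 surface $X$ one has $H^2(X)$ itself of K3-type ($h^{2,0}=h^{0,2}=1$, $h^{1,1}=20$), and the primitive part $H^2_\prim(X)$ with respect to a polarization is again of K3-type with $h^{1,1}=19$; for a non-isotrivial family (e.g.\ an essential variation in the sense of \cite{SaZ}) this variation is non-constant, so the previous paragraph applies verbatim with $Y_s$ any smooth ample curve on $X_s$ varying with $s$. I expect the main (and only real) obstacle to be bookkeeping: making the Tate-twist/weight-shift identification between the geometric grading in \eqref{eqn:MHSOpen} and the $\{-1,0,1\}$ normalization used in Proposition~\ref{prop:AdjWeights2} precise enough that the Hodge-number hypotheses visibly match, and confirming that ``non-constant variation of K3-type'' is the correct translation of ``weight-$0$ period map non-constant'' in that proposition. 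Everything else is a direct citation.
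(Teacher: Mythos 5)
Your proposal is correct and is exactly the paper's argument: the proposition is stated there as a direct consequence of the exact sequence \eqref{eqn:MHSOpen} with $d=1$ combined with Proposition~\ref{prop:AdjWeights2}, with the Tate twist placing the K3-type part in the weight-$0$ slot with $h^{-1,1}=h^{1,-1}=1$ and the curve part $H^1_\var(Y_s)$ in weight $1$, and non-constancy of the K3-type variation supplying the non-constant weight-$0$ period map required there. The bookkeeping you flag is the only content of the deduction, and you have carried it out as the paper intends.
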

%
 \begin{rmk}  
   To obtain examples with rigidity in all horizontal directions, one can consult the examples in Section~\ref{ssec:ExRigPure},
in particular   Proposition~\ref{prop:RigidK3}.  
 \end{rmk}
 
One can handle  many more geometric examples based on the remark that
surfaces   with $h^{2,0}=1$ have K3 type Hodge structure on $H^2$ and $H^2_\prim$. Let us especially consider the case of regular surfaces, that is
surfaces with  $b_1=0$, that are moreover minimal and of general type. By \cite[Thm. VII.2.1]{4authors} one then has $K^2=1,\dots, 8$ and one finds
$h^{1,1}= 20 - K^2$ so that the period domain for the primitive cohomology has dimension 
\[
d(H^2_\prim)=  19- K^2.
\]
Since $p_g=1$, there is a unique canonical curve $K$ of arithmetic genus $K^2+1$.

For the purpose of this article we say that  $X$ is a \textbf{\emph{Catanese--Kynev--Todorov  surface}} or
\textbf{\emph{CKT-surface}} if $X$ is a   simply connected Galois $\bZ/2\bZ\times \bZ/2\bZ$ cover  of the plane
with  Hodge numbers $h^{2,0}(X)=1$, $h^{1,1}(X)=19$ (and so   $K_X^2=1$). These were first constructed by V. Kynev \cite{kynev} and investigated in detail by  F. Catanese \cite{catanese} and A. Todorov
\cite{todorov2}.
 Let us recall (loc. cit.)  some of their properties.   The quotient by one of the involutions  is a  double cover of $\bP^2$ which  is 
 branched in the union of two cubics meeting transversely.
 This is a  K3 surface $Z$ with  $9$ ordinary double points. The family of such $Z$ depends on $10$ effective parameters and 
 the period domain is a linear section $D_2\cap L$  of codimension $9$ of the period domain $D_2$ for K3 surfaces with  a degree $2$ polarization. 
 In other words, the K3 family has a period map which is generically one-to-one onto a suitable quotient of $D_2\cap L$ .
 Over  a general line lies a smooth genus $2$ curve $C$  in $Y$. Branching in  $C$  and the $9$
 ordinary double points  produces the desired surface of general type.  Since there is an   ample divisor and  $9$ smooth rational
 curves of self-intersection $-2$ on the K3 surface, this shows that
  the Picard number of the general member   is  at least  $1+9=10$. Equality follows from   the surjectivity of the period map for $Z$.
In constructing the second double cover, the choice of the line gives $2$ extra parameters which do not vary with  the Hodge structure and so
for those surfaces the period map has fibers of dimension $2$. The resulting surfaces of general type depend on $10+2=12$ moduli.
  
A. Todorov \cite{todorov} has generalized the above construction to give surfaces of general type with $b_1=0$, $h^{2,0}=1$ and $K^2=2,\dots, 8$. We call these \textbf{\emph{Todorov surfaces}}.
These are birational to double covers of a  classical Kummer surface, branched in a quadratic section passing through $8- K^2$
double points plus  the remaining $8+K^2$ double points. These last double points resolve to $-2$ curves on the K3 surface
and the  resulting family has $19- ( 8+K^2)= 11 - K^2$ moduli.
The  choice of the 
quadric section adds $  K^2 +1$ parameters which do not vary with  the Hodge structure and so, as before,  we get in total $12$
parameters and the period map has fibers of dimension $  K^2 +1$. To calculate the generic Picard number,
note that the $8- K^2$ double points through which the curve passes
give just as  many $(-2)$ curves and there are $3$ more independent divisors on   the Kummer surface we started with.
 The results have been summarized in Table~\ref{tab:OnTodSurfs}. In  the table $d(H^2_\prim)$ stands for
 the dimension of the period domain for the weight two  K3-variation with period map $F_2$, "moduli" stands for
the number of moduli of the CKT-  and Todorov-surfaces\footnote{The full moduli space for surfaces with these invariants is expected to be (much)  larger.  See for example \cite{catanese,catdebar}
for $K^2=1,2$.}, $\rho$ is the generic Picard number of the K3 surface,  
 $\dim W_2$ is the dimension of the essential part of the
variation and $\dim (W_3/W_2)= 2 g(K_s)=\dim H^1(K_s)$.
\begin{table}[htp]
\caption{Invariants for   open  CKT   and Todorov modular families }
\begin{center}
\begin{tabular}{|c|c|c|c|c|c|}
\hline
$K^2 $ & $d(H^2_\prim)$ &   moduli & $\rho$  &  fiber dim. of $F_2$ & $(\dim W_2,\dim W_3/W_2)$ \\
\hline
           $1$& 18  & 12 & 10     &2 & (11, 4) \\
\hline $2$ & 17 & 12 & 9         & 3 & (11, 6)\\
\hline $3$ & 16 & 12 & 8          & 4 & (11, 8)\\
\hline $4$ & 15 & 12 & 7         & 5& (11, 10)\\
\hline $5$ & 14 & 12 & 6      &6& (11, 12)\\
\hline $6$ & 13 & 12 & 5    & 7& (11, 14)\\
\hline $7$ & 12 & 12 &4      & 8 & ( 11, 16)\\
\hline $8$ & 11 & 12 & 3      & 9& (11, 18)\\
\hline
 \end{tabular}
 \end{center}
\label{tab:OnTodSurfs}
\end{table}%
\medskip

  The main result about these surfaces is as follows:
  
  \begin{prop} Let $\set{X_s}_{s\in S}$ be a family of CKT-surfaces  or  of Todorov surfaces and let $K_s\subset X_s$ be the canonical curve.   The family $\set{X_s\setminus K_s}$
is rigid if all of the following conditions hold: \label{prop:CKT-surfs}

\begin{enumerate}

\item the family $\set{K_s}_{s\in S}$ of curves is rigid.

\item  The essential part of the K3 variation  is  non-constant and   rigid. 

\item The mixed period map   is  an immersion.  

\end{enumerate}
 These conditions are all satisfied for a modular family, that is, a family  with $12$ effective parameters.
 This is also the case  for any subfamily of a modular family for which the essential part of the K3 variation has rank $11$. This 
 is the case for a sufficiently generic subfamily of a modular family.
  
  \end{prop}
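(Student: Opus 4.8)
The plan is to verify that Proposition~\ref{prop:CKT-surfs} follows by combining the structure of the mixed Hodge structure~\eqref{eqn:MHSOpen} on $H^2(X_s\setminus K_s)$ with the general rigidity criteria developed above, once we identify the relevant Hodge-theoretic inputs. First I would record the weight structure: by~\eqref{eqn:MHSOpen} applied to the pair $(X_s, K_s)$ with $d=2$, the variation $\cH$ on $H^2(X_s\setminus K_s)$ sits in an extension
\[
0 \to H^2_\prim(X_s) \mapright{j^*} H^2(X_s\setminus K_s) \mapright{r} H^1_\var(K_s)(-1) \to 0,
\]
so $\gr^W_2\cH$ is the essential part of the K3-type variation and $\gr^W_3\cH$ is the (Tate-twisted) variable cohomology of the canonical curves. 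Thus $\cH$ has exactly two adjacent weights $2$ and $3$, and by Proposition~\ref{prop:AdjWeights2} (equivalently Theorem~\ref{thm:MainCrit2}(2), the adjacent-weights case) deformations are controlled by $\geg^{-1,0}(\cH)$, i.e. by maps $\gr^W_3\cH \to \gr^W_2\cH$ of the appropriate Hodge type intertwining monodromy, while deformations in the $(-1,\pm1)$-directions are governed by the pure pieces $\gr^W_2$ and $\gr^W_3$ themselves.

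Next I would dispatch the $(-1,0)$-directions. Here the natural tool is Proposition~\ref{prop:OnK3Vars} (or its underlying Proposition~\ref{prop:AdjWeights2}): since the essential part of the K3 variation is non-constant (hypothesis (2)) and is a K3-type Hodge structure with $h^{2,0}=h^{0,2}=1$, the weight-$2$ period map is non-constant, so $F$ is regularly tangent in the $(-1,0)$-directions and there are no deformations in those directions. One should check that the hypotheses of Proposition~\ref{prop:AdjWeights2} are met after reindexing weights — the primitive cohomology $H^2_\prim(X_s)$ has Hodge numbers $h^{2,0}=h^{0,2}=1$ with the rest concentrated in $h^{1,1}$, which is exactly the ``only nonzero Hodge numbers $h^{-1,1}=h^{1,-1}=1$, $h^{0,0}\geq 1$'' pattern of that proposition after the shift — and that the essential variation, being a direct summand on which monodromy acts without a trivial factor, is the piece whose period map is non-constant.

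Then I would handle the remaining horizontal directions using the pure-case results: by Theorem~\ref{thm:MainCrit2} applied in the adjacent-weights situation, a deformation of $F$ in the $(-1,1)$-directions restricts to deformations of the pure weight-graded period maps $F_2$ (on $\gr^W_2\cH$, the essential K3 variation) and $F_3$ (on $\gr^W_3\cH$, the variation $H^1_\var(K_s)(-1)$). Hypothesis (2) says the essential K3 variation is rigid, so $F_2$ has no deformations; hypothesis (1) says the family of curves $\{K_s\}$ is rigid, so by Corollary~\ref{cor:MainCrit} (weight-one case) $F_3$ has no deformations either. Combined with the vanishing in the $(-1,0)$-directions and the fact that $\cH$ is admissible (a mixed variation of geometric origin, cf.~the general set-up of Section~\ref{sec:DefoMPS}) with bounded Hodge norms guaranteed in the two-adjacent-weights case by Section~\ref{sec:norm-estimates}, Theorem~\ref{thm:Main} forces every admissible infinitesimal deformation of $F$ to vanish; hypothesis (3), that the mixed period map is an immersion, ensures there are no ``trivial'' deformations coming from reparametrization, so $F$ is rigid as a period map.

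Finally I would verify the last assertions of the proposition — that a modular family with $12$ effective parameters, and more generally a sufficiently generic subfamily for which the essential K3 variation has rank $11$, satisfies (1)--(3). Here I would use the explicit geometry recalled just before the statement and encapsulated in Table~\ref{tab:OnTodSurfs}: for a rank-$11$ essential K3 variation the period domain $D(\mathsf S)$ has the right dimension and, by Proposition~\ref{prop:RigidK3} (using $11 = \dim W_2$ not divisible by $4$, or alternatively the presence of a maximally unipotent boundary monodromy), the essential variation is rigid; the canonical curves $K_s$, being the variable genus-$g$ curves parametrized in the construction, form a rigid family by the weight-one rigidity results of Section~\ref{ssec:ExRigPure} once the monodromy acts irreducibly (which holds since the polarization comes from the theta divisor); and immersivity of the mixed period map for a modular family is what ``modular'' means, $12$ parameters being exactly the count of moduli. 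The main obstacle in the whole argument is the bookkeeping of which pure pieces carry which Hodge numbers after the weight shift, and confirming that the essential part $\underline T$ of the K3 variation — rather than the full $H^2_\prim$ — is the correct object to feed into the rigidity criteria; the curvature/plurisubharmonicity and norm-estimate inputs are already packaged for the two-adjacent-weights case, so they apply essentially verbatim.
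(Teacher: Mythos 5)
Your reduction of the statement to the three hypotheses follows the paper: Proposition~\ref{prop:OnK3Vars} disposes of the $(-1,0)$-directions using non-constancy of the K3 variation, hypotheses (1) and (2) control the $(-1,1)$-directions through the two pure graded quotients of the two-adjacent-weight extension \eqref{eqn:MHSOpen}, and hypothesis (3) is what converts rigidity of the period map into rigidity of the family of open surfaces. The gap lies in your verification that a modular family actually satisfies (1)--(3), which is where the substance of the paper's proof is.

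The serious problem is condition (3). You write that ``immersivity of the mixed period map for a modular family is what `modular' means, $12$ parameters being exactly the count of moduli.'' This is precisely what cannot be assumed: the CKT and Todorov surfaces were constructed as counterexamples to infinitesimal Torelli, so the pure weight-two period map $F_2$ is \emph{not} an immersion on a modular family --- its fibers have dimension $K^2+1$ (see Table~\ref{tab:OnTodSurfs}). One must show that the curve part of the mixed Higgs field is injective exactly on the kernel of the K3 part. The paper does this by identifying $\ker(u^{(2)}_S)$ with $H^0(\Omega^1_X|_{K})$, using the involution defining the K3 quotient to split $0\to\cO_K(-K)\to\Omega^1_X|_K\to\Omega^1_K\to 0$ so that this kernel becomes $H^0(\Omega^1_{K})$ of dimension $K^2+1$, and then invoking infinitesimal Torelli for the non-hyperelliptic canonical curve to see that $u^{(1)}_S$ is injective there. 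None of this appears in your proposal, so condition (3) is unverified. A secondary issue: for condition (1) you infer rigidity of the curve family from irreducibility of the monodromy ``since the polarization comes from the theta divisor,'' but that argument only gives irreducibility of the variation (no proper polarized sub-variation), not absolute irreducibility of $H_\bC$ as a $\pi_1$-module, and the paper's Example (4) makes clear that extra conditions are then needed (e.g.\ $g$ an odd prime, which fails for the genus-$2$ canonical curves when $K^2=1$). The paper instead deduces (1) from injectivity of the period map on the fibers of $F_2$ together with Arakelov rigidity.
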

  \begin{proof}
  First of all,  since by (2)  the K3 variation is non-constant,  Proposition~\ref{prop:OnK3Vars} implies that the variation is rigid in $(-1,0)$-directions. 
   It is rigid in $(-1,1)$ directions
  if this is the case for the pure   variations coming from the curves as well as for  the K3 variation. Assumption (1)  covers  the curve 
  case (since we are  interested in the variations coming from the geometry of the open surfaces) and (2) covers the
  K3 variation.
  Condition (3) then implies that the family of open surfaces  is   itself   rigid whenever the mixed variation is rigid.
  
  Condition (1) holds as soon as the period map for the curves is an immersion. This is   a consequence of 
   Arakelov's theorem, recalled in Section~\ref{ssec:history}. For a modular family this is the case.
   Indeed, for a modular family the period for the  fibers of  $F_2$  is injective.
  By Proposition~\ref{prop:RigidK3}, the second condition is   satisfied if the rank of the essential variation is not divisible by $4$.
  From the table we see that this is the case for  a modular family.
      
 The third condition is a bit more involved since  the pure K3 variation does not determine the family  because of the failure of  infinitesimal  Torelli.
  Indeed, this is exactly the reason they were constructed! 
The failure of infinitesimal Torelli  is caused by the non-trivial kernel of the  tangent map to the K3 period map. Since $T_X\simeq \Omega^1_X\otimes K_X^{-1}$, the tangent to the period map is the map 
\[
u^{(2)}_S:  T_S\to H^1(T_X) \to \hom\left(H^0(K_X) \to H^1(\Omega^1_X)\simeq  H^1(\Omega^1_X)\right),
\]
where the resulting morphism on the right,
\[
\mu_X:   H^1(T_X)=H^1(\Omega^1_X(-K)) \to  H^1(\Omega^1_X) ,
\]
   is induced by multiplication by a non-zero section of $K_X$ vanishing along the canonical curve $K$. From the exact sequence
   \[
   0= H^0(\Omega^1_X)  \to H^0(\Omega^1_X|K)\to H^1(\Omega^1_X(-K)) \to H^1(\Omega^1_X),
   \]
   one sees that the 
    kernel of $\mu_X$ is isomorphic to $H^0(\Omega^1_X|K)$.
To interpret this space,  recall that, as observed by A. Todorov \cite[proof of Prop. 4.1]{todorov2} and F. Catanese \cite[p. 150]{ttag}  the involution $\tau$ on $X$ that produces the K3-quotient 
induces a splitting of  the exact sequence
\[
0\to \cO_K (-K)  \to \Omega^1_X|_{K} \to \Omega^1_{K} \to 0 .
\]
Indeed, local coordinates $(x,y)$ centered at a point $P$ of $K$ can be chosen in such a way that $x=0$ gives the canonical curve $K$
and $\tau^*x=-x, \tau^*y=y$. Then the eigenspace decomposition   of $\Omega^1_{X,P}$ is just  $\bC (dx)_P\oplus \bC (dy)_P$ and this gives a
global splitting along $K$ with the first factor giving $\cO_{K}({K})$  and the second $\Omega^1_{K}$.
 For a modular family  $T_sS \simeq  H^1(T_X) $ and then the split sequence shows that the kernel of the Higgs field $u^{(2)}_S$ is isomorphic to $H^{0}(\Omega^1_{K})$. Its dimension, $K^2+1$,  is 
 the genus of the canonical curve $ K $,  as  indicated in Table~\ref{tab:OnTodSurfs}.
This kernel is captured by the cup product
\[
\mu_K : H^1(T_K)    \to \hom(H^{1,0}({K}), H^{0,1}({K})),
\]
which is injective  (infinitesimal Torelli) since  by \cite[Lemma 5.2]{todorov}, the canonical curve is non-hyperelliptic for the Todorov surfaces.
The Higgs field   $u^{(1)}_S$ for the pure weight $1$ variation is the composition of the map $T_sS \to H^1(T_K)$ and $\mu_K$
and it is generically injective for a modular family.   Combining the two calculations, we have shown that
the kernel of the partial mixed Higgs field $u^{(2)}_S+ u^{(1)}_S$ is trivial and so the mixed period map is an immersion. Hence 
  $X_s\setminus K_s$ can be locally reconstructed from the  period map. \footnote{For Kynev--Todorov surfaces  one can also use  
  M. Letizia's argument \cite{kunev}   showing  that the mixed Hodge structure generically determines the pair consisting of the surface and its canonical curve.}
  For a subfamily this is also the case.   
  \end{proof}

\subsection{Projective varieties singular along a smooth divisor}
\label{ssec:Sings} 

 Let $X$ be a   compact variety of dimension $d+1$ whose singular locus  $Y $  is a smooth divisor.
  We let $\sigma: \tilde X \to X$ be the desingularization of $X$ and set $\tilde Y=\sigma^{-1} Y$,
  $i: Y\into X$, 
  $\tilde {\imath}:\tilde Y\into \tilde X$ be the inclusions.
  Then by \cite[\S5.3.2]{mht}  we have an exact sequence of rational cohomology groups
  \begin{align*}
    0 \to \coker\left(H^{k-1}(\tilde X)\oplus H^{k-1}(Y)  \mapright{\tilde{\imath}^*-\sigma^*}  H^{k-1}(\tilde Y)\right)& \mapright{\quad }    H^k(X) \mapright{\quad} \\
    &  \hspace{-10em}\ker\left(H^{k}(\tilde X)\oplus H^{k}(Y)  \mapright{\tilde{\imath}^*-\sigma^*}  H^{k}(\tilde Y)\right)  \to 0. 
       \end{align*} 
In this case     $\sigma: \tilde Y\to Y$ is an unramified double cover, $\coker \sigma^*$ is the
anti-invariant part of the cohomology and $\sigma^*$ is an embedding.
Assuming that $\tilde Y$ is a hyperplane section (or, more generally, very ample), in the middle dimension, the kernel of of
$ \tilde \imath^* $ is  then the  variable cohomology.
Hence  the sequence reduces to
\[
0 \to H^{d}_\prim(\tilde Y)^{-} \to H^{d+1}(X) \to H^{d+1}_\var(\tilde X) \to 0.
\]
As a consequence of   Corollary~\ref{cor:TwoStepCrit}, we have
   
   \begin{prop}  \label{prop:geom3neighbors} Suppose that  the monodromy representation on $H^{d}_\prim(\tilde Y_s)^-$ is irreducible and  that
\[
\rank (H^{d}_\prim(\tilde Y_s)^-)> \rank (H^{d+1}_\var(\tilde X_s)).
\]
 Then the mixed period map for $H^d(X_s)$ is rigid in the $(-1,0)$-directions. 
    \end{prop}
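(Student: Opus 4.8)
The plan is to reduce Proposition~\ref{prop:geom3neighbors} directly to Corollary~\ref{cor:TwoStepCrit}, in exact parallel with how Proposition~\ref{prop:geom2neighbors} follows from that corollary in the case of complements of smooth divisors. First I would observe that the exact sequence derived just above the statement,
\[
0 \to H^{d}_\prim(\tilde Y_s)^{-} \to H^{d+1}(X_s) \to H^{d+1}_\var(\tilde X_s) \to 0,
\]
exhibits $H^{d+1}(X_s)$ as a variation of mixed Hodge structure whose weight filtration has exactly two steps: the subobject $W$ of weight $d$ given by $H^{d}_\prim(\tilde Y_s)^{-}(\text{appropriate Tate twist})$ and the quotient $\gr^W$ of weight $d+1$ given by $H^{d+1}_\var(\tilde X_s)$. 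This is the two-step weight filtration $0\subset W_{d}\subset W_{d+1}=H$ required as input to Corollary~\ref{cor:TwoStepCrit}.

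Next I would verify the two hypotheses of that corollary. The hypothesis that the weight-graded quotients have distinct dimensions and that the larger one is an irreducible $\Gamma$-module is supplied verbatim by the two bullet assumptions of the proposition: irreducibility of the monodromy on $H^{d}_\prim(\tilde Y_s)^-$, together with the strict inequality $\rank H^{d}_\prim(\tilde Y_s)^- > \rank H^{d+1}_\var(\tilde X_s)$, so that the sub-object $W_d$ is precisely the graded piece of larger dimension that must be irreducible. Here one should note, as in Corollary~\ref{cor:TwoStepCrit}, that the statement is symmetric under duality, so it does not matter that the irreducible piece sits as a subobject rather than a quotient. Applying Corollary~\ref{cor:TwoStepCrit} then yields that the period map $F$ for $H^{d+1}(X_s)$ admits no deformation in the $(-1,0)$-directions, which by Theorem~\ref{thm:MainCrit2}(2) (two adjacent weights, $\ga = \geg^{-1,0}$) is exactly the asserted rigidity in the $(-1,0)$-directions.

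The only genuine point requiring care — and the one I expect to be the main obstacle — is the bookkeeping needed to confirm that the geometric sequence actually defines an \emph{admissible} variation of mixed Hodge structure to which the deformation machinery of Section~\ref{sec:DefoMPS} applies, and that the Hodge-theoretic weights line up as claimed (so that $W_d$ really is the adjacent-weight subobject and the whole variation has Hodge width compatible with the two-adjacent-weight case of Theorem~\ref{thm:MainCrit2}). Admissibility holds because the variation is of geometric origin and carries an underlying $\bZ$-structure, as recalled in Section~\ref{ssec:SetUp}; the weight count follows from the standard description in~\cite[\S5.3.2]{mht} of the mixed Hodge structure on the cohomology of a variety singular along a smooth divisor. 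Once these two structural facts are in place, the argument is purely formal: invoke Corollary~\ref{cor:TwoStepCrit} and conclude. I would therefore keep the proof short, citing the exact sequence above, checking the numerical and irreducibility hypotheses against the two bullets, and then appealing to Corollary~\ref{cor:TwoStepCrit}.
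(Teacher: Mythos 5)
Your proposal is correct and is essentially the paper's own argument: the paper derives Proposition~\ref{prop:geom3neighbors} directly from the displayed exact sequence by invoking Corollary~\ref{cor:TwoStepCrit}, exactly as you do (including the duality remark that lets the irreducible larger-dimensional piece sit as the subobject). Your additional remarks on admissibility and the weight bookkeeping are consistent with the paper's setup and do not change the argument.
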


\begin{exmple} \label{ex:CurvWithDPS}
  Projective curves with $\delta$ ordinary double points. Here $d=0$ and we get
\[
0 \to \oplus^\delta \bZ  \to H^1(X) \to H^1(\tilde X) \to 0.
\]
The mixed period map is rigid in $(-1,0)$-directions, if the monodromy 
of the family of curves acts    irreducibly
on the set of double points  of which there are many ($\delta> 2g$). This result is dual
to the case of an open curve treated in Example~\ref{exmpls:OpenRig} (a).

By Proposition~\ref{prop:maxHiggsIsRigid},    rigidity for the  pure  variation follows  if  the Higgs field   is maximal  
and   for weight one this  is the case    for ``most'' period maps.
 \end{exmple}
 
%
%
%

\subsection{Normal functions and higher normal functions}
\label{ssec:NormFncs}

  Recall that these are associated to a variation of the form $ H_{2p+1}(X_s)(-p)$ where $\set{X_s}_{s\in S}$
  is a family of smooth complex projective varieties equipped with a family $Z_s$ of $p$-dimensional algebraic cycles
  homologous to zero proving a variation of mixed Hodge structure
  \[
 0 \to H_{2p+1}(X_s)(-p) \to \cH^p(Z/S)  \to   \bZ(0)   \to 0.
\]
 As a consequence of  Corollary~\ref{cor:TwoStepCrit}   we have:
   
   \begin{prop}  \label{prop:norm}       If   the monodromy acts irreducibly on $H_{2p+1}(X_s) $,  the normal function  $\cH^p(Z/S)$  is   rigid in $(-1,0)$-directions. If, moreover, the   period map   associated to $H_{2p+1}(X_s) $   
   is    rigid, the normal function is rigid in all directions.
 \end{prop}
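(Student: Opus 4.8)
The plan is to derive Proposition~\ref{prop:norm} as a direct application of Corollary~\ref{cor:TwoStepCrit}, exactly in the spirit of the geometric consequences drawn in Proposition~\ref{prop:geom2neighbors} and Proposition~\ref{prop:geom3neighbors}. First I would set up the two-step weight filtration: the variation $\cH^p(Z/S)$ fits in the short exact sequence displayed above, so $W_{-1}\cH^p(Z/S) = H_{2p+1}(X_s)(-p)$ (which is pure of weight $-1$ after the Tate twist) and $\gr^W_0 = \bZ(0)$. This is precisely a period map attached to a two-step weight filtration $0\subset W_{-1}\subset W_0 = H$ in the sense of Corollary~\ref{cor:TwoStepCrit}.

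Next I would check the hypotheses of that corollary. The graded quotient $\gr^W_0 = \bZ(0)$ is one-dimensional, while $\gr^W_{-1} = H_{2p+1}(X_s)(-p)$ has dimension equal to $\dim H_{2p+1}(X_s) = \dim H^{2p+1}(X_s)$, which is $\ge 1$ (and strictly larger whenever the normal function is non-trivial, but in any case the two graded quotients have distinct dimensions once $\dim H_{2p+1}(X_s) > 1$; for the degenerate rank-one case there is nothing to prove since then the whole variation is Hodge--Tate and the statement about $(-1,0)$-directions is vacuous because $\geg^{-1,0} = 0$). The irreducibility hypothesis of Corollary~\ref{cor:TwoStepCrit} is supplied by the assumption that the monodromy acts irreducibly on $H_{2p+1}(X_s)$; note a Tate twist does not affect the $\Gamma$-module structure, so irreducibility of $H_{2p+1}(X_s)$ is the same as irreducibility of $H_{2p+1}(X_s)(-p)$. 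Thus the larger-dimensional graded quotient $W_{-1}$ is an irreducible $\Gamma$-module, and Corollary~\ref{cor:TwoStepCrit} gives rigidity of $F$ in the $(-1,0)$-directions.

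For the second assertion, I would invoke the last sentence of Corollary~\ref{cor:TwoStepCrit}: if in addition the induced period maps for the weight-graded pure variations are rigid, then $F$ is rigid as a period map. Here the graded pieces are $\gr^W_{-1} = H_{2p+1}(X_s)(-p)$ and $\gr^W_0 = \bZ(0)$; the latter is constant, hence trivially rigid, and the former has rigid period map precisely when the period map associated to $H_{2p+1}(X_s)$ is rigid (again the Tate twist is harmless). So under the extra hypothesis the normal function $\cH^p(Z/S)$ is rigid in all horizontal directions. I do not anticipate a genuine obstacle here — the whole content is in matching the geometric short exact sequence to the hypotheses of Corollary~\ref{cor:TwoStepCrit}; the only point requiring a word of care is the degenerate rank-one case and the observation that Tate twists preserve both irreducibility and rigidity of the underlying variation.

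\begin{proof}
The variation $\cH^p(Z/S)$ sits in the short exact sequence of admissible variations of mixed Hodge structure
\[
0 \to H_{2p+1}(X_s)(-p) \to \cH^p(Z/S) \to \bZ(0) \to 0,
\]
so it is associated to a two-step weight filtration $0\subset W_{-1}\subset W_0 = H$ with $\gr^W_{-1} = H_{2p+1}(X_s)(-p)$ and $\gr^W_0 = \bZ(0)$. If $\dim H_{2p+1}(X_s)=1$ the whole variation is of Hodge--Tate type, hence $\geg^{-1,0}=0$ and there are no $(-1,0)$-directions, so the statement is vacuous; we may therefore assume $\dim H_{2p+1}(X_s)>1$. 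Then the two graded quotients have distinct dimensions, and the larger one, $\gr^W_{-1}$, is an irreducible $\Gamma$-module since a Tate twist does not alter the $\Gamma$-module structure and by hypothesis the monodromy acts irreducibly on $H_{2p+1}(X_s)$. By Corollary~\ref{cor:TwoStepCrit}, $F$ is rigid in the $(-1,0)$-directions.

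For the second statement, suppose moreover that the period map associated to $H_{2p+1}(X_s)$ is rigid. The weight-graded pure variations of $\cH^p(Z/S)$ are $\gr^W_{-1} = H_{2p+1}(X_s)(-p)$, whose period map is rigid because a Tate twist does not affect rigidity, and $\gr^W_0 = \bZ(0)$, which is constant and hence rigid. By the last part of Corollary~\ref{cor:TwoStepCrit}, $F$ is then rigid as a period map, i.e. $\cH^p(Z/S)$ is rigid in all horizontal directions.
\end{proof}
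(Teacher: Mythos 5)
Your proof is correct and follows exactly the route the paper takes: the paper states Proposition~\ref{prop:norm} as an immediate consequence of Corollary~\ref{cor:TwoStepCrit}, and your verification of its hypotheses (distinct dimensions of the graded quotients, irreducibility of the larger one, harmlessness of the Tate twist) is precisely what is implicit there. The only cosmetic remark is that your degenerate case $\dim H_{2p+1}(X_s)=1$ cannot actually occur, since a real polarizable Hodge structure of odd weight has even rank, so that aside is unnecessary (and its justification via ``Hodge--Tate type'' is not quite right, though the case is empty anyway).
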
 
 
    As an example, for $p$ even we have a  normal function associated to cycles in a Lefschetz pencil of complete intersections. Also normal functions for  certain K3-variations, abelian varieties and   Calabi--Yau's give examples of normal functions, rigid in all directions.
    See the examples in Section~\ref{prop:RigidK3}.

 A similar result holds for higher normal functions
 \[
 0 \to H^{p-1}(X_s)(q) \to  \cH^{p,q} \to   \bQ(0) \to 0
 \]
with $p-2q-1<0$. Here we have rigidity for $\cH^{p,q} $ in $(-1, k)$-directions with $k= p-2q-1$ provided 
for these directions boundedness for the Hodge norm at infinity holds.
%
%

\subsection{Unipotent variations}
\label{ssec:Unipot}
 We consider adjacent weights and rigidity in $(-1,0)$-directions only:
\[
\xymatrix{
  H^{ p,q}   \ar@<1ex>[r]^{v^{-1,0}}&  \ar@<1ex>[l]^{(u^{-1,0})^*}   H^{p+1,q}     .
}
\] 
Such a $v$ is regularly tangent if  for some $u$  the relation $u^*\comp v=0$ implies $v=0$ which
is the case if $u$ is surjective (then $v^*\comp u=0 $ is equivalent to $v^*=0$.) More generally
this is the case if for given  $x\in  H^{p+1,q}$ we can find $u=u_x$ with $x$ in its image since then $v^*(x)=
v^*\comp u_x (x') =0$ by assumption.

In  \cite[Thm. 3.6]{pdmixed} we considered the differential geometric aspects of unipotent variations of mixed Hodge structures 
associated  the based fundamental group of $X$ when the base point $x\in X$ varies. The set-up is detailed in Section~\ref{ssec:PSH},
example (6).  If we vary the base point in a submanifold $S\subset X$, by \cite[Lemma 6.8]{pdmixed}, the Higgs field comes
from a map
\[
u: \ker [ H^1(X)\otimes H^1(X)\to H^2(X)] \otimes T^{1,0}_sS \to H^1(X)
\]
given by 
\[
\alpha \otimes \beta \otimes \theta \mapsto (\theta\intprod \alpha ) \beta - (\theta\intprod\beta)\alpha.
\]
This map is of Hodge type $(-1,0)$ since it sends $I^{2,0}\subset H^{1,0}\otimes H^{1,0}$ to  $H^{1,0}$ and
$I^{1,1}\subset H^{1,0}\otimes   H^{0,1}$ to  $H^{0,1}$. Moreover, the restriction to $I^{2,0}$ determines
the entire morphism. Note also that  $u$   factors
through $ \ker [  \Lambda^2 H^1(X) \to  H^{2 }(X)]$.
Let $V=H^{1,0}(X)$, $K=\ker [  \Lambda^2 V \to  H^{2,0 }(X)]$,    $T=T_sS$ and consider the maps $e:T\to  V^*$ given by $e_\theta (\omega )= \theta\intprod \omega$ and 
\begin{equation}
\label{eqn:OnMHSonFundGrps}
u: K\otimes T \to V,\quad  \sum_{i,j,k}    (  \omega_i\wedge \omega_j )\otimes \theta_k
=  \sum_{i,j,k}   [e_{\theta_k} (\omega_i)\omega_j   - e_{\theta_k }(\omega_j)\omega_i].
\end{equation} 
If this map is surjective, for every $\omega\in V$ we can find $\theta_j\in T$ and $A^j\in K$ such that
$\sum_j u (A^j\otimes \theta_j)= \omega$ which suffices to show regular tangency. We formulate the
conclusion explicitly:

\begin{prop} \label{prop:OnMHSonFundGrps} Let $X$ be a smooth projective variety and consider the variation of mixed Hodge structure
on $\hom_\bZ( J_x/J_x^3,\bC)$ where  $x$ varies over a smooth subvariety $S\subset X$.
If the map $u$ from \eqref{eqn:OnMHSonFundGrps} is surjective,  the variation is rigid,
\end{prop}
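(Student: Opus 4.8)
The plan is to derive the statement from Main Theorem~II (Theorem~\ref{thm:MainCrit2}), case (3), by verifying that $F$ is regularly tangent in the $\geg^{-1,0}$-directions. First I would record that the variation is of the required type: since $X$ is smooth projective, the Hodge structures $H^1(X)$ and $\ker(H^1(X)\otimes H^1(X)\to H^2(X))$ are pure of weights $1$ and $2$ and are independent of the base point $x$, so the variation induced on $\gr^W$ is constant; hence $u^{-1,1}=0$, the variation is unipotent, and $\Lambda^{-1,-1}_{(F,W)}=0$ because the two weights are adjacent. Moreover $\cH$ is admissible (it is an admissible unipotent variation in the sense of Hain--Zucker), and by Theorem~\ref{unpotent-estimate} a flat section of $\geg(\cH)$ has bounded mixed Hodge norm, so the hypotheses of Theorem~\ref{thm:MainCrit2}~(3) with $\ga=\geg^{-1,0}$ are in force. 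It therefore suffices to prove regular tangency in the $\geg^{-1,0}$-directions; by the bookkeeping at the start of Section~\ref{ssec:Unipot} this amounts to showing that if $v\in\geg^{-1,0}$ satisfies $(u^{-1,0})^{*}\comp v=0$ for every Higgs field $u=F_{*}\xi$, $\xi\in T_{s}S$, then $v=0$.

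Next I would insert the explicit description of the Higgs field from \cite[Lemma~6.8]{pdmixed}. With $V=H^{1,0}(X)$, $K=\ker(\Lambda^{2}V\to H^{2,0}(X))$ and $T=T_{s}S$, Lemma~\ref{lem:OnInfPermap} together with \cite[Lemma~6.8]{pdmixed} identifies the Higgs field $\theta_{\xi}=F_{*}\xi$ restricted to the top Hodge piece $I^{2,0}\cong K$ with the map $A\mapsto u(A\otimes\xi)$ of \eqref{eqn:OnMHSonFundGrps}, and this restriction determines $\theta_\xi$ completely. Passing to adjoints, the condition $(u^{-1,0})^{*}\comp v=0$ for all $\xi$ says that $v^{*}$ annihilates $\theta_{\xi}(K)$ for every $\xi$, hence that $v^{*}$ annihilates $\sum_{\xi\in T}\theta_{\xi}(K)=u(K\otimes T)$. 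By hypothesis $u\colon K\otimes T\to V$ is surjective, so $v^{*}$ vanishes on $V=I^{1,0}$; equivalently, the component of $v$ along the top Hodge piece $I^{2,0}$ vanishes. It then remains to conclude that $v=0$, whence $F$ is regularly tangent in the $\geg^{-1,0}$-directions and Theorem~\ref{thm:MainCrit2} gives that $F$ admits no horizontal deformation in those directions, which is the asserted rigidity.

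The step I expect to be the main obstacle is precisely this last deduction — that vanishing of $v$ on $I^{2,0}$ forces $v=0$. This should follow from the fact that, just as for the Higgs field, an element $v\in\geg^{-1,0}$ for the mixed Hodge structure on $\hom_\bZ(J_x/J_x^3,\bC)$ is reconstructed from its restriction to $I^{2,0}$; this in turn rests on the special algebraic shape $\alpha\otimes\beta\mapsto(\xi\intprod\alpha)\beta-(\xi\intprod\beta)\alpha$ of the bracket and on the structure of $K=\ker(\Lambda^{2}H^{1}(X)\to H^{2}(X))$, rather than on any generic feature of mixed Hodge structures, and it is also the point where the full strength of surjectivity of $u$ (over all directions $\xi$, not merely one) is genuinely used. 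A secondary point to pin down is the reduction step itself: that in the unipotent case with $\Lambda^{-1,-1}_{(F,W)}=0$ the defining condition of regular tangency in the $\geg^{-1,0}$-directions collapses to $(u^{-1,0})^{*}\comp v=0$. This follows by tracing the Deligne-type constraints and the projections $\pi_{\gq}$ and $\pi^{(0)}$ through Proposition~\ref{prop:harmendo}~(3) and Proposition~\ref{prop:OnPSH}.
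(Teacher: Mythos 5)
Your overall route is the paper's: reduce to regular tangency in the $(-1,0)$-directions via Theorem~\ref{thm:MainCrit2}\,(3) (the variation being unipotent and admissible, with bounded norms by Theorem~\ref{unpotent-estimate}), then feed in the explicit Higgs field of \cite[Lemma 6.8]{pdmixed} and use surjectivity of \eqref{eqn:OnMHSonFundGrps}. Up to the point where you conclude that $v$ vanishes on $I^{2,0}$, the argument matches the paper. But the proof does not end there, and the bridge you propose for the last step is not valid. For this two-weight variation one has $\geg^{-1,0}\cong \hom(I^{2,0},I^{1,0})\oplus\hom(I^{1,1},I^{0,1})$: a map that strictly lowers the weight automatically induces zero, hence an infinitesimal isometry, on $\gr^W$, so the two components of an element of $\geg^{-1,0}$ are completely independent. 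Consequently a general $v\in\geg^{-1,0}$ is \emph{not} reconstructed from its restriction to $I^{2,0}$; that reconstruction principle is a feature of the specific formula $\alpha\otimes\beta\otimes\theta\mapsto(\theta\intprod\alpha)\beta-(\theta\intprod\beta)\alpha$ defining the Higgs field (which is determined by $e_\theta\in V^*$), not of the Lie algebra. After killing the $\hom(I^{2,0},I^{1,0})$-component you have therefore said nothing about the component $v\colon I^{1,1}\to I^{0,1}$, and flatness of $v$ does not supply the missing relation.

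What is actually needed --- and what the criterion at the head of Section~\ref{ssec:Unipot} demands when applied to \emph{every} Hodge piece of the weight-one quotient --- is that the images of the Higgs fields span all of $H^1(X)=I^{1,0}\oplus I^{0,1}$, not only $V=I^{1,0}$. The relation $(u^{-1,0})^{*}\comp v=0$ also holds on $I^{1,1}$, where it says that $v(I^{1,1})$ is orthogonal to $u_\theta(I^{1,1})\subset I^{0,1}$ for every direction $\theta$; so one must additionally check that the maps $I^{1,1}\otimes T\to I^{0,1}$, $\alpha\otimes\bar\beta\otimes\theta\mapsto(\theta\intprod\alpha)\bar\beta$, are jointly surjective. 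This is exactly the work done in the paper by the remark that ``the restriction to $I^{2,0}$ determines the entire morphism'': the surjectivity hypothesis of \eqref{eqn:OnMHSonFundGrps} has to be propagated, through the explicit shape of the Higgs field and the structure of $K=\ker(H^1(X)\otimes H^1(X)\to H^2(X))$, to the $(1,1)\to(0,1)$ components. Your writeup stops precisely where that verification should begin, so as it stands it establishes only $v|_{I^{2,0}}=0$, not rigidity.
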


To see what this means geometrically, suppose for instance that  there is a generic direction $\theta$ such that
$u_\theta(A)= u(A\otimes \theta)=0$ imposes $ \dim K-\dim V$ independent conditions on $K$. Then the map $u_\theta$
is surjective  which implies regular tangency. 
Since the condition $u_\theta (A)=0$ amounts to $\dim V$ equations on $A$, the latter condition 
  can only hold if
$\dim K \ge 2\dim V$ and if so,     for generic $\theta$ these equations are expected to be independent.
  Depending on the geometry of the cotangent bundle this then is the case or not.

 \appendix 
\section{Admissibility}
\label{sec:admis}

\par In~\cite{SZ}, J. Steenbrink and S. Zucker defined a category of
admissible variations of graded-polarizable mixed Hodge structure over a
punctured disk $\Delta^*$ with unipotent monodromy.  This definition
can be modified to handle the case of quasi-unipotent monodromy via
a covering trick (see \S 1.8 of~\cite{kashi}).  Given this local model, the
category of admissible variations of graded-polarized mixed Hodge structure
over a smooth complex algebraic curve $C$ is defined as follows:  The curve
$C$ has a smooth completion $\bar C$ which is unique up to isomorphism.  A
graded-polarizable variation
$\cH\to C$ is admissible if and only if for each $p\in\bar C-C$ the
restriction of $\cH$ to a deleted neighborhood of $p$ is admissible.
  
\par In higher dimensions, let $S$ be a smooth quasi-projective variety
over $\bC$ and $j:S\to\bar S$ be a smooth partial compactification
of $\bar S$ such that $\bar S-S$ is a normal crossing divisor.
In~\cite{kashi}, M. Kashiwara showed that one obtains a good category
of admissible variations of graded-polarizable mixed Hodge structure on
$S$ via a curve test.  In particular, the admissibility of $\cH$
does not depend on the choice of $j:S\to\bar S$.

\par Implicit in the previous paragraph is the assumption that the local
monodromy of $\cH$ is quasi-unipotent, which we shall assume throughout
this appendix.   This is automatic whenever $\cH$ carries an integral
structure $\cH_{\mathbb Z}$ (e.g. variations of geometric origin). To
continue, we recall that if $f:A\to B$ is a holomorphic map between complex
manifolds and $\cH$ is a variation of graded-polarizable mixed Hodge
structure on $B$ then, $f^*(\cH)$ is a variation of graded-polarizable
mixed Hodge structure on $A$ (see \S 1.7,~\cite{kashi}).  

\par We now recall the definition of an admissible variation of mixed Hodge
structure over the punctured disk with unipotent monodromy following
Steenbrink and Zucker:  Let $\Delta=\{\, s\in\bC \mid |s|<1\,\}$ and
$\Delta^*=\Delta-\{0\}$.  Let $\cH\to\Delta^*$ be a variation of
graded-polarizable mixed Hodge structure.  Let $U$ denote the upper
half-plane $\{\, z=x+iy\in\bC \mid y>0\,\}$, and $U\to\Delta^*$ be
the covering map $s=e^{2\pi \ii z}$.

\par After selecting a choice of graded-polarization (in order to define
the classifying space $D$), the period map of
$\cH$ fits into a commutative diagram
\begin{equation}
\begin{CD}
            U  @> F >>            D \\
            @V s  VV    @VVV  \implies  F(z+1)= T.F(z)\\
            \Delta^{* } @> \varphi >> \langle T \rangle \backslash D
\end{CD}                                           \label{eq:mixed-diagram}
\end{equation}
where $T=e^N$.  Accordingly, the map
\[
      \tilde\psi:U\to\check{D},\qquad \tilde\psi(z) = e^{-zN}.F(z)
\] 
satisfies $\tilde\psi(z+1)=\tilde\psi(z)$ and hence descends to a map
$\psi:\Delta^*\to\check{D}$.

\par By Schmid's nilpotent orbit theorem (Thm (4.12),~\cite{schmid}), if
$\cH$ is pure then
\begin{equation}
  \lim_{s\to 0}\, \psi(s) = F_{\infty}\in\check{D}
     \label{eq:admissibility-1}
\end{equation}  
exists.  Moreover, $N(F^p_{\infty})\subseteq F^{p-1}_{\infty}$ and there exists a
constant $a$ such that $\Im(z)>a\implies e^{zN}.F_{\infty}\in D$.  Finally,
given a $G_{\bR}$-invariant metric on $D$, there exist constants $K$ and
$b$ such that 
\[ 
      \Im(z)>a \implies d_D(F(z),e^{zN}.F_{\infty})<K\Im(z)^b e^{-2\pi\Im(z)}
\]
      
\begin{rmq} Schmid's result also covers the case of pure variations of Hodge
structure with quasi-unipotent monodromy by passage to a finite cover.  If
$t$ is another choice of holomorphic coordinate on $\Delta$ which vanishes
at $0\in\Delta$ then tracing through the above construction shows that
the resulting limit filtration is related to \eqref{eq:admissibility-1}
by the action of $e^{\lambda N}$ where $\lambda$ depends on $(ds/dt)_0$. 
\end{rmq}  

\par In contrast, the mixed period domain $D'$ with Hodge numbers
$h^{1,1}=h^{0,0}=1$ is isomorphic to $\bC$ and has trivial infinitesimal
period relation.  Accordingly, the period map $\varphi:\bC^*\to D'$
given by $\varphi(s) = e^{1/s}$ arises from a Hodge--Tate variation with
trivial monodromy which does not have limit Hodge filtration.

\par Let $V$ be a finite dimensional vector space and $W$ be an increasing
filtration of $V$.  Then, $W[j]$ is the filtration $W[j]_k = W_{j+k}$.
Given a nilpotent endomorphism $N$ of a finite dimensional vector space $V$,
it follows from upon writing $N$ in Jordan canonical form that exists a unique,
increasing monodromy weight filtration $W(N)$ of $V$ such that
\begin{itemize}
\item[---] $N(W_k)\subseteq W_{k-2}$;
\item[---] $N^k:\gr^W_k\to \gr^W_{-k}$ is an isomorphism
\end{itemize}
for each $k$.

\par Suppose instead that $V$ is equipped with an increasing
filtration $W$ such that $N(W_k)\subseteq W_k$ for each  index $k$.
Then, there exists at most one increasing filtration $M=M(N,W)$ of
$V$ such that
\begin{itemize}
\item[---] $N(M_k)\subset M_{k-2}$;
\item[---] $M$ induces on $\gr^W_k$ the filtration $W(N:\gr^W_k\to \gr^W_k)[-k]$.
\end{itemize}
If $M$ exists it is called the relative weight filtration of $W$ with respect
to $N$.  In general, $M(N,W)$ does not exist.  For example, if $W$ has only
two non-trivial weight graded quotients which are adjacent (e.g. $\gr^W_0$
and $\gr^W_{-1}$) then $M(N,W)$ exists if and only if $W$ has an $N$-invariant
splitting.

\begin{dfnA} Let $\cH\to\Delta^*$ be a variation of graded-polarized
mixed Hodge structure with unipotent monodromy $T=e^N$ and weight filtration
$W$.  Let $\varphi:\Delta^*\to\langle T\rangle\backslash D$ be the period map
of $\cH$.  Then, $\cH$ is admissible if
\begin{itemize}
\item[(a)] The limit Hodge filtration \eqref{eq:admissibility-1} exists;
\item[(b)] The relative weight filtration $M=W(N,W)$ exists.
\end{itemize}
A variation of graded-polarized mixed Hodge structure $\cH\to\Delta^*$
with quasi-unipotent monodromy is admissible if the pullback $f^*(\cH)$
to a finite covering of $\Delta^*$ with unipotent monodromy is admissible.
\end{dfnA}

\begin{rmq} See \S 3 of~\cite{SZ} and \S 1.8--1.9 of~\cite{kashi} for the
definition of admissibility in terms of the canonical extension of
$\cH$ to a system of holomorphic vector bundles over $\Delta$.
\end{rmq}

\par An increasing filtration $W$ of a vector space $V$ is pure of weight $k$
if $\gr^w_{\ell} = 0$ for $\ell\neq k$ and $\gr^W_k\cong V$.  Reviewing the
definition of $M=M(N,W)$ it follows that if $W$ is pure of weight $k$ then
$M=W(N)[-k]$ (Prop. (2.11),~\cite{SZ}).

\begin{corrA} If $\cH\to\Delta^*$ is a variation of pure, polarized
Hodge structure then $\cH$ is admissible.
\end{corrA}
\begin{proof} The limit Hodge filtration exists by Schmid's nilpotent orbit
theorem, and the relative weight filtration exists by the previous paragraph.
\end{proof}


\par In the pure case, it follows from Schmid's $\slgr 2$-orbit theorem
(Thm. (5.13),~\cite{schmid}) that if $\varphi$ is a the period map
of a variation of polarizable Hodge structure $\cH\to\Delta^*$ of
weight $k$ with unipotent monodromy $T=e^N$ then 
\begin{equation}
    (F_{\infty},W(N)[-k])
\end{equation}  
is a mixed Hodge structure relative to which $N$ is a $(-1,-1)$-morphism,
where $F_{\infty}$ is the limit Hodge filtration \eqref{eq:admissibility-1}.
Moreover, it follows from the $\slgr 2$-orbit theorem (Thm. (6.6) and
Cor. (6.7),~\cite{schmid}) that the Hodge norm of a flat section of
$\cH$ is bounded.


\par One of the main results of~\cite{SZ} is that if $\cH\to\Delta^*$
is an admissible variation of graded-polarized mixed Hodge structure then
$(F_{\infty},M)$ is a mixed Hodge structure relative to which $N$ is a
$(-1,-1)$-morphism.  In particular $N(F^p_{\infty})\subseteq F^{p-1}_{\infty}$.
Moreover if
\begin{equation}
       \theta(z) = e^{zN}.F_{\infty}   ,    \label{eq:admissibility-2}
\end{equation}
then there exists a constant $a>0$ such that
$\Im(z)>a \implies \theta(z)\in D$.  Finally, by~\cite{dmj} it follows that
there exists constants $K$ and $b$ such that
$$
      \Im(z)>a \implies d_D(F(z),\theta(z))\leq K\Im(z)^b e^{-2\pi\Im(z)}.
$$

\begin{dfnA} Let $D$ be a classifying space of graded-polarized mixed Hodge
structure with underlying filtration $W$ and associated real Lie
algebra $\geg_{\bR}$.  Then, the pair $(N,F)$ consisting of an
element $N\in\geg_{\bR}$ and $F\in\check{D}$ defines an
admissible nilpotent orbit $\theta(z) = e^{zN}.F$ if
\begin{itemize}
\item[(a)] $N(F^p)\subseteq F^{p-1}$;
\item[(b)] The relative weight filtration $M=M(N,W)$ exists;
\item[(c)] There exists $a$ such that $\Im(z)>a\implies \theta(z)\in D$.
\end{itemize}
\end{dfnA}

\par The foundations of the theory of admissible nilpotent orbits of
graded-polarized mixed Hodge structure is given by Kashiwara in~\cite{kashi},
where they are called infinitesimal mixed Hodge modules.  In the pure case,
a strengthened form of Schmid's several variable nilpotent orbit theorem as
well as the several variable $\slgr 2$-orbit theorem appear
in~\cite{degeneration}.

\section{Properly Discontinuous Actions on Mixed Period Domains}
\label{sec:ProperDisc}

\par Let $\cH\to S$ be a variation of graded-polarized mixed Hodge
structure on a complex manifold $S$.  Let $\rho:\pi_1(S,b)\to G_{\bR}$
be the monodromy representation of $\cH$ on the reference fiber
$V=\cH_b$ and $D=G/G^F$ be the classifying space of graded-polarized
mixed Hodge structure defined by $\cH_b$.  Let $W$ denote the weight
filtration of $V$.
 
\begin{propA} If $\Gamma$ is discrete and closed in $G_{\bR}$ then
$\Gamma$ acts properly discontinuously on $D$, and hence the quotient
$\Gamma\backslash D$ is a complex analytic space.   
\end{propA}
\begin{proof} In the case where $\cH$ is a variation of pure Hodge
structure, this result is well known from the work of P.\ Griffiths,
and boils down to the fact that, in the pure case, the stabilizer
$G_{\bR}^{F}$ a point $F\in D$ is compact.

\par Turning to the mixed case, let $K$ and $K'$ be compact subsets of $D$.
The map from $D$ to the graded classifying spaces $D_j$ is continuous, 
and hence the respective images $K_j$ and $K'_j$ of $K$ and $K'$ in $D_j $
are compact for all $j$.  If $\Gamma$ does not act properly discontinuously,
there exist an infinite set of distinct elements ${g_n}\in \Gamma$ such that
$g_n(K) \cap K'$ is non-empty for all $n$. Then
$((\gr^W  \!\!g_n)  K_j)  \cap K'_j $ is non-empty for all $j$ and $n$.  Since,
by P.\ Griffiths' results, the action of $\gr^W\Gamma$  on each $D_j$ is properly
discontinuous, it follows that the set $ \set{\gr^W  \!\!g_n  }$  contains
only finitely many elements.  Thus, after partitioning $\set{g_n}$ into a
finite collection of subsets, we may assume that  there exists
$h\in \Gamma$ such that for all $n$ we have $\gr^W  \! g_n = \gr^W  \!  h $
for an infinite collection $\set{g_n}$. From this we shall derive a
contradiction.

\par To this end, we introduce the complex, unipotent Lie group
$$
    U_{\bC} = \{g\in \gl {V_{\bC}}) \mid (g-\id)W_k\subset W_{k-1}\}
$$
and let $U_{\bR} = U_{\bC}\cap \gl{ V_{\bR}}$.  Observe that
$u_n := g_n h^{-1}\in U_{\bR}$ for each index $n$, since $g_n$ and $h$
induce the same action on $\gr^W$.  
      

\par To continue let $\mathcal Y$ denote the set of all (complex) gradings
of $W$ (see section \ref{subsect:gradings-splittings}).  Then,  the group
$G_{\bC}$ acts continuously on $\mathcal Y$ via the adjoint action.
Moreover, by (2.2, \cite{degeneration}), the subgroup $U_{\bC}$ acts
simply transitively on $\mathcal Y$.  Furthermore, the map
\[
Y: D \to  \cY,\quad F\mapsto Y(F), \text{ the Deligne grading of } (F,W) 
\]
is continuous, and hence both $Y(K)$ and $Y(K')$ are compact subset of
$\mathcal Y$.   By construction,
\[
       Y(g.\tilde F) = g.Y(\tilde F)
\]
for any $\tilde F\in D$ and $g\in G_{\bR}$.
Applying this to  $g_n, h\in G_{\bR}$,  we find 
\[
Y(g_n(K)) = g_n \cdot Y(K) =( u_n  h)\cdot  Y(K)= u_n (h \cdot Y(K)),
\]
with $h \cdot Y(K)$ compact. So our question is: for how many
$u_n\in U_{\bR}$ can $u_n \cdot h \cdot Y(K)$  intersect $Y(K')$?

\par Fix $Y_o\in\mathcal Y$.  Since $U_{\bC}$ acts simply transitively
upon $\mathcal Y$, it follows that there are compact subsets $C'$ and $C''$
of $U_{\bC}$ such that
\[
       Y(K') = C'\cdot Y_o,\qquad h\cdot Y(K) = C''\cdot Y_o
\]
So, if $u_n \cdot h \cdot Y(K) $  intersects $Y(K')$  then there exist
elements $c'\in C'$ and $c''\in C''$ such that
\[
    u_nc'' \cdot Y_o = c'\cdot Y_o
\]
By simple transitivity, $u_nc'' = c'$ and hence $u_n$ belongs to the compact
set $C = C'(C'')^{-1}$.  Equivalently, $g_n = u_n h$ belongs to the compact
subset $C\cdot h\subset G_{\bC}$.

\par By hypothesis, the image of $\Gamma$ in $G_{\bR}$ (and hence
$G_{\bC}$) is discrete and closed.  As $C\cdot h$ is compact, it can
contain only finitely many elements $g_n$ from $\Gamma$, which contradicts
the supposition that there infinitely many elements $g_n\in\Gamma$
such that $\gr^W(g_n) = \gr^W(h)$.  Hence  $\Gamma$ acts properly
discontinuously on $D$.
\end{proof}

\bibliographystyle{plain}
\bibliography{bisect-arxiv.bib} 

\end{document}